\def\titlerunning#1{\gdef\titrun{#1}}
\def\author#1{\gdef\autrun{\def\and{\unskip, }#1}\gdef\@author{#1}}
\def\address#1{{\def\and{\\\hspace*{18pt}}\renewcommand{\thefootnote}{}%
\footnote {#1}}%
\markboth{\autrun}{\titrun}}
\def\email#1{e-mail: #1}
\def\subjclass#1{{\renewcommand{\thefootnote}{}%
\footnote{\emph{Mathematics Subject Classification (2010):} #1}}}
\def\keywords#1{\par\medskip
\noindent\textbf{Keywords.} #1}
\@date \else {\vskip3ex \centering\footnotesize\@date\par\vskip1ex}\fi
\else \@footnotetext{\@setdate}\fi}
\newtheorem{thm}{Theorem}[section]
\newtheorem{cor}[thm]{Corollary}
\newtheorem{lemm}[thm]{Lemma}
\newtheorem{remar}[thm]{Remark}
\newcommand{\R}{{\mathbb{R}}}
\newcommand{\Z}{{\mathbb{Z}}}
\newcommand{\La}{\triangle}
\newcommand{\bs}{\backslash}
\newcommand{\1}{\partial}
\newcommand{\2}{\overline}
\newcommand{\3}{\varepsilon}
\begin{document}

\baselineskip=16pt
\titlerunning{}
\title{Existence and large time behaviour of finite points blow-up solutions of the fast diffusion equation}

\author{Kin Ming Hui \and Sunghoon Kim}

\date{}

\maketitle

\address{K. M. Hui : 
Institute of Mathematics, Academia Sinica, Taipei, 10617, Taiwan, R.O.C., \\
\email{kmhui@gate.sinica.edu.tw}
\and
S. Kim (\Letter):
Department of Mathematics, School of Natural Sciences, The Catholic University of Korea,\\
43 Jibong-ro, Wonmi-gu, Bucheon-si, Gyeonggi-do, 14662, Republic of Korea, \\
\email{math.s.kim@catholic.ac.kr}
}

\subjclass{Primary 35K61, 35K60, 35B40 Secondary 35K67, 35B44}

\begin{abstract}
Let $\Omega\subset\R^n$ be a smooth bounded domain and let $a_1,a_2,\dots,a_{i_0}\in\Omega$, $\widehat{\Omega}=\Omega\setminus\{a_1,a_2,\dots,a_{i_0}\}$ and $\widehat{R^n}=\R^n\setminus\{a_1,a_2,\dots,a_{i_0}\}$. We prove the existence of solution $u$ of the fast diffusion equation $u_t=\Delta u^m$, $u>0$, in $\widehat{\Omega}\times (0,\infty)$ ($\widehat{R^n}\times (0,\infty)$ respectively) which satisfies  $u(x,t)\to\infty$ as $x\to a_i$ for any $t>0$ and $i=1,\cdots,i_0$, when $0<m<\frac{n-2}{n}$, $n\geq 3$, and the initial value satisfies $0\le u_0\in L^p_{loc}(\2{\Omega}\setminus\{a_1,\cdots,a_{i_0}\})$ ($u_0\in L^p_{loc}(\widehat{R^n})$ respectively) for some constant $p>\frac{n(1-m)}{2}$ and  $u_0(x)\ge \lambda_i|x-a_i|^{-\gamma_i}$ for  $x\approx a_i$ and some constants $\gamma_i>\frac{2}{1-m},\lambda_i>0$, for all $i=1,2,\dots,i_0$. We also find the blow-up rate of such solutions near the blow-up points $a_1,a_2,\dots,a_{i_0}$, and obtain the asymptotic large time behaviour of such singular solutions. More precisely we prove that if $u_0\ge\mu_0$ on $\widehat{\Omega}$ ($\widehat{R^n}$, respectively) for some constant $\mu_0>0$  and $\gamma_1>\frac{n-2}{m}$, then the singular solution $u$ converges locally uniformly on every compact subset of $\widehat{\Omega}$ (or $\widehat{R^n}$ respectively) to infinity as $t\to\infty$. If $u_0\ge\mu_0$ on $\widehat{\Omega}$ ($\widehat{R^n}$, respectively) for some constant $\mu_0>0$ and satisfies $\lambda_i|x-a_i|^{-\gamma_i}\le u_0(x)\le \lambda_i'|x-a_i|^{-\gamma_i'}$ for  $x\approx a_i$ and some constants $\frac{2}{1-m}<\gamma_i\le\gamma_i'<\frac{n-2}{m}$,  $\lambda_i>0$, $\lambda_i'>0$, $i=1,2,\dots,i_0$, we prove that $u$ converges in $C^2(K)$ for any compact subset $K$ of $\2{\Omega}\setminus\{a_1,a_2,\dots,a_{i_0}\}$ (or $\widehat{R^n}$ respectively) to a harmonic function as $t\to\infty$.
\keywords{fast diffusion equation, blow-up solution, blow-up rate, asymptotic large time behaviour}
\end{abstract}

%\tableofcontents
\vskip 0.2truein

\setcounter{equation}{0}
\setcounter{section}{0}

\section{Introduction}\label{section-intro}
\setcounter{equation}{0}
\setcounter{thm}{0}

We will study the existence and asymptotic large time behaviour of singular solution $u$ of the fast diffusion equation
\begin{equation}\label{fde}
u_t=\La u^m
\end{equation}
in  bounded and unbounded domains where $0<m<\frac{n-2}{m}$, $n\geq 3$, with nonnegative initial value that blows up at a finite number of points in the domain. Recently there are a lot of research on \eqref{fde} because this equation arises in many physical and geometrical applications \cite{A}, \cite{DK}, \cite{V2}. When $m>1$, \eqref{fde}  is called porous medium equation which appears in the modeling of the flow of an ideal gas in a homogeneous porous media and the filtration of incompressible fluids through a porous medium \cite{A}, \cite{V3}. When $m=1$, \eqref{fde} is the heat equation.  When  $0<m<1$,  \eqref{fde} is called the fast diffusion equation. When $m=\frac{n-2}{n+2}$ and $n\ge 3$,  \eqref{fde} arises in the study of Yamabe flow on $\R^n$ \cite{DS2}, \cite{PS}. Note that the metric $g_{ij}=u^{\frac{4}{n+2}}dx^2$, $u>0$, $n\ge 3$, is a solution of the Yamabe flow  \cite{DS2}, \cite{PS},
\begin{equation*}
\frac{\1 g_{ij}}{\1 t}=-Rg_{ij} \quad \mbox{ in }\R^n
\end{equation*}
if and only if  $u$ is a solution of 
\begin{equation*}
u_t=\frac{n-1}{m}\Delta u^m
\end{equation*}
with $m=\frac{n-2}{n+2}$ where $R(\cdot,t)$ is the scalar curvature of the metric $g_{ij}(\cdot,t)$. Recently Huang, Pan and Wang \cite{HPW}, T.~Luo and H.~Zeng \cite{LZ} have shown that \eqref{fde} with $m>1$ is also the large time asymptotic limit solution of the compressible Euler equation with damping. F.~Golse and F.~Salvarani \cite{GS}, B.~Choi and K.~Lee \cite{CL}, have shown that \eqref{fde} also appears as the nonlinear diffusion limit for the generalized Carleman models.

As observed by L.~Peletier \cite{P} and J.L~Vazquez \cite{V1} there is a big difference on the behaviour of solutions
of \eqref{fde} for $(n-2)/n<m<1$, $n\ge 3$,
and for $0<m\le (n-2)/n$, $n\ge 3$. For example there is a $L^1-L^{\infty}$ regularizing effect  (\cite{HP}, \cite{DaK}) for the solutions
of
\begin{equation}\label{Cauchy-problem}
\left\{\begin{aligned}
u_t=&\Delta u^m, u\ge 0,\quad\mbox{ in }\R^n\times (0,T)\\
u(x,0)=&u_0\qquad\qquad\quad\mbox{ in }\R^n
\end{aligned}\right.
\end{equation}
with $0\le u_0\in L^1_{loc}(\R^n)$ for any $(n-2)/n<m<1$. However there is no such $L^1-L^{\infty}$ regularizing effect \cite{V2} for  solutions of \eqref{Cauchy-problem} when $0<m\le (n-2)/n$ and $n\ge 3$.

Although there are a lot of study (\cite{A}, \cite{DK}, \cite{V3}) on the existence and various properties of the solutions of \eqref{fde} for $m>\frac{(n-2)_+}{n}$,  there are not many results on \eqref{fde} for the case $0<m<\frac{(n-2)_+}{n}$. When $\frac{(n-2)_+}{n}<m<1$, existence and uniqueness of global weak solution of \eqref{Cauchy-problem}
for any $0\le u_0\in L^1_{loc}(\R^n)$ has been proved by M.A.~Herrero and M.~Pierre in \cite{HP}. When $0<m\le (n-2)/n$ and $n\ge 3$,
existence of positive smooth solutions of \eqref{Cauchy-problem} for any $0\le u_0\in L_{loc}^p(\R^n)$, $p>(1-m)n/2$, satisfying the condition,
\begin{equation*}
\liminf_{R\to\infty}\frac{1}{R^{n-\frac{2}{1-m}}}\int_{|x|\le R}u_0\,dx
\ge C_1T^{\frac{1}{1-m}}
\end{equation*}
for some constant $C_1>0$ is proved by S.Y.~Hsu in \cite{Hs}.

Let $\Omega\subset\R^n$ be a smooth bounded domain and let $a_1,a_2,\dots,a_{i_0}\in\Omega$, $\widehat{\Omega}=\Omega\setminus\{a_1,a_2,\dots,a_{i_0}\}$, and $\widehat{R^n}=\R^n\setminus\{a_1,a_2,\dots,a_{i_0}\}$. When $0<m\le\frac{n-2}{n}$ and $n\ge 3$, existence of singular solutions of \eqref{fde} in $\widehat{\Omega}\times (0,T)$ which blows up at $\{a_1,a_2,\dots,a_{i_0}\}\times (0,T)$ was proved by K.M.~Hui and Sunghoon Kim in \cite{HK2} when the initial value $u_0$ satisfies 
\begin{equation*}
u_0(x)\approx |x-a_i|^{-\gamma_i}\quad\mbox{ for }x\approx a_i\quad\forall i=1,2,\dots,i_0
\end{equation*} 
for some constants $\gamma_i>\max\left(\frac{n}{2m},\frac{n-2}{m}\right)$ for any $i=1,2,\dots,i_0$. When $0<m\le\frac{n-2}{n}$, $n\ge 3$ and $0\in\Omega$, existence of singular solutions and asymptotic large time behaviour of \eqref{fde} in $(\Omega\setminus\{0\})\times (0,T)$ which blows up at $\{0\}\times (0,\infty)$ when the initial value $u_0$ satisfies $c_1|x|^{-\gamma_1}\le u_0(x)\le c_2 |x|^{-\gamma_2}$ for  some constants $c_1>0$, $c_2>0$ and $\gamma_2\ge\gamma_1>\frac{2}{1-m}$ were proved by J. L. Vazquez and M. Winkler in \cite{VW1}.  When $0<m\le\frac{n-2}{n}$ and $n\ge 3$, existence  of singular solutions of \eqref{fde} in $\widehat{\R^n}\times (0,T)$ which blows up at $\{0\}\times (0,\infty)$  when the initial value $u_0$ satisfies $c_1|x|^{-\gamma}\le u_0(x)\le c_2 |x|^{-\gamma}$ for any $x\in\R^n\setminus\{0\}$ and some constant $\frac{2}{1-m}<\gamma<\frac{n-2}{m}$ was proved by K. M. Hui and Soojung Kim in \cite{HKs}. Asymptotic large time behaviour of such solution was also proved by K. M. Hui and Soojung Kim in \cite{HKs} when $\frac{2}{1-m}<\gamma<n$. In this paper we will extend the results of \cite{HK2}, \cite{HKs} and \cite{VW1} to the case when the initial value $u_0$ satisfies 
\begin{equation}\label{u0-lower-blow-up-rate}
u_0(x)\geq \frac{\lambda_i}{|x-a_i|^{\gamma_i}} \qquad \forall 0<|x-a_i|<\delta_1,\,\,i=1, \cdots, i_0.
\end{equation}
for some constants $0<\delta_1<\delta_0$, $\lambda_1$, $\cdots$, $\lambda_{i_0}\in\R^+$ and $\gamma_1,\cdots,\gamma_{i_0}\in \left(\frac{2}{1-m},\infty\right)$.

For any $x_0\in\R^n$ and $R>0$, let $B_R(x_0)=B\left(x_0,R\right)=\{x\in\R^n:|x-x_0|<R\}$, $B_R=B_R(0)$, $\widehat{B}_R(x_0)=B_R(x_0)\bs\left\{x_0\right\}$ and $\widehat{B}_R=\widehat{B}_R(0)$. We choose $R_0>0$ such that  $a_1,\cdots,a_{i_0}\in B_{R_0}$. For any $\delta>0$, let $\Omega_{\delta}=\Omega\bs\left(\cup_{i=1}^{i_0}B_{\delta}(a_i)\right)$ and  $\R^n_{\delta}=\R^n\bs\left(\cup_{i=1}^{i_0}B_{\delta}(a_i)\right)$. Let $\delta_0(\Omega)=\frac{1}{3}\min_{1\leq i,j\leq i_0}\left(\textbf{dist}(a_i,\Omega),\left|a_i-a_j\right|\right)$ and $\delta_0(\R^n)=\frac{1}{3}\min_{1\leq i,j\leq i_0}|a_i-a_j|$. When there is no ambiguity we will drop the parameter and write $\delta_0$ instead of $\delta_0(\Omega)$ or $\delta_0(\R^n)$. Unless stated otherwise we will assume that $0<m<\frac{n-2}{n}$ and $n\ge 3$ for the rest of the paper.
 
In this paper we will prove the existence of solution $u$ of \eqref{fde} in $\widehat{\Omega}\times (0,\infty)$ ($\widehat{R^n}\times (0,\infty)$ respectively) which satisfies 
\begin{equation}\label{u-blow-up-infty}
u(x,t)\to\infty\quad\mbox{ as }x\to a_i\quad\forall t>0,i=1,\cdots,i_0, 
\end{equation}
when $0<m<\frac{n-2}{n}$, $n\geq 3$, and the initial value satisfies $0\le u_0\in L^p_{loc}(\widehat{\Omega})$ ($u_0\in L^p_{loc}(\widehat{R^n})$ respectively) for some constant $p>\frac{n(1-m)}{2}$ such that \eqref{u0-lower-blow-up-rate} holds
for some constants $0<\delta_1<\delta_0$, $\lambda_1$, $\cdots$, $\lambda_{i_0}\in\R^+$ and $\gamma_1,\cdots,\gamma_{i_0}\in \left(\frac{2}{1-m},\infty\right)$.
 We also find the blow-up rate of such solutions near the blow-up points $a_1,a_2,\dots,a_{i_0}$, and obtain the asymptotic large time behaviour of such singular solutions. 
 
We find that the asymptotic large time behaviour of such  solutions  depends on the blow-up rate of the initial value $u_0$ at the singular points $a_1, a_2,\dots, a_{i_0}$, and the lower bound of $u_0$. 
We prove that if the initial value satisfies $u_0\ge\mu_0$ on $\widehat{\Omega}$ ($\widehat{R^n}$, respectively) for some constant $\mu_0>0$ and \eqref{u0-lower-blow-up-rate} holds
for some constants $0<\delta_1<\delta_0$, $\lambda_1$, $\cdots$, $\lambda_{i_0}\in\R^+$, 
and
\begin{equation}\label{gamma1-large}
\gamma_1>\frac{n-2}{m},\quad
\gamma_i>\frac{2}{1-m}\quad\forall i=2,\dots,i_0,
\end{equation}
then the singular solution  converges locally uniformly on every compact subset of $\widehat{\Omega}$ (or $\widehat{R^n}$ respectively) to infinity as $t\to\infty$. 

When $u_0\ge\mu_0$ on $\widehat{\Omega}$ ($\widehat{R^n}$, respectively) for some constant $\mu_0>0$ satisfies satisfy \eqref{u0-lower-blow-up-rate} and 
\begin{equation}\label{upper-blow-up-rate-initial-data}
u_0(x)\leq \frac{\lambda_i'}{|x-a_i|^{\gamma_i'}} \qquad \forall 0<|x-a_i|<\delta_3,i=1, \cdots, i_1
\end{equation}
 with $i_1=i_0$ for some constants $0<\delta_3<\delta_1<\delta_0$, $\lambda_1$, $\cdots$, $\lambda_{i_0}$, $\lambda_1'$, $\cdots$, $\lambda_{i_0}'\in\R^+$, and
\begin{equation}\label{gamma-upper-lower-bd3}
\frac{2}{1-m}<\gamma_i\le\gamma_i'<\frac{n-2}{m}\quad\forall i=1,2,\dots,i_0,
\end{equation}   
we prove that $u$ converges  converges in $C^2(K)$ for any compact subset $K$  of $\2{\Omega}\setminus\{a_1,a_2,\dots,a_{i_0}\}$ (or $\widehat{R^n}$ respectively) to a harmonic function as $t\to\infty$. More precisely we prove the following existence and convergence results.

\begin{thm}\label{first-main-existence-thm}
Let $n\geq 3$, $0<m<\frac{n-2}{n}$, $0<\delta_1<\delta_0$, $0\le f\in L^{\infty}(\partial\Omega\times[0,\infty))$ and $0\leq u_0\in L_{loc}^p(\2{\Omega}\setminus\{a_1,\cdots,a_{i_0}\})$ for some constant  $p>\frac{n(1-m)}{2}$ be such that \eqref{u0-lower-blow-up-rate}
holds for some constants $\lambda_1$, $\cdots$, $\lambda_{i_0}\in\R^+$ and $\gamma_1,\cdots,\gamma_{i_0}\in \left(\frac{2}{1-m},\infty\right)$. Then there exists a solution $u$ of 
\begin{equation}\label{Dirichlet-blow-up-problem}
\begin{cases}
\begin{aligned}
u_t&=\La u^m \quad \mbox{in $\widehat{\Omega}\times(0,\infty)$}\\
u&=f \qquad \mbox{ on $\partial\Omega\times(0,\infty)$}\\
u(a_i,t)&=\infty \qquad \forall t>0, i=1,\cdots,i_0\\
u(x,0)&=u_0(x) \quad \mbox{in $\widehat{\Omega}$}
\end{aligned}
\end{cases}
\end{equation}
such that for any $T>0$ and $\delta_2\in (0,\delta_1)$ there exists a constant $C_1>0$  such that
\begin{equation}\label{eq-lower-limit-of-u-near-blow-up-points}
u(x,t) \geq \frac{C_1}{|x-a_i|^{\gamma_i}}\quad \forall 0<\left|x-a_i\right|<\delta_2, 0<t<T.
\end{equation}
Moreover if there exists a constant $T_0\ge 0$ such that 
\begin{equation}\label{eq-condition-monotone-decreasingness-of-f2-34}
f(x,t)\mbox{ is monotone decreasing in $t$ on }\partial\Omega\times(T_0,\infty),
\end{equation} 
then $u$ satisfies
\begin{equation}\label{Aronson-Bernilan-ineqn}
u_t\leq\frac{u}{(1-m)(t-T_0)} \quad \mbox{ in }\widehat{\Omega}\times (T_0,\infty).
\end{equation}
\end{thm}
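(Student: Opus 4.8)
The plan is to construct the singular solution $u$ as an increasing limit of solutions to approximating initial-boundary value problems posed on the truncated domains $\Omega_{\delta}$, where the singular points are replaced by small spheres on which large (eventually infinite) Dirichlet data are imposed. First I would fix, for each $\delta\in(0,\delta_1)$ and each $k\in\N$, the bounded domain $\Omega_{1/k}=\Omega\bs(\cup_i\2{B}_{1/k}(a_i))$ and solve
\begin{equation*}
\begin{cases}
\begin{aligned}
(u_{k})_t&=\La u_{k}^m &&\mbox{in }\Omega_{1/k}\times(0,\infty)\\
u_{k}&=f &&\mbox{on }\partial\Omega\times(0,\infty)\\
u_{k}&=A_{i,k} &&\mbox{on }\partial B_{1/k}(a_i)\times(0,\infty)\\
u_{k}(x,0)&=u_{0,k}(x)&&\mbox{in }\Omega_{1/k},
\end{aligned}
\end{cases}
\end{equation*}
where $A_{i,k}\to\infty$ is chosen so that $A_{i,k}\le\lam_i k^{\gam_i}$ (consistent with the lower bound \eqref{u0-lower-blow-up-rate}) and $u_{0,k}$ is a smooth approximation of $u_0$ from below. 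Standard parabolic theory for the (uniformly, on $\Omega_{1/k}$) non-degenerate equation gives a unique smooth positive solution $u_{k}$. The comparison principle then yields monotonicity: $u_{k}\le u_{k+1}$ (after arranging $A_{i,k}\le A_{i,k+1}$ and $u_{0,k}\le u_{0,k+1}$), so $u:=\lim_{k\to\infty}u_{k}$ exists pointwise in $\widehat{\Omega}\times(0,\infty)$, possibly $+\infty$ a priori.

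The crucial step is to show $u$ is finite and smooth away from the $a_i$, and that it genuinely blows up at each $a_i$. For finiteness/local regularity I would construct, on each fixed region $\Omega_{\delta}\times(0,T)$, a supersolution barrier independent of $k$: away from the singular points the equation is uniformly parabolic on the range of values attained, and one can dominate $u_k$ by a solution of \eqref{fde} on a slightly larger smooth domain with large but finite boundary data, or alternatively use the Herrero--Pierre / Hsu-type local $L^p$--$L^\infty$ smoothing estimates valid for $p>\frac{n(1-m)}{2}$ together with a comparison argument using the explicit separable singular profiles of \eqref{fde}. This gives uniform local bounds and hence, by parabolic Schauder/Krylov--Safonov estimates, local $C^{2,1}$ convergence of $u_k\to u$ on compact subsets of $\widehat{\Omega}\times(0,\infty)$, so $u$ solves \eqref{fde} classically there. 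For the lower bound \eqref{eq-lower-limit-of-u-near-blow-up-points} and the blow-up condition $u(a_i,t)=\infty$, I would compare $u_k$ from below with an explicit singular subsolution of the form $c|x-a_i|^{-\gam_i}$ localized near $a_i$: since $\gam_i>\frac{2}{1-m}$, a direct computation shows $w(x)=c|x-a_i|^{-\gam_i}$ satisfies $\La w^m\le 0\le w_t$ in a punctured ball, and one can absorb the boundary terms on $\partial B_\delta(a_i)$ using a smooth cutoff and the lower bound on $u_0$; choosing $c=C_1(\lam_i,\gam_i,m,n,\delta_2,T)$ appropriately, the comparison principle on $\widehat{B}_{\delta_2}(a_i)\times(0,T)$ yields \eqref{eq-lower-limit-of-u-near-blow-up-points}, and in particular $u(x,t)\to\infty$ as $x\to a_i$.

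Finally, for the Aronson--B\'enilan-type estimate \eqref{Aronson-Bernilan-ineqn} under the monotonicity hypothesis \eqref{eq-condition-monotone-decreasingness-of-f2-34}, I would work at the approximate level. Differentiating the equation in $t$, the function $v=(u_k)_t$ solves a linear parabolic equation, and one checks that $\overline{v}=\frac{u_k}{(1-m)(t-T_0)}$ is a supersolution of that linearized equation (this is the classical computation: if $u_t\le\frac{u}{(1-m)(t-T_0)}$ then consistency with $u_t=\La u^m$ propagates); on the parabolic boundary one uses that $f$ is nonincreasing in $t$ on $\partial\Omega\times(T_0,\infty)$ so $(u_k)_t\le 0\le\overline v$ there, that $(u_k)_t=0$ on the fixed inner spheres (time-independent data), and that near $t=T_0^+$ the bound holds because $\overline v\to+\infty$. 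The comparison principle then gives $(u_k)_t\le\frac{u_k}{(1-m)(t-T_0)}$ for each $k$, and passing to the limit $k\to\infty$ using the established $C^{2,1}_{loc}$ convergence yields \eqref{Aronson-Bernilan-ineqn} for $u$. The main obstacle is the first part of the second paragraph: obtaining $k$-independent local upper bounds for $u_k$ near (but not at) the singular points despite the degeneracy of the equation at $u=0$ and the unboundedness at the $a_i$; this is where the hypothesis $p>\frac{n(1-m)}{2}$ on $u_0$ and a careful choice of comparison supersolution (matching the singular profile $|x-a_i|^{-\gam_i}$ from above near $a_i$ while staying finite on $\Omega_\delta$) are essential, and it is likely the step that requires the most technical care, probably invoking results from \cite{HK2}, \cite{HKs}, \cite{Hs} on local estimates for \eqref{fde} in the subcritical range $0<m<\frac{n-2}{n}$.
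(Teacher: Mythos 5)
Your construction scheme (exhaustion by truncated domains $\Omega_{1/k}$ with increasing data on the inner spheres, monotone limit, local $L^p$--$L^\infty$ smoothing for $k$-independent upper bounds, Schauder for $C^{2,1}_{loc}$ convergence, and passing the Aronson--B\'enilan inequality through the limit) is a workable variant of what the paper does (the paper instead truncates the \emph{data}, solving with $u_{0,\3,M}=\min(u_0,M)+\3$ on all of $\Omega$, letting $\3\to0$ and then $M\to\infty$, with weighted integral estimates to keep the $\3\to0$ limit from degenerating). But there is a genuine gap at the step you yourself flag as needing only "a direct computation": the lower bound \eqref{eq-lower-limit-of-u-near-blow-up-points}. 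You claim $w(x)=c|x-a_i|^{-\gam_i}$ is a static subsolution because $\gam_i>\frac{2}{1-m}$ gives $\La w^m\le 0\le w_t$. First, that inequality is the \emph{wrong way around}: a subsolution of $u_t=\La u^m$ requires $w_t\le\La w^m$, and for time-independent $w$ with $\La w^m\le0$ you have exhibited a \emph{supersolution}. Second, the sign of $\La w^m$ is determined by $\La|x|^{-m\gam_i}=m\gam_i(m\gam_i+2-n)|x|^{-m\gam_i-2}$, so $w^m$ is subharmonic (and $w$ a static subsolution) only when $\gam_i\ge\frac{n-2}{m}$. Since $0<m<\frac{n-2}{n}$ forces $\frac{2}{1-m}<\frac{n-2}{m}$, the theorem's hypothesis allows the whole range $\gam_i\in\bigl(\frac{2}{1-m},\frac{n-2}{m}\bigr)$, in which no static power-law profile is a subsolution and the persistence of the singular lower bound is a genuinely parabolic fact. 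The paper's Lemma \ref{lem-local-lower-bound-of-u-near-singular-points} handles exactly this: it compares with the solution $U_{A\eta}$ of the problem with regularized singular initial data $A(|x|^2+\eta)^{-\gam_1/2}$ and invokes Lemmas 3.8 and 3.10 of \cite{VW1} to show $U_{A\eta}(x,t)\ge b_1(|x|^2+\eta)^{-\gam_1/2}$ uniformly on $[0,T]$ before letting $\eta\to0$, splitting into the cases $\gam_1\le\frac{n-2}{m}$ and $\gam_1\ge\frac{n-2}{m}$. Your argument as written proves nothing in the subcritical range, and hence does not establish \eqref{eq-lower-limit-of-u-near-blow-up-points} nor the blow-up condition $u(a_i,t)=\infty$ there.

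A secondary point: since you approximate $u_0$ from below by smooth $u_{0,k}$ and $u_0$ is only assumed nonnegative away from the $a_i$, you still owe an argument that the limit $u$ is strictly positive on all of $\widehat{\Omega}\times(0,\infty)$ (so that the equation is classically satisfied and the comparison arguments apply); for $0<m<\frac{n-2}{n}$ positivity is not automatic, which is why the paper proves the weighted mass lower bounds of Lemmas \ref{lem-similar-L-1-contraction-for-positivity-of-sol-u-epsilon-M}--\ref{lem-property-of-u-epsilon-M-with-test-function} and appeals to Lemma 3.3 of \cite{HK2}. In your scheme the large inner boundary data should supply this, but it needs to be said.
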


\begin{thm}\label{second-main-existence-thm}
Let $n\geq 3$, $0<m<\frac{n-2}{n}$, $0<\delta_1<\delta_0$ and $0\leq u_0\in L^p_{loc}\left(\widehat{\R^n}\right)$ for some constant $p>\frac{n(1-m)}{2}$ such that  \eqref{u0-lower-blow-up-rate} holds for some constants $\lambda_1$, $\cdots$, $\lambda_{i_0}\in\R^+$ and $\gamma_1,\cdots,\gamma_{i_0}\in \left(\frac{2}{1-m},\infty\right)$. Then there exists a solution $u$ of 
\begin{equation}\label{cauchy-blow-up-problem}
\begin{cases}
\begin{aligned}
u_t&=\La u^m \quad \mbox{in $\widehat{\R^n}\times(0,\infty)$}\\
u(a_i,t)&=\infty \qquad \forall i=1,\cdots,i_0,\,\,t>0\\
u(x,0)&=u_0(x) \quad \mbox{in $\widehat{\R^n}$}
\end{aligned}
\end{cases}
\end{equation}
such that for any $T>0$ and $\delta_2\in (0,\delta_1)$ there exists a constant $C_1>0$  such that
\eqref{eq-lower-limit-of-u-near-blow-up-points} and
\begin{equation}\label{eq-Aronson-Bernilan-on-R-n}
u_t\leq\frac{u}{(1-m)t} \quad \mbox{ in }\,\,\widehat{\R^n}\times (0,\infty)
\end{equation}
hold.
\end{thm}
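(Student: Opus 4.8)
The plan is to obtain the solution on $\widehat{\R^n}\times(0,\infty)$ as a monotone increasing limit of the bounded-domain solutions furnished by Theorem~\ref{first-main-existence-thm}. Choose $R^\ast>R_0$ large enough that $\delta_0(B_R)=\delta_0(\R^n)$ for every $R\geq R^\ast$, so that $0<\delta_1<\delta_0(B_R)$; note that $u_0\in L^p_{loc}(\widehat{\R^n})$ restricts to an element of $L^p_{loc}(\overline{B_R}\setminus\{a_1,\dots,a_{i_0}\})$ with $p>\frac{n(1-m)}{2}$, and \eqref{u0-lower-blow-up-rate} holds on $0<|x-a_i|<\delta_1$. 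Hence, for each $R\geq R^\ast$, Theorem~\ref{first-main-existence-thm} applied with $\Omega=B_R$ and $f\equiv 0$ produces a solution $u_R$ of \eqref{Dirichlet-blow-up-problem} on $(B_R\setminus\{a_1,\dots,a_{i_0}\})\times(0,\infty)$ satisfying the lower bound \eqref{eq-lower-limit-of-u-near-blow-up-points}; moreover, since the constant $f\equiv 0$ trivially satisfies \eqref{eq-condition-monotone-decreasingness-of-f2-34} with $T_0=0$, we also get
\[
u_{R,t}\leq\frac{u_R}{(1-m)t}\qquad\text{in }(B_R\setminus\{a_1,\dots,a_{i_0}\})\times(0,\infty),
\]
which is equivalent to the monotonicity in $t$ of $t\mapsto t^{-\frac{1}{1-m}}u_R(x,t)$ for each fixed $x$.

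For $R^\ast\leq R<R'$, both $u_R$ and $u_{R'}$ solve \eqref{fde} in $(B_R\setminus\{a_1,\dots,a_{i_0}\})\times(0,\infty)$, take the same initial value $u_0$, blow up at each $a_i$, and satisfy $u_{R'}\geq 0=u_R$ on $\partial B_R\times(0,\infty)$; by the comparison principle for the fast diffusion equation, $u_R\leq u_{R'}$ there. Thus the pointwise limit $u(x,t):=\lim_{R\to\infty}u_R(x,t)\in(0,\infty]$ exists. The crucial step is a uniform (independent of $R$) upper bound for $u_R$ on compact subsets of $\widehat{\R^n}$: given a compact $K\subset\widehat{\R^n}$ and $0<\tau<T$, cover $K$ by finitely many balls $B_r(x_j)$ with $\overline{B_{2r}(x_j)}\subset\widehat{\R^n}$; for $R$ large, $u_R$ is a nonnegative solution of \eqref{fde} in $B_{2r}(x_j)\times(0,\infty)$ with initial value $u_0\in L^p(B_{2r}(x_j))$, and the local $L^p$--$L^\infty$ smoothing estimate for the fast diffusion equation (valid because $p>\frac{n(1-m)}{2}$; cf.\ \cite{Hs}) bounds $\sup_{B_r(x_j)\times[\tau,T]}u_R$ in terms of $m,n,p,r,\tau,T$ and $\|u_0\|_{L^p(B_{2r}(x_j))}$ only, hence independently of $R$. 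Taking the maximum over the finitely many $j$ gives $\sup_{K\times[\tau,T]}u_R\leq C(K,\tau,T)$ uniformly in $R$, so $u$ is finite on $\widehat{\R^n}\times(0,\infty)$. On the other hand, by monotonicity $u\geq u_{R^\ast}$, and for $T>0$ and $\delta_2\in(0,\delta_1)$ we have $u_{R^\ast}(x,t)\geq C_1|x-a_i|^{-\gamma_i}$ for $0<|x-a_i|<\delta_2$, $0<t<T$ by \eqref{eq-lower-limit-of-u-near-blow-up-points}; hence \eqref{eq-lower-limit-of-u-near-blow-up-points} holds for $u$, and in particular $u(x,t)\to\infty$ as $x\to a_i$ for every $t>0$ and $i=1,\dots,i_0$.

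It then remains to verify that the monotone limit $u$ is a solution of \eqref{cauchy-blow-up-problem}. The uniform local upper bounds, combined with interior parabolic regularity for the fast diffusion equation, give convergence of $u_R$ to $u$ (together with the relevant derivatives) locally in the region where $u$ is locally bounded and positive — positivity holding away from the points $a_i$ because $u\geq u_{R^\ast}>0$ there, and near each $a_i$ by the lower bound above — so that $u$ solves $u_t=\Delta u^m$ in $\widehat{\R^n}\times(0,\infty)$. The initial condition $u(\cdot,0)=u_0$ is obtained by a squeezing argument on each $B_R\setminus\{a_1,\dots,a_{i_0}\}$: one compares $u$ from below with $u_R$ and from above with the solution of the Dirichlet blow-up problem on $B_R\setminus\{a_1,\dots,a_{i_0}\}$ that has initial value $u_0$ and (finite) constant lateral value $\sup_{\partial B_R\times[0,\infty)}u$ given by Theorem~\ref{first-main-existence-thm}, both of which attain the initial value $u_0$. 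Finally, letting $R\to\infty$ in the monotonicity of $t\mapsto t^{-\frac{1}{1-m}}u_R(x,t)$ shows that $t\mapsto t^{-\frac{1}{1-m}}u(x,t)$ is nonincreasing, i.e.\ \eqref{eq-Aronson-Bernilan-on-R-n} holds.

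The main obstacle is the uniform-in-$R$ upper bound on compact subsets of $\widehat{\R^n}$: since the fast diffusion equation with $0<m<\frac{n-2}{n}$ has no $L^1$--$L^\infty$ smoothing effect, the bound cannot be extracted from the total mass, and the key point is that the \emph{local} $L^p$--$L^\infty$ smoothing estimate depends only on the local $L^p$ norm of $u_0$, and not on $R$ or on the large values of $u_R$ near the blow-up points, precisely when $p>\frac{n(1-m)}{2}$. A secondary technical point is that $u_R$ need not be bounded below, so the passage to the limit and the regularity arguments must be carried out on the region of positivity, using the lower bound near the points $a_i$.
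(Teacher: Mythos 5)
Your route is genuinely different from the paper's. The paper does not exhaust $\R^n$ by balls: it solves the Cauchy problem on all of $\R^n$ directly for the regularized data $u_{0,\3,M}$ (via Theorem 1.1 of \cite{Hs} and Corollary 2.2 of \cite{DS1}, which also deliver the Aronson--Benilan inequality with $T_0=0$ for free), and then passes to the limits $\3\to 0$ and $M\to\infty$ exactly as in Lemmas \ref{lem-property-of-u-M-with-test-function} and \ref{lem-um-converge-to-u}, using the weighted-integral lower bound to preserve positivity and the barrier argument of Lemma \ref{lem-local-lower-bound-of-u-near-singular-points} for the blow-up rate. Your exhaustion-by-balls scheme with $f\equiv 0$ is a legitimate alternative, and your identification of the key point --- that the local $L^p$--$L^\infty$ estimate (Lemma \ref{lem-Cor-2-2-of-Hs1a}) depends only on the local $L^p$ norm of $u_0$ and hence gives $R$-independent interior bounds --- is exactly right and is the same tool the paper uses to pass $M\to\infty$.

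There are, however, two gaps. First, the monotonicity $u_R\le u_{R'}$ cannot be obtained by invoking ``the comparison principle for the fast diffusion equation'' applied directly to $u_R$ and $u_{R'}$: these are unbounded singular solutions, and Lemma \ref{comparison-lem-bd-domain} (the only comparison tool available) requires both functions to be in $L^{\infty}$ with positive infima; indeed, uniqueness and comparison for blow-up solutions is precisely what is \emph{not} known in general, which is why the paper only ever states comparison for the \emph{constructed} solutions (Theorems \ref{bd-soln-comparison-thm1} and \ref{R^n-soln-comparison-thm}). The correct argument is to compare the approximants: $u_{R',\3,M}\ge\3=f_{\3}|_{\partial B_R}$ by \eqref{eq-upper-and-lower-bound-of-u-epsilon-M-09}, so Lemma \ref{comparison-lem-bd-domain} gives $u_{R,\3,M}\le u_{R',\3,M}$ on $B_R$, and one then lets $\3\to 0$, $M\to\infty$. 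Second, your squeezing argument for the initial trace uses an upper barrier with lateral datum $\sup_{\partial B_R\times[0,\infty)}u$, but this supremum need not be finite: $u_0$ is only in $L^p_{loc}$ near $\partial B_R$, so $u$ need not be bounded there as $t\to 0^+$, and the interior estimates degenerate as $t_1\to 0$. The initial condition \eqref{u-initial-value} should instead be obtained from the local $L^1$-continuity estimates at $t=0$ (as in the proofs of Theorem 1.1 of \cite{Hs} and Theorem 1.1 of \cite{HK2}, which is what the paper cites), applied to the approximants before passing to the limit. With these two repairs your construction goes through.
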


\begin{thm}\label{convergence-thm1}
Suppose that $n\geq 3$, $0<m<\frac{n-2}{n}$ and $\mu_0>0$. Let $\mu_0\leq u_0\in L^{p}_{loc}\left(\overline{\Omega}\bs\left\{a_1,\cdots,a_{i_0}\right\}\right)$ for some constant $p>\frac{n(1-m)}{2}$ satisfy \eqref{u0-lower-blow-up-rate} and \eqref{upper-blow-up-rate-initial-data}
with $i_1=i_0$ for some constants  $0<\delta_3<\delta_1<\min (\delta_0,1)$, $\lambda_1$, $\cdots$, $\lambda_{i_0}$, $\lambda_1'$, $\cdots$, $\lambda_{i_0}'\in\R^+$ and  
\begin{equation}\label{gamma-upper-lower-bd1}
\frac{2}{1-m}<\gamma_i\le\gamma_i'<n\quad\forall i=1,2,\dots,i_0.
\end{equation} 
Let $f\in L^{\infty}(\partial\Omega\times\left(0,\infty\right))\cap C^3(\partial\Omega\times\left(T_1,\infty\right))$ for some constant $T_1>0$ satisfy
\begin{equation}\label{eq-first-condition-on-f-constant-mu-sub-zero}
f\geq\mu_0 \quad \mbox{ on }\partial\Omega\times\left(0,\infty\right)
\end{equation}
and
\begin{equation}\label{eq-second-condition-on-f-convergence-to-mu-sub-one}
f(x,t)\to\mu_0 \quad \mbox{uniformly in }\,\, C^3(\partial\Omega)\mbox{ as }t\to\infty.
\end{equation}
Let $u$ be the solution of \eqref{Dirichlet-blow-up-problem} given by Theorem \ref{first-main-existence-thm}. 
Then
\begin{equation}\label{u-infty-to-mu0}
u(x,t)\to \mu_0 \quad \mbox{ in }C^2(K) \quad \mbox{ as }t\to\infty
\end{equation}
for any compact subset $K$ of $\overline{\Omega}\bs\left\{a_1,\cdots,a_{i_0}\right\}$.
\end{thm}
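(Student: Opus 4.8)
The plan is to show that the singular solution $u$ of \eqref{Dirichlet-blow-up-problem} is squeezed between a subsolution and a supersolution, both of which converge to the constant $\mu_0$ as $t\to\infty$. First I would use the Aronson--Bénilan type inequality \eqref{Aronson-Bernilan-ineqn}: since by \eqref{eq-second-condition-on-f-convergence-to-mu-sub-one} the boundary data $f$ is asymptotically constant (hence, after replacing $f$ by its ``decreasing envelope'' $\sup_{s\ge t}f(\cdot,s)$ and using comparison, essentially monotone decreasing for large $t$), \eqref{Aronson-Bernilan-ineqn} gives $u_t\le u/((1-m)(t-T_0))$ on $\widehat\Omega\times(T_0,\infty)$. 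Integrating this differential inequality shows that $t^{-1/(1-m)}u(x,t)$ is monotone decreasing in $t$, so $u(x,t)\le C\,t^{1/(1-m)}$ and, more importantly, the rescaled limit is controlled; combined with a uniform lower bound coming from $u_0\ge\mu_0$ and the comparison principle (the constant $\mu_0$ is a stationary subsolution on $\widehat\Omega$, and on the boundary $f\ge\mu_0$ by \eqref{eq-first-condition-on-f-constant-mu-sub-zero}), we get $u(x,t)\ge\mu_0$ for all $t>0$.

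Next I would construct the supersolution near each blow-up point. Because $\gamma_i<n$ for every $i$ (this is exactly where hypothesis \eqref{gamma-upper-lower-bd1} is used, as opposed to the regime $\gamma_1>\frac{n-2}{m}$ of the divergence theorem), the singularity $|x-a_i|^{-\gamma_i}$ is $L^1_{loc}$ near $a_i$ and the ``mass'' carried by the singular part can be made to disappear in the limit. The key comparison object is a separable supersolution of the form $U(x,t)=\bigl(c\,t+g(x)^{1-m}\bigr)^{1/(1-m)}$ (or a small perturbation thereof), where $g$ is chosen so that $g^m$ is superharmonic on $\widehat\Omega$ with the prescribed blow-up $g(x)\approx|x-a_i|^{-\gamma_i'/m}$... — more precisely, one wants $\Delta g^m\le 0$ near $a_i$, which forces the exponent of $g^m$ to lie below $n-2$, i.e. the constraint $\gamma_i'<n-2$ or, after the correct bookkeeping with the $m$'s, $\gamma_i'<n$; this matches \eqref{gamma-upper-lower-bd1}. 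Using \eqref{upper-blow-up-rate-initial-data} to dominate $u_0$ at $t=0$ and \eqref{eq-first-condition-on-f-constant-mu-sub-zero}--\eqref{eq-second-condition-on-f-convergence-to-mu-sub-one} on $\partial\Omega$, the comparison principle (Theorem \ref{first-main-existence-thm}'s construction gives $u$ as a monotone limit of bounded solutions, to which comparison applies) yields $u\le U$, and one checks $U(x,t)\to$ (a bounded harmonic-type function, in fact $\mu_0$ after matching the boundary behaviour) locally uniformly away from the $a_i$'s.

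Having trapped $\mu_0\le u(x,t)\le V(x,t)$ with $V\to\mu_0$ locally uniformly on compact subsets of $\overline\Omega\setminus\{a_1,\dots,a_{i_0}\}$, the convergence $u(\cdot,t)\to\mu_0$ in $C^0(K)$ follows. To upgrade to $C^2(K)$ convergence I would invoke parabolic regularity: on any compact $K\subset\overline\Omega\setminus\{a_i\}$ the solution $u$ is uniformly bounded above and below away from zero for $t$ large, so \eqref{fde} is uniformly parabolic there, and the standard Schauder / interior-boundary estimates (using the $C^3$ regularity of $f$ on $\partial\Omega\times(T_1,\infty)$ for the boundary portion of $K$) give uniform $C^{2+\alpha}$ bounds on $u(\cdot,t)$ for $t\ge T$; an Arzelà--Ascoli argument together with the already-established $C^0$ limit then forces $u(\cdot,t)\to\mu_0$ in $C^2(K)$.

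The main obstacle I expect is the construction of the global supersolution $V$ on $\widehat\Omega$ that simultaneously (i) has the correct order of blow-up $|x-a_i|^{-\gamma_i'}$ at \emph{each} $a_i$ so that it dominates $u_0$ via \eqref{upper-blow-up-rate-initial-data}, (ii) satisfies the differential inequality $V_t\ge\Delta V^m$ globally — which near each $a_i$ reduces to a subtle ODE/superharmonicity computation that only closes under $\gamma_i'<n$, and is the real reason \eqref{gamma-upper-lower-bd1} appears — and (iii) has boundary values $\ge f$ on $\partial\Omega$ while still collapsing to $\mu_0$ as $t\to\infty$. Handling the boundary data $f$ (which is only asymptotically $\mu_0$, not equal to it) requires a perturbation argument: first compare with the solution having boundary data $\sup_{s\ge t}\|f(\cdot,s)-\mu_0\|_{L^\infty}$ added, then let $t\to\infty$. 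The rest of the argument — the lower barrier, the $L^1_{loc}$ estimate on the singular part, and the bootstrap to $C^2$ — is routine given the tools already developed in the paper.
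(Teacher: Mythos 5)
The easy parts of your outline --- the lower barrier $u\ge\mu_0$, the local upper bounds near the $a_i$ coming from \eqref{upper-blow-up-rate-initial-data}, uniform parabolicity on compact sets away from the singular points, and the Schauder/Arzel\`a--Ascoli upgrade from $C^0$ to $C^2$ convergence --- all match the paper. But the heart of the proof, namely the reason the limit is exactly $\mu_0$, is missing. You propose to trap $u$ under a supersolution $V$ that collapses to $\mu_0$; no such object is constructed, and the specific form you suggest, $U=(ct+g(x)^{1-m})^{1/(1-m)}$, grows like $t^{1/(1-m)}$ and cannot converge to $\mu_0$. Any supersolution dominating $u$ must blow up at every $a_i$ for all time, and the time-independent barrier the paper does have (Lemma \ref{lem-local-upper-bound-of-u-near-singular-points}) only gives $u\le C_2$ on compact sets away from the $a_i$, where $C_2$ has nothing to do with $\mu_0$. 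The Aronson--B\'enilan inequality does not close the gap either: it bounds $u_t$ from above by a positive quantity, so it permits growth and cannot by itself force convergence; in this paper it is the engine of Theorem \ref{convergence-thm3} (under the stronger hypothesis $\gamma_i'<\frac{n-2}{m}$), not of the present theorem.

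What the paper actually does is extract, along an arbitrary sequence $t_k\to\infty$, an eternal limit solution $u_\infty$ with boundary value $\mu_0$, and then kill its spatial gradient by a dissipation estimate. Because $\gamma_i'<n$ one can choose $1<p_1<n/\max_i\gamma_i'$ so that $u(\cdot,t_0)\in L^{p_1}(\widehat{\Omega})$; differentiating $\int_{\Omega_\delta}u^{p_1}\,dx$ and integrating by parts (the boundary term has a sign after first reducing to $f\equiv\mu_0$ for large $t$) yields
\begin{equation*}
\int_{t_0}^{\infty}\int_{\Omega_{\delta}}u^{m+p_1-3}|\nabla u|^2\,dx\,dt<\infty,
\end{equation*}
hence $\int_{t_k}^{\infty}\int_{\Omega_{\delta}}|\nabla u|^2\,dx\,dt\to 0$ and, by Fatou's lemma, $\nabla u_\infty\equiv 0$, which together with the boundary condition forces $u_\infty\equiv\mu_0$. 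This is also where \eqref{gamma-upper-lower-bd1} is genuinely used --- for the $L^{p_1}$ integrability --- and not for superharmonicity of a barrier as you guessed: superharmonicity of $|x-a_i|^{-m\gamma_i'}$ requires $\gamma_i'\le\frac{n-2}{m}$, which is the different condition \eqref{gamma-upper-lower-bd3} appearing in other theorems. Your sandwich between boundary data $\mu_0$ and $\mu_0(1+\frac{1}{i})$ for general $f$ does match the paper's Case 2, but it only works once the constant-boundary-data case has been settled by the dissipation argument above.
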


\begin{thm}\label{convergence-thm2}
Suppose that $n\geq 3$, $0<m<\frac{n-2}{n+2}$ and $\mu_1\geq\mu_0>0$. Let $\mu_0\leq u_0\in L^{p}_{loc}\left(\overline{\Omega}\bs\left\{a_1,\cdots,a_{i_0}\right\}\right)$  for some constant $p>\frac{n(1-m)}{2}$ satisfy \eqref{u0-lower-blow-up-rate} and \eqref{upper-blow-up-rate-initial-data} with $i_1=i_0$ for some constants  $0<\delta_3<\delta_1<\min(1,\delta_0)$, $\lambda_1$, $\cdots$, $\lambda_{i_0}$  and  
\begin{equation}\label{gamma-upper-lower-bd2}
\frac{2}{1-m}<\gamma_i\le\gamma_i'<\frac{n}{m+1}\quad\forall i=1,2,\dots,i_0.
\end{equation} 
Let $f\in L^{\infty}(\partial\Omega\times\left(0,\infty\right))\cap C^3(\partial\Omega\times\left(T_1,\infty\right))$ and $f_t\in L^1\left(\partial\Omega\times(T_1,\infty)\right)$ for some constant $T_1>0$  satisfy \eqref{eq-first-condition-on-f-constant-mu-sub-zero} and
\begin{equation}\label{eq-for-thm-a-second-condition-on-f-convergence-to-mu-sub-one}
 f(x.t)\to\mu_1 \mbox{ uniformly in }\,\, C^3(\partial\Omega)\mbox{ as }t\to\infty.
\end{equation}
Let $u$ be the solution of \eqref{Dirichlet-blow-up-problem} given by Theorem \ref{first-main-existence-thm}. Then
\begin{equation}\label{eq-for-thm-a-global-behaviour-of-solution-u-blow-up=0-above-by}
u(x,t)\to \mu_1 \quad\mbox{ in }C^2(K)\mbox{ as }t\to\infty
\end{equation}
for any compact subset $K$ of $\overline{\Omega}\bs\left\{a_1,\cdots,a_{i_0}\right\}$.
\end{thm}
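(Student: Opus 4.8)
The plan is to squeeze $u$ between a sub- and a supersolution of \eqref{Dirichlet-blow-up-problem}, both converging on compact subsets of $\2{\Omega}\setminus\{a_1,\dots,a_{i_0}\}$ to $\mu_1$, and then to upgrade that $C^{0}$ convergence to $C^{2}$ by parabolic regularity. The subsolution is elementary; the supersolution must retain the blow-ups at the $a_i$, and proving that it nevertheless converges to $\mu_1$ on compacta away from the $a_i$ is the heart of the matter — this is where $0<m<\frac{n-2}{n+2}$ and $\gamma_i'<\frac{n}{m+1}$ in \eqref{gamma-upper-lower-bd2} are used, via an energy (Lyapunov) argument. One may assume $\mu_1>\mu_0$, since when $\mu_1=\mu_0$ the hypotheses here imply those of Theorem \ref{convergence-thm1} and the conclusion is already contained in it. For $\varepsilon\in(0,\mu_1-\mu_0)$, use \eqref{eq-for-thm-a-second-condition-on-f-convergence-to-mu-sub-one} to pick $T_\varepsilon>T_1$ with $\mu_1-\varepsilon\le f\le\mu_1+\varepsilon$ on $\partial\Omega\times[T_\varepsilon,\infty)$. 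The subsolution: let $\underline u_\varepsilon$ solve $u_t=\Delta u^m$ in $\Omega\times(T_\varepsilon,\infty)$ with boundary value $\mu_1-\varepsilon$ and initial value $\mu_0$; since $\mu_0$ is a strict stationary subsolution for this boundary value, $\underline u_\varepsilon$ is nondecreasing in $t$, bounded above by $\mu_1-\varepsilon$, and therefore converges uniformly on $\2{\Omega}$ (Dini) to the unique stationary solution $\equiv\mu_1-\varepsilon$. As $u(\cdot,T_\varepsilon)\ge\mu_0$, $u=f\ge\mu_1-\varepsilon$ on $\partial\Omega$, and $u\to\infty$ near each $a_i$, the comparison principle gives $u\ge\underline u_\varepsilon$ in $\widehat{\Omega}\times[T_\varepsilon,\infty)$, whence $\liminf_{t\to\infty}u\ge\mu_1-\varepsilon$ locally uniformly on $\2{\Omega}\setminus\{a_1,\dots,a_{i_0}\}$, and $\ge\mu_1$ after $\varepsilon\to0$.

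For the supersolution, fix $\varepsilon>0$ and let $\2{v}$ be the solution of $u_t=\Delta u^m$ in $\widehat{\Omega}\times(T_\varepsilon,\infty)$ with boundary value $\mu_1+\varepsilon$ on $\partial\Omega$, with $\2{v}(a_i,t)=\infty$, and with initial value $u(\cdot,T_\varepsilon)$ (which lies in $L^{p}_{loc}(\2{\Omega}\setminus\{a_i\})$ and satisfies \eqref{u0-lower-blow-up-rate} at time $T_\varepsilon$), produced by the approximation scheme of Theorem \ref{first-main-existence-thm}. Constructing $u$ and $\2{v}$ through the same scheme and comparing at each approximating level gives $u\le\2{v}$ in $\widehat{\Omega}\times[T_\varepsilon,\infty)$. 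Comparison with the constant $\mu_0$ gives $\2{v}\ge\mu_0$; the blow-up-rate estimates established earlier in the paper (available because $\gamma_i'<\frac{n}{m+1}<\frac{n-2}{m}$) provide a constant $C$, independent of $t$, with $\2{v}(x,t)\le C|x-a_i|^{-\gamma_i'}$ for $0<|x-a_i|<\delta_3$, $t\ge T_\varepsilon$; since then the lateral and initial data of $\2{v}$ on $\Omega_{\delta_3}$ are bounded, comparison with a large constant yields a bound $\2{v}\le C(K)$ on $K\times[T_\varepsilon,\infty)$ for every compact $K\subset\2{\Omega}\setminus\{a_i\}$. On a neighbourhood of such a $K$, $u_t=\Delta u^m$ is uniformly parabolic, so interior and boundary Schauder estimates make $\{\2{v}(\cdot,t):t\ge T_\varepsilon+1\}$ precompact in $C^{2}(K)$.

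Next comes the energy argument. Let $\Phi(s)=\tfrac{1}{m+1}s^{m+1}-(\mu_1+\varepsilon)^{m}s+\tfrac{m}{m+1}(\mu_1+\varepsilon)^{m+1}$, so $\Phi\ge0$, $\Phi(\mu_1+\varepsilon)=0$, $\Phi'(s)=s^{m}-(\mu_1+\varepsilon)^{m}$, $\Phi''>0$. Because $\gamma_i'<\frac{n}{m+1}$ and $0<m<\frac{n-2}{n+2}$ together force $\gamma_i'<\frac{n-2}{2m}$, the bound $\2{v}\le C|x-a_i|^{-\gamma_i'}$ and parabolic interior gradient estimates make $\int_\Omega\Phi(\2{v}(\cdot,t))\,dx$ and $\int_\Omega|\nabla\2{v}^{m}(\cdot,t)|^{2}\,dx$ finite for $t>T_\varepsilon$ and make the surface integrals over $\partial B_\rho(a_i)$ in the integration by parts below tend to $0$ as $\rho\to0$. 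Multiplying $\2{v}_t=\Delta\2{v}^{m}$ by $\2{v}^{m}-(\mu_1+\varepsilon)^{m}$ (which vanishes on $\partial\Omega$), integrating over $\Omega\setminus\bigcup_i B_\rho(a_i)$ and letting $\rho\to0$ gives
\[
\frac{d}{dt}\int_\Omega\Phi(\2{v}(x,t))\,dx=-\int_\Omega|\nabla\2{v}^{m}(x,t)|^{2}\,dx\le0 .
\]
Thus $t\mapsto\int_\Omega\Phi(\2{v}(\cdot,t))\,dx$ decreases to a limit $L\ge0$ and $\int_{T_\varepsilon}^{\infty}\int_\Omega|\nabla\2{v}^{m}|^{2}\,dx\,dt<\infty$, so there exist $t_k\to\infty$ with $\|\nabla\2{v}^{m}(\cdot,t_k)\|_{L^{2}(\Omega)}\to0$. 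By the precompactness above, along a subsequence $\2{v}(\cdot,t_k)\to w$ in $C^{2}_{loc}(\2{\Omega}\setminus\{a_i\})$ with $\nabla w^{m}\equiv0$; as $\Omega$ is connected and $w=\mu_1+\varepsilon$ on $\partial\Omega$, $w\equiv\mu_1+\varepsilon$. Splitting $\int_\Omega\Phi(\2{v}(\cdot,t_k))\,dx$ into small balls about the $a_i$ (where the integrand is dominated by $C|x-a_i|^{-\gamma_i'(m+1)}\in L^{1}$) and the complement shows $L=0$, i.e.\ $\int_\Omega\Phi(\2{v}(\cdot,t))\,dx\to0$; since $\Phi\ge0$ vanishes only at $\mu_1+\varepsilon$, every time sequence has a subsequence along which $\2{v}\to\mu_1+\varepsilon$ in $C^{2}_{loc}(\2{\Omega}\setminus\{a_i\})$, hence $\2{v}(\cdot,t)\to\mu_1+\varepsilon$ in $C^{2}(K)$ for every compact $K\subset\2{\Omega}\setminus\{a_i\}$.

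From $u\le\2{v}$ it follows that $\limsup_{t\to\infty}u\le\mu_1+\varepsilon$ locally uniformly on $\2{\Omega}\setminus\{a_i\}$; together with the lower bound and $\varepsilon\to0$ this gives $u(\cdot,t)\to\mu_1$ locally uniformly there. Finally, on a neighbourhood of any compact $K\subset\2{\Omega}\setminus\{a_i\}$ the solution $u$ is eventually trapped between two positive constants, so $u_t=\Delta u^{m}$ is uniformly parabolic there, and interior Schauder estimates together with boundary Schauder estimates (using $f\in C^{3}(\partial\Omega\times(T_1,\infty))$ and $f_t\in L^{1}(\partial\Omega\times(T_1,\infty))$ to control $u$ up to $\partial\Omega$) upgrade the convergence to $C^{2}(K)$, which is \eqref{eq-for-thm-a-global-behaviour-of-solution-u-blow-up=0-above-by}. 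The two steps I expect to be most delicate are the comparison $u\le\2{v}$ across the blow-up set, which has to be run through the approximating sequences, and the \emph{time-uniform} blow-up bound $\2{v}(x,t)\le C|x-a_i|^{-\gamma_i'}$: it is exactly this bound that makes $\int_\Omega\Phi(\2{v})$ and $\int_\Omega|\nabla\2{v}^{m}|^{2}$ finite, and it is the conditions $\gamma_i'<\frac{n}{m+1}$ and $m<\frac{n-2}{n+2}$ that ensure the finiteness — which is also why the admissible range \eqref{gamma-upper-lower-bd2} here is narrower than in Theorem \ref{convergence-thm1}.
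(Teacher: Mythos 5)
Your sandwich-plus-Lyapunov strategy is viable and correctly locates where the hypotheses enter, but it is a genuinely different route from the paper's proof (which takes rescaled limits $u_k(x,t)=u(x,t+t_k)$, derives $\int_1^\infty\int_{\widehat{\Omega}}u^{m-1}u_t^2\,dx\,dt<\infty$ by testing the equation against $(u^m)_t$, concludes that the limit $u_\infty^m$ is harmonic with removable singularities at the $a_i$, and finishes with the maximum principle), and as written it has two genuine gaps. The first concerns the comparison $u\le\2{v}$: restarting $\2{v}$ at time $T_\3$ with initial data $u(\cdot,T_\3)$ does not put you within the scope of Theorem \ref{bd-soln-comparison-thm1}, because $u$ restricted to $t\ge T_\3$ is not known to coincide with the solution produced by the approximation scheme of Theorem \ref{first-main-existence-thm} from the data $u(\cdot,T_\3)$ (no uniqueness is proved for the singular problem, only comparison for solutions built by the scheme). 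The clean fix is to define $\2{v}$ from time $0$ with initial data $u_0$ and boundary data $\max\left(f,\mu_1+\3\right)$, which equals the constant $\mu_1+\3$ for $t\ge T_\3$; then Theorem \ref{bd-soln-comparison-thm1} applies directly.

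The second gap is in the gradient estimates near the $a_i$, which are not free. The estimate actually available — and the one the paper proves in \eqref{v-i-tidle-x-derivative-bd} by rescaling $\tilde v(y,t)=R^{\gamma}u\bigl(a_i+R^{\gamma(1-m)/2}y,t\bigr)$ — is $|\nabla\2{v}|\le C|x-a_i|^{-\gamma-\gamma(1-m)/2}$, hence $|\nabla\2{v}^m|\le C|x-a_i|^{-\gamma(1+m)/2}$, and not the naive $|\nabla\2{v}^m|\lesssim\2{v}^m/|x-a_i|$ that your inequality $\gamma_i'<\frac{n-2}{2m}$ implicitly presupposes. With the correct estimate the flux over $\partial B_\rho(a_i)$ is $O\bigl(\rho^{\,n-1-\gamma(3m+1)/2}\bigr)$ and the energy density is $O\bigl(|x-a_i|^{-\gamma(1+m)}\bigr)$, so the conditions you actually need are $\gamma_i'(m+1)<n$ and $\gamma_i'(3m+1)<2(n-1)$; both follow from \eqref{gamma-upper-lower-bd2} together with $m<\frac{n-2}{n+2}$, since $\frac{n}{m+1}<\frac{2(n-1)}{3m+1}$ is equivalent to $m<\frac{n-2}{n+2}$, whereas $\gamma_i'<\frac{n-2}{2m}$ does not imply $\gamma_i'<\frac{2(n-1)}{3m+1}$ in this regime. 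More importantly, the rescaling argument needs two-sided bounds $c|x-a_i|^{-\gamma}\le\2{v}\le C|x-a_i|^{-\gamma}$ with the \emph{same} exponent to make the rescaled equation uniformly parabolic; when $\gamma_i<\gamma_i'$ this fails, which is precisely why the paper first proves the theorem for $\gamma_i=\gamma_i'$ and then treats $\gamma_i<\gamma_i'$ by sandwiching $u_0$ between data with exact power laws $\lambda_i|x-a_i|^{-\gamma_i}$ and $\lambda_i'|x-a_i|^{-\gamma_i'}$ and invoking Theorem \ref{bd-soln-comparison-thm1}. You need the same reduction before your energy identity can be justified.
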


By using the Aronson-Bernilan inequality \eqref{Aronson-Bernilan-ineqn} we also prove the following extension of Theorem \ref{convergence-thm2}.

\begin{thm}\label{convergence-thm3}
Suppose that $n\geq 3$, $0<m<\frac{n-2}{n}$ and $\mu_0>0$. Let $\mu_0\leq u_0\in L^{p}_{loc}\left(\overline{\Omega}\bs\left\{a_1,\cdots,a_{i_0}\right\}\right)$ for some constant $p>\frac{n(1-m)}{2}$ satisfy \eqref{u0-lower-blow-up-rate} and \eqref{upper-blow-up-rate-initial-data} with $i_1=i_0$ for some constants satisfying \eqref{gamma-upper-lower-bd3} and $0<\delta_3<\delta_1<\delta_0$, $\lambda_1$, $\cdots$, $\lambda_{i_0}$, $\lambda_1'$, $\cdots$, $\lambda_{i_0}'\in\R^+$. Let $f\in L^{\infty}(\partial\Omega\times\left(0,\infty\right))\cap C^3(\partial\Omega\times\left(T_1,\infty\right))$ for some constant $T_1>0$ satisfy \eqref{eq-first-condition-on-f-constant-mu-sub-zero}
and
\begin{equation}\label{eq-for-thm-a-second-condition-on-f-convergence-to-mu-sub-one-general}
f(x.t)\to g(x) \qquad\mbox{uniformly in $C^3\left(\partial\Omega\right)$ as $t\to\infty$}.
\end{equation}
for some function $g\in C^{3}\left(\partial\Omega\right)$, $g\geq\mu_0$ on $\partial\Omega$. Let $u$ be the solution of \eqref{Dirichlet-blow-up-problem} given by Theorem \ref{first-main-existence-thm}. Let $\phi$ be the solution of 
\begin{equation}\label{harmonic-eqn}
\begin{cases}
\begin{aligned}
\La\phi&=0 \quad\,\,\,\, \mbox{ in }\Omega\\
\phi&=g^m \quad \mbox{ on }\partial\Omega.
\end{aligned}
\end{cases}
\end{equation}
Then 
\begin{equation}\label{u-phi-1/m-limit1}
u(x,t)\to \phi^{\frac{1}{m}} \quad \mbox{ in } C^2(K) \quad \mbox{ as }t\to\infty
\end{equation}
for any compact subset $K$ of $\overline{\Omega}\bs\left\{a_1,\cdots,a_{i_0}\right\}$.
\end{thm}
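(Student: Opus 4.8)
Since $f$ is not assumed monotone in $t$, the Aronson--B\'enilan inequality \eqref{Aronson-Bernilan-ineqn} cannot be applied to $u$ directly. The plan is therefore to squeeze $u$, for all large times, between two solutions $u_\3^{\pm}$ of the \emph{same} blow-up problem but with the \emph{time-independent} boundary data $g\pm\3$, for which \eqref{Aronson-Bernilan-ineqn} \emph{is} available, and then to identify the large-time limit of each $u_\3^{\pm}$ with a harmonic function, the key point being that $m\gamma_i'<n-2$ by \eqref{gamma-upper-lower-bd3}. \textbf{Step 1 (uniform two-sided bounds).} Comparison with the constant stationary subsolution $\mu_0$ gives $u\ge\mu_0$ on $\widehat{\Omega}\times(0,\infty)$, and \eqref{eq-lower-limit-of-u-near-blow-up-points} gives $u(x,t)\ge C_1|x-a_i|^{-\gamma_i}$ near $a_i$. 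For the matching upper bound, observe that since $m\gamma_i'<n-2$ the function $V(x):=\big(\sum_{i=1}^{i_0}A_i|x-a_i|^{-m\gamma_i'}+B\big)^{1/m}$ satisfies $\La V^m<0$ in $\widehat{\Omega}$ (because $\La(|x-a_i|^{-m\gamma_i'})=m\gamma_i'(m\gamma_i'-n+2)|x-a_i|^{-m\gamma_i'-2}<0$), hence is a stationary supersolution of \eqref{fde} for any $A_i,B>0$; choosing $A_i,B$ large, using \eqref{upper-blow-up-rate-initial-data} and $f\in L^\infty$ together with the approximation scheme defining $u$ in Theorem \ref{first-main-existence-thm}, the comparison principle yields constants $C_2,C_3,\delta>0$ with $u(x,t)\le C_2|x-a_i|^{-\gamma_i'}$ for $0<|x-a_i|<\delta$ and $u(x,t)\le C_3$ on each compact subset of $\2{\Omega}\bs\{a_1,\dots,a_{i_0}\}$, uniformly for $t\ge1$. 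In particular \eqref{fde} is uniformly parabolic on compact subsets of $\widehat{\Omega}$ (in $x$), away from the $a_i$.

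\textbf{Step 2 (reduction to constant boundary data).} Fix $\3\in(0,\mu_0)$. By \eqref{eq-for-thm-a-second-condition-on-f-convergence-to-mu-sub-one-general} there is $T_\3>T_1$ with $g-\3\le f(\cdot,t)\le g+\3$ on $\partial\Omega$ for $t\ge T_\3$. Let $u_\3^{\pm}$ be the solution of \eqref{Dirichlet-blow-up-problem} furnished by Theorem \ref{first-main-existence-thm}, shifted so as to start at time $T_\3$ from the initial value $u(\cdot,T_\3)$ — which satisfies the hypotheses of that theorem by Step 1 and \eqref{eq-lower-limit-of-u-near-blow-up-points} — with blow-up at the $a_i$ and with the time-independent boundary value $g\pm\3\in C^3(\partial\Omega)$. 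A constant-in-$t$ boundary datum is trivially monotone decreasing, so Theorem \ref{first-main-existence-thm} gives $(u_\3^{\pm})_t\le u_\3^{\pm}/\big((1-m)(t-T_\3)\big)$ in $\widehat{\Omega}\times(T_\3,\infty)$. By the comparison principle $u_\3^{-}\le u\le u_\3^{+}$ on $\widehat{\Omega}\times(T_\3,\infty)$, and Step 1 applies verbatim to $u_\3^{\pm}$: thus $\mu_0-\3\le u_\3^{\pm}(x,t)\le C_2|x-a_i|^{-\gamma_i'}$ near $a_i$ (after possibly enlarging $C_2$), and $u_\3^{\pm}$ is uniformly bounded on compact subsets of $\2{\Omega}\bs\{a_1,\dots,a_{i_0}\}$; again the equation is uniformly parabolic there.

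\textbf{Step 3 (large-time limit of $u_\3^{\pm}$).} I claim that $u_\3^{\pm}(\cdot,t)\to\Phi_{\pm\3}^{1/m}$ in $C^2_{loc}\big(\2{\Omega}\bs\{a_1,\dots,a_{i_0}\}\big)$ as $t\to\infty$, where $\Phi_{\pm\3}$ is the harmonic function on $\Omega$ with $\Phi_{\pm\3}=(g\pm\3)^m$ on $\partial\Omega$. Fix a sequence $t_k\to\infty$ and put $w_k(x,s):=u_\3^{+}(x,s+t_k)$. By the uniform bounds of Step 2 and interior and boundary parabolic estimates (De Giorgi--Nash--Moser followed by Schauder, the equation being uniformly parabolic and the boundary datum the fixed $C^3$ function $g+\3$), $\{w_k\}$ is precompact in $C^{2,1}_{loc}\big((\2{\Omega}\bs\{a_i\})\times\R\big)$; any subsequential limit $w_\infty$ solves \eqref{fde} on $(\2{\Omega}\bs\{a_i\})\times\R$, equals $g+\3$ on $\partial\Omega\times\R$, satisfies $\mu_0-\3\le w_\infty$, $w_\infty(x,s)\le C_2|x-a_i|^{-\gamma_i'}$ near $a_i$, and, since $(w_k)_s(x,s)=(u_\3^{+})_t(x,s+t_k)\le u_\3^{+}(x,s+t_k)/\big((1-m)(s+t_k-T_\3)\big)\to0$ locally uniformly, also $\partial_s w_\infty\le0$. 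Hence for each $x$ the function $w_\infty(x,\cdot)$ is nonincreasing and bounded, so it has finite limits $\Psi^{\pm}(x)$ as $s\to\pm\infty$, and $\Psi^{\pm}$ are stationary solutions of \eqref{fde} in $\widehat{\Omega}$ with $\Psi^{\pm}=g+\3$ on $\partial\Omega$ and $(\Psi^{\pm})^m\le C_2^m|x-a_i|^{-m\gamma_i'}$ near $a_i$. Because $m\gamma_i'<n-2$, B\^ocher's removable singularity theorem shows that each $(\Psi^{\pm})^m$, being a nonnegative harmonic function on a punctured ball dominated by $|x-a_i|^{-m\gamma_i'}$, extends harmonically across $a_i$; so $(\Psi^{\pm})^m=\Phi_\3$ by uniqueness for the Dirichlet problem for $\La$, whence $\Psi^{+}=\Psi^{-}=\Phi_\3^{1/m}$ and therefore $w_\infty\equiv\Phi_\3^{1/m}$. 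Since every subsequential limit is the same, $u_\3^{+}(\cdot,t)\to\Phi_\3^{1/m}$ in $C^2_{loc}$, and the argument for $u_\3^{-}$ is identical.

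\textbf{Step 4 (conclusion) and main difficulty.} Combining Steps 2 and 3, for every $\3\in(0,\mu_0)$ one has $\Phi_{-\3}^{1/m}\le\liminf_{t\to\infty}u(\cdot,t)\le\limsup_{t\to\infty}u(\cdot,t)\le\Phi_{\3}^{1/m}$ locally uniformly on $\2{\Omega}\bs\{a_1,\dots,a_{i_0}\}$; since $(g\pm\3)^m\to g^m$ in $C^3(\partial\Omega)$ as $\3\to0$, the harmonic functions $\Phi_{\pm\3}$ converge in $C^2(\2{\Omega})$ to the solution $\phi$ of \eqref{harmonic-eqn}, so $u(\cdot,t)\to\phi^{1/m}$ locally uniformly on $\2{\Omega}\bs\{a_1,\dots,a_{i_0}\}$. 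With the limit identified, the uniform parabolic (interior and up-to-$\partial\Omega$, using $g\in C^3$, the smoothness of $\Omega$, and that $f\to g$ in $C^3(\partial\Omega)$) estimates of Steps 1 and 3, applied now to $u$ itself on $\widehat{\Omega}\times(T_1,\infty)$, upgrade this to convergence in $C^2(K)$ for every compact $K\subset\2{\Omega}\bs\{a_1,\dots,a_{i_0}\}$, i.e. \eqref{u-phi-1/m-limit1}. The heart of the matter is Step 3: using \eqref{Aronson-Bernilan-ineqn} to force the large-time limit of $u_\3^{\pm}$ to be \emph{time-independent}, and then using $m\gamma_i'<n-2$ to show this stationary limit has only \emph{removable} singularities at the $a_i$, so that it is determined by the boundary datum alone. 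A secondary point requiring care is Step 1, namely that the uniform-in-$t$ upper bound $u\le C_2|x-a_i|^{-\gamma_i'}$ — on which the removability in Step 3 rests — really is available for the solution produced by Theorem \ref{first-main-existence-thm} (it must be read off from the approximation used to construct $u$, since a stationary supersolution of the right singular order must be compared with $u$ from a positive time, after the local smoothing provided by $u_0\in L^p_{loc}$ with $p>\tfrac{n(1-m)}2$).
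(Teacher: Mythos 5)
Your overall strategy coincides with the paper's: sandwich $u$ between solutions with eventually time\mbox{-}constant boundary data, invoke the Aronson--B\'enilan inequality \eqref{Aronson-Bernilan-ineqn} for those to obtain a time\mbox{-}monotone eternal limit, show the stationary limits have removable singularities at the $a_i$ because $m\gamma_i'<n-2$, and finish with the maximum principle and an $\3\to0$ squeeze. Steps 1, 3 and 4 match the paper's Lemma \ref{lem-local-upper-bound-of-u-near-singular-points} and Cases 1--2 of its proof almost verbatim.

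There is, however, a genuine gap in your Step 2. You construct $u_\3^{\pm}$ by \emph{restarting} the problem at time $T_\3$ from the initial value $u(\cdot,T_\3)$ and then assert $u_\3^-\le u\le u_\3^+$ on $(T_\3,\infty)$ ``by the comparison principle''. The only comparison result available for these singular solutions is Theorem \ref{bd-soln-comparison-thm1}, which compares two solutions \emph{both produced by the approximation scheme from time $0$} with ordered boundary and initial data; it does not apply to $u$ restricted to $t\ge T_\3$ versus a freshly constructed solution, because no uniqueness theorem identifies $u|_{t\ge T_\3}$ with the solution re-built from the data $u(\cdot,T_\3)$ (and uniqueness is genuinely in doubt here --- cf.\ the oscillating solutions of \cite{VW2} mentioned in the paper's Remark). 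The paper avoids this by never restarting: it takes $\overline{f}_\3=\max(f,g+\3)$ and $\underline{f}_\3=\min(f,g-\3)$ as boundary data on all of $(0,\infty)$ (these equal $g\pm\3$ for $t\ge T_\3$, hence are eventually monotone), keeps comparable initial data at $t=0$, and applies Theorem \ref{bd-soln-comparison-thm1} directly; your argument should be amended in the same way. A secondary, more minor point: in Step 3 you assert without proof that the monotone limits $\Psi^{\pm}$ of $w_\infty$ satisfy $\La(\Psi^{\pm})^m=0$; this needs an argument, e.g.\ the paper's observation that $\int_{-\infty}^{\infty}\int_{\Omega_\delta}|\La w_\infty^m|\,dx\,dt=-\int\int \partial_s w_\infty\,dx\,dt<\infty$, which yields sequences $s_j\to\pm\infty$ along which $\La w_\infty^m(\cdot,s_j)\to0$ in $L^1(\Omega_\delta)$.
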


\begin{thm}\label{convergence-thm4}
Suppose that $n\geq 3$, $0<m<\frac{n-2}{n}$ and $\mu_0>0$. Let $\mu_0\leq u_0\in L^{p}_{loc}\left(\widehat{\R^n}\right)$  for some constant $p>\frac{n(1-m)}{2}$ satisfy \eqref{u0-lower-blow-up-rate}, \eqref{upper-blow-up-rate-initial-data} with $i_1=i_0$ for some constants  satisfying \eqref{gamma-upper-lower-bd1}, $0<\delta_3<\delta_1<\delta_0$, $\lambda_1$, $\cdots$, $\lambda_{i_0}$, $\lambda_1'$, $\cdots$, $\lambda_{i_0}'\in\R^+$.  Suppose there exist constants $R_1>R_0$ and $C_1>0$ such that
\begin{equation}\label{u0-infty-behaviour}
u_0(x)\leq C_1\qquad \forall |x|\ge R_1
\end{equation} 
and 
\begin{equation}\label{eq-initial-condition-over-ball-L-1-bound-of-u-0-and-mu-0}
\int_{\R^n\setminus B_{R_1}}|u_0-\mu_0|\,dx<\infty
\end{equation}
hold. Let $u$ be the solution of \eqref{cauchy-blow-up-problem} given by Theorem \ref{second-main-existence-thm}. Then \eqref{u-infty-to-mu0} holds for any compact subset $K$ of $\widehat{\R^n}$.
\end{thm}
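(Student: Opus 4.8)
The plan is to trap $u$ between the constant solution $\mu_0$ from below and an explicit stationary supersolution blowing up at the $a_i$ from above, and then to study the limit of the time--translates $u(\cdot,\cdot+t_k)$, $t_k\to\infty$, using conservation of the ``excess mass'' $\int_{\R^n}(u-\mu_0)$ together with the Aronson--B\'enilan inequality \eqref{eq-Aronson-Bernilan-on-R-n}. Since $\mu_0$ solves $u_t=\La u^m$, $u_0\ge\mu_0$, and $u$ blows up at each $a_i$, the comparison principle gives $u\ge\mu_0$ on $\widehat{\R^n}\times(0,\infty)$. For the upper bound, put $\Phi(x)=\sum_{i=1}^{i_0}B_i|x-a_i|^{-\gamma_i'm}$ and $\2u(x)=\big(M^m+\Phi(x)\big)^{1/m}$. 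Because $\gamma_i'<n$ and $0<m<\frac{n-2}{n}$ we have $\gamma_i'm<n-2$, so $\La|x-a_i|^{-\gamma_i'm}=\gamma_i'm(\gamma_i'm+2-n)|x-a_i|^{-\gamma_i'm-2}\le0$ on $\widehat{\R^n}$; hence $\2u_t-\La\2u^{\,m}=-\La\Phi\ge0$, i.e.\ $\2u$ is a time--independent supersolution blowing up at every $a_i$. Taking $B_i\ge(\lambda_i')^m$ and $M$ large (and, if needed, restarting from $u(\cdot,t_0)$ with a small $t_0>0$, which is locally bounded on $\widehat{\R^n}$ by the $L^p$--$L^\infty$ smoothing effect valid for $p>\frac{n(1-m)}{2}$), one checks that $\2u$ dominates the data, hence the approximating sequence used to build $u$ in Theorem \ref{second-main-existence-thm}, so $\mu_0\le u\le\2u$ on $\widehat{\R^n}\times(0,\infty)$. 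In particular $u$ is locally bounded on $\widehat{\R^n}$ uniformly in $t$, and since $u\ge\mu_0>0$, interior parabolic regularity makes $\{\,u(\cdot,\cdot+t):t\ge1\,\}$ precompact in $C^{2,1}_{loc}(\widehat{\R^n}\times\R)$.

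Next I would show that $\mathcal M:=\int_{\R^n}(u_0-\mu_0)\,dx$ is finite and that $\int_{\R^n}(u(\cdot,t)-\mu_0)\,dx=\mathcal M$ for all $t>0$. Finiteness comes from \eqref{upper-blow-up-rate-initial-data} with $\gamma_i'<n$ (so $|x-a_i|^{-\gamma_i'}$ is locally integrable), $u_0\in L^p_{loc}$, and the decay hypotheses \eqref{u0-infty-behaviour}--\eqref{eq-initial-condition-over-ball-L-1-bound-of-u-0-and-mu-0}. Conservation follows because no mass escapes through the singular points: from $u\le\2u\sim|x-a_i|^{-\gamma_i'}$ and interior gradient estimates one gets $|\D u^m|\lesssim|x-a_i|^{-\gamma_i'm-1}$ near $a_i$, so the flux $\int_{\partial B_\3(a_i)}\partial_\nu u^m\,dS=O\big(\3^{\,n-2-\gamma_i'm}\big)\to0$ as $\3\to0$ (again because $\gamma_i'm<n-2$), while a cut--off/$L^1$--contraction argument against the constant solution $\mu_0$ controls the contribution from spatial infinity using \eqref{u0-infty-behaviour}--\eqref{eq-initial-condition-over-ball-L-1-bound-of-u-0-and-mu-0}; together these give $\frac{d}{dt}\int_{\R^n}(u-\mu_0)\,dx=0$.

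Finally, fix $t_k\to\infty$ and pass to a subsequence so that $u(\cdot,\cdot+t_k)\to u_\infty$ in $C^{2,1}_{loc}(\widehat{\R^n}\times\R)$; then $u_\infty$ solves $u_t=\La u^m$ on $\widehat{\R^n}\times\R$ with $\mu_0\le u_\infty\le\2u$. By \eqref{eq-Aronson-Bernilan-on-R-n}, $\partial_t u(\cdot,t+t_k)\le\frac{u(\cdot,t+t_k)}{(1-m)(t+t_k)}\le\frac{\2u}{(1-m)(t+t_k)}\to0$, so $\partial_t u_\infty\le0$, i.e.\ $u_\infty(x,t)$ is non--increasing in $t$; and by Fatou together with the previous step, $\int_{\R^n}(u_\infty(\cdot,t)-\mu_0)\,dx\le\mathcal M$ for all $t\in\R$. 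As $t\to-\infty$, $u_\infty(\cdot,t)$ increases to some $\4v\le\2u$ with $\4v^{\,m}$ harmonic on $\widehat{\R^n}$ and $\4v^{\,m}\le M^m+\Phi\le C+B_i|x-a_i|^{-\gamma_i'm}$ near $a_i$ with $\gamma_i'm<n-2$; by the removable--singularity theorem for harmonic functions $\4v^{\,m}$ extends to a harmonic function on all of $\R^n$, which is bounded, hence constant by Liouville. If this constant were $>\mu_0^m$, then $\int_{\R^n}(\4v-\mu_0)\,dx=\infty$, contradicting $\int_{\R^n}(u_\infty(\cdot,t)-\mu_0)\,dx\le\mathcal M$ (let $t\to-\infty$ and use monotone convergence); thus $\4v\equiv\mu_0$, and since $\mu_0\le u_\infty\le\4v\equiv\mu_0$ we get $u_\infty\equiv\mu_0$. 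The subsequence being arbitrary, $u(\cdot,t)\to\mu_0$ locally uniformly on $\widehat{\R^n}$ as $t\to\infty$, and the convergence in $C^2(K)$ then follows from parabolic regularity.

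The main obstacle is the mass identity of the second step: one needs quantitative gradient bounds for $u^m$ at the blow-up points to kill the flux there -- which is exactly where $\gamma_i'<n$ is used -- together with a careful $L^1$ estimate at spatial infinity, essentially an $L^1$--contraction with the constant solution $\mu_0$ on the punctured space, which is where \eqref{u0-infty-behaviour}--\eqref{eq-initial-condition-over-ball-L-1-bound-of-u-0-and-mu-0} enter. A lesser technical point, already implicit in Theorem \ref{second-main-existence-thm}, is checking that the stationary supersolution $\2u$ dominates the approximating sequence defining $u$.
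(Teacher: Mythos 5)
Your proposal is correct in outline and follows the same skeleton as the paper's proof: comparison from below with the constant solution, a stationary radial supersolution of the form $\bigl(M^m+\sum_iB_i|x-a_i|^{-m\gamma_i'}\bigr)^{1/m}$ from above (this is exactly the paper's $v_\eta$ of Lemma \ref{lem-super-soluiton-v-o-eta-sum-and-powers}, implemented carefully at the level of the approximations $u_{\3,M}$ in Lemma \ref{upper-bound-blow-up-soln-Rn1}), a uniform-in-time $L^1$ bound on $u-\mu_0$, compactness of the time translates, monotonicity of the limit $u_\infty$ via the Aronson--B\'enilan inequality \eqref{eq-Aronson-Bernilan-on-R-n}, and identification of the harmonic limits $w_j$ after removing the singularities at the $a_i$ (your check that $m\gamma_i'<n-2$ is the same one the paper makes). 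Where you genuinely diverge is the last step: the paper shows $w_j\equiv\mu_0$ by choosing spheres $\partial B_{R_{j,i}}$ on which $\int_{\partial B_{R_{j,i}}}|w_j-\mu_0|\,d\sigma\to0$ (possible because $w_j-\mu_0\in L^1$) and then invoking the Green representation formula, whereas you invoke Liouville's theorem for the bounded entire harmonic function $\4v^{\,m}$ and rule out a constant value exceeding $\mu_0$ by the finite-excess-mass bound. Your route is slightly more elementary and equally valid; the paper's route has the minor advantage of not needing the Liouville step, only the mean-value/Green identity.

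One step you should repair before this is a complete proof: the claimed exact conservation $\int_{\R^n}(u(\cdot,t)-\mu_0)\,dx=\mathcal M$ is both unjustified and unnecessary. Killing the flux at the singular points is plausible (though your gradient bound $|\D u^m|\lesssim|x-a_i|^{-m\gamma_i'-1}$ is optimistic; the scaling argument of the paper only gives $|\D u|\lesssim|x-a_i|^{-\gamma_i-\gamma_i(1-m)/2}$, which still makes the flux vanish but with a different exponent), but controlling the flux at spatial infinity to get equality rather than an inequality is a real difficulty in the range $0<m<\frac{n-2}{n}$, and your ``cut-off/$L^1$-contraction argument'' only delivers the one-sided bound $\int_{\R^n}|u(\cdot,t)-\mu_0|\,dx\le\int_{\R^n}|u_0-\mu_0|\,dx$. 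Fortunately that inequality is all your endgame uses (via Fatou, to get $\int(u_\infty(\cdot,t)-\mu_0)\le\mathcal M$ and hence the contradiction with $\4v>\mu_0$), and it is exactly what the paper establishes by applying Corollary 2.2 of \cite{DS1} to the globally bounded approximations $u_{\3,M}$ of \eqref{cauchy-problem-Rn} and passing to the limit. So: drop the conservation claim, state only the $L^1$-contraction inequality, and prove it at the level of the approximating solutions rather than on the punctured space.
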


\begin{thm}\label{convergence-thm5}
Suppose that $n\geq 3$, $0<m<\frac{n-2}{n}$ and $\mu_0>0$. Let $\mu_0\leq u_0\in L^{p}_{loc}\left(\widehat{\R^n}\right)$  for some constant $p>\frac{n(1-m)}{2}$ satisfy \eqref{u0-lower-blow-up-rate} and  \eqref{upper-blow-up-rate-initial-data} with $i_1=i_0$ for some constants satisfying \eqref{gamma-upper-lower-bd3} and $0<\delta_3<\delta_1<\delta_0$, $\lambda_1$, $\cdots$, $\lambda_{i_0}$, $\lambda_1'$, $\cdots$, $\lambda_{i_0}'\in\R^+$.  Suppose $u_0$ satisfies
\begin{equation}\label{eq-initial-condition-of-u-0-to-mu-0-at-infty}
u_0\left(r\sigma\right)\to\mu_0 \qquad \mbox{uniformly in $S^{n-1}$ as $|x|=r\to\infty$.}
\end{equation}
Let $u$ be the solution of \eqref{cauchy-blow-up-problem} given by Theorem \ref{second-main-existence-thm}. Then \eqref{u-infty-to-mu0} holds for any compact subset $K$ of $\widehat{\R^n}$.
\end{thm}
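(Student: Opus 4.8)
The plan is to squeeze the singular solution $u$ between the constant subsolution $\mu_0$ from below and, for each $\varepsilon>0$, the solution of the Dirichlet blow-up problem on a large ball with constant boundary value $\mu_0+\varepsilon$ from above, and then let the ball exhaust $\R^n$ and $\varepsilon\downarrow 0$, invoking Theorem \ref{convergence-thm3} on the balls (whose ``harmonic limit'' for constant boundary data is that constant). The lower bound is immediate: since $u_0\ge\mu_0$ on $\widehat{\R^n}$ and the constant $\mu_0$ is a singularity free stationary solution of \eqref{fde}, comparison on $\widehat{\R^n}\times(0,\infty)$ — using $u(x,t)\to\infty$ as $x\to a_i$, so that $u\ge\mu_0$ holds on the parabolic boundary near each $a_i$, and passing through the approximating scheme of Theorem \ref{second-main-existence-thm} to handle the behaviour at spatial infinity — gives $u(x,t)\ge\mu_0$ for all $(x,t)$, whence $\liminf_{t\to\infty}u(x,t)\ge\mu_0$ for every $x\in\widehat{\R^n}$.

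The main technical input is a triple of time-uniform a priori bounds on $u$: (i) $u$ is bounded on each compact subset of $\widehat{\R^n}$, uniformly in $t\in(0,\infty)$; (ii) $u(x,t)\le C_i|x-a_i|^{-\gamma_i'}$ for $x$ near $a_i$, uniformly in $t$; and (iii) for each $t>0$, $u(r\sigma,t)\to\mu_0$ as $r\to\infty$, uniformly in $t$. These are where the real work lies, and one would obtain them from the construction in Theorem \ref{second-main-existence-thm} together with the local $L^p$–$L^\infty$ smoothing effect (which is exactly why $p>n(1-m)/2$ is imposed), comparison with stationary supersolutions of the form $\big(\mathrm{const}+\sum_i b_i|x-a_i|^{-m\gamma_i'}\big)^{1/m}$ (whose $m$-th powers are superharmonic on $\widehat{\R^n}$ precisely because $m\gamma_i'<n-2$ by \eqref{gamma-upper-lower-bd3}) and exterior barriers of the form $\big((\mu_0+\varepsilon)^m+A|x|^{2-n}\big)^{1/m}$ built from the fundamental solution of the Laplacian, as well as the Aronson–Bénilan inequality \eqref{eq-Aronson-Bernilan-on-R-n}.

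Granting (i)–(iii), fix $\varepsilon>0$; by (iii) we have $u\le\mu_0+\varepsilon$ on $\partial B_R\times(0,\infty)$ for all sufficiently large $R$. Fix such an $R$ (large enough that $\delta_0(B_R)=\delta_0(\R^n)$), write $D_R:=B_R\setminus\{a_1,\dots,a_{i_0}\}$, and let $\overline{u}_R$ be the solution of \eqref{Dirichlet-blow-up-problem} on $D_R\times(0,\infty)$ furnished by Theorem \ref{first-main-existence-thm} with boundary value $f\equiv\mu_0+\varepsilon$ and initial value $u_0|_{B_R}$; these data still satisfy \eqref{u0-lower-blow-up-rate}, \eqref{upper-blow-up-rate-initial-data} and $u_0|_{B_R}\ge\mu_0$ with the same constants, the hypotheses of Theorem \ref{convergence-thm3} hold, and the harmonic function $\phi$ of \eqref{harmonic-eqn} is the constant $(\mu_0+\varepsilon)^m$. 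Hence Theorem \ref{convergence-thm3} gives $\overline{u}_R(\cdot,t)\to\mu_0+\varepsilon$ in $C^2(K)$ as $t\to\infty$ for every compact $K\subset\overline{B_R}\setminus\{a_1,\dots,a_{i_0}\}$. On the other hand the comparison principle for singular solutions on $D_R\times(0,\infty)$ — both functions blow up at the $a_i$, coincide at $t=0$ on $D_R$, and satisfy $u\le\mu_0+\varepsilon=f$ on $\partial B_R\times(0,\infty)$ — yields $u\le\overline{u}_R$ on $D_R\times(0,\infty)$. Therefore $\limsup_{t\to\infty}u(x,t)\le\mu_0+\varepsilon$ for each $x\in D_R$; since for a given $x\in\widehat{\R^n}$ we may take $R>|x|$ and then let $\varepsilon\downarrow 0$, this and the lower bound give $u(x,t)\to\mu_0$ pointwise on $\widehat{\R^n}$. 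Finally, on any compact $K\subset\widehat{\R^n}$ one has $\mu_0\le u\le\overline{u}_R$, so $u$ is bounded on a neighbourhood of $K$ for large $t$, the equation $u_t=\La u^m$ is uniformly parabolic there, and interior parabolic Schauder estimates make $\{u(\cdot,t)\}$ precompact in $C^2(K)$ for large $t$; combined with the pointwise convergence this upgrades to $u(\cdot,t)\to\mu_0$ in $C^2(K)$, which is \eqref{u-infty-to-mu0}.

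The main obstacle is thus the triple of time-uniform a priori estimates (i)–(iii); once these are in hand the convergence follows cleanly from the comparison principle, Theorem \ref{convergence-thm3}, and interior parabolic regularity. It is worth noting where the hypotheses enter: $\mu_0\le u_0$ gives the lower barrier, \eqref{eq-initial-condition-of-u-0-to-mu-0-at-infty} feeds into (iii), and the restriction $\gamma_i'<\frac{n-2}{m}$ in \eqref{gamma-upper-lower-bd3} is precisely what lets the removable-singularity theorem — applied to the $\omega$-limits, which the Aronson–Bénilan inequality forces to be stationary — identify the stationary limit inside Theorem \ref{convergence-thm3} (here the constant $\mu_0+\varepsilon$), and is also what makes the $m$-th powers of the barriers in the second step superharmonic.
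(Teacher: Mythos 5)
Your route is genuinely different from the paper's: you reduce the Cauchy problem to the bounded-domain result (Theorem \ref{convergence-thm3}) on an exhausting family of balls $B_R$ with constant boundary data $\mu_0+\varepsilon$, whereas the paper works directly on $\widehat{\R^n}$ with the time-shifted sequence $u_k=u(\cdot,\cdot+t_k)$, uses the Aronson--B\'enilan inequality to force the eternal limit $u_\infty$ to be monotone in $t$, shows the limits $w_1,w_2$ as $t\to\pm\infty$ are harmonic after removing the singularities at the $a_i$ (this is where $\gamma_i'<\frac{n-2}{m}$ enters), and identifies $w_j\equiv\mu_0$ from the decay at spatial infinity supplied by Lemma \ref{upper-bound-blow-up-soln-Rn1}. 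Your a priori estimates (i)--(iii) are essentially the content of that same Lemma \ref{upper-bound-blow-up-soln-Rn1} (with the caveat that (iii) is only available in a two-regime form: a bound $C_1+\3_1$ on the exterior for $0\le t\le t_0$ and the barrier $v_\eta$ for $t\ge t_0$, with $t_0$ depending on $\3_1$), so deferring them is acceptable.

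The genuine gap is the step ``the comparison principle for singular solutions on $D_R\times(0,\infty)$ \dots yields $u\le\overline{u}_R$.'' No such comparison principle between two arbitrary singular blow-up solutions is available, and one cannot expect one: both functions are unbounded near each $a_i$, so the classical comparison argument breaks down there, and the Vazquez--Winkler construction recalled in Remark 1.9 shows that solutions of the Dirichlet blow-up problem with identical data need not even be unique, which rules out a generic comparison theorem for this class. The comparison results the paper actually proves (Theorems \ref{bd-soln-comparison-thm1} and \ref{R^n-soln-comparison-thm}) apply only to pairs of solutions produced by the \emph{same} approximation scheme on the \emph{same} domain with ordered data; here you are comparing the restriction to $B_R$ of the $\widehat{\R^n}$-solution with a ball solution having different lateral data. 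To close this you must descend to the approximants: compare $u_M$ (which satisfies $u_M\ge\mu_0>0$ and, by the proof of Lemma \ref{upper-bound-blow-up-soln-Rn1}, the exterior bound $u_M\le C_1+\3_1$ on $\partial B_R$ for $0\le t\le t_0$ together with $u_M\le v_\eta$ for $t\ge t_0$ --- note the bound is needed for the approximants $u_M$, not merely for the limit $u$) against the bounded, strictly positive approximants of $\overline{u}_R$ via Lemma \ref{comparison-lem-bd-domain} or the $L^1$-contraction of Lemma 2.1 of \cite{Hu3}, and then pass to the limit. Once that is supplied, the rest of your argument (application of Theorem \ref{convergence-thm3} with $\phi\equiv(\mu_0+\varepsilon)^m$, letting $\varepsilon\downarrow 0$, and upgrading pointwise convergence to $C^2(K)$ convergence by Schauder precompactness) is sound.
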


\begin{thm}\label{convergence-thm6}
Suppose that $n\geq 3$, $0<m<\frac{n-2}{n}$ and $\mu_0>0$. Let $\mu_0\leq u_0\in L^p_{loc}\left(\widehat{\Omega}\right)$ for some constant $p>\frac{n(1-m)}{2}$ satisfy \eqref{u0-lower-blow-up-rate} for some constants satisfying \eqref{gamma1-large} and $0<\delta_1<\delta_0$, $\lambda_1$, $\cdots$, $\lambda_{i_0}\in\R^+$ and let $f\in L^{\infty}(\partial\Omega\times(0,\infty))$ satisfy \eqref{eq-first-condition-on-f-constant-mu-sub-zero}. Let $u$ be the solution of \eqref{Dirichlet-blow-up-problem}  given by Theorem \ref{first-main-existence-thm}. Then
\begin{equation}\label{eq-blow-up-of-soluiton-u-when-gamma-greater-than-n-2-over-m-2c}
u(x,t)\to\infty \quad \mbox{ on } K\quad \mbox{ as }t\to\infty
\end{equation}
for any compact subset $K$ of $\widehat{\Omega}$. 
\end{thm}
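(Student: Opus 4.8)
\emph{Plan of proof.} The plan is to combine comparison arguments with the special structure forced by the strong singularity $\gamma_1>\frac{n-2}{m}$. First note that since $u_0\ge\mu_0$, $f\ge\mu_0$ and constants solve \eqref{fde}, the comparison principle gives $u\ge\mu_0$ in $\widehat{\Omega}\times(0,\infty)$. We then argue in two steps: (i) show $u(\cdot,t)\to\infty$ as $t\to\infty$, uniformly on compact subsets of a small punctured ball $\widehat{B}_\delta(a_1)$; and (ii) propagate this to all of $\widehat{\Omega}$ by comparison with solutions carrying arbitrarily large Dirichlet data on a sphere around $a_1$.

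For step (i), we would set $\widetilde\lambda_1:=\min\bigl(\lambda_1,\mu_0\delta_1^{\gamma_1}\bigr)$ and $\delta:=(\widetilde\lambda_1/\mu_0)^{1/\gamma_1}\le\delta_1$, so that $u_0(x)\ge\widetilde\lambda_1|x-a_1|^{-\gamma_1}$ for $0<|x-a_1|<\delta_1$ and $\widetilde\lambda_1\delta^{-\gamma_1}=\mu_0$, and let $w$ be the solution furnished by Theorem \ref{first-main-existence-thm} applied on $B_\delta(a_1)$ with the single blow-up point $a_1$, of $w_t=\Delta w^m$ in $\widehat{B}_\delta(a_1)\times(0,\infty)$ with $w=\mu_0$ on $\partial B_\delta(a_1)$, $w(a_1,t)=\infty$, $w(\cdot,0)=\widetilde\lambda_1|x-a_1|^{-\gamma_1}$; since the data are radial, $w$ is radial. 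The point is that $\gamma_1>\frac{n-2}{m}$, i.e. $m\gamma_1>n-2$, makes $w(\cdot,0)$ a time-independent subsolution of this problem, since $\Delta\bigl(\widetilde\lambda_1|x-a_1|^{-\gamma_1}\bigr)^m=\widetilde\lambda_1^{\,m}m\gamma_1(m\gamma_1+2-n)|x-a_1|^{-m\gamma_1-2}>0$ and its lateral value is $\mu_0$; hence $w(\cdot,t)\ge w(\cdot,0)$ for all $t>0$, and comparing $w(\cdot,\cdot+s)$ with $w$ shows $w$ is nondecreasing in $t$. Writing $W_\infty:=\lim_{t\to\infty}w(\cdot,t)\in(0,\infty]$, we would then show $W_\infty\equiv\infty$: if $W_\infty$ were finite at some point, local regularity and the Harnack inequality for \eqref{fde} force $\{W_\infty<\infty\}$ to be open with $W_\infty$ locally bounded on it, so $w(\cdot,t)\to W_\infty$ in $C^2_{loc}$ there with $\Delta W_\infty^m=0$; using that $W_\infty$ is radial and that a radial harmonic pressure $A+B|x-a_1|^{2-n}$ stays bounded near any positive radius (while $W_\infty\to\infty$ at interior boundary radii by lower semicontinuity), one concludes $\{W_\infty<\infty\}=\widehat{B}_\delta(a_1)$, and then $A+B|x-a_1|^{2-n}=W_\infty^m\ge\widetilde\lambda_1^{\,m}|x-a_1|^{-m\gamma_1}$ near $a_1$ contradicts $m\gamma_1>n-2$. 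Thus $W_\infty\equiv\infty$, and by monotonicity and continuity $w(\cdot,t)\to\infty$ uniformly on compact subsets of $\widehat{B}_\delta(a_1)$. Finally, $u$ restricted to $\widehat{B}_\delta(a_1)$ has lateral data $\ge\mu_0$, initial data $\ge\widetilde\lambda_1|x-a_1|^{-\gamma_1}$ and blows up at $a_1$, so $u\ge w$ there and the same conclusion holds for $u$.

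For step (ii), given a compact $K\subset\widehat{\Omega}$ we would write $K=K_1\cup K_2$ with $K_1=K\cap\overline{B_{\delta/2}(a_1)}$ (a compact subset of $\widehat{B}_\delta(a_1)$, already handled) and $K_2=K\cap\{|x-a_1|\ge\delta/3\}$, and pick $\rho>0$ so small that $K_2\subset\Omega'':=\Omega\setminus\overline{B_{\delta/4}(a_1)}\setminus\bigcup_{i=2}^{i_0}\overline{B_\rho(a_i)}$, a connected smooth bounded domain. By step (i), for every $M>0$ there is $t_M$ with $u\ge M$ on $\partial B_{\delta/4}(a_1)\times[t_M,\infty)$; let $z_M$ solve \eqref{fde} in $\Omega''\times(t_M,\infty)$ with Dirichlet data $M$ on $\partial B_{\delta/4}(a_1)$ and $\mu_0$ on the rest of $\partial\Omega''$, and $z_M(\cdot,t_M)=\mu_0$. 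Since $u\ge\mu_0$ everywhere, comparison gives $u\ge z_M$ in $\Omega''\times(t_M,\infty)$. As $\mu_0$ is a subsolution of the problem for $z_M$ and the boundary data are time-independent, $z_M$ is nondecreasing in $t$, stays below the stationary solution $Z_M$ (with $Z_M^m$ harmonic in $\Omega''$, equal on $\partial\Omega''$ to the $m$-th power of the boundary data), and the standard stabilization theory for \eqref{fde} on bounded domains with positive Dirichlet data gives $z_M(\cdot,t)\to Z_M$; hence $\liminf_{t\to\infty}u(x,t)\ge Z_M(x)$ on $\Omega''$. Since $Z_M^m\ge M^m h$, where $h$ is harmonic in $\Omega''$ with $h=1$ on $\partial B_{\delta/4}(a_1)$ and $h=0$ on the rest of $\partial\Omega''$ (so $h>0$ in $\Omega''$), we get $\liminf_{t\to\infty}u(x,t)\ge M\,h(x)^{1/m}$ on $\Omega''$; letting $M\to\infty$ and using $\min_{K_2}h>0$ yields $u(\cdot,t)\to\infty$ uniformly on $K_2$, and combining with $K_1$ proves \eqref{eq-blow-up-of-soluiton-u-when-gamma-greater-than-n-2-over-m-2c}.

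The main obstacle is step (i), and specifically the dichotomy that monotone convergence of $w$ must produce $W_\infty\equiv\infty$: one needs to know that finiteness of $W_\infty$ at a single point forces local boundedness (hence $C^2_{loc}$ convergence and harmonicity of the limiting pressure), and then use the radial structure together with the fact that $m\gamma_1>n-2$ rules out a harmonic pressure near $a_1$ — this is exactly where the hypothesis $\gamma_1>\frac{n-2}{m}$ enters. A secondary technical point is to make sure the comparison principle for the solutions of Theorem \ref{first-main-existence-thm} may legitimately be applied both when restricting $u$ to the smaller domain $\widehat{B}_\delta(a_1)$ and comparing it with $w$ at the blow-up point $a_1$, and when comparing $z_M$ with $u$ on $\Omega''$.
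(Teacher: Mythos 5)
Your route is genuinely different from the paper's. The paper proves this theorem in a few lines: it compares $u$, via Theorem \ref{bd-soln-comparison-thm1}, with the solution $v$ of \eqref{Dirichlet-blow-up-problem} on $\Omega\setminus\{a_1\}$ whose initial data is $\lambda_1|x-a_1|^{-\gamma_1}$ on $\widehat{B}_{\delta_1}(a_1)$ and $\mu_0$ elsewhere, and then quotes Theorem 4.13 of \cite{VW1}, which states precisely that such a one-point solution with $\gamma_1>\frac{n-2}{m}$ tends to infinity locally uniformly in $\Omega$. Your step (ii) is essentially the propagation mechanism behind that quoted theorem and is sound. The problem is that your step (i) amounts to reproving the core of that theorem of \cite{VW1} from scratch, and as written it has gaps at exactly the points where that result is delicate.

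Concretely: (a) the monotonicity $w(\cdot,t)\ge w(\cdot,0)$ is obtained by comparing the singular solution $w$ with the unbounded stationary subsolution $\widetilde\lambda_1|x-a_1|^{-\gamma_1}$; none of the comparison results at hand covers this (Lemma \ref{comparison-lem-bd-domain} requires bounded sub/supersolutions with positive infima, and Theorem \ref{bd-soln-comparison-thm1} compares two solutions produced by the approximation scheme), and the truncations $\min(w_0,M)+\3$ used in the construction are not themselves subsolutions, since the minimum of a subharmonic function and a constant is not subharmonic; so monotonicity needs a separate argument. (b) You assert that finiteness of $W_\infty$ at one point makes $\{W_\infty<\infty\}$ open with $W_\infty$ locally bounded there ``by local regularity and the Harnack inequality''; but for $0<m<\frac{n-2}{n}$ no such local smoothing or Harnack inequality holds in general (this is emphasized in the introduction), and an increasing limit of subharmonic functions (here $\La w^m=w_t\ge 0$) that is finite at a point need not be locally bounded nearby. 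In the radial setting this can be repaired by observing that $w^m(\cdot,t)$ is convex in $s=|x-a_1|^{2-n}$, so the finiteness set of $W_\infty^m$ is an interval in $s$ containing $s=\delta^{2-n}$, with local boundedness on its interior — but that is not the argument you give. (c) Even granting (b), the conclusion $\{W_\infty<\infty\}=\widehat{B}_\delta(a_1)$ does not follow: the finiteness set is an outer annulus $\{r^*<r<\delta\}$, and your lower-semicontinuity argument does not exclude the case of a finite harmonic limit on that annulus with $W_\infty\equiv\infty$ only on $\{0<|x-a_1|<r^*\}$; the contradiction $A+B|x-a_1|^{2-n}\ge\widetilde\lambda_1^m|x-a_1|^{-m\gamma_1}$ is only available when the finiteness set reaches $a_1$. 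The overall scheme can be saved — the only genuinely contradictory case is $W_\infty<\infty$ on all of $\widehat{B}_\delta(a_1)$, and in the remaining case one obtains blow-up on some inner punctured ball, which is enough input for your step (ii) — but as stated the dichotomy is not correctly closed. The cleanest fix is the paper's: reduce to the one-point problem by comparison and invoke Theorem 4.13 of \cite{VW1} directly.
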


\begin{thm}\label{convergence-thm7}
Suppose that $n\geq 3$, $0<m<\frac{n-2}{n}$ and $\mu_0>0$. Let $\mu_0\leq u_0\in L^p_{loc}\left(\widehat{\R^n}\right)$ for some constant $p>\frac{n(1-m)}{2}$ satisfy \eqref{u0-lower-blow-up-rate} for some constants satisfying \eqref{gamma1-large} and $0<\delta_1<\delta_0$, $\lambda_1$, $\cdots$, $\lambda_{i_0}\in\R^+$. Let $u$ be the solution of \eqref{cauchy-blow-up-problem} given by Theorem \ref{second-main-existence-thm}. Then \eqref{eq-blow-up-of-soluiton-u-when-gamma-greater-than-n-2-over-m-2c} holds for any compact subset $K$ of $\widehat{\R^n}$. 
\end{thm}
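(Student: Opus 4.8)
The plan is to reduce Theorem~\ref{convergence-thm7} to its bounded--domain counterpart, Theorem~\ref{convergence-thm6}, by exhausting $\widehat{\R^n}$ with punctured balls. The first step is to note that $u\ge\mu_0$ on $\widehat{\R^n}\times(0,\infty)$: the constant $\mu_0$ is a stationary solution of $u_t=\Delta u^m$ and $u_0\ge\mu_0$, so the comparison principle for the Cauchy problem \eqref{cauchy-blow-up-problem} gives the bound. This is where the hypothesis $u_0\ge\mu_0$ enters; note that no condition on $u_0$ at infinity is needed, since the blow-up will be driven entirely by the strong singularity of $u_0$ at $a_1$.

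Next, fix $R>R_0$ so large that $a_1,\dots,a_{i_0}\in B_R$ and $\delta_0(B_R)=\delta_0$ (which holds once $R-|a_i|\ge 3\delta_0$ for every $i$), and let $u_R$ be the solution of the Dirichlet blow-up problem \eqref{Dirichlet-blow-up-problem} on $\widehat{B_R}\times(0,\infty)$, with $\Omega=B_R$, boundary data $f\equiv\mu_0$, initial data $u_0|_{\widehat{B_R}}$ and $u_R(a_i,t)=\infty$, furnished by Theorem~\ref{first-main-existence-thm}. The restriction $u_0|_{\widehat{B_R}}$ lies in $L^p_{loc}(\overline{B_R}\setminus\{a_1,\dots,a_{i_0}\})$, is $\ge\mu_0$, and still obeys \eqref{u0-lower-blow-up-rate} near each $a_i$ with the same exponents, so \eqref{gamma1-large} holds and $0<\delta_1<\delta_0=\delta_0(B_R)$; also $f\equiv\mu_0$ satisfies \eqref{eq-first-condition-on-f-constant-mu-sub-zero}. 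Hence Theorem~\ref{convergence-thm6} applies to $u_R$: for every compact $K_0\subset\widehat{B_R}$ one has $u_R(x,t)\to\infty$ uniformly on $K_0$ as $t\to\infty$.

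The remaining point is the comparison $u\ge u_R$ on $\widehat{B_R}\times(0,\infty)$. On the parabolic boundary of this punctured cylinder, $u(\cdot,0)=u_0=u_R(\cdot,0)$ on $\widehat{B_R}$ and $u\ge\mu_0=u_R$ on $\partial B_R\times(0,\infty)$ by the first step, while at the singular points one compares $u$ with the bounded approximants of $u_R$ on the domains $\widehat{B_R}\setminus\bigcup_i B_{1/k}(a_i)$: on $\partial B_{1/k}(a_i)$ the approximant has value of order $k$, which is dominated by $u$ in view of the a~priori lower bound \eqref{eq-lower-limit-of-u-near-blow-up-points} since $\gamma_i>\frac{2}{1-m}>1$, the comparison being performed on each finite interval $(0,T)$ and then letting $k\to\infty$ and $T\to\infty$. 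Granting $u\ge u_R$, the theorem follows: given a compact $K\subset\widehat{\R^n}$, choose $R$ as above with $K\subset\widehat{B_R}$; then $u\ge u_R\to\infty$ uniformly on $K$ as $t\to\infty$, i.e.\ \eqref{eq-blow-up-of-soluiton-u-when-gamma-greater-than-n-2-over-m-2c} holds.

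The main obstacle I anticipate is making the inequality $u\ge u_R$ rigorous up to the singular set $\{a_1,\dots,a_{i_0}\}$ --- that is, invoking the paper's comparison principle in the punctured domain and controlling the inner boundary data of the approximants of $u_R$ by the lower bound already known for $u$; the subsequent exhaustion argument is routine. Alternatively one could bypass the balls entirely and reproduce on $\widehat{\R^n}$ the subsolution underlying the proof of Theorem~\ref{convergence-thm6}, built from the term $\lambda_1|x-a_1|^{-\gamma_1}$ of $u_0$ with $\gamma_1>\frac{n-2}{m}$, which by itself grows to $\infty$ as $t\to\infty$.
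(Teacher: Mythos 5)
Your strategy is correct and is essentially the route the paper takes: establish $u\ge\mu_0$, compare $u$ from below on a large ball with a Dirichlet blow-up solution whose boundary data is $\mu_0$, and invoke the bounded-domain blow-up result (ultimately Theorem 4.13 of \cite{VW1}). The differences are in the details of the comparison. The paper does not route through Theorem \ref{convergence-thm6} with the full data $u_0|_{\widehat{B_R}}$; instead it builds a simpler comparison function $v$ on $B_R(a_1)$ from the initial data $v_0=\lambda_1|x-a_1|^{-\gamma_1}$ near $a_1$ and $v_0=\mu_0$ elsewhere (so only the one singularity with $\gamma_1>\frac{n-2}{m}$ is retained), and---crucially---it performs the comparison entirely at the level of \emph{bounded} approximants: since $u_M\ge\mu_0$ on $\partial B_R(a_1)\times(0,\infty)$ and $u_{0,M}\ge v_{0,M}$, the bounded solutions satisfy $u_M\ge v_M$ on the full ball by the comparison lemma, and one then lets $M\to\infty$ on both sides. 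This sidesteps exactly the obstacle you flag: there is never a need to compare two singular solutions across the punctured boundary, so no excision of small balls $B_{1/k}(a_i)$ and no control of inner boundary values is required. Your excision argument can be made to work, but as written it has a loose end: the "approximants of $u_R$ with value of order $k$ on $\partial B_{1/k}(a_i)$" are not the approximants by which Theorem \ref{first-main-existence-thm} constructs $u_R$ (those are solutions on the whole ball with initial data truncated at level $M$, bounded by $\max(M,\mu_0)$), so you would either have to prove that your annular approximants converge to the same $u_R$, or switch to comparing the truncated approximants on the full ball as the paper does. With that adjustment your proof is complete; the exhaustion over $R$ and the application of the locally uniform blow-up on every compact subset of $\widehat{B}_R(a_1)$ (which is smooth across $a_2,\dots,a_{i_0}$) then go through exactly as you describe.
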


\begin{remar}
In the paper \cite{VW2} J.L.~Vazquez and M.~Winkler proved that for any smooth domain $\Omega\subset\R^n$ containing the origin and any constant $\mu_0>0$, $n\ge 3$ and $0<m<\frac{n-2}{n}$, there exists initial data $\mu_0\le u_0\in C^{\infty}(\2{\Omega}\bs\{0\})$, $u_0(x)=\mu_0$ for all $x\in\1\Omega$,  such that the problem
\begin{equation*}
\begin{cases}
\begin{aligned}
u_t&=\Delta u^m\quad \mbox{ in }(\Omega\setminus\{0\})\times(0,\infty)\\
u(x,t)&=\mu_0 \qquad  \mbox{ on }\partial\Omega\times(0,\infty)\\
u(0,t)&=\infty \qquad  \forall t>0\\
u(x,0)&=u_0(x) \quad  \mbox{ in }\Omega\setminus\{0\}
\end{aligned}
\end{cases}
\end{equation*}
has a solution that oscillates between the values $\mu_0$ and $\infty$ in $L_{loc}^{\infty}(\Omega\setminus\{0\})$ as $t\to\infty$. We conjecture that similar result should hold for the family of solutions of \eqref{fde} which blows up at a finite number of points in the domain. More precisely we conjecture that for any constant $\mu_0>0$, $n\ge 3$ and $0<m<\frac{n-2}{n}$, and any set of points $a_1,\dots, a_{i_0}\in\Omega$ there exists initial data $\mu_0\le u_0\in C^{\infty}(\2{\Omega}\bs\{0\})$, $u_0(x)=\mu_0$ for all $x\in\1\Omega$,  such that the problem
\begin{equation*}
\begin{cases}
\begin{aligned}
u_t&=\Delta u^m\quad \mbox{ in }\widehat{\Omega}\times(0,\infty)\\
u(x,t)&=\mu_0 \qquad  \mbox{ on }\partial\Omega\times(0,\infty)\\
u(a_i,t)&=\infty \qquad  \forall t>0, i=1,\cdots,i_0\\
u(x,0)&=u_0(x) \quad  \mbox{ in }\widehat{\Omega}
\end{aligned}
\end{cases}
\end{equation*}
has a solution that oscillates between the values $\mu_0$ and $\infty$ in $L_{loc}^{\infty}(\widehat{\Omega})$ as $t\to\infty$.
\end{remar}

The plan of the paper is as follows.  In section \ref{existence-soln-section} we will  prove the existence of solutions of \eqref{Dirichlet-blow-up-problem} and \eqref{cauchy-blow-up-problem} with initial data $u_0$ that blows up at a finite number of points in the domain. We also obtain the blow-up rate near the blow-up points and prove that the singularities of the solutions of \eqref{Dirichlet-blow-up-problem} and \eqref{cauchy-blow-up-problem} are preserved for all positive time. We will prove the asymptotic large time behaviour of solutions of \eqref{Dirichlet-blow-up-problem} and \eqref{cauchy-blow-up-problem} in section \ref{Asymptotic-behaviour-of-solution}.

We start with some definitions. For any $t_2>t_1$, we say that $u$ is a solution of \eqref{fde} in $\Omega\times(t_1,t_2)$ if $u$ is positive in $\Omega\times(t_1,t_2)$ and satisfies \eqref{fde} in $\Omega\times(t_1,t_2)$ in the classical sense. For any $T>0$, $0\leq f\in L^1(\partial\Omega\times(0,T))$ and $0\le u_0\in L^1_{loc}(\widehat{\Omega})$, we say that $u$ is a solution (subsolution, supersolution respectively) of 
\begin{equation}\label{fde-Dirichlet-blow-up-problem}
\begin{cases}
\begin{aligned}
u_t&=\La u^m \qquad \mbox{in $\widehat{\Omega}\times(0,T)$}\\
u&=f \qquad\quad \mbox{on $\partial\Omega\times(0,T)$}\\
u(x,0)&=u_0(x) \quad\,\,\, \mbox{in $\widehat{\Omega}$}
\end{aligned}
\end{cases}
\end{equation}
if $u$ is positive in $\widehat{\Omega}\times(0,T)$ and satisfies \eqref{fde} in $\widehat{\Omega}\times(0,T)$ ($\le, \ge$ respectively) in the classical sense with
\begin{equation}\label{u-initial-value}
\|u(\cdot,t)-u_0\|_{L^1(K)}\to 0\quad \mbox{ as }t\to 0 
\end{equation}
for any compact set $K\subset\widehat{\Omega}$ and
\begin{equation}\label{eq-formula-for-weak-sols-of-main-problem}
\begin{aligned}
&\int_{t_1}^{t_2}\int_{\widehat{\Omega}}\left(u\eta_t+u^m\La\eta\right)\,dxdt\\
&\qquad \qquad =\int_{t_1}^{t_2}\int_{\partial\Omega}f^m\frac{\partial\eta}{\partial\nu}\,d\sigma dt+\int_{\widehat{\Omega}}u(x,t_2)\eta(x,t_2)\,dx-\int_{\widehat{\Omega}}u(x,t_1)\eta(x,t_1)\,dx
\end{aligned}
\end{equation}
($\geq, \leq$ respectively) for any $0<t_1<t_2<T$ and $\eta\in C_c^2((\overline{\Omega}\bs\left\{a_1,\cdots,a_{i_0}\right\})\times(0,T))$ satisfying $\eta\equiv 0$ on $\partial\Omega\times(0,T)$.  We say that $u$ is a solution (subsolution, supersolution respectively) of \eqref{Dirichlet-blow-up-problem} if $u$ is a solution of \eqref{fde-Dirichlet-blow-up-problem} for $T=\infty$ and satisfies \eqref{u-blow-up-infty}.

For any $0\leq f\in L^1(\partial\Omega\times(0,T))$ and $0\le u_0\in L^{1}_{loc}(\Omega)$, we say that $u$ is a solution (subsolution, supersolution respectively) of 
\begin{equation}\label{fde-Dirichlet-problem}
\begin{cases}
\begin{aligned}
u_t&=\La u^m \qquad \mbox{in $\Omega\times(0,T)$}\\
u(x,t)&=f \qquad \quad \mbox{on $\partial\Omega\times(0,T)$}\\
u(x,0)&=u_0(x) \quad\,\,\, \mbox{in $\Omega$}.
\end{aligned}
\end{cases}
\end{equation}
if $u$ is positive in $\Omega\times(0,T)$ and satisfies \eqref{fde} in $\Omega\times(0,T)$ ($\leq, \geq$ respectively) in the classcal sense, \eqref{u-initial-value} holds
for any compact set $K\subset\Omega$ and
\begin{equation}\label{eq-alinged-formula-for-weak-solution-of-u}
\int_{t_1}^{t_2}\int_{\Omega}(u\eta_t+u^m\La\eta)\,dx\,dt=\int_{t_1}^{t_2}\int_{\partial\Omega}f^m\frac{\partial\eta}{\partial\nu}\,d\sigma\, dt+\int_{\Omega}u\eta\,dx\bigg|_{t_1}^{t_2}
\end{equation}
($\geq, \leq$ respectively) for any $0<t_1<t_2<T$ and $\eta\in C_c^2(\overline{\Omega}\times(0,T))$ satisfying $\eta=0$ on $\partial\Omega\times(0,T)$. 

For any $0\le u_0\in L^1_{loc}(\widehat{\R^n})$ we say that $u$ is a solution (subsolution, supersolution respectively) of \eqref{cauchy-blow-up-problem} if $u$ is positive in $\widehat{\R^n}\times(0,\infty)$ and satisfies \eqref{fde} in $\widehat{\Omega}\times(0,\infty)$ ($\leq, \geq$ respectively) in the classical sense and \eqref{u-blow-up-infty}, \eqref{u-initial-value}, holds for any compact set $K\subset\widehat{\R^n}$. We say that $u$ is a solution of
\begin{equation*}
\begin{cases}
\begin{aligned}
u_t&=\La u^m \qquad \mbox{in $\Omega\times (0,\infty)$}\\
u(x,t)&=\infty \quad \quad\,\,\, \mbox{ on }\partial\Omega\times (0,\infty)\\
u(x,0)&=u_0(x) \quad\,\,\, \mbox{in $\Omega$}.
\end{aligned}
\end{cases}
\end{equation*}
if $u$ is positive in $\Omega\times(0,\infty)$ and satisfies \eqref{fde} in $\Omega\times (0,\infty)$ in the classical sense,  \eqref{u-initial-value} holds for any compact set $K\subset\Omega$, and
\begin{equation*}
\lim_{\substack{(y,s)\to (x,t)\\
(y,s)\in\Omega\times (0,\infty)}}u(y,s)=\infty\quad\forall (x,t)\in\1\Omega\times (0,\infty).
\end{equation*}
For any set $A\subset\R^n$, we let $\chi_A$ be the characteristic function of the set $A$. For any $a\in\R$, we let $a_+=\max 0,a)$. For any $x_0\in\R^n$, $\alpha>0$ and $h>0$, let
$\Gamma_{\alpha}(x_0)=\{(x,t):|x-x_0|<\alpha\sqrt{t}\}$ 
and $\Gamma_{\alpha}^h(x_0)=\{(x,t):(x,t)\in \Gamma_{\alpha}(x_0), 0<t<h\}$. 

\section{Existence of solutions and a priori estimates}\label{existence-soln-section}
\setcounter{equation}{0}
\setcounter{thm}{0}

In this section \ref{existence-soln-section} we will  use a modification of the technique of \cite{VW1} to prove the existence of solutions of \eqref{Dirichlet-blow-up-problem} and \eqref{cauchy-blow-up-problem} with initial data $u_0$ that blows up at a finite number of points in the domain.

For any non-negative functions $u_0\in L^1(\Omega)$ or $L^1(\R^n)$, $0\leq f\in L^{\infty}(\partial\Omega\times(0,\infty))$, and constants $M>0$, $0<\3<1$, let 
\begin{equation}\label{eq-def-of-u-0-epsilon-M}
\begin{cases}
\begin{aligned}
u_{0,M}(x)&=\min\left(u_0(x),M\right)\\
u_{0,\3,M}(x)&=\min\left(u_0(x),M\right)+\3  
\end{aligned}
\end{cases}
\end{equation}
and
\begin{equation}\label{eq-definition-of-f-epsilon-by-f}
f_{\3}(x,t)=f(x,t)+\3 \qquad \forall (x,t)\in\partial\Omega\times(0,\infty).
\end{equation}
By an argument similar to the proof of Lemma 2.3 of \cite{DaK}, we have the following result.
\begin{lemm}\label{comparison-lem-bd-domain}
Let $n\geq 1$, $0<m<1$ and $\Omega\subset\R^n$ be a smooth bounded domain. Let $0\leq u_{0,1}\leq u_{0,2}\in L^{\infty}(\Omega)$ and $0\leq f_1\le f_2\in L^{\infty}(\partial\Omega\times(0,T))$ be such that $\inf_{\partial\Omega_1\times(0,T)}f_i>0$ for $i=1,2$. Let $u_1$, $u_2\in L^{\infty}(\Omega\times(0,T))$ be subsolution and supersolution of \eqref{fde-Dirichlet-problem} with $f=f_1$, $f_2$ and $u_0=u_{0,1}$, $u_{0,2}$ respectively. If 
\begin{equation*}
\inf_{\Omega\times(0,T)}u_i>0\quad\forall i=1, 2,
\end{equation*} 
then
\begin{equation*}
u_1\leq u_2 \quad \mbox{ in }\Omega\times(0,T).
\end{equation*}
\end{lemm}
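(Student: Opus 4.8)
The plan is to prove a comparison principle for bounded sub/supersolutions of the Dirichlet problem \eqref{fde-Dirichlet-problem} that are uniformly bounded away from zero, following the scheme of Lemma 2.3 of \cite{DaK}. The key point is that once we stay in the region $\inf u_i>0$ together with $u_i\le C<\infty$, the nonlinearity $s\mapsto s^m$ is uniformly Lipschitz on the relevant range, so the equation behaves like a uniformly parabolic (indeed quasilinear) one and the usual Oleinik-type duality/test-function argument applies. First I would write $w=u_1-u_2$ and note that $w$ satisfies, in the weak sense on $\Omega\times(0,T)$, the differential inequality
\[
\int_{t_1}^{t_2}\!\!\int_\Omega\big(w\,\eta_t+(u_1^m-u_2^m)\,\La\eta\big)\,dx\,dt\;\ge\;0
\]
for suitable nonnegative test functions $\eta$ vanishing on $\partial\Omega$, using that $f_1\le f_2$ to drop the boundary term with the correct sign (here one uses $\1\eta/\1\nu\le 0$ on $\partial\Omega$ for $\eta\ge 0$ vanishing on the boundary), and that $u_{0,1}\le u_{0,2}$ together with \eqref{u-initial-value} to handle the initial slice.

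Next I would linearize: write $u_1^m-u_2^m=a(x,t)\,w$ where, by the mean value theorem,
\[
a(x,t)=m\,\xi(x,t)^{m-1},\qquad \xi(x,t)\ \text{between}\ u_1(x,t)\ \text{and}\ u_2(x,t),
\]
so that $0<c_0\le a(x,t)\le C_0<\infty$ on account of the two-sided bounds $0<\inf u_i\le u_i\le \|u_i\|_{L^\infty}$. Then the claim $w_+\equiv 0$ follows by testing the inequality for $w$ against the solution $\eta$ of a dual backward problem $\eta_t+a\,\La\eta=-\chi_{\{w>0\}}\cdot(\text{smooth approx})$, $\eta(\cdot,t_2)=0$, $\eta|_{\partial\Omega}=0$, with $a$ first regularized and bounded below, exactly as in \cite{DaK}, \cite{V3}; the uniform bounds on $a$ give the a priori estimates on $\eta$ (nonnegativity, boundedness, control of $\int\!\int a|\La\eta|^2$) needed to pass to the limit. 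Letting the regularization parameters go and then $t_1\to 0$ yields $\int_\Omega w_+(x,t_2)\,dx\le 0$ for a.e. $t_2\in(0,T)$, hence $u_1\le u_2$.

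I expect the main obstacle to be the \emph{localization near the boundary and the initial time}: the sub/supersolution hypotheses are only assumed in the interior together with the trace condition $u=f_i$ on $\partial\Omega$ and the $L^1_{\mathrm{loc}}$ initial condition \eqref{u-initial-value}, so the duality test function must be chosen to vanish on $\partial\Omega$ and one must justify that the boundary integral $\int_{\partial\Omega}(f_1^m-f_2^m)\,\1\eta/\1\nu$ appears with a favorable sign and that the $t_1\to0^+$ limit of $\int_\Omega w(x,t_1)\eta(x,t_1)\,dx$ is $\le 0$; this requires the approximation of the characteristic function $\chi_{\{w>0\}}$ and a careful cutoff argument, but no new idea beyond \cite{DaK}. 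A secondary technical point is ensuring the regularized coefficient $a_\delta$ stays in $[c_0,C_0]$ so that the parabolic estimates for $\eta$ are uniform; this is immediate from the standing bounds $0<\inf u_i\le u_i\le \sup u_i<\infty$, which is precisely why the hypothesis $\inf_{\Omega\times(0,T)}u_i>0$ is imposed.
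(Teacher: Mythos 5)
Your proposal is correct and follows essentially the same route as the paper, which gives no detailed proof but simply invokes "an argument similar to the proof of Lemma 2.3 of \cite{DaK}" — precisely the Oleinik-type duality argument you describe, with the linearized coefficient $a=m\xi^{m-1}$ kept uniformly elliptic by the two-sided bounds $0<\inf u_i\le u_i\le\sup u_i<\infty$ and the boundary term handled via $f_1\le f_2$ and $\partial\eta/\partial\nu\le 0$. Nothing further is needed.
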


By Lemma \ref{comparison-lem-bd-domain} and an argument similar to the proof of Theorem 3.5 in \cite{Hu2}  and the proof of the Aronson-Benilan inequality in Theorem 2.2 of \cite{Hu3}, we have the following result.

\begin{lemm}\label{approx-soln-lem1}
Let $n\geq 1$, $0<m<1$, $0\le f\in L^{\infty}(\partial\Omega\times[0,\infty))$ and $0\leq u_0\in L^1_{loc}(\widehat{\Omega})$. Then for any $\3>0$ and $0<M<\infty$ there exists a unique solution $u_{\3,M}$ of 
\begin{equation}\label{eq-aligned-problem-related-with-epsilon-main-one}
\begin{cases}
\begin{aligned}
u_t&=\La u^m \qquad  \mbox{in $\Omega\times(0,\infty)$}\\
u(x,t)&=f_{\3} \qquad\,\,\, \mbox{ on $\partial\Omega\times(0,\infty)$}\\
u(x,0)&=u_{0,\3,M}  \quad \mbox{ in $\Omega$}.
\end{aligned}
\end{cases}
\end{equation}
which satisfies
\begin{equation}\label{eq-upper-and-lower-bound-of-u-epsilon-M-09}
\3\le u_{\3,M}\le\max\left(M,\left\|f\right\|_{L^{\infty}\left(\partial\Omega\times[0,\infty)\right)}\right)+\3.
\end{equation}
Moreover if there exists a constant $T_0>0$ such that $f(x,t)$ is monotone decreasing in $t$ on $\partial\Omega\times\left(T_0,\infty\right)$, then $u_{\3,M}$ also satisfies the Aronson-Benilan inequality,
\begin{equation}\label{Aronson-Benilan-ineqn2}
u_t\le\frac{u}{(1-m)(t-T_0)}
\end{equation} 
in $\Omega\times (T_0,\infty)$.
\end{lemm}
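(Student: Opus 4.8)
The plan is to construct $u_{\3,M}$ as a limit of solutions of a uniformly parabolic approximation, read off the two-sided bound \eqref{eq-upper-and-lower-bound-of-u-epsilon-M-09} by comparison with constant solutions, and deduce the Aronson--Benilan inequality \eqref{Aronson-Benilan-ineqn2} from the scaling invariance of \eqref{fde} together with the monotonicity of $f$.

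The key point for existence is that the truncation renders the data harmless: $u_{0,\3,M}=\min(u_0,M)+\3$ is $a.e.$ defined on $\Omega$ and lies in $L^\infty(\Omega)$ with $\3\le u_{0,\3,M}\le M+\3$, while $\3\le f_\3=f+\3\le\|f\|_{L^\infty(\partial\Omega\times[0,\infty))}+\3$. On any range $[\3,M']$ the function $s\mapsto s^m$ has derivative $ms^{m-1}$ bounded above and below by positive constants, so \eqref{fde} is uniformly parabolic there. I would therefore approximate $u_{0,\3,M}$ in $L^1(\Omega)$ by smooth functions valued in $[\3,M+\3]$ and $f_\3$ by smooth functions valued in $[\3,\|f\|_{L^\infty}+\3]$ meeting the parabolic compatibility conditions, solve the resulting non-degenerate quasilinear Dirichlet problem on $\Omega\times(0,T)$ for each $T>0$ exactly as in the proof of Theorem~3.5 of \cite{Hu2}, use comparison with the constants $\3$ and $M':=\max(M,\|f\|_{L^\infty})+\3$ to obtain uniform bounds, and then use interior and boundary parabolic estimates to pass (along a subsequence) to a locally uniform limit $u_{\3,M}$ solving \eqref{eq-aligned-problem-related-with-epsilon-main-one} on $\Omega\times(0,T)$, attaining $u_{0,\3,M}$ in $L^1_{loc}$, and extending consistently to $\Omega\times(0,\infty)$. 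Uniqueness is then immediate from Lemma~\ref{comparison-lem-bd-domain}, since two such solutions are each bounded below by $\3>0$ and hence each dominates the other. The bound \eqref{eq-upper-and-lower-bound-of-u-epsilon-M-09} follows the same way: the constants $\3$ and $M'$ are stationary (hence sub- and super-) solutions, $u_{0,\3,M}\ge\3$ and $f_\3\ge\3$ give $u_{\3,M}\ge\3$ by Lemma~\ref{comparison-lem-bd-domain}, and $u_{0,\3,M}\le M+\3\le M'$, $f_\3\le\|f\|_{L^\infty}+\3\le M'$ give $u_{\3,M}\le M'$.

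For the Aronson--Benilan inequality, assume $f$ is monotone decreasing in $t$ on $\partial\Omega\times(T_0,\infty)$ for some $T_0>0$. For $\lambda\ge1$ put
\[
u_\lambda(x,t)=\lambda^{-\frac{1}{1-m}}\,u_{\3,M}\bigl(x,\,T_0+\lambda(t-T_0)\bigr),\qquad (x,t)\in\Omega\times(T_0,\infty),
\]
and observe, by a direct computation using the scaling invariance of \eqref{fde}, that $u_\lambda$ is again a classical solution of \eqref{fde} on $\Omega\times(T_0,\infty)$. At $t=T_0$ one has $u_\lambda(\cdot,T_0)=\lambda^{-\frac{1}{1-m}}u_{\3,M}(\cdot,T_0)\le u_{\3,M}(\cdot,T_0)$, and on $\partial\Omega\times(T_0,\infty)$, since $T_0+\lambda(t-T_0)\ge t>T_0$ and $f$ is decreasing on $(T_0,\infty)$,
\[
u_\lambda(x,t)=\lambda^{-\frac{1}{1-m}}\bigl(f(x,T_0+\lambda(t-T_0))+\3\bigr)\le f(x,t)+\3=f_\3(x,t).
\]
Both $u_\lambda$ and $u_{\3,M}$ are bounded below by positive constants on $\Omega\times(T_0,T)$, so Lemma~\ref{comparison-lem-bd-domain} (with the time origin translated to $T_0$) gives $u_\lambda\le u_{\3,M}$ on $\Omega\times(T_0,\infty)$, i.e.
\[
\lambda^{-\frac{1}{1-m}}\,u_{\3,M}\bigl(x,T_0+\lambda(t-T_0)\bigr)\le u_{\3,M}(x,t)\qquad\forall\lambda\ge1,\ t>T_0.
\]
The left-hand side equals the right at $\lambda=1$ and is $C^1$ in $\lambda$ near $\lambda=1$ by interior parabolic regularity of $u_{\3,M}$; differentiating at $\lambda=1^+$ and using that the right derivative is $\le0$ yields $-\frac{1}{1-m}u_{\3,M}(x,t)+(t-T_0)(u_{\3,M})_t(x,t)\le0$, which is exactly \eqref{Aronson-Benilan-ineqn2}.

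The only genuinely technical step is the existence construction — the smooth approximation of the data and the compactness passage — but the uniform positive lower bound $u\ge\3$ removes all degeneracy, so this is routine and is precisely the kind of argument carried out in \cite{Hu2}; the two-sided bound and the Aronson--Benilan inequality then reduce entirely to Lemma~\ref{comparison-lem-bd-domain} and the scaling identity above.
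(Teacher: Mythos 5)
Your proposal is correct and follows essentially the route the paper itself indicates: existence and uniqueness via a non-degenerate approximation and Lemma \ref{comparison-lem-bd-domain} as in Theorem 3.5 of \cite{Hu2}, the two-sided bound by comparison with the constant solutions $\3$ and $\max(M,\|f\|_{L^{\infty}})+\3$, and the Aronson--Benilan inequality by the standard time-scaling comparison argument (which is the argument of Theorem 2.2 of \cite{Hu3}), where the monotone decrease of $f$ on $(T_0,\infty)$ is exactly what makes the rescaled solution a subsolution on the lateral boundary. The computation $g'(1)\le 0$ with $g(\lambda)=\lambda^{-1/(1-m)}u(x,T_0+\lambda(t-T_0))$ correctly yields \eqref{Aronson-Benilan-ineqn2}.
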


In the following lemma, we will construct an upper bound for the solution $u_{\epsilon, M}$.
\begin{lemm}\label{lem-cf-lemma-2-3-of-cite-HK1}
Let $n\ge 1$, $0<m<1$ and $0<\delta_3<\min(1,\delta_0)$. Let $0\le f\in L^{\infty}(\partial\Omega\times[0,\infty))$  and $0\leq u_0\in L_{loc}^1(\widehat{\Omega})$ satisfy \eqref{upper-blow-up-rate-initial-data} for some integer $1\le i_1\le i_0$ and constants $\lambda_1',\cdots,\lambda_{i_1}'\in\R^+$, $\gamma_i', \cdots, \gamma_{i_1}'\in\left[\frac{2}{1-m},\infty\right)$. For any $0<\3<1$ and $M>0$, let $u_{\3,M}$ be the solution of \eqref{eq-aligned-problem-related-with-epsilon-main-one} given by Lemma \ref{approx-soln-lem1}. Then  there exists a  constant $A_0>0$ such that
\begin{equation}\label{u-upper-bd-phi-tidle}
u_{\3,M}(x,t)\leq \phi_{i,A_0}(x-a_i,t) \quad \forall 0<|x-a_i|<\delta_3,t\ge 0, 0<\3<1, M>0, i=1,\cdots,i_1
\end{equation}
holds where
\begin{equation}\label{phi-A-defn}
\phi_{i, A_0}(x,t)=\frac{A_0(1+t)^{\frac{1}{1-m}}}{|x|^{\gamma_i'}(\delta_3-|x|)^{\frac{2}{1-m}}} \qquad \forall i=1,\cdots,i_1.
\end{equation}
\end{lemm}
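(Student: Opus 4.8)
The plan is to construct $\phi_{i,A_0}(x-a_i,t)$ as a supersolution of the equation $u_t=\Delta u^m$ on the punctured ball $\widehat B_{\delta_3}(a_i)$, dominating $u_{\3,M}$ both on the parabolic boundary and as $|x-a_i|\to 0$, and then invoke the comparison principle (Lemma \ref{comparison-lem-bd-domain}) on annular regions $\{\eta<|x-a_i|<\delta_3\}$ and let $\eta\to 0$. First I would fix $i$, translate so that $a_i=0$, and write $\phi(x,t)=A_0(1+t)^{\frac{1}{1-m}}\,g(x)$ with $g(x)=|x|^{-\gamma_i'}(\delta_3-|x|)^{-\frac{2}{1-m}}$, so that $\phi^m$ has a clean separated form. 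Since $\phi_t=\frac{1}{1-m}(1+t)^{\frac{m}{1-m}}A_0 g$ and $\Delta\phi^m=A_0^m(1+t)^{\frac{m}{1-m}}\Delta(g^m)$, the supersolution inequality $\phi_t\ge\Delta\phi^m$ reduces, after cancelling the common factor $(1+t)^{\frac{m}{1-m}}$, to
\begin{equation*}
\frac{A_0}{1-m}\,g(x)\ \ge\ A_0^m\,\Delta\!\left(g(x)^m\right)\qquad\text{for }0<|x|<\delta_3.
\end{equation*}
Because $1-m>0$ this is achievable for $A_0$ large precisely where $\Delta(g^m)$ is bounded above by a multiple of $g$; the key technical step is to compute $\Delta(g^m)$ in polar coordinates (using that $g$ is radial) and check that the singular behaviour at $r=|x|\to 0$ and at $r\to\delta_3$ is no worse than that of $g$ itself.

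For the boundary and singularity matching: on $|x|=\delta_3$ one has $g\to\infty$, so $\phi\ge u_{\3,M}$ there automatically since $u_{\3,M}$ is bounded by \eqref{eq-upper-and-lower-bound-of-u-epsilon-M-09}; as $|x|\to 0$ one has $g(x)\to\infty$, so $\phi$ dominates the (bounded) $u_{\3,M}$ near the puncture; and at $t=0$ the hypothesis \eqref{upper-blow-up-rate-initial-data}, namely $u_0(x)\le\lambda_i'|x|^{-\gamma_i'}$, together with $(\delta_3-|x|)^{-\frac{2}{1-m}}\ge\delta_3^{-\frac{2}{1-m}}$ and the factor $(1+0)^{\frac{1}{1-m}}=1$, gives $\phi_{i,A_0}(x,0)\ge A_0\delta_3^{-\frac{2}{1-m}}|x|^{-\gamma_i'}\ge u_{0,\3,M}(x)$ provided $A_0$ is chosen at least $\delta_3^{\frac{2}{1-m}}(\lambda_i'+1)$ (absorbing the $+\3$ with $\3<1$, possibly at the cost of a mild adjustment using that $|x|^{-\gamma_i'}$ is bounded below on the annulus when one needs to cover the additive $\3$). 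One then picks $A_0$ to be the maximum of the finitely many constants coming from the supersolution computation and the initial-data matching, uniformly over $i=1,\dots,i_1$.

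With $\phi_{i,A_0}$ a supersolution on each annulus $\{\eta<|x-a_i|<\delta_3\}\times(0,T)$ that lies above $u_{\3,M}$ on the parabolic boundary of that annulus, Lemma \ref{comparison-lem-bd-domain} yields $u_{\3,M}\le\phi_{i,A_0}$ on the annulus; since $\phi_{i,A_0}$ does not depend on $\eta$ and the bound holds for every small $\eta>0$, letting $\eta\to 0$ gives \eqref{u-upper-bd-phi-tidle} on the full punctured ball $\widehat B_{\delta_3}(a_i)$, and the bound is uniform in $\3\in(0,1)$ and $M>0$ as required. I expect the main obstacle to be the polar-coordinate estimate of $\Delta(g^m)$: one must verify that the mixed singularity $|x|^{-m\gamma_i'}(\delta_3-|x|)^{-\frac{2m}{1-m}}$ produces, after two radial derivatives, terms controlled by $g=|x|^{-\gamma_i'}(\delta_3-|x|)^{-\frac{2}{1-m}}$ near \emph{both} endpoints simultaneously — near $r=\delta_3$ this is exactly the classical computation that makes $(\delta_3-r)^{-\frac{2}{1-m}}$ a barrier for the fast diffusion equation, while near $r=0$ one uses $m\gamma_i'<\gamma_i'$ together with $\gamma_i'\ge\frac{2}{1-m}$ (which forces the exponents to line up favourably) so that no term exceeds a constant multiple of $g$.
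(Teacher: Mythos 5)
Your proposal is correct and follows essentially the same route as the paper's proof: the same separated supersolution $\phi_{i,A_0}$, the same use of $\gamma_i'\ge\frac{2}{1-m}$ to control $\Delta(g^m)$ by a multiple of $g$ near the puncture, the same initial-data matching via \eqref{upper-blow-up-rate-initial-data} with $|x-a_i|<\delta_3<1$ absorbing the additive $\3$, and comparison on annuli followed by a limiting argument. The only point to tidy is that Lemma \ref{comparison-lem-bd-domain} requires bounded data, so the comparison must be run on annuli $\{\delta'<|x-a_i|<\delta''\}$ with $\delta''<\delta_3$ as well (using that $\phi_{i,A_0}$ exceeds $\max\left(M,\|f\|_{L^{\infty}}\right)+1$ on collars near both $|x-a_i|=0$ and $|x-a_i|=\delta_3$), letting $\delta'\to 0$ and $\delta''\to\delta_3$ at the end, exactly as the paper does.
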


\begin{proof}
We will use a modification of the proof of Lemma 2.3 of \cite{HK2} to prove the lemma. Without loss of generality it suffices to prove \eqref{u-upper-bd-phi-tidle} for $i=i_1=1$. Let
\begin{equation}\label{eq-condition-of-A-1-by-max-1}
A_0=\max\left\{\lambda_1'+1,\left(\frac{m\left((1-m)^2(m\gamma_1'+1)\gamma_1'+2(1-m)(n-1)+2(1+m)\right)}{1-m}\right)^{\frac{1}{1-m}}\right\}.
\end{equation}
Then by \eqref{upper-blow-up-rate-initial-data},
\begin{equation}\label{eq-compare-at-initial-time-between-phi-A-1-and-u-0-epsilon-M}
\phi_{1,A_0}(x-a_1,0)\geq u_0(x)+1 \geq u_{0,\3,M}(x) \qquad \forall 0<|x-a_1|<\delta_3,0<\3<1,M>0.
\end{equation}
Since
\begin{equation*}
\phi_{1, A_0}(x-a_1,t)=\frac{A_0(1+t)^{\frac{1}{1-m}}}{r^{\gamma_1'}(\delta_3-r)^{\frac{2}{1-m}}}\to\infty
\end{equation*}
uniformly on $t\in [0,\infty)$ when $r=|x-a_1|\to\delta_3^-$ or $r=|x-a_1|\to 0^+$,
there exist constants $\delta'$, $\delta''\in(0,\delta_3)$ such that
\begin{equation}\label{eq-condition-of-tilde-phi-on-boundary-1}
\phi_{1, A_0}(x-a_1,t)>\max\left(M,\|f\|_{L^{\infty}\left(\partial\Omega\times(0,\infty)\right)}\right)+1\qquad\forall x\in (\2{B_{\delta'}(a_1)}\setminus\{a_1\})\cup (B_{\delta_3}(a_1)\setminus B_{\delta''}(a_1)),t\geq 0.
\end{equation}
By \eqref{eq-condition-of-A-1-by-max-1},
\begin{align}\label{eq-condition-of-tilde-phi-in-interior-1}
\La\phi^m_{1, A_0}(x,t)=&A_0^m(1+t)^{\frac{m}{1-m}}\left[\frac{m\gamma_1'\left(m\gamma_1'-n+2\right)}{|x|^{m\gamma_1'+2}\left(\delta_3-|x|\right)^{\frac{2m}{1-m}}}+\frac{2m(n-1-2m\gamma_1')}{(1-m)|x|^{m\gamma_1'+1}(\delta_3-|x|)^{\frac{1+m}{1-m}}} +\frac{2m(1+m)}{(1-m)^2|x|^{m\gamma_1'}(\delta_3-|x|)^{\frac{2}{1-m}}}\right]\notag\\
\le&\frac{mA_0^m\left[(1-m)^2(m\gamma_1'+1)\gamma_1'+2(1-m)(n-1)+2(1+m)\right]}{(1-m)^2|x|^{m\gamma_1'+2}(\delta_3-|x|)^{\frac{2}{1-m}}}(1+t)^{\frac{m}{1-m}}\notag\\
\le&\phi_{A_0,t}(x,t)\qquad\qquad\qquad\qquad\qquad\qquad \forall 0<|x|<\delta_3, t\geq 0.
\end{align}
Let $\delta_1'\in (0,\delta')$ and $\delta_1''\in (\delta'',\delta_3)$. By \eqref{eq-compare-at-initial-time-between-phi-A-1-and-u-0-epsilon-M}, \eqref{eq-condition-of-tilde-phi-on-boundary-1} and \eqref{eq-condition-of-tilde-phi-in-interior-1}, $\phi_{A_0}(x-a_1,t)$ is a supersolution of
\begin{equation}\label{w-eqn}
\begin{cases}
w_t=\La w^m\qquad\qquad \qquad\qquad\qquad\,\,\mbox{ in }\left(B_{\delta_1''}(a_1)\bs B_{\delta_1'}(a_1)\right)\times(0,\infty)\\
w=\max\left(M,\|f\|_{L^{\infty}(\partial\Omega\times(0,\infty))}\right)+1 \quad\,\mbox{ on }\partial \left(B_{\delta_1''}(a_1)\bs B_{\delta_1'}(a_1)\right)\times(0,\infty)\\
w(x,0)=u_{0,\3,M}(x)\qquad\qquad\qquad\quad\mbox{ in }B_{\delta_1''}(a_1)\bs B_{\delta_1'}(a_1).
\end{cases}
\end{equation}
Since $u_{\3,M}$ is a subsolution of \eqref{w-eqn} for any $0<\3<1$ and $M>0$, by the Lemma \ref{comparison-lem-bd-domain},
\begin{equation}\label{eq-compare-between-u-epsilon-M-amd-phi-on-B-delta-bs-B-delta-and-t-1-rt-2}
u_{\3, M}(x,t)\leq \phi_{1,A_0}(x-a_1,t) \qquad \forall \delta_1'<|x-a_1|<\delta_1'',t\ge 0, 0<\3<1, M>0.
\end{equation}
Letting $\delta'\to 0$ and $\delta''\to\delta_3$ in \eqref{eq-compare-between-u-epsilon-M-amd-phi-on-B-delta-bs-B-delta-and-t-1-rt-2}, we get \eqref{u-upper-bd-phi-tidle} and the lemma follows.
\end{proof}

\begin{lemm}\label{lem-aligned-bound-of-integral-term-of-psi-to-some-power-and-La-psi}
Let $n\geq 3$, $0<m<\frac{n-2}{n}$, $\delta_0>\delta_1>\delta_2>0$ and $0\leq\eta\in C_c^{\infty}(B_{\delta_0})$ be such that 
\begin{equation}\label{eta-defn}
\begin{cases}
\begin{aligned}
\eta(x)&>0\quad\quad\,\, \forall  |x|<\delta_1\\
\eta(x)&=0 \quad\quad\,\, \forall \delta_1\leq|x|\leq\delta_0\\
\eta(x)&=\delta_2^{-\beta_1/b_1} \quad \forall |x|=\delta_2
\end{aligned}
\end{cases}
\end{equation}
for some constants $b_1>\frac{2}{1-m}$ and $\beta_1\in\left[0,n-\frac{2}{1-m}\right)$. Let $\psi\in C^{\infty}(\widehat{B_{\delta_0}})$ be such that
\begin{equation}\label{psi-defn}
\begin{cases}
\begin{aligned}
\psi(x)&=|x|^{-\beta_1}\quad\, \forall 0<|x|\leq \delta_2\\
\psi(x)&=\eta^{b_1}(x) \quad \forall \delta_2\leq |x|\leq \delta_0.
\end{aligned}
\end{cases}
\end{equation}
Then there exists a constant $C_{\psi}>0$ such that
\begin{equation}\label{eq-aligned-bound-of-integral-term-of-psi-to-some-power-and-La-psi}
\int_{\widehat{B}_{\delta_1}}\psi^{-\frac{m}{1-m}}\left|\La\psi\right|^{\frac{1}{1-m}}dx\leq C_{\psi}<\infty.
\end{equation}
\end{lemm}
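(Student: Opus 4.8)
The plan is to split $\widehat{B}_{\delta_1}$ into the two regions on which $\psi$ is given by an explicit formula and estimate the integral on each separately: the punctured ball $\widehat{B}_{\delta_2}=\{0<|x|\le\delta_2\}$, where $\psi(x)=|x|^{-\beta_1}$, and the annulus $B_{\delta_1}\setminus B_{\delta_2}$, where $\psi=\eta^{b_1}$ and $\eta>0$ (because $|x|<\delta_1$ there). Since the interface $\{|x|=\delta_2\}$ has Lebesgue measure zero and $\psi\in C^\infty(\widehat{B_{\delta_0}})$ is genuinely smooth across it, there is no singular contribution to $\La\psi$ there, so it suffices to bound $\int_{\widehat{B}_{\delta_2}}\psi^{-m/(1-m)}|\La\psi|^{1/(1-m)}\,dx$ and $\int_{B_{\delta_1}\setminus B_{\delta_2}}\psi^{-m/(1-m)}|\La\psi|^{1/(1-m)}\,dx$ and add; the sum will be our $C_\psi$.

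On $\widehat{B}_{\delta_2}$ we use that for the radial function $|x|^{-\beta_1}$ one has $\La(|x|^{-\beta_1})=\beta_1(\beta_1+2-n)|x|^{-\beta_1-2}$, hence
\begin{equation*}
\psi^{-\frac{m}{1-m}}|\La\psi|^{\frac{1}{1-m}}=|\beta_1(\beta_1+2-n)|^{\frac{1}{1-m}}\,|x|^{\frac{\beta_1 m-\beta_1-2}{1-m}}=|\beta_1(\beta_1+2-n)|^{\frac{1}{1-m}}\,|x|^{-\beta_1-\frac{2}{1-m}}.
\end{equation*}
Passing to polar coordinates, $\int_{\widehat{B}_{\delta_2}}\psi^{-m/(1-m)}|\La\psi|^{1/(1-m)}\,dx$ is a finite constant times $\int_0^{\delta_2}r^{\,n-1-\beta_1-\frac{2}{1-m}}\,dr$, which is finite precisely when $\beta_1<n-\frac{2}{1-m}$. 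This is exactly the hypothesis $\beta_1\in[0,n-\frac{2}{1-m})$, and the interval is nonempty because $0<m<\frac{n-2}{n}$ forces $\frac{2}{1-m}<n$.

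On $B_{\delta_1}\setminus B_{\delta_2}$ we use $\psi=\eta^{b_1}$ with $b_1>\frac{2}{1-m}>2$, so $\psi\in C^2$ and
\begin{equation*}
\La\psi=b_1\eta^{b_1-2}\big[(b_1-1)|\nabla\eta|^2+\eta\,\La\eta\big],
\end{equation*}
whence
\begin{equation*}
\psi^{-\frac{m}{1-m}}|\La\psi|^{\frac{1}{1-m}}=b_1^{\frac{1}{1-m}}\,\eta^{\frac{b_1(1-m)-2}{1-m}}\,\big|(b_1-1)|\nabla\eta|^2+\eta\,\La\eta\big|^{\frac{1}{1-m}}.
\end{equation*}
Since $b_1>\frac{2}{1-m}$, the exponent $\frac{b_1(1-m)-2}{1-m}$ is strictly positive; since $\eta\in C_c^\infty$, the quantities $\eta$, $|\nabla\eta|$, $\La\eta$ are all bounded on $\overline{B_{\delta_1}}$. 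Hence this integrand is bounded on $B_{\delta_1}\setminus B_{\delta_2}$ by an explicit constant (and in fact tends to $0$ as $|x|\to\delta_1^-$), so its integral over a set of finite measure is finite.

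The only delicate points are the two endpoints of the radial variable, and in each case the relevant hypothesis is precisely what makes the estimate close. Near $|x|=\delta_1$ the weight $\psi^{-m/(1-m)}=\eta^{-b_1 m/(1-m)}$ blows up, but it is over-compensated by $|\La\psi|^{1/(1-m)}$, the compensation being quantified by $b_1>\frac{2}{1-m}$, which makes the net power of $\eta$ positive. Near $|x|=0$ the integrand is an explicit negative power of $|x|$, and its local integrability in $\R^n$ is exactly $\beta_1<n-\frac{2}{1-m}$. I expect the "main obstacle" to be purely bookkeeping: carrying out the two differentiations of $\psi$ cleanly and verifying that the two exponent inequalities match the stated hypotheses; no compactness argument or PDE theory is needed.
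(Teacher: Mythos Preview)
Your proposal is correct and follows essentially the same approach as the paper: split the integral at $|x|=\delta_2$, compute $\La(|x|^{-\beta_1})$ explicitly on the inner punctured ball and use $\beta_1<n-\frac{2}{1-m}$ for integrability, and on the annulus expand $\La(\eta^{b_1})$ and use $b_1>\frac{2}{1-m}$ so that the net power of $\eta$ is nonnegative and everything is bounded by $L^\infty$ norms of $\eta,\nabla\eta,\La\eta$. The paper's proof is line-for-line the same computation.
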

\begin{proof}
By \eqref{psi-defn},
\begin{equation}\label{eq-aligned-split-of-integral-term-of-psi-to-some-power-and-La-psi-intwo-tow}
\begin{aligned}
\int_{\widehat{B}_{\delta_1}}\psi^{-\frac{m}{1-m}}\left|\La\psi\right|^{\frac{1}{1-m}}dx&=\int_{\widehat{B}_{\delta_2}}|x|^{\frac{m\beta_1}{1-m}}\left|\La\left(|x|^{-\beta_1}\right)\right|^{\frac{1}{1-m}}dx+\int_{B_{\delta_1}\bs B_{\delta_2}}\eta^{-\frac{mb_1}{1-m}}\left|\La\left(\eta^{b_1}\right)\right|^{\frac{1}{1-m}}dx\\
&=:I_1+I_{2}.
\end{aligned}
\end{equation}
Since $\beta_1+\frac{2}{1-m}<n$,
\begin{equation}\label{eq-aligned-computation-for-I-1-in-integral-term-of-psi-La-psi}
\begin{aligned}
I_1=\beta_1^{\frac{1}{1-m}}(n-\beta_1-2)^{\frac{1}{1-m}}\int_{\widehat{B}_{\delta_2}}|x|^{-\beta_1-\frac{2}{1-m}}dx=\frac{\omega_n\beta_1^{\frac{1}{1-m}}(n-\beta_1-2)^{\frac{1}{1-m}}}{n-\beta_1-\frac{2}{1-m}}\cdot\delta_2^{n-\beta_1-\frac{2}{1-m}}
\end{aligned}
\end{equation}
and
\begin{equation}\label{eq-aligned-computation-for-I-2-in-integral-term-of-psi-La-psi}
\begin{aligned}
I_2&=b_1^{\frac{1}{1-m}}\int_{B_{\delta_1}\bs B_{\delta_2}}\eta^{b_1-\frac{2}{1-m}}\left|\eta\La\eta+\left(b_1-1\right)\left|\nabla\eta\right|^2\right|^{\frac{1}{1-m}}dx\\
&\leq b_1^{\frac{1}{1-m}}\left\|\eta\right\|^{b_1-\frac{2}{1-m}}_{L^{\infty}}\left(\left\|\eta\right\|_{L^{\infty}}\left\|\La\eta\right\|_{L^{\infty}}+\left(b_1-1\right)\left\|\nabla\eta\right\|^2_{L^{\infty}}\right)^{\frac{1}{1-m}}\left|B_{\delta_1}\bs B_{\delta_2}\right|.
\end{aligned}
\end{equation}
where $\omega_n$ is the surface area of the unit sphere $S^{n-1}$ in $\R^n$. By \eqref{eq-aligned-split-of-integral-term-of-psi-to-some-power-and-La-psi-intwo-tow}, \eqref{eq-aligned-computation-for-I-1-in-integral-term-of-psi-La-psi} and \eqref{eq-aligned-computation-for-I-2-in-integral-term-of-psi-La-psi}, \eqref{eq-aligned-bound-of-integral-term-of-psi-to-some-power-and-La-psi} holds for some constant $C_{\psi}>0$ and the lemma follows.
\end{proof}

\begin{lemm}\label{lem-similar-L-1-contraction-for-positivity-of-sol-u-epsilon-M}
Let $n\ge 3$, $0<m<\frac{n-2}{n}$, $0\le f\in L^{\infty}(\partial\Omega\times[0,\infty))$  and  $0\le u_0\in L^1_{loc}(\widehat{\Omega})$.  Let $\psi$ be given by \eqref{psi-defn} with $\eta$ given by \eqref{eta-defn} for some constants $0<\delta_2<\delta_1<\delta_0$, $b_1>\frac{2}{1-m}$ and $\beta_1\in\left[0,n-\frac{2}{1-m}\right)$. Let  $\psi_{x_0}(x)=\psi(x-x_0)$ for any $x_0\in\Omega$. Then there exists a constant $C_1>0$ such that for any $\3\in\left(0,1\right)$ and $M>0$ the solution $u_{\3,M}$ of \eqref{eq-aligned-problem-related-with-epsilon-main-one} satisfies
\begin{equation}\label{soln-loc-integral-lower-bd1}
\int_{\widehat{B}_{\delta_1}\left(a_i\right)}u_{\3,M}(x,t)\psi_{a_i}(x)\,dx\geq e^{-t}\int_{\widehat{B}_{\delta_1}\left(a_i\right)}u_{0,\3,M}(x)\psi_{a_i}(x)\,dx-C_1\left(1-e^{-t}\right) \qquad \forall t>0,\,\,i=1,2,\cdots,i_0.
\end{equation}
\end{lemm}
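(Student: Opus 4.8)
The plan is to test the equation satisfied by $u_{\3,M}$ against the weight $\psi_{a_i}$, integrate by parts carefully around the puncture at $a_i$, and derive a linear differential inequality for
\[
F(t):=\int_{\widehat{B}_{\delta_1}(a_i)}u_{\3,M}(x,t)\,\psi_{a_i}(x)\,dx .
\]
Fix $i$ and abbreviate $u=u_{\3,M}$, $\psi=\psi_{a_i}$. Since $\delta_1<\delta_0$, the ball $B_{\delta_1}(a_i)$ is compactly contained in $\Omega$ and contains no other point $a_j$, so by Lemma \ref{approx-soln-lem1} $u$ is a bounded classical (hence smooth) solution of $u_t=\Delta u^m$ on a neighbourhood of $\overline{B_{\delta_1}(a_i)}$ for every $t>0$, with $\3\le u\le\max(M,\|f\|_{L^\infty})+\3$. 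As $\psi\le|\cdot-a_i|^{-\beta_1}\in L^1(B_{\delta_1}(a_i))$ (because $\beta_1<n$) and $\partial_t u$ is continuous, differentiation under the integral sign is justified and $F\in C^1((0,\infty))$ with $F'(t)=\int_{\widehat{B}_{\delta_1}(a_i)}\Delta(u^m)\,\psi\,dx$, the integral converging absolutely since $\Delta(u^m)$ is bounded near $a_i$.

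Next I would apply Green's second identity on the annulus $B_{\delta_1}(a_i)\setminus\overline{B_\rho(a_i)}$ and let $\rho\to0$. On the outer sphere $\partial B_{\delta_1}(a_i)$ both $\psi$ and $\nabla\psi$ vanish, because there $\psi=\eta^{b_1}(\cdot-a_i)$ with $\eta$ and all its derivatives vanishing on $\{\delta_1\le|x|\le\delta_0\}$ and $b_1>2$; so the outer boundary contributes nothing. On the inner sphere $\partial B_\rho(a_i)$, where $\psi=|\cdot-a_i|^{-\beta_1}$, we have $|\psi|\sim\rho^{-\beta_1}$ and $|\partial_\nu\psi|\sim\rho^{-\beta_1-1}$ while $u^m$ and $\nabla(u^m)$ stay bounded; since the sphere has area $\sim\rho^{n-1}$ and $\beta_1<n-\tfrac{2}{1-m}<n-2$, both inner boundary terms are $O(\rho^{n-2-\beta_1})\to0$. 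Hence, also using $|\Delta\psi|\sim|\cdot-a_i|^{-\beta_1-2}\in L^1$ (again $\beta_1+2<n$),
\[
F'(t)=\int_{\widehat{B}_{\delta_1}(a_i)}u^m\,\Delta\psi\,dx\ \ge\ -\int_{\widehat{B}_{\delta_1}(a_i)}u^m\,|\Delta\psi|\,dx .
\]

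The key step is Young's inequality with the conjugate exponents $\tfrac1m$, $\tfrac1{1-m}$, written as
\[
u^m|\Delta\psi|=(u\psi)^m\bigl(\psi^{-m}|\Delta\psi|\bigr)\le m\,u\psi+(1-m)\,\psi^{-\frac{m}{1-m}}\,|\Delta\psi|^{\frac1{1-m}},
\]
valid since $\psi>0$ on $\widehat{B}_{\delta_1}(a_i)$. Integrating over $\widehat{B}_{\delta_1}(a_i)$ and invoking Lemma \ref{lem-aligned-bound-of-integral-term-of-psi-to-some-power-and-La-psi}, applied to the translate $\psi_{a_i}$ (whose weighted integral equals the same constant $C_\psi$), gives $F'(t)\ge-mF(t)-(1-m)C_\psi\ge-F(t)-C_1$ with $C_1:=(1-m)C_\psi$, where I used $F\ge0$ and $m<1$. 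Therefore $\tfrac{d}{dt}\bigl(e^tF(t)\bigr)\ge-C_1e^t$ on $(0,\infty)$; integrating from $s$ to $t$ and letting $s\to0^+$ — using $u(\cdot,s)\to u_{0,\3,M}$ in $L^1(\overline{B_{\delta_1}(a_i)})$ together with the uniform bound on $u$ and $\psi\in L^1$ to conclude $F(s)\to\int_{\widehat{B}_{\delta_1}(a_i)}u_{0,\3,M}\psi_{a_i}\,dx$ — yields exactly \eqref{soln-loc-integral-lower-bd1}; the constant $C_1$ is independent of $\3$, $M$ and $i$, as required.

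I expect the only delicate point to be the vanishing of the inner boundary terms in the integration by parts (and the absolute convergence of the integrals involving $\Delta\psi$), which is precisely where the hypothesis $\beta_1<n-\tfrac2{1-m}$ enters; the differentiation under the integral sign, the Young-inequality bookkeeping, and the Gronwall-type integration are routine. Should one prefer to avoid appealing to the classical regularity of $u_{\3,M}$, the same computation can be carried out on the weak formulation, testing against $\psi_{a_i}$ multiplied by a cutoff equal to $0$ on $B_\rho(a_i)$ and $1$ off $B_{2\rho}(a_i)$ and then sending $\rho\to0$.
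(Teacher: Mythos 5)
Your proof is correct and follows essentially the same route as the paper: test the equation against $\psi_{a_i}$, integrate by parts on the punctured ball, kill the inner boundary terms using $\beta_1<n-\frac{2}{1-m}$, apply Young's inequality together with Lemma \ref{lem-aligned-bound-of-integral-term-of-psi-to-some-power-and-La-psi}, and integrate the resulting Gronwall-type inequality down to $t=0$. The only (immaterial) difference is that the paper first replaces $u_{\3,M}$ by the auxiliary solution $v_{\3,M}$ of \eqref{eq-aligned-problem-related-with-epsilon-main-one} with constant boundary data $\3$ and the same initial data, proves the estimate for $v_{\3,M}$, and then invokes the comparison $u_{\3,M}\ge v_{\3,M}$ from Lemma \ref{comparison-lem-bd-domain}, whereas you run the computation directly on $u_{\3,M}$, which works equally well since $u_{\3,M}$ satisfies the same two-sided bounds \eqref{eq-upper-and-lower-bound-of-u-epsilon-M-09} and hence the same interior gradient estimates.
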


\begin{proof}
We will use a modification of the proof of Lemma 3.5 of \cite{VW1} to prove the lemma. Without loss of generality it suffices to prove \eqref{soln-loc-integral-lower-bd1} for $i=i_0=1$ and $a_1=\left(0,\cdots,0\right)$. For any $\3\in\left(0,1\right)$ and $M>0$ let $v_{\3,M}$ be the solution of \eqref{eq-aligned-problem-related-with-epsilon-main-one} with $f_{\3}$ being replaced by $\3$ that satisfies 
\begin{equation}\label{v-epsilon-m-lower-upper-bd0}
\3\le v_{\3,M}\le M+\3.
\end{equation} 
Then by Lemma \ref{comparison-lem-bd-domain},
\begin{equation}\label{v-u-epsilon-m-compare}
u_{\3,M}\ge v_{\3,M}\quad\mbox{ in }\widehat{\Omega}\times (0,\infty)\quad\forall 0<\3<1,M>0.
\end{equation}
By \eqref{eq-aligned-problem-related-with-epsilon-main-one} and the Green theorem for any $0<\delta<\delta_2$ and $t>0$,
\begin{align}\label{eq-integration-after-multi-test-function}
\frac{d}{dt}\left(\int_{B_{\delta_1}\bs B_{\delta}}v_{\3,M}(x,t)\psi_{a_1}(x)\,dx\right)
=&\int_{B_{\delta_1}\bs B_{\delta}} v^m_{\3,M}(x,t)\La\psi_{a_1}(x)\,dx
-\int_{\partial B_{\delta}}\psi_{a_1}(\sigma) \frac{\partial v^m_{\3,M}(\sigma,t)}{\partial r}\,d\sigma\notag\\
&\qquad +\int_{\partial B_{\delta}} v^m_{\3,M}(\sigma,t)\frac{\partial\psi_{a_1}(\sigma)}{\partial r}\,d\sigma\notag\\
=&:I_1+I_2+I_3.
\end{align}
By \eqref{v-epsilon-m-lower-upper-bd0} the equation \eqref{fde} for $v_{\3,M}$ is uniformly parabolic on $\2{\Omega}\times[0,\infty)$. Hence by the parabolic Schauder estimates \cite{LSU} for any $t_1>0$ there exists a constant $C_1>0$ such that
\begin{equation}\label{eq-nabla-u-ep-M-bounded-by-constant-depending-on-ep-and-M}
|\nabla v_{\3,M}|\leq C_1 \qquad \mbox{in $B_{\delta_1}\times[t_1,\infty)$}.
\end{equation}
By \eqref{psi-defn}, \eqref{v-epsilon-m-lower-upper-bd0}  and \eqref{eq-nabla-u-ep-M-bounded-by-constant-depending-on-ep-and-M},
\begin{equation}\label{eq-control-of-I-2-as-delta-to-zero}
|I_2|\leq mC_1\3^{m-1}\omega_n\delta^{n-\beta_1-1}\le C_2\delta^{n-\beta_1-2} \qquad \forall 0<\delta<\delta_2,\,\, t_1\leq t<T.
\end{equation}
and
\begin{equation}\label{eq-control-of-I-3-as-delta-to-zero}
|I_3|\leq C_2\delta^{n-\beta_1-2} \qquad \forall 0<\delta<\delta_2,\,\, 0<t<T
\end{equation}
for some constant $C_2>0$. By Lemma \ref{lem-aligned-bound-of-integral-term-of-psi-to-some-power-and-La-psi} and Young's inequality,
\begin{align}\label{eq-aligned-lower-bound-of-I-1=after-youngs-inequality}
\left|I_1\right|&\leq \left(\int_{B_{\delta_1}\bs B_{\delta}}v_{\3,M}(x,t)\psi_{a_1}(x)\,dx\right)^{m}\left(\int_{B_{\delta_1}\bs B_{\delta}}\psi_{a_1}^{-\frac{m}{1-m}}\left|\La\psi_{a_1}\right|^{\frac{1}{1-m}}dx\right)^{1-m}\notag\\
&\leq \int_{B_{\delta_1}\bs B_{\delta}}v_{\3,M}(x,t)\psi_{a_1}(x)\,dx+\int_{\widehat{B}_{\delta_1}}\psi_{a_1}^{-\frac{m}{1-m}}\left|\La\psi_{a_1}\right|^{\frac{1}{1-m}}dx\notag\\
&\leq \int_{B_{\delta_1}\bs B_{\delta}}v_{\3,M}(x,t)\psi_{a_1}(x)\,dx+C_{\psi}, \qquad  \forall0<\delta<\delta_1, 0<t<T.
\end{align}
By\eqref{eq-integration-after-multi-test-function}, \eqref{eq-control-of-I-2-as-delta-to-zero}, \eqref{eq-control-of-I-3-as-delta-to-zero} and \eqref{eq-aligned-lower-bound-of-I-1=after-youngs-inequality},
\begin{equation}\label{eq-after-applying-conclusion-of-I1-I2-I3}
\frac{d}{dt}\left(\int_{B_{\delta_1}\bs B_{\delta}}v_{\3,M}(x,t)\psi_{a_1}(x)\,dx\right)\geq -\int_{B_{\delta_1}\bs B_{\delta}} v_{\3,M}(x,t)\psi_{a_1}(x)\,dx-\left(C_{\psi}+2C_2\delta^{n-\beta_1-2}\right) \quad \forall 0<\delta<\delta_2,t_1\leq t<T.
\end{equation}
Integrating \eqref{eq-after-applying-conclusion-of-I1-I2-I3} over $\left(t_1,t\right)$, $t_1\leq t<T$,
\begin{equation}\label{eq-after-integration-w-r-t-time-in-after-apply-I-1-3}
e^t\int_{B_{\delta_1}\bs B_{\delta}}v_{\3,M}(x,t)\psi_{a_1}(x)\,dx\geq e^{t_1}\int_{B_{\delta_1}\bs B_{\delta}}v_{\3,M}(x,t_1)\psi_{a_1}(x)\,dx-(C_{\psi}+2C_2\delta^{n-\beta_1-2})(e^t-e^{t_1}) \quad \forall t_1\le t<T.
\end{equation}
Since $\beta_1<n-\frac{2}{1-m}$, letting first $t_1\to 0$ and then $\delta\to 0$  in \eqref{eq-after-integration-w-r-t-time-in-after-apply-I-1-3}, 
\begin{equation}\label{v-soln-loc-integral-lower-bd1}
\int_{\widehat{B}_{\delta_1}(a_i)}v_{\3,M}(x,t)\psi_{a_i}(x)\,dx\ge e^{-t}\int_{\widehat{B}_{\delta_1}(a_i)}u_{0,\3,M}(x)\psi_{a_i}(x)\,dx-C_{\psi}(1-e^{-t}) \qquad \forall t>0,\,\,i=1,2,\cdots,i_0.
\end{equation} 
By \eqref{v-u-epsilon-m-compare} and \eqref{v-soln-loc-integral-lower-bd1} we get \eqref{soln-loc-integral-lower-bd1} and the lemma follows. 
\end{proof}

\begin{lemm}\label{lem-property-of-u-epsilon-M-with-test-function}
Let $n\ge 3$, $0<m<\frac{n-2}{n}$, $0<\delta_2<\delta_1<\delta_0$, $0\le f\in L^{\infty}(\partial\Omega\times[0,\infty))$ and $0\leq u_0\in L^p_{loc}\left(\widehat{\Omega}\right)$ for some constant $p>\frac{n(1-m)}{2}$ satisfy \eqref{u0-lower-blow-up-rate} for some constants  $\lambda_1$, $\cdots$, $\lambda_{i_0}\in\R^+$, $\gamma_1$, $\cdots$, $\gamma_{i_0}\in\left(\frac{2}{1-m},\infty\right)$.  Let $\psi$ be given by \eqref{psi-defn} with $\eta$ given by \eqref{eta-defn} for some constants $b_1>\frac{2}{1-m}$ and $\beta_1\in\left[0,n-\frac{2}{1-m}\right)$. Let $\psi_{a_i}(x)=\psi(x-a_i)$ with $\beta_1=\left(n-\gamma_i\right)_+$ for all $i=1,2,\cdots,i_0$. Then for any $T>0$ and $C_2>0$ there exists a constant $M_0=M_0(T,C_2)>0$ such that for any $M\ge M_0$ the solution $u_{\3,M}$ of \eqref{eq-aligned-problem-related-with-epsilon-main-one} satisfies
\begin{equation}\label{eq-strictly-positivity-of-solution-u-ep-M}
\int_{\widehat{B}_{\delta_1}\left(a_i\right)}u_{\3,M}(x,t)\psi_{a_i}(x)\,dx>C_2 \qquad \forall 0<t<T, 0<\3<1,i=1,2,\cdots,i_0.
\end{equation}
\end{lemm}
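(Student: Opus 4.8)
The plan is to combine the almost-monotonicity estimate \eqref{soln-loc-integral-lower-bd1} of Lemma~\ref{lem-similar-L-1-contraction-for-positivity-of-sol-u-epsilon-M} with a direct lower bound for the weighted initial mass $\int_{\widehat{B}_{\delta_1}(a_i)}u_{0,\3,M}\,\psi_{a_i}\,dx$ that shows the latter can be made arbitrarily large by taking $M$ large. First I would check that the weight $\psi_{a_i}$, built from \eqref{psi-defn}--\eqref{eta-defn} with $\beta_1=(n-\gamma_i)_+$, is admissible in Lemmas~\ref{lem-aligned-bound-of-integral-term-of-psi-to-some-power-and-La-psi} and~\ref{lem-similar-L-1-contraction-for-positivity-of-sol-u-epsilon-M}: since $0<m<\frac{n-2}{n}$ forces $\frac{2}{1-m}<n$, and since $\gamma_i>\frac{2}{1-m}$, one has $0\le(n-\gamma_i)_+<n-\frac{2}{1-m}$, while $b_1>\frac{2}{1-m}$ by hypothesis. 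Hence Lemma~\ref{lem-similar-L-1-contraction-for-positivity-of-sol-u-epsilon-M} applies for each $i$ and provides a constant $C_1>0$ (uniform in $i$ after a maximum over the finitely many indices), depending only on $n,m,\delta_1,\delta_2,b_1$ and the $\gamma_i$ and independent of $\3$, $M$ and $t$, such that \eqref{soln-loc-integral-lower-bd1} holds. Using $u_{0,\3,M}\ge 0$, $e^{-t}\ge e^{-T}$ and $1-e^{-t}\le 1$, this yields for all $0<t<T$
\[
\int_{\widehat{B}_{\delta_1}(a_i)}u_{\3,M}(x,t)\,\psi_{a_i}(x)\,dx\ \ge\ e^{-T}\int_{\widehat{B}_{\delta_1}(a_i)}u_{0,\3,M}(x)\,\psi_{a_i}(x)\,dx-C_1 .
\]

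Next I would bound the weighted initial mass from below, uniformly in $\3\in(0,1)$. Since the integrand is nonnegative on $\widehat{B}_{\delta_1}(a_i)$, since $u_{0,\3,M}\ge\min(u_0,M)$, and since $\psi_{a_i}(x)=|x-a_i|^{-\beta_1}$ on $\widehat{B}_{\delta_2}(a_i)$, I restrict to $\widehat{B}_{\delta_2}(a_i)$ and invoke \eqref{u0-lower-blow-up-rate}. Writing $r_M=(\lambda_i/M)^{1/\gamma_i}$ and taking $M$ large enough that $r_M<\delta_2$, one has $\lambda_i|x-a_i|^{-\gamma_i}<M$ on the annulus $r_M<|x-a_i|<\delta_2$, hence $\min(u_0,M)\ge\lambda_i|x-a_i|^{-\gamma_i}$ there, and therefore
\[
\int_{\widehat{B}_{\delta_1}(a_i)}u_{0,\3,M}\,\psi_{a_i}\,dx\ \ge\ \lambda_i\,\omega_n\int_{r_M}^{\delta_2}r^{\,n-1-\gamma_i-\beta_1}\,dr .
\]
The role of the choice $\beta_1=(n-\gamma_i)_+$ is precisely that the exponent $n-1-\gamma_i-\beta_1$ equals $-1$ when $\frac{2}{1-m}<\gamma_i\le n$ and equals $n-1-\gamma_i<-1$ when $\gamma_i>n$. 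In the first case the last integral is $\ln(\delta_2/r_M)=\ln\delta_2-\frac1{\gamma_i}\ln(\lambda_i/M)$; in the second it is $(\gamma_i-n)^{-1}\bigl(r_M^{\,n-\gamma_i}-\delta_2^{\,n-\gamma_i}\bigr)$. In both cases this quantity tends to $+\infty$ as $M\to\infty$.

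Finally, since there are only finitely many singular points, I would set $I(M):=\min_{1\le i\le i_0}\int_{\widehat{B}_{\delta_2}(a_i)}\min(u_0,M)\,\psi_{a_i}\,dx$, which is nondecreasing in $M$ and, by the previous step, satisfies $I(M)\to\infty$ as $M\to\infty$; then choose $M_0=M_0(T,C_2)$ so large that $e^{-T}I(M_0)-C_1>C_2$. Combining the two displays above, for every $M\ge M_0$, every $i=1,\dots,i_0$, every $\3\in(0,1)$ and every $0<t<T$,
\[
\int_{\widehat{B}_{\delta_1}(a_i)}u_{\3,M}(x,t)\,\psi_{a_i}(x)\,dx\ \ge\ e^{-T}I(M)-C_1\ \ge\ e^{-T}I(M_0)-C_1\ >\ C_2 ,
\]
which is \eqref{eq-strictly-positivity-of-solution-u-ep-M}. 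The only genuinely delicate point is the case distinction $\gamma_i\le n$ versus $\gamma_i>n$ in the evaluation of the initial-mass integral; this is exactly what the truncation $\beta_1=(n-\gamma_i)_+$ in the definition of $\psi_{a_i}$ is designed to accommodate, and the remainder is routine bookkeeping built on Lemma~\ref{lem-similar-L-1-contraction-for-positivity-of-sol-u-epsilon-M}.
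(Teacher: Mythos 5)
Your proposal is correct and follows essentially the same route as the paper: apply Lemma \ref{lem-similar-L-1-contraction-for-positivity-of-sol-u-epsilon-M} and then make the weighted initial mass large by integrating $\lambda_i|x-a_i|^{-\gamma_i-\beta_1}$ over an annulus whose inner radius shrinks as $M$ grows, using $\beta_1=(n-\gamma_i)_+$ to force the exponent to be at most $-1$ so the integral diverges. The only cosmetic difference is that the paper fixes the inner radius $\delta$ first and sets $M_0=\lambda_1\delta^{-\gamma_1}$, whereas you write the radius as $r_M=(\lambda_i/M)^{1/\gamma_i}$ and let $M\to\infty$; these are the same computation.
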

\begin{proof} 
Without loss of generality it suffices to prove \eqref{eq-strictly-positivity-of-solution-u-ep-M} for $i=i_0=1$, $a_1=\left(0,\cdots,0\right)$ and $\beta_1=\left(n-\gamma_1\right)_+$. Note that $0\le\beta_1<n-\frac{2}{1-m}$ and $\gamma_1+\beta_1\geq n$. Let $C_1>0$ be given by Lemma \ref{lem-similar-L-1-contraction-for-positivity-of-sol-u-epsilon-M},
\begin{equation*}
\delta=\left\{\begin{aligned}
&\left(\omega_n^{-1}\lambda_1^{-1}(C_1+C_2)(\gamma_1+\beta_1)e^T+\delta_2^{n-\gamma_1-\beta_1}\right)^{-\frac{1}{\gamma_1+\beta_1-n}}\quad\mbox{ if }\gamma_1+\beta_1>n\\
&\delta_2\,\mbox{exp}\,\left(-\omega_n^{-1}\lambda_1^{-1}(C_1+C_2)e^T\right)\qquad\qquad\qquad\qquad\mbox{ if }\gamma_1+\beta_1=n\\
\end{aligned}\right.
\end{equation*}
and $M_0=\lambda_1\delta^{-\gamma_1}$. Then $\delta\in (0,\delta_2)$. Let $M\ge M_0$. Then by \eqref{u0-lower-blow-up-rate} and \eqref{psi-defn},
\begin{align}\label{eq-aligned-lower-bound-of-integral-u-0-epsilon-M-denoted-by-I-1}
\int_{\widehat{B}_{\delta_1}}u_{0,\3,M}(x)\psi_{a_i}(x)\,dx\ge&\lambda_1\int_{B_{\delta_2}\bs B_{\delta}}|x|^{-\gamma_1-\beta_1}\,dx
=\omega_n\lambda_1\int_{\delta}^{\delta_2}r^{n-\gamma_1-\beta_1-1}\,dr\notag\\
=&\left\{\begin{aligned}
&\frac{\omega_n\lambda_1}{\gamma_1+\beta_1-n}\left(\delta^{n-\gamma_1-\beta_1}-\delta_2^{n-\gamma_1-\beta_1}\right)\quad\mbox{ if }\gamma_1+\beta_1>n\\
&\omega_n\lambda_1\left(\log\delta_2-\log\delta\right)\qquad\qquad\quad\,\,\mbox{ if }\gamma_1+\beta_1=n\\
\end{aligned}\right.\notag\\
\ge&(C_1+C_2)e^T.
\end{align}
Hence by \eqref{soln-loc-integral-lower-bd1} and \eqref{eq-aligned-lower-bound-of-integral-u-0-epsilon-M-denoted-by-I-1},  \eqref{eq-strictly-positivity-of-solution-u-ep-M} holds and the lemma follows.
\end{proof}

\begin{lemm}\label{lem-property-of-u-M-with-test-function}
Let $n\ge 3$, $0<m<\frac{n-2}{n}$, $0<\delta_2<\delta_1<\delta_0$, $0\le f\in L^{\infty}(\partial\Omega\times[0,\infty))$ and $0\leq u_0\in L^p_{loc}\left(\widehat{\Omega}\right)$ for some constant $p>\frac{n(1-m)}{2}$ satisfy \eqref{u0-lower-blow-up-rate} for some constants  $\lambda_1$, $\cdots$, $\lambda_{i_0}\in\R^+$, $\gamma_1$, $\cdots$, $\gamma_{i_0}\in\left(\frac{2}{1-m},\infty\right)$.  Let $\psi$ be given by \eqref{psi-defn} with $\eta$ given by \eqref{eta-defn} for some constants $b_1>\frac{2}{1-m}$ and $\beta_1\in\left[0,n-\frac{2}{1-m}\right)$. Let $\psi_{a_i}(x)=\psi(x-a_i)$ with $\beta_1=\left(n-\gamma_i\right)_+$ for all $i=1,2,\cdots,i_0$.  Let $T>0$, $\mathcal{C}_2>0$ and let $M_0=M_0(T,\mathcal{C}_2)$ be given by Lemma \ref{lem-property-of-u-epsilon-M-with-test-function}. Then for any $M\ge M_0$, as $\3\to 0$, the solution $u_{\3,M}$ of \eqref{eq-aligned-problem-related-with-epsilon-main-one} decreases and converges to a solution $u_M$ of 
\begin{equation}\label{eq-aligned-problem-related-with-epsilon-main-one-as-epsilon-to-zero}
\begin{cases}
\begin{aligned}
u_t&=\La u^m \quad\mbox{ in }\Omega\times (0,T)\\
u(x,t)&=f \quad\quad \mbox{ on $\partial\Omega\times(0,T)$}\\
u(x,0)&=u_{0,M} \quad \mbox{in $\Omega$}
\end{aligned}
\end{cases}
\end{equation}
uniformly in $C^{2,1}(K)$ for any every compact subset $K$ of $\Omega\times(0,T)$  which satisfies
\begin{equation}\label{eq-comparison-w-r-t-M}
u_{M_1}(x,t)\leq u_{M_2}(x,t)\le\max(M_2,\|f\|_{L^{\infty}}) \quad \mbox{ in }\Omega\times(0,T)\quad\forall M_0\le M_1<M_2,
\end{equation}
and
 \begin{equation}\label{eq-strictly-positivity-of-solution-u-ep-M-1-over-2}
\int_{\widehat{B}_{\delta_1}\left(a_i\right)}u_M(x,t)\psi_{a_i}(x)\,dx\ge C_2 \qquad \forall 0<t<T,\,\,i=1,2,\cdots,i_0.
\end{equation}
Moreover if there exists a constant $T_0\in (0,T)$ such that $f(x,t)$ is monotone decreasing in $t$ on $\partial\Omega\times\left(T_0,\infty\right)$, then $u_M$ satisfies \eqref{Aronson-Benilan-ineqn2} in $\Omega\times (T_0,T)$ for any $M\ge M_0$.
\end{lemm}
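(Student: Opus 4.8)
The plan is to realize $u_M$ as the monotone limit $u_M=\lim_{\3\to0}u_{\3,M}$ and to transfer to $u_M$ the relevant properties of the approximations. If $0<\3_1<\3_2<1$ then $u_{0,\3_1,M}\le u_{0,\3_2,M}$, $f_{\3_1}\le f_{\3_2}$ and $u_{\3_j,M}\ge\3_j>0$, so Lemma~\ref{comparison-lem-bd-domain} gives $u_{\3_1,M}\le u_{\3_2,M}$; hence $u_{\3,M}$ decreases as $\3\downarrow0$ to a limit $u_M$ with $0\le u_M\le\max(M,\|f\|_{L^{\infty}})$ by \eqref{eq-upper-and-lower-bound-of-u-epsilon-M-09}. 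Applying Lemma~\ref{comparison-lem-bd-domain} once more with ordered truncation levels $M_1<M_2$ (equal boundary data $f_\3$, ordered initial data $u_{0,\3,M_1}\le u_{0,\3,M_2}$) gives $u_{\3,M_1}\le u_{\3,M_2}$, and letting $\3\to0$ yields \eqref{eq-comparison-w-r-t-M}. Finally, for $M\ge M_0(T,\mathcal{C}_2)$, Lemma~\ref{lem-property-of-u-epsilon-M-with-test-function} gives $\int_{\widehat B_{\delta_1}(a_i)}u_{\3,M}(x,t)\psi_{a_i}(x)\,dx>\mathcal{C}_2$ for all $0<t<T$ and $0<\3<1$, so the monotone convergence theorem yields \eqref{eq-strictly-positivity-of-solution-u-ep-M-1-over-2}.

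The heart of the matter is to show $u_M>0$ in $\Omega\times(0,T)$. Since each $u_{\3,M}$ satisfies the weak formulation \eqref{eq-alinged-formula-for-weak-solution-of-u} (with data $f_\3$, $u_{0,\3,M}$) and $0\le u_{\3,M}\le\max(M,\|f\|_{L^{\infty}})+1$ uniformly in $\3$, passing $\3\to0$ in \eqref{eq-alinged-formula-for-weak-solution-of-u} by dominated convergence shows that $u_M$ is a bounded nonnegative weak solution of \eqref{fde} in $\Omega\times(0,T)$ with data $f$, $u_{0,M}$; by the local regularity theory for the fast diffusion equation, $u_M\in C(\Omega\times(0,T))$. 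Now fix $t\in(0,T)$ and set $t'=t/2$. Splitting the integral in \eqref{eq-strictly-positivity-of-solution-u-ep-M-1-over-2} at time $t'$ over $B_\rho(a_i)$ and $\widehat B_{\delta_1}(a_i)\bs B_\rho(a_i)$, using $u_M\le\max(M,\|f\|_{L^{\infty}})$ and the local integrability of $\psi_{a_i}$ near $a_i$ (valid because $\beta_1=(n-\gamma_i)_+<n-\tfrac{2}{1-m}<n$), we may fix $\rho\in(0,\delta_2)$ so small, independently of $t$, that the contribution of $B_\rho(a_i)$ is $<\mathcal{C}_2/2$; since $\psi_{a_i}$ is bounded on $\{\rho\le|x-a_i|<\delta_1\}$, it follows that $\int_{\{\rho\le|x-a_i|<\delta_1\}}u_M(x,t')\,dx>0$. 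Thus $u_M(\cdot,t')$ does not vanish identically and, being continuous, is strictly positive on some ball contained in $B_{\delta_1}(a_i)\subset\Omega$. By the strong positivity (infinite speed of propagation) of nonnegative solutions of the fast diffusion equation on the connected set $\Omega$, this forces $u_M(\cdot,s)>0$ on $\Omega$ for all $s\in(t',T)$, in particular at $s=t=2t'$; since $t\in(0,T)$ was arbitrary, $u_M>0$ in $\Omega\times(0,T)$.

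With positivity in hand the remaining steps are routine. On any compact $K\subset\Omega\times(0,T)$ one has $u_M\ge2c_K>0$ with $c_K=\tfrac12\inf_K u_M$; since $u_{\3,M}\downarrow u_M$ with all $u_{\3,M}$ and $u_M$ continuous, Dini's theorem gives uniform convergence on $K$, whence $u_{\3,M}\ge c_K$ on $K$ for all small $\3$. On $K$ the equation \eqref{fde} for $u_{\3,M}$ is then uniformly parabolic with constants independent of $\3$, so the parabolic Schauder estimates \cite{LSU} provide uniform interior $C^{2+\alpha,1+\alpha/2}$ bounds, and with the pointwise convergence this upgrades $u_{\3,M}\to u_M$ to convergence in $C^{2,1}(K)$; in particular $u_M$ is a classical solution of \eqref{fde}, and passing to the limit in \eqref{eq-alinged-formula-for-weak-solution-of-u} together with $u_{0,\3,M}\to u_{0,M}$ and the uniform $L^{\infty}$ bound shows $u_M$ solves \eqref{eq-aligned-problem-related-with-epsilon-main-one-as-epsilon-to-zero}. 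If moreover $f(\cdot,t)$ is monotone decreasing on $\partial\Omega\times(T_0,\infty)$ for some $T_0\in(0,T)$, then each $u_{\3,M}$ satisfies \eqref{Aronson-Benilan-ineqn2} in $\Omega\times(T_0,T)$ by Lemma~\ref{approx-soln-lem1}, and passing to the $C^{2,1}_{loc}$ limit yields \eqref{Aronson-Benilan-ineqn2} for $u_M$. The main obstacle is the positivity assertion on the whole interval $(0,T)$: in the singular range $0<m<\frac{n-2}{n}$ bounded solutions of \eqref{fde} can vanish in finite time, so positivity cannot come from comparison with subsolutions localized near the $a_i$ alone (iterating such subsolutions covers only a bounded time span). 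This is exactly why $M_0=M_0(T,\mathcal{C}_2)$ is allowed to depend on $T$: taking $M$ large forces, via Lemmas~\ref{lem-similar-L-1-contraction-for-positivity-of-sol-u-epsilon-M} and~\ref{lem-property-of-u-epsilon-M-with-test-function}, the weighted mass near each $a_i$ to stay above $\mathcal{C}_2$ for all $t<T$, which rules out premature extinction and drives the positivity-spreading argument above.
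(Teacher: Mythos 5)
Your overall strategy coincides with the paper's: monotone limit in $\3$ via Lemma \ref{comparison-lem-bd-domain}, passage to the limit in the weak formulation and in \eqref{eq-strictly-positivity-of-solution-u-ep-M}, positivity of $u_M$, then uniform parabolicity plus Schauder estimates to upgrade to $C^{2,1}(K)$ convergence and to pass to the limit in the Aronson--B\'enilan inequality. Those parts are fine.

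The step that is not adequately justified is the positivity of $u_M$ on all of $\Omega\times(0,T)$, which is the heart of the lemma. From \eqref{eq-strictly-positivity-of-solution-u-ep-M-1-over-2} you correctly deduce that $u_M(\cdot,t')\not\equiv 0$, but you then invoke ``strong positivity (infinite speed of propagation)'' to spread positivity over all of $\Omega$ at later times. In the range $0<m<\frac{n-2}{n}$ this is exactly the step that cannot be quoted as a black box: local positivity estimates for very fast diffusion have a backward-in-time character (the lower bound at time $t$ degenerates as $t$ approaches a possible extinction time), and pointwise positivity at one time does not by itself propagate forward --- this is the whole subject of \cite{BV}, and it is why the paper does not argue this way. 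The paper instead cites Lemma 3.3 of \cite{HK2}, which is precisely the quantitative statement needed: a continuous nonnegative distribution solution whose weighted mass $\int u\,\psi_{a_i}\,dx$ stays above a fixed positive constant on all of $(0,T)$ is pointwise positive in $\Omega\times(0,T)$. Your closing paragraph correctly observes that the persistent mass bound is what rules out premature extinction, but you never convert that observation into a pointwise lower bound; as written, the spreading step is a heuristic rather than a proof. To close the gap, replace the infinite-speed-of-propagation appeal by an application of Lemma 3.3 of \cite{HK2} (or of the quantitative local lower bounds of \cite{BV}), using the uniform-in-$t$ bound \eqref{eq-strictly-positivity-of-solution-u-ep-M-1-over-2} as the hypothesis.
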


\begin{proof}
Since 
\begin{equation*}
\3_1\leq u_{0,\3_{1},M_1}\leq u_{0,\3_2,M_2}\leq \max(M_2,\|f\|_{L^{\infty}})+\3_2 \quad \mbox{ in }\Omega\quad\forall 0<\3_1\leq\3_2<1,M_2>M_1>0,
\end{equation*}
by Lemma \ref{comparison-lem-bd-domain} and Lemma \ref{approx-soln-lem1},
\begin{equation}\label{eq-increasing-of-u-epsilon-M-ubove-by-u-M}
\begin{aligned}
&\3_1\leq u_{\3_1, M_1}\leq u_{\3_2, M_2}\leq \max\left(\left\|f\right\|_{L^{\infty}}, M_2\right)+\3_2\quad \mbox{ in }\Omega\times [0,T)\quad\forall M_0\le M_1<M_2,0<\3_1\leq\3_2<1.
\end{aligned}
\end{equation}
By \eqref{eq-increasing-of-u-epsilon-M-ubove-by-u-M} and the result of \cite{S} the sequence $\left\{u_{\3,M}\right\}_{0<\3<1}$ is equi-H\"older continuous on every compact subset of $\Omega\times  (0,T)$. Hence by \eqref{eq-increasing-of-u-epsilon-M-ubove-by-u-M}, the Ascoli theorem and the Dini Theorem for any $M\ge M_0$ the sequence $\left\{u_{\3,M}\right\}_{0<\3<1}$ decreases and converges uniformly to some continuous function $u_M$ on every compact subset of $\Omega\times  (0,T)$ as $\3\to 0$. 

Let $M\ge M_0$.
Letting $\3_2=\3_1\to 0$ in \eqref{eq-increasing-of-u-epsilon-M-ubove-by-u-M}, we get \eqref{eq-comparison-w-r-t-M}. Putting $u=u_{\3,M}$, $f=f_\3$ in \eqref{eq-alinged-formula-for-weak-solution-of-u} and letting $\3\to 0$, $u_{M}$ satisfies \eqref{eq-alinged-formula-for-weak-solution-of-u}. By Lemma \ref{lem-property-of-u-epsilon-M-with-test-function}, \eqref{eq-strictly-positivity-of-solution-u-ep-M} holds. Letting $\3\to 0$ in \eqref{eq-strictly-positivity-of-solution-u-ep-M}, we get \eqref{eq-strictly-positivity-of-solution-u-ep-M-1-over-2}.
 Since $u_M$ is a continuous distribution solution of \eqref{fde} in $\Omega\times(0,T)$, by \eqref{eq-strictly-positivity-of-solution-u-ep-M-1-over-2} and  Lemma 3.3 of \cite{HK2}, 
\begin{equation}\label{uM-+}
u_M(x,t)>0 \qquad \forall (x,t)\in\Omega\times(0,T), M\ge M_0.
\end{equation} 
Since $u_M$ is continuous, by \eqref{uM-+} for any compact subset $K$ of $\Omega\times(0,T)$, there exists a constant $c_K>0$ such that
\begin{equation}\label{eq-uniform-bound-of-u-M-on-compact-subset}
c_K\leq u_{M}(x,t)\leq u_{\3,M}(x,t)\leq \max\left(\|f\|_{L^{\infty}},M\right)+1 \qquad \forall (x,t)\in K,\,\,0<\3<1.
\end{equation} 
By \eqref{eq-uniform-bound-of-u-M-on-compact-subset} for any $M\ge M_0$ the equation for $\left\{u_{\3,M}\right\}_{0<\3<1}$ is uniformly parabolic on every compact subset of $\Omega\times(0,T)$. Hence by the Schauder estimates \cite{LSU} the sequence $\left\{u_{\3,M}\right\}_{0<\3<1}$  is equi-H\"older continuous in $C^{2,1}(K)$ for any compact set $K\subset\Omega\times(0,T)$. Hence by \eqref{eq-increasing-of-u-epsilon-M-ubove-by-u-M} the sequence $\left\{u_{\3,M}\right\}_{0<\3<1}$ decreases and converges  uniformly in $C^{2,1}(K)$ for any  every compact subset $K$ of $\Omega\times(0,T)$ to $u_M$ as $\3\to 0$. Hence $u_M$ satisfies \eqref{fde} in $\Omega\times(0,T)$ . By \eqref{eq-comparison-w-r-t-M} and  an argument similar in the proof of Theorem 1.1 of \cite{HK2}, $u_M$ has initial value $u_{0,M}$. Therefore $u_M$ is a solution of \eqref{eq-aligned-problem-related-with-epsilon-main-one-as-epsilon-to-zero} which satisfies \eqref{eq-strictly-positivity-of-solution-u-ep-M-1-over-2}. 

If there exists a constant $T_0\in (0,T)$ such that $f(x,t)$ is monotone decreasing in $t$ on $\partial\Omega\times\left(T_0,\infty\right)$, then $u_{\3,M}$ satisfies \eqref{Aronson-Benilan-ineqn2} in $\Omega\times (T_0,T)$ for any $M\ge M_0$ and $0<\3<1$. Putting $u=u_{\3,M}$ in \eqref{Aronson-Benilan-ineqn2} and letting $\3\to 0$ we get that
$u_M$ satisfies \eqref{Aronson-Benilan-ineqn2} in $\Omega\times (T_0,T)$ for any $M\ge M_0$ and the lemma follows.
\end{proof}

\begin{remar}\label{remark-about-maximal-time-T-M-to-infty-as-M-to-infty}
For any $M>0$, let $T_M>0$ be the maximal existence time of the solution $u_M$ of \eqref{eq-aligned-problem-related-with-epsilon-main-one-as-epsilon-to-zero}. Then by  Lemma \ref{lem-property-of-u-M-with-test-function}, $T_{M}\to\infty$ as $M\to\infty$.
\end{remar}

Note that by  the discussion on P.445 of \cite{Hs}, Theorem 1.6, Lemma 1.7 and Lemma 1.9 of \cite{Hu3} remains valid for $n\ge 3$, $0<m\le\frac{n-2}{n}$, $p>\frac{n(1-m)}{2}$, and
$0\le g\in L^{\infty}(\1\Omega\times (0,\infty))$. Hence by Theorem 1.6, Lemma 1.7 and Lemma 1.9 of \cite{Hu3} and an argument similar to that of \cite{Hs} and \cite{Hu3} we have the following two results.

\begin{lemm}[cf. Theorem 2.3 of \cite{Hs}]\label{lem-Cor-2-2-of-Hs1}
Let $n\ge 3$, $0<m\leq\frac{n-2}{n}$, $0\le f\in L^{\infty}(\partial\Omega\times[0,\infty))$  and $0\le u_0\in L^p_{loc}(\2{\Omega}\setminus\{a_1,\cdots,a_{i_0}\})$ for some constant $p>\frac{n(1-m)}{2}$. Suppose $u$ is a solution of  \eqref{fde-Dirichlet-blow-up-problem}.
Then for any $0<\delta_6<\delta_5<\delta_0$ and $0<t_1<T$ there exist constants $C>0$ and $\theta>0$ such that
\begin{equation*}
\left\|u\right\|_{L^{\infty}(\Omega_{\delta_5}\times[t_1,T))}\leq C\left(k_f^p|\Omega|+\int_{\Omega_{\delta_6}}u_0^p\,dx\right)^{\theta/p}+k_f
\end{equation*}
where $k_f=\max (1,\|f\|_{L^{\infty}})$.
\end{lemm}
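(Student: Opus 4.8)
The plan is to deduce the bound from the purely local $L^p$--$L^\infty$ regularizing estimate for the fast diffusion equation which, although it fails in the $L^1$ formulation when $0<m\le\frac{n-2}{n}$, does hold for data in $L^p_{loc}$ with $p>\frac{n(1-m)}{2}$; this is exactly the content of the discussion on p.\,445 of \cite{Hs} together with Theorem 1.6, Lemma 1.7 and Lemma 1.9 of \cite{Hu3}. To see why this exponent range is the right one, note that a scaling analysis of $u_t=\La u^m$ forces any estimate of the form $\|u(\cdot,t)\|_{L^\infty(B_\rho)}\le C\,t^{-\sigma}\big(\int_{B_{2\rho}}u_0^p\,dx\big)^{\theta/p}+(\text{lower-order terms})$ on a ball $B_{2\rho}$ to have $\sigma=\frac{n}{2p-n(1-m)}$ and $\theta=\frac{2p}{2p-n(1-m)}$, so both exponents are finite precisely when $p>\frac{n(1-m)}{2}$; this also explains why the dependence in the statement is through a power $\theta/p$ rather than linearly.

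Fix $0<\delta_6<\delta_5<\delta_0$ and $0<t_1<T$. Every point of the compact set $\2{\Omega_{\delta_5}}$ lies at distance at least $\delta_5$ from each $a_i$, and, since $\delta_5<\delta_0\le\frac13\operatorname{dist}(a_i,\partial\Omega)$, every point of $\partial\Omega$ lies at distance at least $3\delta_5>\delta_6$ from each $a_i$. I would therefore fix a radius $\rho$ with $2\rho<\delta_5-\delta_6$ and cover $\2{\Omega_{\delta_5}}$ by finitely many balls $B_\rho(x_1),\dots,B_\rho(x_N)$ with $x_j\in\2{\Omega_{\delta_5}}$. For each $j$ and each $y\in B_{2\rho}(x_j)$ one has $|y-a_i|>\delta_5-2\rho>\delta_6$, so $B_{2\rho}(x_j)\cap\Omega\subset\Omega_{\delta_6}$; in particular on each cylinder $\big(B_{2\rho}(x_j)\cap\Omega\big)\times(0,T)$ the function $u$ is a genuine classical solution of $u_t=\La u^m$ with initial datum $u_0\in L^p\big(B_{2\rho}(x_j)\cap\Omega\big)$, and with lateral datum $f$ on $\partial\Omega\cap B_{2\rho}(x_j)$ when this set is nonempty. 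The singular points $a_1,\dots,a_{i_0}$ are never seen by these cylinders.

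For those $j$ with $B_{2\rho}(x_j)\Subset\Omega$ I would apply the interior $L^p$--$L^\infty$ smoothing estimate of \cite{Hs}, \cite{Hu3} on $B_{2\rho}(x_j)$ to get $\|u\|_{L^\infty(B_\rho(x_j)\times[t_1,T))}\le C\big(\int_{B_{2\rho}(x_j)}u_0^p\,dx\big)^{\theta/p}$, with $C$ now also depending on $t_1,\rho,T$. For those $j$ with $B_{2\rho}(x_j)\cap\partial\Omega\ne\emptyset$ I would use the corresponding estimate up to the boundary: comparing $u$ with the constant supersolution $k_f$ near $\partial\Omega$ and invoking the boundary versions of Lemma 1.7 and Lemma 1.9 of \cite{Hu3} (valid in this range by the p.\,445 discussion of \cite{Hs}) gives $\|u\|_{L^\infty((B_\rho(x_j)\cap\Omega)\times[t_1,T))}\le C\big(k_f^p|\Omega\cap B_{2\rho}(x_j)|+\int_{B_{2\rho}(x_j)\cap\Omega}u_0^p\,dx\big)^{\theta/p}+k_f$. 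Since the $N$ balls $B_\rho(x_j)$ cover $\2{\Omega_{\delta_5}}$, their doubles meet $\Omega$ inside $\Omega_{\delta_6}$, and $N$ is controlled, summing these finitely many inequalities and using $(a+b)^{\theta/p}\le C(a^{\theta/p}+b^{\theta/p})$ yields $\|u\|_{L^\infty(\Omega_{\delta_5}\times[t_1,T))}\le C\big(k_f^p|\Omega|+\int_{\Omega_{\delta_6}}u_0^p\,dx\big)^{\theta/p}+k_f$, which is the assertion, with $\theta$ depending only on $n,m,p$.

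The covering construction and the bookkeeping of constants are routine. The one structural point to be careful about is that the balls of the covering, although centered in $\2{\Omega_{\delta_5}}$, are deliberately allowed to reach into $\Omega_{\delta_6}$, so that on each of them $u$ is a bona fide finite solution of the equation and the local estimate applies without any a priori control of $u$ near $a_1,\dots,a_{i_0}$. The genuinely nontrivial ingredient — a local, and boundary, $L^p$--$L^\infty$ smoothing estimate for $u_t=\La u^m$ in the subcritical range $0<m\le\frac{n-2}{n}$, $p>\frac{n(1-m)}{2}$ — is not re-proved here: it is imported from \cite{Hs} and \cite{Hu3}, and the overall argument then follows the proof of Theorem 2.3 of \cite{Hs}, with the finite covering above playing the role of the single domain estimate used there.
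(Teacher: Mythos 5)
Your proposal is correct and follows essentially the same route as the paper, which gives no detailed proof but simply invokes the discussion on p.~445 of \cite{Hs} together with Theorem 1.6, Lemma 1.7 and Lemma 1.9 of \cite{Hu3} and ``an argument similar to that of \cite{Hs} and \cite{Hu3}''; your finite covering of $\2{\Omega_{\delta_5}}$ by balls whose doubles stay in $\Omega_{\delta_6}$ and avoid the singular points, with the interior and boundary $L^p$--$L^{\infty}$ smoothing estimates imported from those references, is exactly the intended adaptation of Theorem 2.3 of \cite{Hs}. The only point worth tightening is the boundary case: rather than calling $k_f$ a supersolution dominating $u$, one applies the smoothing estimate to $(u-k_f)_+$ (as in \cite{Hu3}), which is where the additive $k_f$ and the $k_f^p|\Omega|$ term in the stated bound come from.
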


\begin{lemm}[cf. Corollary 2.2 of \cite{Hs}]\label{lem-Cor-2-2-of-Hs1a}
Let $n\geq 3$, $0<m\leq\frac{n-2}{n}$ and $0\leq u_0\in L^p_{loc}(\widehat{\Omega})$ for some constant $p>\frac{n(1-m)}{2}$. Suppose $u$ is a solution of 
\begin{equation}\label{fde-cauchy-problem5}
\begin{cases}
\begin{aligned}
u_t&=\La u^m \qquad \mbox{in $\Omega\times(0,T)$}\\
u(x,0)&=u_0(x) \qquad \mbox{in $\Omega$}.
\end{aligned}
\end{cases}
\end{equation}  
Then for any $B_{R_1}(x_0)\subset \overline{B_{R_2}(x_0)}\subset\Omega$ and $0<t_1<T$ there exist constants $C>0$ and $\theta>0$ such that
\begin{equation*}
\left\|u\right\|_{L^{\infty}\left(B_{R_1}(x_0)\times\left[t_1,T\right)\right)}\leq C\left(1+\int_{B_{R_2}(x_0)}u_0^p\,dx\right)^{\theta}.
\end{equation*}
\end{lemm}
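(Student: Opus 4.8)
The plan is to adapt the interior $L^p$--$L^\infty$ smoothing estimate for the fast diffusion equation from \cite{Hs} (Theorem~2.3 and Corollary~2.2 there, which rest on Theorem~1.6, Lemma~1.7 and Lemma~1.9 of \cite{Hu3}) to the present purely interior situation. Fix balls $B_{R_1}(x_0)\subset\overline{B_{R_2}(x_0)}\subset\Omega$, choose intermediate radii $R_1<R_1'<R_2'<R_2$ and a cutoff $0\le\zeta\in C_c^\infty(B_{R_2'}(x_0))$ with $\zeta\equiv1$ on $B_{R_1'}(x_0)$ and $|\D\zeta|+|\La\zeta|\le C(R_1,R_2)$. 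The argument proceeds in two stages: first a local $L^p$ a priori bound carrying the initial information forward in time (the local counterpart of Lemma~1.7 of \cite{Hu3}), and then a local parabolic Moser iteration turning $L^p$ into $L^\infty$ on $B_{R_1}(x_0)\times[t_1,T)$ (the local counterpart of Lemma~1.9 of \cite{Hu3} and Theorem~2.3 of \cite{Hs}).

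For the first stage I would derive a local energy inequality by using the weak formulation of \eqref{fde-cauchy-problem5} with a truncated, regularised version of the test function $p\,u^{p-1}\zeta^2$: replace $u$ by $\min(u,j)$, integrate by parts, and let $j\to\infty$ by monotone convergence, using $u^{p+m-1}\in L^1_{loc}(\Omega)$ (valid since $p+m-1<p$). This yields, for $0<t'<t<T$,
\begin{equation*}
\int_{B_{R_2'}(x_0)}u(x,t)^p\zeta^2\,dx+c\int_{t'}^{t}\!\!\int_{B_{R_2'}(x_0)}\Bigl|\D\bigl(u^{\frac{p+m-1}{2}}\bigr)\Bigr|^2\zeta^2\,dx\,ds\le\int_{B_{R_2'}(x_0)}u(x,t')^p\zeta^2\,dx+C\int_{t'}^{t}\!\!\int_{B_{R_2'}(x_0)}u^{p+m-1}|\D\zeta|^2\,dx\,ds,
\end{equation*}
where absorbing the cross term $\int u^{p+m-2}\zeta\,\D\zeta\cdot\D u$ into the Dirichlet term uses the sign condition $p+m-1>0$ (which holds since $p>\frac{n(1-m)}{2}>1-m$ for $n\ge3$) together with $p>1$. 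Since $u(\cdot,t')\to u_0$ only in $L^1_{loc}$, I would then let $t'\to0$ via Fatou/lower semicontinuity on the left-hand side and, after interpolating the forcing term $\int u^{p+m-1}|\D\zeta|^2$ against the Dirichlet integral and $\int u^p\,dx$ (using $p+m-1<p$) and applying Gronwall's inequality with a standard family of nested cutoffs, bound everything by $\int_{B_{R_2}(x_0)}u_0^p\,dx$; the conclusion of this stage is
\begin{equation*}
\sup_{0<t<T}\int_{B_{R_1'}(x_0)}u(x,t)^p\,dx+\int_0^T\!\!\int_{B_{R_1'}(x_0)}\Bigl|\D\bigl(u^{\frac{p+m-1}{2}}\bigr)\Bigr|^2\,dx\,dt\le C\Bigl(1+\int_{B_{R_2}(x_0)}u_0^p\,dx\Bigr),\qquad C=C(n,m,p,R_1,R_2,T).
\end{equation*}

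For the second stage I would run the standard parabolic Moser iteration, exactly as in the proof of Theorem~2.3 of \cite{Hs} / Lemma~1.9 of \cite{Hu3}: apply the energy inequality at a sequence of exponents $q_k$ with $q_0=p$ and $q_k+m-1$ growing geometrically (so $q_k\to\infty$), combine it with the Sobolev inequality in $\R^n$ and a decreasing family of space--time cutoffs shrinking to $B_{R_1}(x_0)\times[t_1,T)$, and obtain a recursive inequality for the $L^{q_k}$-norms of $u$ over the shrinking cylinders whose iteration converges because $\sum_k q_k^{-1}<\infty$; it is exactly here that the borderline integrability $p>\frac{n(1-m)}{2}$ enters a second time, ensuring that the exponents genuinely increase and that the resulting product is finite. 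After covering $\overline{B_{R_1}(x_0)}$ by finitely many such balls this gives
\begin{equation*}
\|u\|_{L^\infty(B_{R_1}(x_0)\times[t_1,T))}\le C\Bigl(1+\sup_{0<t<T}\int_{B_{R_1'}(x_0)}u(x,t)^p\,dx\Bigr)^{\theta}\le C\Bigl(1+\int_{B_{R_2}(x_0)}u_0^p\,dx\Bigr)^{\theta}
\end{equation*}
for suitable $C,\theta>0$ depending only on $n,m,p,R_1,R_2,t_1$, the last inequality being the first-stage bound; this is the assertion.

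The main obstacle, and the reason the statement is merely quoted from \cite{Hs} and \cite{Hu3}, lies in making the first stage fully rigorous when $u_0$ is only $L^p_{loc}$ and its trace at $t=0$ is known only in $L^1_{loc}$: one has to truncate $u$ inside the test function, justify the limit $j\to\infty$, and pass to the limit $t'\to0$ without any $L^p$-convergence of $u(\cdot,t')$. Once the local energy inequality is in place, the second stage (Sobolev inequality, Moser bookkeeping, finite covering) is entirely routine, and the hypotheses $n\ge3$, $p>\frac{n(1-m)}{2}$ are used only in the two places indicated: the sign of $p+m-1$ in the energy estimate and the convergence of the Moser iteration.
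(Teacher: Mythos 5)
Your two-stage plan (local $L^p$ energy estimate propagating the initial data forward in time, followed by a parabolic Moser iteration on shrinking cylinders) is exactly the route the paper relies on: it gives no proof of this lemma beyond invoking Theorem 1.6, Lemmas 1.7 and 1.9 of \cite{Hu3} and Corollary 2.2 of \cite{Hs}, and those results are proved precisely by the argument you describe. Your accounting of where the hypotheses enter ($p>1$ and $p+m-1>0$ for the energy inequality, $p>\frac{n(1-m)}{2}$ for the Sobolev exponent to strictly increase in the iteration) is also correct.

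One step, as literally written, would fail: you propose to let $j\to\infty$ in the truncation first and only afterwards send $t'\to 0$ ``via Fatou/lower semicontinuity on the left-hand side.'' But the term that must be controlled as $t'\to 0$ sits on the \emph{right}-hand side, namely $\int u(x,t')^p\zeta^2\,dx$, and Fatou combined with the a.e.\ convergence of a subsequence $u(\cdot,t')\to u_0$ gives $\liminf_{t'\to 0}\int u(\cdot,t')^p\zeta^2\ge\int u_0^p\zeta^2$ --- the wrong direction; since $u(\cdot,t')\to u_0$ only in $L^1_{loc}$, nothing prevents $\int u(\cdot,t')^p$ from being much larger than $\int u_0^p$ in the limit. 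The fix is to reverse the order of limits: keep the truncation level $j$ fixed, so that the time-boundary term is $\int\Phi_j(u(x,t'))\zeta^2\,dx$ with $\Phi_j(s)=\int_0^s p\min(\sigma,j)^{p-1}\,d\sigma$ a globally Lipschitz function of $s$ satisfying $\Phi_j(s)\le p\,s^p$; then the $L^1_{loc}$ convergence \eqref{u-initial-value} alone yields $\int\Phi_j(u(\cdot,t'))\zeta^2\to\int\Phi_j(u_0)\zeta^2\le p\int u_0^p\zeta^2$ as $t'\to 0$, and only afterwards does one let $j\to\infty$, using monotone convergence on the left-hand side. With that reordering your first stage closes, and the rest of the proposal is routine.
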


\begin{lemm}\label{lem-um-converge-to-u}
Let $n\geq 3$, $0<m<\frac{n-2}{n}$, $0<\delta_2<\delta_1<\delta_0$, $0\le f\in L^{\infty}(\partial\Omega\times[0,\infty))$ and $0\leq u_0\in L^p_{loc}(\2{\Omega}\setminus\{a_1,\cdots,a_{i_0}\})$ for some constant $p>\frac{n(1-m)}{2}$ satisfy \eqref{u0-lower-blow-up-rate} for some constants  $\lambda_1$, $\cdots$, $\lambda_{i_0}\in\R^+$, $\gamma_1$, $\cdots$, $\gamma_{i_0}\in\left(\frac{2}{1-m},\infty\right)$.  Let $\psi$ be given by \eqref{psi-defn} with $\eta$ given by \eqref{eta-defn} for some constants $b_1>\frac{2}{1-m}$ and $\beta_1\in\left[0,n-\frac{2}{1-m}\right)$. Let $\psi_{a_i}(x)=\psi(x-a_i)$ with $\beta_1=\left(n-\gamma_i\right)_+$ for all $i=1,2,\cdots,i_0$. Then the solution $u_M$ of \eqref{eq-aligned-problem-related-with-epsilon-main-one-as-epsilon-to-zero} in $\Omega\times(0,T_M)$ increases and converges uniformly in $C^{2,1}(K)$ for every compact subset $K$ of $\widehat{\Omega}\times(0,\infty)$ to a solution $u$ of \eqref{fde-Dirichlet-blow-up-problem} in $\widehat{\Omega}\times\left(0,\infty\right)$ as $M\to\infty$. 

Moreover $u$ satisfies
 \begin{equation}\label{eq-strictly-positivity-of-solution-u-ep-M-infty}
\int_{\widehat{B}_{\delta_1}\left(a_i\right)}u(x,t)\psi_{a_i}(x)\,dx=\infty \qquad \forall t>0,\,\,i=1,2,\cdots,i_0.
\end{equation}
If there exists a constant $T_0>0$ such that $f(x,t)$ is monotone decreasing in $t$ on $\partial\Omega\times\left(T_0,\infty\right)$, then $u$ satisfies \eqref{Aronson-Bernilan-ineqn}.
\end{lemm}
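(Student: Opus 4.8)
The plan is to obtain $u$ as the monotone limit $u=\lim_{M\to\infty}u_M$ and to pass each property of the approximations $u_M$ to $u$ by combining monotone convergence, the local smoothing bound of Lemma~\ref{lem-Cor-2-2-of-Hs1}, and interior parabolic regularity.

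First I would record the monotone limit and its finiteness. By \eqref{eq-comparison-w-r-t-M} the family $\{u_M\}$ is non-decreasing in $M$ on its common domain of definition, and by Remark~\ref{remark-about-maximal-time-T-M-to-infty-as-M-to-infty} $T_M\to\infty$, so every compact set $K\subset\widehat{\Omega}\times(0,\infty)$ lies in $\Omega\times(0,T_M)$ for all large $M$; hence $u(x,t):=\lim_{M\to\infty}u_M(x,t)\in(0,\infty]$ is well defined on $\widehat{\Omega}\times(0,\infty)$. Since the test functions in \eqref{eq-formula-for-weak-sols-of-main-problem} have support away from $a_1,\dots,a_{i_0}$, \eqref{eq-alinged-formula-for-weak-solution-of-u} for $u_M$ shows that the restriction of $u_M$ to $\widehat{\Omega}\times(0,T_M)$ is a solution of \eqref{fde-Dirichlet-blow-up-problem} with initial value $u_{0,M}\le u_0$ and boundary value $f$. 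Therefore Lemma~\ref{lem-Cor-2-2-of-Hs1} gives, for all $0<\delta_6<\delta_5<\delta_0$ and $0<t_1<T<\infty$, a bound $\|u_M\|_{L^\infty(\Omega_{\delta_5}\times[t_1,T))}\le C$ in which $C$ is independent of $M$ (it depends only on $\delta_5,\delta_6,t_1,T,\|f\|_{L^\infty}$ and on $\int_{\Omega_{\delta_6}}u_0^p\,dx$, which is finite because $u_0\in L^p_{loc}(\overline{\Omega}\setminus\{a_1,\dots,a_{i_0}\})$). Letting $M\to\infty$ then shows that $u$ is finite and locally bounded on $\widehat{\Omega}\times(0,\infty)$.

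Next I would show that the limit is regular and solves the problem. Given a compact $K\subset\widehat{\Omega}\times(0,\infty)$, I would choose $M_1\ge M_0$ with $T_{M_1}$ larger than the largest time occurring in $K$; then $u_{M_1}$ is positive and continuous near $K$, so $u_M\ge u_{M_1}\ge c_K>0$ on $K$ for every $M\ge M_1$, while the bound above gives $u_M\le C_K$ on $K$. Hence \eqref{fde} is uniformly parabolic for $\{u_M\}_{M\ge M_1}$ on $K$, and by the parabolic Schauder estimates \cite{LSU} the sequence $\{u_M\}$ is bounded in $C^{2,1}$ on every compact subset of the interior of $K$; together with the monotone pointwise convergence this gives that $u$ is continuous and $u_M\to u$ in $C^{2,1}_{loc}(\widehat{\Omega}\times(0,\infty))$, so $u$ is a positive classical solution of \eqref{fde} there. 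To get the boundary value $f$ and the weak identity \eqref{eq-formula-for-weak-sols-of-main-problem} for $u$ I would pass to the limit in \eqref{eq-alinged-formula-for-weak-solution-of-u} for $u_M$: any admissible test function has support in some $\Omega_{\delta_5}\times[t_1,T]$ on which $u_M\le C$ uniformly in $M$, so dominated convergence ($u_M\nearrow u$, $u_M^m\nearrow u^m$) applies. For \eqref{eq-strictly-positivity-of-solution-u-ep-M-infty} I would fix $t>0$, $i$ and $C_2>0$, take $T>t$ and $M\ge M_0(T,C_2)$, and use \eqref{eq-strictly-positivity-of-solution-u-ep-M-1-over-2} to get $\int_{\widehat{B}_{\delta_1}(a_i)}u_M(x,t)\psi_{a_i}(x)\,dx\ge C_2$; letting $M\to\infty$ (monotone convergence, $\psi_{a_i}\ge0$) gives the same bound for $u$, and since $C_2$ is arbitrary, \eqref{eq-strictly-positivity-of-solution-u-ep-M-infty} follows. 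Finally, if $f$ is monotone decreasing in $t$ on $\partial\Omega\times(T_0,\infty)$, then by Lemma~\ref{lem-property-of-u-M-with-test-function} each $u_M$ (for $M$ large) satisfies \eqref{Aronson-Benilan-ineqn2} on $\Omega\times(T_0,T_M)$, and the $C^{2,1}_{loc}$ convergence above transfers this to $u$, which is \eqref{Aronson-Bernilan-ineqn}.

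The step I expect to be the main obstacle is the recovery of the $L^1_{loc}(\widehat{\Omega})$ initial trace \eqref{u-initial-value} of $u$. The inequality $\liminf_{t\to0^+}\int_K u(\cdot,t)\,dx\ge\int_K u_0\,dx$ is immediate from $u\ge u_M$ and $u_{0,M}\nearrow u_0$ in $L^1_{loc}(\widehat{\Omega})$, but the matching upper bound, and hence $\limsup_{t\to0^+}\|u(\cdot,t)-u_0\|_{L^1(K)}=0$ for every compact $K\subset\widehat{\Omega}$, cannot be read off from $u=\sup_M u_M$; I would obtain it, by an argument analogous to the proof of Theorem~1.1 of \cite{HK2}, from a local $L^1$-type comparison estimate that is uniform in $M$ as $t\to 0^+$.
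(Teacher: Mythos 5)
Your proposal follows essentially the same route as the paper's proof: monotonicity in $M$ from \eqref{eq-comparison-w-r-t-M} plus the uniform local $L^\infty$ bound of Lemma \ref{lem-Cor-2-2-of-Hs1} give uniform parabolicity, Schauder estimates and Ascoli give $C^{2,1}_{loc}$ convergence, and \eqref{eq-strictly-positivity-of-solution-u-ep-M-infty} and the Aronson--B\'enilan inequality are obtained by passing to the limit in \eqref{eq-strictly-positivity-of-solution-u-ep-M-1-over-2} and \eqref{Aronson-Benilan-ineqn2}. The initial-trace step you flag as the main obstacle is handled in the paper exactly as you propose, by invoking the argument of Theorem 1.1 of \cite{HK2} (and of \cite{Hs}).
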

\begin{proof}
Let $0<\delta<\delta_2$ and $T>t_1>0$.
By Remark \ref{remark-about-maximal-time-T-M-to-infty-as-M-to-infty} there exists $M_0'=M_0'(T)>0$ such that $T_M>T$ for all $M\geq M_0'$.
By Lemma \ref{lem-Cor-2-2-of-Hs1} there exist constants $C>0$ and $\theta>0$ such that
\begin{equation}\label{eq-L-infty-bounded-of-u-M-0-strictly-inside-of-domain}
\left\|u_M\right\|_{L^{\infty}(\Omega_{\delta}\times(t_1,T])}\leq C\left\{\left(1+\int_{\Omega_{\frac{\delta}{2}}}u_0^p\,dx\right)^{\theta/p}+1\right\}\quad\forall M\geq M_0'.
\end{equation}
By \eqref{eq-comparison-w-r-t-M} and \eqref{eq-L-infty-bounded-of-u-M-0-strictly-inside-of-domain} the equation \eqref{fde} for $\left\{u_M\right\}_{M>M_0'}$ is uniformly parabolic on $\Omega_{\delta}\times(t_1,T]$ for any $0<\delta<\delta_2$ and $0<t_1<T$. Then by the Schauder estimates \cite{LSU} the sequence $\{u_M\}$ is equi-H\"older continuous in $C^{2,1}(K)$ for any compact subset $K$ of $\widehat{\Omega}\times (0,\infty)$. Hence by the Ascoli theorem, \eqref{eq-comparison-w-r-t-M},  \eqref{eq-L-infty-bounded-of-u-M-0-strictly-inside-of-domain} and a diagonalization argument the sequence $\left\{u_M\right\}$ increases and converges uniformly in $C^{2,1}(K)$ for any compact subset $K$ of $\widehat{\Omega}\times(0,\infty)$ to a solution $u$ of \eqref{fde} in $\widehat{\Omega}\times(0,\infty)$ as $M\to\infty$. Putting $u=u_M$ in \eqref{eq-formula-for-weak-sols-of-main-problem} and letting $M\to\infty$, we get that $u$ satisfies \eqref{eq-formula-for-weak-sols-of-main-problem} for any $t_2>t_1>0$ and $\eta\in C_c^2((\overline{\Omega}\bs\left\{a_1,\cdots,a_{i_0}\right\})\times(0,\infty))$ satisfying $\eta\equiv 0$ on $\partial\Omega\times(0,\infty)$.

By an argument similar in the proof of Theorem 1.1 of \cite{HK2} and Theorem 1.1 of \cite{Hs}, $u$ has initial value $u_0$. By Lemma \ref{lem-property-of-u-M-with-test-function} for any constants $T>0$ and $C_2>0$ there exists a constant $M_0>0$ such that \eqref{eq-strictly-positivity-of-solution-u-ep-M-1-over-2} holds for all $M\ge M_0$. Hence letting $M\to\infty$ in \eqref{eq-strictly-positivity-of-solution-u-ep-M-1-over-2},
\begin{equation}\label{eq-side-condition-of-solution-u-weighted-by-psi-i}
\int_{\widehat{B}_{\delta_1}\left(a_i\right)}u(x,t)\psi_{a_i}(x)\,dx>\mathcal{C}_2 \qquad \forall 0<t<T,C_2>0,i=1,2,\cdots,i_0.
\end{equation}
Letting first $C_2\to\infty$ and then $T\to\infty$ in \eqref{eq-side-condition-of-solution-u-weighted-by-psi-i}, we get \eqref{eq-strictly-positivity-of-solution-u-ep-M-infty}.  

If there exists a constant $T_0>0$ such that $f(x,t)$ is monotone decreasing in $t$ on $\partial\Omega\times\left(T_0,\infty\right)$, then $u_M$ satisfies \eqref{Aronson-Benilan-ineqn2} in $\Omega\times (T_0,T_M)$. Putting $u=u_M$ in \eqref{Aronson-Benilan-ineqn2} and letting $M\to\infty$ we get that
$u$ satisfies \eqref{Aronson-Bernilan-ineqn} and the lemma follows.
\end{proof}

\begin{lemm}\label{u-upper-bd-in-tangential-region-lem}
Let $n\ge 1$, $0<m<1$ and $\alpha>0$. Let $\Omega_1\subset\Omega\subset\R^n$ be smooth bounded domains with $\2{\Omega}_1\subset \Omega$ and $0\le u_0\in L_{loc}^{\infty}(\Omega)$. Let $h_1=\mbox{dist}\, (\2{\Omega}_1,\1\Omega)/2$ and $0<h<\min\left(T,\frac{h_1}{4},\frac{h_1^2}{64\alpha^2}\right)$. Suppose $u$ is a solution of \eqref{fde-cauchy-problem5}. Then there exists a constant $M>0$ such that 
\begin{equation}\label{u-bdness-in-tangential-region}
u(x,t)\le M\quad\forall (x,t)\in\bigcup_{x_0\in\2{\Omega}_1}\Gamma_{\alpha}^h(x_0).
\end{equation}
\end{lemm}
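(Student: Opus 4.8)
The plan is to dominate $u$ near each point $x_0\in\overline{\Omega}_1$ by an explicit supersolution of $u_t=\Delta u^m$ that already lies above $u_0$ at $t=0$ and becomes infinite on the sphere $\{|x-x_0|=\rho\}$ with $\rho:=h_1/4$; since $h$ is so small that $\alpha\sqrt h<h_1/8<\rho$, the tangential cone $\Gamma_\alpha^h(x_0)$ stays well inside $\{|x-x_0|\le\alpha\sqrt h\}$, where this supersolution is bounded by a constant $M$ depending only on $m,n,h_1,h$ and $\|u_0\|_{L^\infty}$ on a fixed compact neighbourhood of $\overline{\Omega}_1$.

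\emph{Setup.} Put $\rho=h_1/4$. For $x_0\in\overline{\Omega}_1$ one has $\operatorname{dist}(x_0,\partial\Omega)\ge\operatorname{dist}(\overline{\Omega}_1,\partial\Omega)=2h_1>\rho$, so $\overline{B_\rho(x_0)}\subset\Omega$; hence $\overline{\Omega}_2\subset\Omega$ is compact, where $\Omega_2:=\bigcup_{x_0\in\overline{\Omega}_1}B_\rho(x_0)$, and $K:=\|u_0\|_{L^\infty(\overline{\Omega}_2)}<\infty$ because $u_0\in L^\infty_{loc}(\Omega)$. With $b:=\frac{2}{1-m}$ define, for each $x_0\in\overline{\Omega}_1$,
\[
w_{x_0}(x,t)=\frac{A_0\,(1+t)^{\frac{1}{1-m}}}{\bigl(\rho^2-|x-x_0|^2\bigr)^{\frac{2}{1-m}}}\qquad\text{on }B_\rho(x_0)\times(0,\infty),
\]
with $A_0$ chosen below. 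Writing $\psi=\rho^2-|x-x_0|^2$, using $|\nabla\psi|^2=4|x-x_0|^2\le4\rho^2$, $\Delta\psi=-2n$ and the identity $mb+2=b$, a direct computation gives $\Delta(w_{x_0}^m)\le C_0A_0^m(1+t)^{\frac{m}{1-m}}\psi^{-b}$ for some $C_0=C_0(m,n,\rho)>0$, while $(w_{x_0})_t=\frac{1}{1-m}A_0(1+t)^{\frac{m}{1-m}}\psi^{-b}$; hence $w_{x_0}$ is a positive supersolution of $u_t=\Delta u^m$ in $B_\rho(x_0)\times(0,\infty)$ as soon as $A_0^{1-m}\ge(1-m)C_0$. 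Also $w_{x_0}(x,0)\ge A_0\rho^{-2b}\ge K\ge u_0(x)$ on $B_\rho(x_0)$ once $A_0\ge K\rho^{2b}$, and $w_{x_0}(x,t)\to\infty$ as $|x-x_0|\to\rho$. Fix such an $A_0=A_0(m,n,h_1,K)$.

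\emph{Comparison step (the crux).} I claim $u\le w_{x_0}$ in $B_\rho(x_0)\times(0,h)$ for every $x_0\in\overline{\Omega}_1$. Since $u$ is a positive classical solution on $\Omega\times(0,T)\supset\overline{B_\rho(x_0)}\times\{t\}$ for $0<t<h<T$, $u(\cdot,t)$ is bounded on $\overline{B_\rho(x_0)}$ for each such $t$, whereas $w_{x_0}(\cdot,t)\to\infty$ at $\partial B_\rho(x_0)$; hence $(u-w_{x_0})_+(\cdot,t)$ is supported in a compact subset of $B_\rho(x_0)$, uniformly for $t$ in compact subintervals of $(0,h)$. Because $u^m-w_{x_0}^m$ is smooth in the interior, $u$ a subsolution and $w_{x_0}$ a supersolution, Kato's inequality for the Laplacian gives, distributionally, $\partial_t(u-w_{x_0})_+\le\Delta\bigl[(u^m-w_{x_0}^m)_+\bigr]$; integrating over $B_\rho(x_0)$ (all boundary terms vanish, $(u^m-w_{x_0}^m)_+$ being $0$ near $\partial B_\rho(x_0)$) yields $\frac{d}{dt}\int_{B_\rho(x_0)}(u-w_{x_0})_+\,dx\le0$ on $(0,h)$. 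On the other hand $w_{x_0}(\cdot,t)\ge w_{x_0}(\cdot,0)\ge K$ for $t>0$, so $(u-w_{x_0})_+(\cdot,t)\le(u(\cdot,t)-K)_+\le|u(\cdot,t)-u_0|$ on $B_\rho(x_0)$, and $\|u(\cdot,t)-u_0\|_{L^1(\overline{B_\rho(x_0)})}\to0$ forces $\int_{B_\rho(x_0)}(u-w_{x_0})_+(\cdot,t)\,dx\to0$ as $t\to0$. Combining, $\int_{B_\rho(x_0)}(u-w_{x_0})_+\,dx\equiv0$ on $(0,h)$, i.e. $u\le w_{x_0}$ there.

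\emph{Conclusion.} If $(x,t)\in\Gamma_\alpha^h(x_0)$ then $|x-x_0|^2<\alpha^2t<\alpha^2h<h_1^2/64=\rho^2/4$, so $\rho^2-|x-x_0|^2>\tfrac34\rho^2$ and therefore
\[
u(x,t)\le w_{x_0}(x,t)\le\frac{A_0(1+h)^{\frac{1}{1-m}}}{\bigl(\tfrac34\rho^2\bigr)^{\frac{2}{1-m}}}=:M,
\]
a constant independent of $x_0,x,t$, which is the assertion. The only real obstacle is the comparison step: with the initial datum only in $L^\infty_{loc}$ (attained in the $L^1_{loc}$ sense) and the barrier singular on $\partial B_\rho(x_0)$, one cannot directly invoke a comparison principle with prescribed lateral data; instead one runs the $L^1$/Kato argument above, exploiting the blow-up of $w_{x_0}$ on $\partial B_\rho(x_0)$ to annihilate the lateral contributions and the local $L^1$-continuity at $t=0$ to start the inequality. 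Everything else is routine bookkeeping of constants, which are manifestly uniform in $x_0$.
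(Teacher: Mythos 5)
Your proof is correct, but it follows a genuinely different route from the paper's. The paper covers $\overline{\Omega}_1$ by finitely many balls and, for each $y_0$ and $0<t_0<h$, rescales $v(x,t)=\alpha^{\frac{2}{1-m}}u(y_0+\alpha\sqrt{t_0}\,x,t_0t)$ so that the rescaled initial datum has a uniformly controlled $L^p$-norm on $B_2$; it then invokes the local $L^p$--$L^{\infty}$ smoothing estimate of Lemma \ref{lem-Cor-2-2-of-Hs1a} at time $1$ to bound $v(\cdot,1)$, i.e.\ $u$ at the tip of each tangential cone. You instead build an explicit supersolution $w_{x_0}=A_0(1+t)^{\frac{1}{1-m}}(\rho^2-|x-x_0|^2)^{-\frac{2}{1-m}}$ (of the same flavour as $\phi_{i,A_0}$ in Lemma \ref{lem-cf-lemma-2-3-of-cite-HK1}) that blows up on $\partial B_{\rho}(x_0)$, and compare via the standard Kato/$L^1$-contraction argument, using the lateral blow-up of the barrier to kill the boundary terms and \eqref{u-initial-value} to start the inequality at $t=0$; your identity $mb+2=b$ with $b=\frac{2}{1-m}$ and the choice of $A_0$ are checked correctly, and the geometric condition $\alpha^2h<h_1^2/64=\rho^2/4$ keeps the cone in the region where the barrier is bounded. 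Your argument buys two things: it is self-contained (no appeal to the smoothing lemma, whose stated hypotheses $n\ge 3$, $0<m\le\frac{n-2}{n}$ do not literally cover the full range $n\ge 1$, $0<m<1$ of this lemma), and it needs the initial trace only in the $L^1_{loc}$ sense. What it loses is the scaling-invariant form of the bound (the paper's $C_{\alpha}\alpha^{-\frac{2}{1-m}}$ constant), which is immaterial here.
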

\begin{proof}
We will use a modification of the proof of Lemma 2.1 of \cite{DFK} to prove this lemma.
 Since $\2{\Omega}_1\subset\bigcup_{x_0\in\2{\Omega}_1}B_h(x_0)$ and $\2{\Omega}_1$ is compact, there exist $x_1,\cdots, x_{k_0}\in\2{\Omega}_1$ such that $\2{\Omega}_1\subset\bigcup_{i=1}^{k_0}B_h(x_i)$. For any $i\in\{1,\cdot,k_0\}$, $y_0\in B_h(x_i)$ and $0<t_0<h$, consider the function
\begin{equation}\label{u-rescaled-v-defn}
v(x,t)=\alpha^{\frac{2}{1-m}}u(y_0+\alpha\sqrt{t_0}x,t_0t)\quad\mbox{ in  }B_2\times [0,1].
\end{equation}
Then $v$ is a solution of \eqref{fde} in $B_2\times [0,1]$ and for any $p>\frac{n(1-m)}{2}$,
\begin{equation}\label{v-initial-lp-bd}
\|v(x,0)\|_{L^p(B_2)}\le C_1\alpha^{\frac{2}{1-m}}\|u_0\|_{L^{\infty}(\Omega_2)}
\end{equation}
where $C_1=|B_2|^{1/p}$ and $\Omega_2=\{x\in\Omega:\mbox{dist}\,(x,\1\Omega)>h_1/2\}$.
By \eqref{u-rescaled-v-defn}, \eqref{v-initial-lp-bd} and Lemma \ref{lem-Cor-2-2-of-Hs1a}, there exists a constant $C_{\alpha}$ depending on $\alpha$ such that
\begin{equation*}
\alpha^{\frac{2}{1-m}}\sup_{|z-y_0|<\alpha\sqrt{t_0}}u(z,t_0)=\sup_{|x|\le 1}v(x,1)\le C_{\alpha}\quad\forall 0<t_0<h
\end{equation*}
and \eqref{u-bdness-in-tangential-region} follows.
\end{proof}

By an argument similar to the proof of Lemma 2.5 of \cite{DFK} but with equation (1.28) of \cite{BV} replacing Theorem 1.1 in the proof there we get the following result.

\begin{lemm}\label{u-lower-bd-in-tangential-region-lem}
Let $n\ge 1$, $0<m<1$ and $\Omega_1\subset\Omega\subset\R^n$ be smooth bounded domains with $\2{\Omega}_1\subset \Omega$ and $0\le u_0\in L_{loc}^{\infty}(\Omega)$. Let $h_1=\mbox{dist}\, (\2{\Omega}_1,\1\Omega)/2$. Suppose $u$ is a solution of \eqref{fde-cauchy-problem5} and $E=\{x\in \2{\Omega}_1:u_0(x)>0\}$. Then there exist constants $0<h<h_1$ and $\alpha>0$ such that for almost every $x_0\in E$,
\begin{equation}
\underset{\substack{(x,t)\to x_0\\(x,t)\in\Gamma_{\alpha}^h(x_0)}}{\lim}u(x,t)>0
\end{equation}
\end{lemm}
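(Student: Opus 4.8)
The plan is to mirror the rescaling argument in the proof of Lemma~\ref{u-upper-bd-in-tangential-region-lem}, using the local lower bound estimate~(1.28) of \cite{BV} for solutions of \eqref{fde} in place of the $L^p$--$L^{\infty}$ smoothing estimate (Lemma~\ref{lem-Cor-2-2-of-Hs1a}) employed there. First I would reduce to a single point. Set $\Omega_2=\{x\in\Omega:\mbox{dist}\,(x,\1\Omega)>h_1/2\}$ and $K=\|u_0\|_{L^{\infty}(\Omega_2)}<\infty$. By the Lebesgue differentiation theorem, for a.e.\ $x_0\in E$ we have $|B_r(x_0)|^{-1}\int_{B_r(x_0)}u_0\,dy\to u_0(x_0)>0$ as $r\to0^+$; fix such an $x_0$ and pick $r_0=r_0(x_0)\in(0,h_1)$ with $|B_r(x_0)|^{-1}\int_{B_r(x_0)}u_0\,dy\ge\tfrac12 u_0(x_0)$ for all $0<r<r_0$. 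It then suffices to produce $\alpha>0$ and $h\in(0,h_1)$ depending only on $n,m,K$ (and on $h_1,T$), together with $c=c(n,m)>0$, such that $u(x,t)\ge c\,u_0(x_0)$ for all $(x,t)\in\Gamma_{\alpha}^h(x_0)$ with $t$ sufficiently small: along any sequence $(x,t)\to x_0$ inside $\Gamma_{\alpha}^h(x_0)$ one has $t\to0$, so this gives $\liminf u(x,t)\ge c\,u_0(x_0)>0$, and since $\Gamma_{\alpha}^h(x_0)$ and $\Gamma_{\alpha}^{h'}(x_0)$ coincide in a neighbourhood of the vertex the particular admissible value of $h$ is immaterial for this limit.

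For the core step, given small $t_0\in(0,h)$ I would centre the rescaling of \eqref{u-rescaled-v-defn} at $x_0$, that is, put $v(y,s)=\alpha^{\frac{2}{1-m}}u\big(x_0+\alpha\sqrt{t_0}\,y,\,t_0 s\big)$ on $B_2\times[0,1]$; as in that proof $v$ solves \eqref{fde} in $B_2\times(0,1)$, and the change of variables $z=x_0+\alpha\sqrt{t_0}\,y$ gives $\int_{B_1}v(y,0)\,dy=\alpha^{\frac{2}{1-m}}|B_1|\,|B_{\alpha\sqrt{t_0}}(x_0)|^{-1}\int_{B_{\alpha\sqrt{t_0}}(x_0)}u_0\,dz$. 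Applying~(1.28) of \cite{BV} to $v$ on $B_2\times(0,1)$ and choosing $\alpha$ so small that $\alpha^{\frac{2}{1-m}}|B_1|K$ falls below the threshold required by that estimate, one obtains constants $\beta_0,c_0>0$ depending only on $n,m$ with $v(y,1)\ge c_0\int_{B_1}v(\cdot,0)\,dz$ for $|y|<\beta_0$. The smallness of $\alpha$ enters precisely here, to guarantee that the time interval on which~(1.28) asserts positivity reaches the rescaled time $s=1$; for $m\le\frac{n-2}{n}$ this is a genuine constraint, reflecting the finite-time extinction of the very fast diffusion equation. On the other hand, for $\alpha\sqrt{t_0}<r_0(x_0)$ the Lebesgue-point property gives $\int_{B_1}v(\cdot,0)\,dz\ge\tfrac12\alpha^{\frac{2}{1-m}}|B_1|u_0(x_0)$, so undoing the scaling yields $u(z,t_0)=\alpha^{-\frac{2}{1-m}}v\big(\tfrac{z-x_0}{\alpha\sqrt{t_0}},1\big)\ge\tfrac{c_0|B_1|}{2}u_0(x_0)$ for all $|z-x_0|<\beta_0\alpha\sqrt{t_0}$.

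To conclude I would take the aperture in the statement to be $\beta_0\alpha$ and fix $h\in(0,h_1)$ small enough, as in the proof of Lemma~\ref{u-upper-bd-in-tangential-region-lem}, that $B_{2\alpha\sqrt{t_0}}(x_0)\subset\Omega_2$ and $t_0<T$ whenever $0<t_0<h$. For $(x,t)\in\Gamma_{\beta_0\alpha}^h(x_0)$ with $t<r_0(x_0)^2/\alpha^2$ the core step with $t_0=t$ applies and gives $|x-x_0|<\beta_0\alpha\sqrt t$, hence $u(x,t)\ge\tfrac{c_0|B_1|}{2}u_0(x_0)>0$; along any sequence $(x,t)\to x_0$ in $\Gamma_{\beta_0\alpha}^h(x_0)$ one eventually has $t<r_0(x_0)^2/\alpha^2$, so the desired $\liminf>0$ follows. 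Since $\alpha$ and $h$ were chosen in terms of $n,m,\|u_0\|_{L^{\infty}(\Omega_2)},h_1,T$ only, they are uniform in $x_0\in E$, as required.

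The step I expect to be the main obstacle is the correct invocation of~(1.28) of \cite{BV}: one must verify that the rescaled $v$ actually satisfies its hypotheses on the full cylinder $B_2\times(0,1)$ — it is a positive classical solution of \eqref{fde} there with $L^1$ initial trace, and a purely local estimate asks for no boundary data — and, above all, one must keep track of the length of the time interval on which the lower bound is asserted, so that the rescaled time $s=1$ is reached; this is exactly what forces $\alpha$ to be taken small depending on $\|u_0\|_{L^{\infty}(\Omega_2)}$. Everything else is the same scaling bookkeeping as in Lemma~\ref{u-upper-bd-in-tangential-region-lem}: the exponent $\tfrac{2}{1-m}$, the change-of-variables factor $|B_1|$, and the check that the cone $\{|x-x_0|<\beta_0\alpha\sqrt t\}$ sits inside the region where the rescaled lower bound is available.
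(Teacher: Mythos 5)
Your overall strategy is the one the paper intends: its ``proof'' is precisely the one-line instruction to repeat the proof of Lemma 2.5 of \cite{DFK}, replacing the Aronson--Caffarelli-type Theorem 1.1 there by the local positivity estimate (1.28) of \cite{BV}, and your reduction to Lebesgue points, the rescaling centred at $x_0$, and the final bookkeeping all match. But the key quantitative step is inverted, and as written it fails. The window on which (1.28) asserts positivity is $0<t\le t_*$ with a critical time scaling like
\begin{equation*}
t_*\,\sim\,c_*\Bigl(|B_R|^{-1}\!\int_{B_R}u_0\,dx\Bigr)^{1-m}R^2 ,
\end{equation*}
so for your rescaled solution $v(y,s)=\alpha^{\frac{2}{1-m}}u(x_0+\alpha\sqrt{t_0}\,y,t_0s)$ on $B_2\times(0,1)$ one gets
\begin{equation*}
t_*(v)\,\sim\,c_*\,\alpha^{2}\Bigl(|B_{\alpha\sqrt{t_0}}(x_0)|^{-1}\!\int_{B_{\alpha\sqrt{t_0}}(x_0)}u_0\,dx\Bigr)^{1-m}\approx c_*\,\alpha^{2}\,u_0(x_0)^{1-m}.
\end{equation*}
Reaching the rescaled time $s=1$ therefore requires $\alpha\ge c\,u_0(x_0)^{-\frac{1-m}{2}}$, i.e.\ $\alpha$ \emph{large}, not small: shrinking $\alpha$ shrinks the rescaled initial mass and hence shortens the positivity (pre-extinction) window, which is exactly the opposite of what you claim when you say that ``the smallness of $\alpha$ enters precisely here, to guarantee that the time interval \dots reaches the rescaled time $s=1$.'' There is also no smallness ``threshold'' on the initial mass in (1.28) for you to fall below.

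This matters for uniformity: the required lower bound $c\,u_0(x_0)^{-\frac{1-m}{2}}$ is unbounded as $u_0(x_0)\to 0^+$ on $E$, so you cannot fix a single rescaling parameter $\alpha$ depending only on $n,m,\|u_0\|_{L^{\infty}(\Omega_2)},h_1,T$ as you assert. The repair is to apply (1.28) at scale $R=\rho(x_0)\sqrt{t_0}$ with the point-dependent choice $\rho(x_0)\sim u_0(x_0)^{-\frac{1-m}{2}}$ (so that $t_0\le t_*$ holds with a fixed ratio $t_0/t_*$, and the resulting lower bound on $u(\cdot,t_0)$ is still a fixed positive multiple of $u_0(x_0)$). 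This gives positivity on the cone $\Gamma^{h(x_0)}_{\rho(x_0)/2}(x_0)$; since $u_0\le K:=\|u_0\|_{L^{\infty}(\Omega_2)}$ forces $\rho(x_0)/2\ge \alpha:=c\,K^{-\frac{1-m}{2}}$ for a.e.\ $x_0\in E$, all these cones contain the fixed cone $\Gamma_{\alpha}(x_0)$, which yields the uniform aperture in the statement; the point-dependence of the admissible height $h(x_0)$ (coming from the requirement $B_{2\rho(x_0)\sqrt{t_0}}(x_0)\subset\Omega$) is harmless for the limit at the vertex, as you already observed. With this correction the rest of your scaling bookkeeping goes through.
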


By an argument similar to the proof of Theorem 2.7 of \cite{DFK} (cf. Theorem 1.8 of \cite{Hu1} ) but with Lemma \ref{u-upper-bd-in-tangential-region-lem} and Lemma \ref{u-lower-bd-in-tangential-region-lem} replacing Lemma 2.1 and Lemma 2.5 in the proof there we have the following result.

\begin{lemm}\label{lem-from-Hu2-and-DFK-convergence-as-t-to-zero}
Let $n\ge 1$, $0<m<1$, $\alpha>0$ and $\Omega\subset\R^n$ be a smooth bounded domain and $0\le u_0\in L_{loc}^{\infty}(\Omega)$. Suppose $u$ is a solution of \eqref{fde-cauchy-problem5}. Then
\begin{equation*}
\lim_{\begin{subarray}{c}|x-x_0|\leq\alpha\sqrt{t}\\t\to 0\end{subarray}}u(x,t)=u_0(x_0) 
\end{equation*}
for any point $x_0\in\Omega$ of continuity of $u_0$.
\end{lemm}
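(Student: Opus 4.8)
The plan is to fix a point $x_0\in\Omega$ at which $u_0$ is continuous, set $a:=u_0(x_0)\ge 0$, and establish separately the two one-sided bounds $\limsup u(x,t)\le a$ and $\liminf u(x,t)\ge a$ as $(x,t)\to(x_0,0)$ subject to $|x-x_0|\le\alpha\sqrt t$; the lower one is automatic when $a=0$ since $u\ge 0$. Given $\3>0$, continuity of $u_0$ at $x_0$ yields $\rho>0$ with $\overline{B_{2\rho}(x_0)}\subset\Omega$ and $|u_0-a|<\3$ on $B_{2\rho}(x_0)$ (and, when $a>0$, also $u_0>a-\3>0$ there); as $\3$ is arbitrary, it then suffices to produce for each such $\3$ a $\tau>0$ with $|u(x,t)-a|\le3\3$ whenever $|x-x_0|\le\alpha\sqrt t$ and $0<t<\tau$.

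For the upper bound I would first apply Lemma \ref{u-upper-bd-in-tangential-region-lem} with a smooth bounded domain $\Omega_1$ such that $\overline{B_{2\rho}(x_0)}\subset\Omega_1$ and $\overline{\Omega_1}\subset\Omega$: it gives $h>0$ and $M>0$ with $u\le M$ on $\bigcup_{x'\in\overline{\Omega_1}}\Gamma_\alpha^h(x')$, hence (taking $x'=x$) with $u\le M$ on $\overline{\Omega_1}\times(0,h)$. Then I would fix $\tau\le h$ with $\alpha^2\tau<\rho^2$, so that the set $\{|x-x_0|\le\alpha\sqrt t\}\cap\{0<t<\tau\}$ sits inside the cylinder $Q:=B_\rho(x_0)\times(0,\tau)$, and construct a supersolution $\overline u$ of \eqref{fde} on $Q$ with $\overline u(\cdot,0)\ge a+\3\ge u_0$ on $B_\rho(x_0)$, $\overline u\ge M\ge u$ on $\partial B_\rho(x_0)\times(0,\tau)$, and $\overline u\le a+3\3$ on $\{|x-x_0|\le\alpha\sqrt t\}$ for $0<t<\tau$. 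Because the range of $\overline u$ lies in a compact subinterval of $(0,\infty)$, equation \eqref{fde} is uniformly parabolic along $\overline u$, and an explicit barrier such as $\overline u(x,t)=(a+\3)+\beta t+M|x-x_0|^2/\rho^2$ does the job once $\beta$ is chosen large (depending on $M,\rho,a,\3,n,m$) and then $\tau$ small. A comparison of $u$ and $\overline u$ on $Q$ — run exactly as in the argument behind Lemma \ref{comparison-lem-bd-domain}, the region $\{u>\overline u\}$ being contained in $\{u>a+\3\}$ where \eqref{fde} is uniformly parabolic — using $u\le M\le\overline u$ on the lateral boundary of $Q$ and the initial trace \eqref{u-initial-value} on its bottom, yields $u\le\overline u\le a+3\3$ in the claimed region.

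For the lower bound (the case $a>0$) one argues symmetrically, now with Lemma \ref{u-lower-bd-in-tangential-region-lem} in place of Lemma \ref{u-upper-bd-in-tangential-region-lem}. That lemma provides, for a.e.\ $x'$ in $\{x\in\overline{\Omega_1}:u_0(x)>0\}\supset B_{2\rho}(x_0)$, a uniform height $h'$ and aperture $\alpha'$ and a positive limit of $u$ along $\Gamma_{\alpha'}^{h'}(x')$; in particular $\liminf_{t\to0^+}u(x',t)>0$ for a.e.\ $x'\in B_{2\rho}(x_0)$. Combining this with an Egorov-type argument, with the continuity (hence pointwise positivity) of $u(\cdot,t)$ for $t>0$, and with a measure-to-pointwise expansion-of-positivity estimate (legitimate since $0<u\le M$ on $B_{2\rho}(x_0)\times(0,h)$ by the upper bound) one obtains a constant $c_0>0$ and a time $\tau'>0$ with $u\ge c_0$ on $\partial B_\rho(x_0)\times(0,\tau')$. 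One then compares $u$ from below on $B_\rho(x_0)\times(0,\tau')$ with a subsolution $\underline u$ of \eqref{fde} satisfying $\underline u(\cdot,0)\le a-\3\le u_0$ on $B_\rho(x_0)$, $\underline u\le c_0\le u$ on $\partial B_\rho(x_0)\times(0,\tau')$, and $\underline u\ge a-3\3$ on $\{|x-x_0|\le\alpha\sqrt t\}$ for small $t$ — again an explicit barrier, for instance $\underline u(x,t)=c_0+(a-2\3-c_0)\bigl(1-|x-x_0|^2/\rho^2\bigr)-\mu t$ with $\mu$ large and $\tau'$ small, whose range stays in the compact subinterval $[c_0/2,a-2\3]$ of $(0,\infty)$. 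Letting $\3\to0$ in the two bounds gives $\lim u(x,t)=a$ in the parabolic region, which is the assertion. (Equivalently, one may phrase the whole argument as a rescaling/compactness statement: the parabolic blow-ups $v_k(y,s)=u(x_0+\sqrt{t_k}\,y,\,t_k s)$ of $u$ at $(x_0,0)$ solve \eqref{fde}, are locally bounded above by Lemma \ref{u-upper-bd-in-tangential-region-lem} and below away from $s=0$ by Lemma \ref{u-lower-bd-in-tangential-region-lem}, and have initial data $u_0(x_0+\sqrt{t_k}\,\cdot\,)\to a$ locally uniformly; a subsequential limit is then a solution with constant initial value $a$, hence $\equiv a$, forcing $u(x_k,t_k)\to a$ whenever $|x_k-x_0|\le\alpha\sqrt{t_k}$ and $t_k\to0$.)

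The step I expect to be the main obstacle is the lower bound: upgrading the merely almost-everywhere conclusion of Lemma \ref{u-lower-bd-in-tangential-region-lem} to a genuine positive lower bound for $u$ on the lateral boundary $\partial B_\rho(x_0)\times(0,\tau')$ of the comparison cylinder — the Egorov plus expansion-of-positivity step above — together with the choice of super- and sub-solution barriers adapted to the parabolic scale $|x-x_0|\sim\sqrt t$; these are essentially the content of the proof of Theorem 2.7 of \cite{DFK} that is being adapted here, with Lemma \ref{u-upper-bd-in-tangential-region-lem} and Lemma \ref{u-lower-bd-in-tangential-region-lem} playing the roles of Lemma 2.1 and Lemma 2.5 there. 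The remaining ingredients — the parabolic scaling invariance of \eqref{fde}, the comparison principle on bounded cylinders for the singular equation (handled as in Lemma \ref{comparison-lem-bd-domain} and the \cite{DaK}-type arguments used throughout the paper), the initial trace \eqref{u-initial-value}, and the elementary barrier verifications — are routine.
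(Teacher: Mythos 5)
Your proposal is correct and follows essentially the same route as the paper, which proves this lemma simply by invoking the argument of Theorem 2.7 of \cite{DFK} (cf. Theorem 1.8 of \cite{Hu1}) with Lemma \ref{u-upper-bd-in-tangential-region-lem} and Lemma \ref{u-lower-bd-in-tangential-region-lem} replacing Lemmas 2.1 and 2.5 there; you have reconstructed that adaptation in detail, correctly identifying the two key lemmas, the barrier-plus-comparison structure for the two one-sided bounds, and the delicate point (upgrading the a.e.\ positivity of Lemma \ref{u-lower-bd-in-tangential-region-lem} to a uniform lower bound on the lateral boundary, which rests on the \cite{BV} positivity estimate already underlying that lemma).
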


\begin{lemm}\label{lem-local-lower-bound-of-u-near-singular-points}
Let $n\geq 3$, $0<m<\frac{n-2}{n}$, $0<\delta_1<\delta_0$, $0\le f\in L^{\infty}(\partial\Omega\times[0,\infty))$ and $0\leq u_0\in L^p_{loc}(\2{\Omega}\setminus\{a_1,\cdots,a_{i_0}\})$ for some constant $p>\frac{n(1-m)}{2}$ satisfy \eqref{u0-lower-blow-up-rate} for some constants  $\lambda_1$, $\cdots$, $\lambda_{i_0}\in\R^+$, $\gamma_1$, $\cdots$, $\gamma_{i_0}\in\left(\frac{2}{1-m},\infty\right)$.  Let $u$ be the solution of \eqref{fde-Dirichlet-blow-up-problem} in $\widehat{\Omega}\times\left(0,\infty\right)$  given by Lemma \ref{lem-um-converge-to-u}. Then for any $T>0$ and $\delta_2\in (0,\delta_1)$ there exists a constant $0<C_1\leq\min_{1\leq i\leq i_0}\lambda_i$ such that \eqref{eq-lower-limit-of-u-near-blow-up-points} holds.
\end{lemm}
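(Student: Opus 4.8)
The plan is to argue locally near each singular point and to bound $u$ from below by explicit barriers. By the symmetry of the hypotheses it suffices to treat one index, say $i=1$; since $\delta_1<\delta_0$, the ball $B_{\delta_1}(a_1)$ contains none of the other singular points. The essential simplification is that $u$ is the increasing limit of the \emph{bounded} classical solutions $u_M$ of \eqref{fde} on $\Omega\times(0,T_M)$ produced in Lemma~\ref{lem-um-converge-to-u}, with $u_M\le u$ and $T_M\to\infty$ (Remark~\ref{remark-about-maximal-time-T-M-to-infty-as-M-to-infty}); hence every comparison below can be carried out against the $u_M$, which have no singularity at $a_1$, and the limit $M\to\infty$ is taken at the end. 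Writing $\rho_M=(\lambda_1/M)^{1/\gamma_1}$, \eqref{u0-lower-blow-up-rate} gives $u_{0,M}(x)=\min(u_0(x),M)\ge\lambda_1\max(|x-a_1|,\rho_M)^{-\gamma_1}$ on $\widehat{B}_{\delta_1}(a_1)$ for $M$ large. Fix $T>0$ and $\delta_2\in(0,\delta_1)$.

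The main step is a singular barrier near $a_1$ whose amplitude is allowed to decay. For a radius $R\in(0,\delta_1]$ and a smooth radial cut-off $\zeta$ with $\zeta\equiv1$ on $[0,R/2]$, $\zeta>0$ on $[0,R)$ and $\zeta(r)=((R-r)/(R/2))^{\beta}$ near $r=R$ for some $\beta>n/m$, put $G_M(y)=\lambda_1(|y|^{\gamma_1}+\rho_M^{\gamma_1})^{-1}\zeta(|y|)$. Then $G_M$ is smooth, $G_M\le\min(\lambda_1|y|^{-\gamma_1},M)$, $G_M\ge\tfrac12\lambda_1|y|^{-\gamma_1}\zeta(|y|)$ on $\{|y|\ge\rho_M\}$, and $G_M=0$ on $\{|y|=R\}$. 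A computation using $\gamma_1(1-m)>2$ (and $\beta>n/m$ to handle the cut-off region) yields
\[
\inf_{0<|y|<R}\frac{\Delta\!\left(G_M^{m}\right)(y)}{G_M(y)}\ \ge\ -\Lambda,\qquad \Lambda=C_{\ast}\,\lambda_1^{m-1}R^{(1-m)\gamma_1-2},
\]
with $C_{\ast}=C_{\ast}(n,m,\gamma_1,\beta)$ \emph{independent of} $M$ (the quotient equals $\approx\lambda_1^{m-1}m\gamma_1(m\gamma_1+2-n)|y|^{(1-m)\gamma_1-2}$ on $\rho_M\le|y|\le R/2$, is of order $-\rho_M^{(1-m)\gamma_1-2}\to0$ at the truncation scale, and is $\ge0$ on $R/2\le|y|<R$). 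Letting $A(t)=\bigl(1-(1-m)\Lambda t\bigr)_+^{1/(1-m)}$, so that $A'/A^m\equiv-\Lambda$ and $A(0)=1$, the function $\underline u_M(x,t)=A(t)G_M(x-a_1)$ is a subsolution of \eqref{fde} on $B_R(a_1)\times\{A>0\}$, vanishes on $\partial B_R(a_1)\times(0,\infty)$, and satisfies $\underline u_M(\cdot,0)=G_M(\cdot-a_1)\le u_{0,M}$. Since $(1-m)\gamma_1-2>0$, the life span $[(1-m)\Lambda]^{-1}$ of this barrier tends to $\infty$ as $R\downarrow0$, so we may fix $R=R(T)\in(0,\delta_1]$ with $[(1-m)\Lambda]^{-1}>T$. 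The comparison principle (Lemma~\ref{comparison-lem-bd-domain}, extended in the standard way to a subsolution vanishing on part of the lateral boundary) then gives $\underline u_M\le u_M$ on $B_R(a_1)\times(0,T)$ for $M$ large; letting $M\to\infty$ and using $G_M\uparrow\lambda_1|y|^{-\gamma_1}\zeta(|y|)$, $u_M\uparrow u$, we obtain, with the fixed constant $A(T)>0$,
\[
u(x,t)\ \ge\ A(T)\,\lambda_1\,|x-a_1|^{-\gamma_1}\qquad\text{for }0<|x-a_1|<R/2,\ 0<t<T .
\]

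It remains to propagate this out to $|x-a_1|<\delta_2$, where purely \emph{stationary} barriers suffice, since on an annulus away from $a_1$ one can make $w^m$ subharmonic. On $\{R/2<|x-a_1|<\delta_1\}$ set $w(x)=V\,q(|x-a_1|)$ with $V=A(T)\lambda_1(R/2)^{-\gamma_1}$ and $q(r)=\bigl((\delta_1-r)_+/(\delta_1-R/2)\bigr)^{\beta'}$, where $\beta'$ is large enough that $m\beta'-1\ge(n-1)\bigl(2\delta_1/R-1\bigr)$; then $\Delta(q^m)\ge0$ on the annulus, hence $\Delta(w^m)\ge0=\partial_t w$ and $w$ is a subsolution. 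On $\partial B_{R/2}(a_1)$ one has $w=V\le u$ by the previous step; on $\partial B_{\delta_1}(a_1)$, $w=0\le u$; and $w(\cdot,0)\le V\le\lambda_1\delta_1^{-\gamma_1}\le u_0$ on the annulus once $V\le\lambda_1\delta_1^{-\gamma_1}$, which we may assume after decreasing the seed constant $A(T)$ (a valid lower bound remains valid when shrunk). Thus $u\ge w$ on $\{R/2<|x-a_1|<\delta_1\}\times(0,T)$, and in particular $u\ge Vq(\delta_2)=:c_1>0$ on $\{R/2\le|x-a_1|\le\delta_2\}\times(0,T)$. Combining, $u(x,t)\ge A(T)\lambda_1|x-a_1|^{-\gamma_1}$ for $|x-a_1|<R/2$ and $u(x,t)\ge c_1\ge c_1\delta_2^{\gamma_1}|x-a_1|^{-\gamma_1}$ for $R/2\le|x-a_1|<\delta_2$, all for $0<t<T$. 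Running this argument at each $a_i$ and taking $C_1=\min\bigl(\min_i\lambda_i,\ \min_i A_i(T)\lambda_i,\ \min_i c_1^{(i)}\delta_2^{\gamma_i}\bigr)$ gives \eqref{eq-lower-limit-of-u-near-blow-up-points} with $0<C_1\le\min_i\lambda_i$.

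The hard part is the first step. Because $0<m<\frac{n-2}{n}$ forces $\frac{2}{1-m}<\frac{n-2}{m}$, for $\gamma_1$ in the range $\bigl(\frac{2}{1-m},\frac{n-2}{m}\bigr)$ there is \emph{no} stationary subsolution of \eqref{fde} singular of order $|x-a_1|^{-\gamma_1}$, and any barrier of the shape $A(t)|x-a_1|^{-\gamma_1}$ has amplitude $A$ extinguishing in finite time; the crux is to quantify this extinction time against the radius of the ball and to exploit $\gamma_1(1-m)>2$ so that it exceeds the prescribed horizon $T$ once the ball is small. The remaining technicalities — justifying the comparison principle when the barriers touch zero on part of the boundary, checking that $\Lambda$ stays bounded uniformly in $M$ at the truncation scale $\rho_M$, and interchanging $M\to\infty$ with the comparisons — are routine.
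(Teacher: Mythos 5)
Your argument is essentially correct, but it takes a genuinely different route from the paper. The paper reduces to radially symmetric auxiliary solutions: it compares $u_{\3,M}$ on $B_{\delta_0}(a_1)$ with the solution $v_{\3,M}$ of a radial Dirichlet problem, uses Lemma \ref{lem-from-Hu2-and-DFK-convergence-as-t-to-zero} to get a positive lower bound for $v_{M_2}$ on $\{|x-a_1|=\delta_2\}\times[0,T]$, then compares with the solutions $U_{A\eta}$ of \eqref{eq-problem-for-upper-lower-bound-U-A-eta} (which have strictly positive boundary data, so Lemma \ref{comparison-lem-bd-domain} applies directly), and finally \emph{cites} Lemmas 3.8 and 3.10 of \cite{VW1} for the key fact that $U_{A\eta}$ retains the $|x|^{-\gamma_1}$ lower bound up to time $T$. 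You instead build that key fact from scratch: a separable subsolution $A(t)G_M(x-a_1)$ whose amplitude decays at rate $\Lambda\sim R^{(1-m)\gamma_1-2}$, so that the extinction time $[(1-m)\Lambda]^{-1}\to\infty$ as $R\downarrow 0$ precisely because $\gamma_1>\frac{2}{1-m}$; this is the same mechanism that underlies the cited VW1 lemmas, and your version has the merit of being self-contained and of making the dependence of the constant on $T$ and $R$ explicit. The outward propagation to $\delta_2$ by a stationary subharmonic barrier is a reasonable substitute for the paper's use of radial symmetry plus small-time continuity.

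Two points need shoring up. First, the comparison principle: Lemma \ref{comparison-lem-bd-domain} as stated requires $\inf u_i>0$ for \emph{both} functions, and your barriers $A(t)G_M$ and $Vq$ vanish on part of the lateral boundary (and $A$ vanishes at the extinction time). You cannot simply restrict to a smaller ball where $G_M\ge\sigma$, since you have no matching lower bound for $u_{\3,M}$ on that inner sphere. The clean fix is either a Kato/$L^1$-contraction argument for bounded ordered sub/supersolutions (as in Lemma 2.1 of \cite{Hu3}, used elsewhere in the paper), or to give the barrier a small positive boundary value $\le\3$ and run the comparison against $u_{\3,M}$ before letting $\3\to 0$ — which is in effect how the paper arranges its $U_{A\eta}$. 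Second, the inequality $\Delta(G_M^m)/G_M\ge-\Lambda$ with $\Lambda$ uniform in $M$ is asserted rather than proved on the cut-off annulus $R/2\le|y|<R$: there the quotient behaves like $r^{(1-m)\gamma_1-2}(R-r)^{-(1-m)\beta}$ times a quadratic in $r/(R-r)$, and you need $\beta$ large enough (depending on $n,m,\gamma_1$) that this quadratic is nonnegative for all $r/(R-r)\ge 1$; the condition $\beta>n/m$ you state is not visibly the right one, though a sufficiently large $\beta$ does work. Neither issue is fatal, but both are exactly the places where a referee would push.
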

\begin{proof}
Without loss of generality it suffices to prove the lemma for the case $i=i_0=1$ and $a_1=\left(0,\cdots,0\right)$. Let $f_{\3}$ be given by \eqref{eq-definition-of-f-epsilon-by-f},
\begin{equation}\label{eq-def-of-v-0-near-blow-up-point}
v_0(x)=\begin{cases} 
\lambda_1|x|^{-\gamma_1}\quad  \forall 0<|x|<\delta_1\\
 0 \qquad \quad\,\,\,\forall x\in B_{\delta_0}\bs B_{\delta_1},
 \end{cases}
\end{equation}
and $v_{0, M}, v_{0,\3,M}$ be given by \eqref{eq-def-of-u-0-epsilon-M} with $u_0$
being replaced by $v_0$.
By Lemma \ref{approx-soln-lem1} for any $0<\3<1$ and $M>0$ there exists a solution $v_{\3,M}$ of 
\begin{equation}\label{Dirichlet-problem-in-ball}
\begin{cases}
\begin{aligned}
u_t&=\La u^m \qquad\,  \mbox{in $B_{\delta_0}\times(0,\infty)$}\\
u(x,t)&=\3 \qquad\quad \mbox{ on $\partial B_{\delta_0}\times(0,\infty)$}\\
u(x,0)&=v_{0,\3,M}  \quad \,\mbox{ in $B_{\delta_0}$}.
\end{aligned}
\end{cases}
\end{equation} which satisfies
\begin{equation*}
\3\leq v_{\3, M}\le M+\3 \quad \mbox{ in }B_{\delta_0}\times(0,\infty).
\end{equation*}
Then by Lemma \ref{comparison-lem-bd-domain} for any $t>0$ $v_{\3, M}(x,t)$ is  radially symmetric in $x\in B_{\delta_0}$.
Let $u_{\3,M}$ be the solution of \eqref{eq-aligned-problem-related-with-epsilon-main-one} which satisfies \eqref{eq-upper-and-lower-bound-of-u-epsilon-M-09}. Since $u_{0,\3,M}\geq v_{0,\3,M}\ge\3$ in $B_{\delta_0}$ and $u_{\3,M}$ is a supersolution of \eqref{Dirichlet-problem-in-ball}, by Lemma \ref{comparison-lem-bd-domain},
\begin{equation}\label{eq-comparison-between-u-epsilon-M-and-v-epsilon-M}
u_{\3,M}\geq v_{\3,M} \quad \mbox{ in }B_{\delta_0}\times(0,\infty)\quad\forall M>0, 0<\3<1.
\end{equation}
Let $T>0$. By Lemma \ref{lem-property-of-u-M-with-test-function} and Remark \ref{remark-about-maximal-time-T-M-to-infty-as-M-to-infty} for any $M>0$ there exists a maximal existence time $T_M'>0$ such that a solution $v_M$ of 
\begin{equation}\label{Dirichlet-problem-in-ball-bdary=0}
\begin{cases}
\begin{aligned}
u_t&=\La u^m \quad\mbox{ in }B_{\delta_0}\times (0,T_M')\\
u(x,t)&=0 \quad\quad \mbox{ on $\partial B_{\delta_0}\times(0,T_M')$}\\
u(x,0)&=v_{0,M} \quad \mbox{in $B_{\delta_0}$}
\end{aligned}
\end{cases}
\end{equation} 
exists and $v_{\3,M}$  decreases and converges to $v_M$ in $C^{2,1}(K)$ for any compact subset $K$ of $B_{\delta_0}\times (0,T_M')$ as $\3\to 0$. Moreover $T_M'\to\infty$ as $M\to\infty$. Hence for any $0<t<T_M'$ $v_M(x,t)$ is radially symmetric in $x\in B_{\delta_0}$. Moreover there exists $M_1=M_1(T)>0$ such that
\begin{equation*}
T_M'>T\qquad \forall M\geq M_1.
\end{equation*}
Letting $\3\to 0$ in \eqref{eq-comparison-between-u-epsilon-M-and-v-epsilon-M}, 
\begin{equation}\label{eq-comparison-between-u-M-and-v-M}
u_M\geq v_M \quad \mbox { in }B_{\delta_0}\times(0,T)\quad\forall M\geq M_1.
\end{equation}
Let 
\begin{equation*}
\delta_2\in\left(0,\delta_1\right) \qquad \mbox{and} \qquad  M_2=\max\left(\lambda_1\delta_2^{-\gamma_1}, M_1\right).
\end{equation*}
Then
\begin{equation}\label{eq-lower-bound-of-v-0-M-2-in-the-ball-of-radius-delta-2}
v_{0,M_2}\left(x\right)\geq \lambda_1\delta_2^{-\gamma_1}\qquad \forall |x|\leq\delta_2.
\end{equation}
Let $|x_1|=\delta_2$. By Lemma \ref{lem-from-Hu2-and-DFK-convergence-as-t-to-zero}  there exists a constant $t_1>0$ such that
\begin{equation}\label{eq-difference-between-v-M-2-and-v-0-M-2-near-x-y-very-closed}
|v_{M_2}(x_1,t)-v_{0,M_2}(x_1)|\le\frac{1}{2}\lambda_1\delta_2^{-\gamma_1} \quad \forall 0<t\le t_1.
\end{equation}
Since $v_M(x,t)$ is radially symmetric in $x\in B_{\delta_0}$, by \eqref{eq-lower-bound-of-v-0-M-2-in-the-ball-of-radius-delta-2} and \eqref{eq-difference-between-v-M-2-and-v-0-M-2-near-x-y-very-closed},
\begin{equation}\label{eq-lower-bound-of-v-M-2-y-near-ball-B-delta-2}
v_{M_2}(x,t)=v_{M_2}(x_1,t)\geq \frac{1}{2}\lambda_1\delta_2^{-\gamma_1} \quad \forall |x|=|x_1|=\delta_1, 0<t\le t_1.
\end{equation}
Let 
$$
c_1=\min_{\substack{|x|=\delta_2\\t_1\le t\le T}}v_{M_2}(x,t).
$$ 
Then $c_1>0$ and by \eqref{eq-lower-bound-of-v-M-2-y-near-ball-B-delta-2},
\begin{equation}\label{vm-lower-bd}
v_{M_2}(x,t)\geq c_2\quad  \forall |x|=\delta_2, 0\le t\le T.
\end{equation}
where $c_2=\min\left(c_1,\frac{1}{2}\lambda_1\delta_2^{-\gamma_1}\right)>0$. For any $0<\eta<1$ and $A>0$, let $U_{A\eta}$ be the solution of  
\begin{equation}\label{eq-problem-for-upper-lower-bound-U-A-eta}
\begin{cases}
\begin{aligned}
u_t&=\La u^m \qquad\qquad\quad\mbox{ in } B_{\delta_2}\times (0,\infty)\\
u(x,t)&=A\left(\delta_2^2+\eta\right)^{-\frac{\gamma_1}{2}} \quad\,\,\mbox{ on } \1 B_{\delta_2}\times (0,\infty)\\
u(x,0)&=A(|x|^2+\eta)^{-\frac{\gamma_1}{2}} \quad \mbox{ in } B_{\delta_2}
\end{aligned}
\end{cases}
\end{equation}
which satisfies 
\begin{equation}\label{U-a-eta-lower-upper-bd}
A(\delta_2^2+\eta)^{-\frac{\gamma_1}{2}}\le U_{A\eta}\le A\eta^{-\gamma_1}\quad\mbox{ in }
B_{\delta_2}\times (0,\infty).
\end{equation}
Then by Lemma \ref{comparison-lem-bd-domain}, Lemma \ref{lem-property-of-u-M-with-test-function}, \eqref{vm-lower-bd} and \eqref{U-a-eta-lower-upper-bd}, for any $0<\eta<1$ and $0<A\leq \min\left(c_2\eta^{\gamma_1},\lambda_1\right)$,
\begin{equation}\label{u-a-eta-vm-bdary-compare1}
U_{A\eta}(x,t)\le c_2\leq v_{M_2}(x,t)\leq v_{\3,M_2}(x,t)\leq v_{\3,M}(x,t) \quad \forall |x|=\delta_2, 0\le t\le T,M\ge M_2, 0<\3<1
\end{equation}
and
\begin{equation}\label{u-a-eta-vm-bdary-compare2}
U_{A\eta}(x,0)\leq\min\left(A|x|^{-\gamma_1}, A\eta^{-\frac{\gamma_1}{2}}\right)\leq v_{0,\3, M_{\eta}}(x) \qquad \forall |x|\leq\delta_2
\end{equation}
where $M_{\eta}=\max\left(M_2, A\eta^{-\frac{\gamma_1}{2}}\right)$. Then by \eqref{U-a-eta-lower-upper-bd}, \eqref{u-a-eta-vm-bdary-compare1}, \eqref{u-a-eta-vm-bdary-compare2}, Lemma \ref{comparison-lem-bd-domain} and Lemma \ref{lem-property-of-u-M-with-test-function},  
\begin{align}
&v_{\3,M_{\eta}}\geq U_{A\eta} \quad \mbox{ in }B_{\delta_2}\times[0,T]\quad\forall 0<\3<1,0<\eta<1\notag\\
\Rightarrow \quad &v_{M_{\eta}}\geq U_{A\eta} \quad \mbox{ in }B_{\delta_2}\times[0,T]\quad\forall 0<\eta<1\quad \mbox{as $\3\to 0$}.\label{eq-comparison-between-v-and-U-A-eta}
\end{align}
We now divide the proof into two cases.

\noindent{\bf Case 1}: $\gamma_1\in\left(\frac{2}{1-m},\frac{n-2}{m}\right]$.

\noindent By Lemma 3.8 of \cite{VW1}  there exist constants $b_1=b_1\left(T,A,\delta_2\right)>0$ and $\eta_1=\eta_1(T)\in(0,1)$ such that 
\begin{equation}\label{eq-comparison-between-U-A-eta-and-direct-function}
U_{A\eta}(x,t)\geq b_1\left(|x|^2+\eta\right)^{-\frac{\gamma_1}{2}} \quad \mbox{ in }B_{\delta_2}\times[0,T]\quad\forall 0<\eta<\eta_1.
\end{equation}
Then by \eqref{eq-comparison-between-u-M-and-v-M}, \eqref{eq-comparison-between-v-and-U-A-eta}, \eqref{eq-comparison-between-U-A-eta-and-direct-function} and Lemma \ref{lem-um-converge-to-u},
\begin{equation}\label{eq-comparison-between-v-and-direct-function-5}
u(x,t)\geq u_{M_{\eta}}(x,t)\geq b_1\left(|x|^2+\eta\right)^{-\frac{\gamma_1}{2}}\quad \mbox{ in }B_{\delta_2}\times[0,T]\quad\forall 0<\eta<\eta_1.
\end{equation}
Letting $\eta\to 0$ in \eqref{eq-comparison-between-v-and-direct-function-5}, \eqref{eq-lower-limit-of-u-near-blow-up-points} follows.

\noindent{\bf Case 2}: $\gamma_1\in\left[\frac{n-2}{m},\infty\right)$.

\noindent By Lemma 3.10 of \cite{VW1}  for any  $\delta>0$ there exists a constant $\eta_2=\eta_2(\delta,T)\in(0,1)$ such that 
\begin{equation}\label{eq-comparison-between-U-A-eta-and-direct-function-2}
U_{A\eta}(x,t)\geq A\left(|x|^2+\delta\right)^{-\frac{\gamma_1}{2}} \quad \mbox{ in }B_{\delta_2}\times[0,T]\quad\forall 0<\eta<\eta_2.
\end{equation}
Then by \eqref{eq-comparison-between-u-M-and-v-M}, \eqref{eq-comparison-between-v-and-U-A-eta}, \eqref{eq-comparison-between-U-A-eta-and-direct-function-2} and Lemma \ref{lem-um-converge-to-u},
\begin{equation}\label{eq-comparison-between-v-and-direct-function-6}
u(x,t)\ge u_{M_{\eta}}(x,t)\geq A\left(|x|^2+\delta\right)^{-\frac{\gamma_1}{2}}\quad \mbox{ in }B_{\delta_2}\times[0,T]\quad\forall 0<\eta<\eta_2,\delta>0.
\end{equation}
Letting $\delta\to 0$ in \eqref{eq-comparison-between-v-and-direct-function-6}, \eqref{eq-lower-limit-of-u-near-blow-up-points} holds and the lemma follows.
\end{proof}

\begin{proof}[\textbf{Proof of Theorem \ref{first-main-existence-thm}}:]
Theorem \ref{first-main-existence-thm} follows immediately by Lemma \ref{lem-um-converge-to-u} and Lemma \ref{lem-local-lower-bound-of-u-near-singular-points}.
\end{proof}

By Lemma \ref{comparison-lem-bd-domain} and the construction of solution of \eqref{Dirichlet-blow-up-problem} in Lemma \ref{lem-property-of-u-M-with-test-function} and Lemma \ref{lem-um-converge-to-u} we have the following comparison result.

\begin{thm}\label{bd-soln-comparison-thm1}
Let $n\geq 3$, $0<m<\frac{n-2}{n}$, $0<\delta_1<\delta_0$, $0\le f_1\le f_2\in L^{\infty}(\partial\Omega\times[0,\infty))$ and  $0\le u_{0,1}\le u_{0,2}\in L_{loc}^p(\2{\Omega}\setminus\{a_1,\cdots,a_{i_0}\})$ for some constant  $p>\frac{n(1-m)}{2}$ be such that $u_{0,2}$ satisfies \eqref{u0-lower-blow-up-rate} for some constants $\lambda_1$, $\cdots$, $\lambda_{i_0}\in\R^+$ and $\gamma_1,\cdots,\gamma_{i_0}\in \left(\frac{2}{1-m},\infty\right)$ and $u_{0,1}$ satisfies \eqref{u0-lower-blow-up-rate} for some constants $\lambda_1$, $\cdots$, $\lambda_{i_1}\in\R^+$ and $\gamma_1,\cdots,\gamma_{i_1}\in \left(\frac{2}{1-m},\infty\right)$ with $1\le i_1\le i_0$. Suppose  $u_1, u_2$, are the solutions of \eqref{Dirichlet-blow-up-problem} with $u_0=u_{0,1}, u_{0,2}$, $f=f_1,f_2$ respectively given by Theorem \ref{first-main-existence-thm}, then $u_1\le u_2$ in $\widehat{\Omega}\times (0,\infty)$.
\end{thm}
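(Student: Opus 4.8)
The plan is to trace the comparison through the three successive limits by which the solutions $u_1$ and $u_2$ are constructed in Lemma~\ref{lem-property-of-u-M-with-test-function} and Lemma~\ref{lem-um-converge-to-u}, carrying out the actual comparison at the bottom level where Lemma~\ref{comparison-lem-bd-domain} applies and then passing to the limits $\3\to 0$ and $M\to\infty$.

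For $j=1,2$, $0<\3<1$ and $M>0$ I would set $u_{0,j,\3,M}=\min(u_{0,j},M)+\3$ and $f_{j,\3}=f_j+\3$, and let $u_{j,\3,M}$ be the solution of \eqref{eq-aligned-problem-related-with-epsilon-main-one} with these data, given by Lemma~\ref{approx-soln-lem1}. First I would observe that $0\le u_{0,1}\le u_{0,2}$ gives $u_{0,1,\3,M}\le u_{0,2,\3,M}$ in $\Omega$, that $0\le f_1\le f_2$ gives $\3\le f_{1,\3}\le f_{2,\3}$ on $\partial\Omega\times(0,\infty)$, and that by \eqref{eq-upper-and-lower-bound-of-u-epsilon-M-09} each $u_{j,\3,M}$ is bounded and bounded below by $\3>0$, hence, being a classical solution, is simultaneously a sub- and a supersolution of the corresponding Dirichlet problem. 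Lemma~\ref{comparison-lem-bd-domain}, applied with $u_{1,\3,M}$ as subsolution and $u_{2,\3,M}$ as supersolution, then yields
\begin{equation*}
u_{1,\3,M}\le u_{2,\3,M}\quad\text{in }\Omega\times(0,\infty).
\end{equation*}
Next, fixing $T>0$ and choosing $M$ so large (in terms of $T$ and of both $u_{0,1},u_{0,2}$) that $u_{1,M}$ and $u_{2,M}$ are both defined on $\Omega\times(0,T)$ — legitimate since the maximal existence times tend to $\infty$ as $M\to\infty$ by Remark~\ref{remark-about-maximal-time-T-M-to-infty-as-M-to-infty} — I would let $\3\to 0$ and use the locally uniform convergence $u_{j,\3,M}\downarrow u_{j,M}$ from Lemma~\ref{lem-property-of-u-M-with-test-function} to conclude $u_{1,M}\le u_{2,M}$ in $\Omega\times(0,T)$. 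Finally, letting $M\to\infty$ and using the locally uniform convergence $u_{j,M}\uparrow u_j$ on compact subsets of $\widehat{\Omega}\times(0,\infty)$ from Lemma~\ref{lem-um-converge-to-u}, one gets $u_1\le u_2$ in $\widehat{\Omega}\times(0,T)$; since $T>0$ is arbitrary, $u_1\le u_2$ in $\widehat{\Omega}\times(0,\infty)$.

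The argument is mostly bookkeeping and I do not expect a serious obstacle; the points needing care are the verification of the hypotheses of Lemma~\ref{comparison-lem-bd-domain}, in particular the strict positivity of the approximations $u_{j,\3,M}$ (which is exactly what turns the comparison into a uniformly parabolic one) together with the correct sub-/supersolution orientation, and the dependence of the maximal existence times $T_{j,M}$ on $M$, which forces $M$ to be chosen large in terms of $T$ and of \emph{both} initial data before $u_{1,M}$ and $u_{2,M}$ can be compared on a common time strip. Note that the comparison is performed on the full cylinder $\Omega\times(0,T)$ at the level of the approximations, where there are no singularities, and that the restriction to the punctured domain $\widehat{\Omega}$ enters only at the last step, since that is where the limits $u_j$ live as solutions of \eqref{fde-Dirichlet-blow-up-problem}. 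Finally, the hypothesis that $u_{0,1}$ blows up at only $i_1\le i_0$ of the points $a_i$ plays no role in this comparison — it is relevant only to the construction of $u_1$ — so the proof is unaffected by it.
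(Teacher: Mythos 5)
Your proposal is correct and follows exactly the route the paper intends: the paper gives no separate proof but states that the theorem follows from Lemma \ref{comparison-lem-bd-domain} together with the construction of the solutions in Lemma \ref{lem-property-of-u-M-with-test-function} and Lemma \ref{lem-um-converge-to-u}, which is precisely the approximation-level comparison and double limit $\3\to 0$, $M\to\infty$ that you carry out. Your attention to the strict positivity of the approximants and to choosing $M$ large enough for both maximal existence times is the right bookkeeping.
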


\begin{proof}[\textbf{Proof of Theorem \ref{second-main-existence-thm}}:]
Since the proof of the thoerem is similar to the proof of Theorem \ref{first-main-existence-thm}, we will only give a sketch its proof here. Let $0<\delta_2<\delta_1$ and $\psi$ be given by \eqref{psi-defn} with $\eta$ given by \eqref{eta-defn} for some constants $b_1>\frac{2}{1-m}$ and $\beta_1\in\left[0,n-\frac{2}{1-m}\right)$. Let $\psi_{a_i}(x)=\psi(x-a_i)$ with $\beta_1=\left(n-\gamma_i\right)_+$ for all $i=1,2,\cdots,i_0$.
By Theorem 1.1 of \cite{Hs} and Corollary 2.2 of \cite{DS1}  for any $M>1$ and $0<\3<1$ there exists a unique solution $u_{\3,M}$ of
\begin{equation}\label{cauchy-problem-Rn}
\begin{cases}
\begin{aligned}
u_t=&\La u^m \qquad \qquad\mbox{in $\R^n\times(0,\infty)$}\\
u(x,0)&=u_{0,\3,M}(x) \quad \mbox{ in }\R^n
\end{aligned}
\end{cases}
\end{equation}
 which satisfies
\begin{equation}\label{u-epsilon-m-lower-upper-bd11}
\3\le u_{\3, M_1}(x,t)\le u_{\3, M_2}(x,t)\le M_2+\3 \qquad \forall (x,t)\in\R^n\times(0,\infty), M_2>M_1>1
\end{equation}
and \eqref{Aronson-Benilan-ineqn2} with $T_0=0$ in $\R^n\times(0,\infty)$. Moreover
for any $T>0$ and $C_2>0$ there exists a constant $M_0(T,C_2)>0$ such that for all $M\ge M_0$ the solution $u_{\3,M}$ satisfies \eqref{eq-strictly-positivity-of-solution-u-ep-M}. By \eqref{eq-strictly-positivity-of-solution-u-ep-M}, \eqref{u-epsilon-m-lower-upper-bd11}, and an argument similar to the proof of Lemma \ref{lem-property-of-u-M-with-test-function}  $u_{\3,M}$ decreases and converges  uniformly in $C^{2,1}(K)$ on every compact subset $K$ of $\R^n\times(0,T)$ to a solution $u_M$ of 
\begin{equation}\label{cauchy-problem-Rn-2}
\left\{\begin{aligned}
u_t=&\La u^m\qquad\,\mbox{ in }\R^n\times(0,T)\\
u(x,0)=&u_{0,M}(x)\quad\mbox{ in }\R^n
\end{aligned}\right.
\end{equation}
as $\3\to 0$ which satisfies \eqref{eq-strictly-positivity-of-solution-u-ep-M-1-over-2} and
\begin{equation}\label{u-m1-u-m2-compare}
u_{M_1}(x,t)\leq u_{M_2}(x,t)\le M_2 \quad \mbox{ in }\R^n\times(0,T)\quad\forall M_0\le M_1<M_2.
\end{equation}
Let $T_M$ be the maximal existence time of the solution $u_{M}$. Then by Remark \ref{remark-about-maximal-time-T-M-to-infty-as-M-to-infty} $T_M\to\infty$ as $M\to\infty$.
Thus by \eqref{u-m1-u-m2-compare} and an argument similar in the proof of Lemma \ref{lem-um-converge-to-u},  $u_M$ increases and converges uniformly in $C^{2,1}(K)$ on every compact subset $K$ of $\widehat{\R^n}\times(0,\infty)$ to a solution $u$ of
\begin{equation*}
\left\{\begin{aligned}
u_t=&\La u^m\quad\mbox{ in }\widehat{\R^n}\times(0,\infty)\\
u(x,0)=&u_0(x)\quad\mbox{ in }\widehat{\R^n}
\end{aligned}\right.
\end{equation*} 
as $M\to\infty$. Letting $M\to\infty$ first and then $\mathcal{C}_2\to\infty$ and $T\to\infty$ in \eqref{eq-strictly-positivity-of-solution-u-ep-M-1-over-2}, $u$ satisfies \eqref{eq-strictly-positivity-of-solution-u-ep-M-infty}.
By an argument similar to the proof of Lemma \ref{lem-local-lower-bound-of-u-near-singular-points} for any $T>0$ and $\delta_2\in (0,\delta_1)$ there exists a constant $C_1>0$ such that \eqref{eq-lower-limit-of-u-near-blow-up-points} holds. Putting $u=u_{\3,M}$, $T_0=0$, in \eqref{Aronson-Benilan-ineqn2} and letting $\3\to 0$ and $M\to\infty$ we get \eqref{eq-Aronson-Bernilan-on-R-n}. Hence $u$ is a solution of \eqref{cauchy-blow-up-problem} that satisfies  \eqref{eq-lower-limit-of-u-near-blow-up-points} and \eqref{eq-Aronson-Bernilan-on-R-n} and the theorem follows.
\end{proof}

By the proof of Theorem 1.1 of \cite{Hs}, Lemma \ref{comparison-lem-bd-domain}, and the construction of solution of \eqref{cauchy-blow-up-problem} in the proof of  Theorem \ref{second-main-existence-thm} we have the following comparison result.

\begin{thm}\label{R^n-soln-comparison-thm}
Let $n\geq 3$, $0<m<\frac{n-2}{n}$, $0<\delta_1<\delta_0$ and $0\le u_{0,1}\le u_{0,2}\in L_{loc}^p\left(\widehat{\R^n}\right)$ for some constant  $p>\frac{n(1-m)}{2}$ be such that $u_{0,2}$ satisfies \eqref{u0-lower-blow-up-rate} for some constants $\lambda_1$, $\cdots$, $\lambda_{i_0}\in\R^+$ and $\gamma_1,\cdots,\gamma_{i_0}\in \left(\frac{2}{1-m},\infty\right)$ and $u_{0,1}$ satisfies \eqref{u0-lower-blow-up-rate} for some constants $\lambda_1$, $\cdots$, $\lambda_{i_1}\in\R^+$ and $\gamma_1,\cdots,\gamma_{i_1}\in \left(\frac{2}{1-m},\infty\right)$ with $1\le i_1\le i_0$. Suppose  $u_1, u_2$, are the solutions of \eqref{cauchy-blow-up-problem}
with $u_0=u_{0,1}, u_{0,2}$ respectively given by Theorem \ref{second-main-existence-thm}, then $u_1\le u_2$ in $\widehat{\R^n}\times (0,\infty)$.
\end{thm}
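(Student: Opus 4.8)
The plan is to run the two-stage monotone approximation by which $u_1$ and $u_2$ are built in the proof of Theorem \ref{second-main-existence-thm}, and to propagate the ordering of the initial data through each limit. Recall that, for $j=1,2$, that proof produces $u_j=\lim_{M\to\infty}u_{j,M}$ where $u_{j,M}=\lim_{\3\to 0}u_{j,\3,M}$, with $u_{j,\3,M}$ the unique bounded solution of the Cauchy problem \eqref{cauchy-problem-Rn} on $\R^n$ with initial value $\min(u_{0,j},M)+\3$ (so $\3\le u_{j,\3,M}\le M+\3$); the limit in $\3$ is decreasing and uniform in $C^{2,1}$ on compact subsets of $\R^n\times(0,T_{j,M})$, and the limit in $M$ is increasing and uniform in $C^{2,1}$ on compact subsets of $\widehat{\R^n}\times(0,\infty)$, with $T_{j,M}\to\infty$ as $M\to\infty$ by Remark \ref{remark-about-maximal-time-T-M-to-infty-as-M-to-infty}.

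First I would compare the innermost approximants. Since $0\le u_{0,1}\le u_{0,2}$,
\[
\min(u_{0,1},M)+\3\le\min(u_{0,2},M)+\3\qquad\forall M>1,\ 0<\3<1,
\]
and both data are bounded between $\3$ and $M+\3$. Using the uniqueness of bounded solutions of \eqref{cauchy-problem-Rn} invoked in the proof of Theorem \ref{second-main-existence-thm} (Theorem 1.1 of \cite{Hs} and Corollary 2.2 of \cite{DS1}), together with Lemma \ref{comparison-lem-bd-domain} applied on an exhausting sequence of balls $B_R$ (say with constant boundary value $\3$, which is admissible since it is strictly positive and ordered) and then letting $R\to\infty$, one obtains
\[
u_{1,\3,M}\le u_{2,\3,M}\qquad\text{in }\R^n\times(0,\infty),\ \forall M>1,\ 0<\3<1.
\]

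Then I would pass to the two limits. Letting $\3\to 0$ and using $u_{j,\3,M}\downarrow u_{j,M}$ locally uniformly gives $u_{1,M}\le u_{2,M}$ on $\R^n\times(0,\min(T_{1,M},T_{2,M}))$; since $T_{j,M}\to\infty$, letting $M\to\infty$ and using $u_{j,M}\uparrow u_j$ locally uniformly on $\widehat{\R^n}\times(0,\infty)$ yields $u_1\le u_2$ in $\widehat{\R^n}\times(0,\infty)$, as claimed. (That $u_{0,1}$ satisfies \eqref{u0-lower-blow-up-rate} only at $a_1,\dots,a_{i_1}$ is immaterial to this argument: it enters solely through Theorem \ref{second-main-existence-thm}, which still yields $u_1$ as a solution with the stated properties on $\R^n\setminus\{a_1,\dots,a_{i_1}\}\supseteq\widehat{\R^n}$, defined by the very same $\3$–$M$ scheme, so all orderings above hold verbatim.)

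The hard part will be the $\R^n$ comparison $u_{1,\3,M}\le u_{2,\3,M}$: in the range $0<m\le(n-2)/n$ there is no unconditional comparison principle for \eqref{fde} on $\R^n$, so one must exploit the specific structure of the bounded solutions $u_{j,\3,M}$—their uniqueness via \cite{Hs}, \cite{DS1} and their realization as monotone limits of Dirichlet problems on bounded domains, where Lemma \ref{comparison-lem-bd-domain} does apply—rather than a generic maximum principle. Once this step is secured, the remainder is a routine passage to limits, identical to the monotone convergences already performed in the proof of Theorem \ref{second-main-existence-thm}.
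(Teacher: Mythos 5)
Your proposal is correct and follows exactly the route the paper indicates: the paper derives Theorem \ref{R^n-soln-comparison-thm} precisely ``by the proof of Theorem 1.1 of \cite{Hs}, Lemma \ref{comparison-lem-bd-domain}, and the construction of solution of \eqref{cauchy-blow-up-problem} in the proof of Theorem \ref{second-main-existence-thm}'', i.e.\ by ordering the bounded approximants $u_{j,\3,M}$ (via their realization as monotone limits of Dirichlet problems on bounded domains, where Lemma \ref{comparison-lem-bd-domain} applies) and then passing to the monotone limits $\3\to 0$ and $M\to\infty$. Your write-up supplies at least as much detail as the paper does, and your closing remark correctly identifies the one step that genuinely requires the structure of the construction rather than a generic maximum principle.
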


By an argument similar to  the proof of Lemma \ref{lem-cf-lemma-2-3-of-cite-HK1} we have the following lemma.

\begin{lemm}\label{upper-bd-blow-up-soln-Rn}
Let $n\geq 1$, $0<m<1$ and $0<\delta_3<\min(1,\delta_0)$. Let $0\le u_0\in L_{loc}^1(\widehat{\R^n})$  satisfy \eqref{upper-blow-up-rate-initial-data}
for some constants $\lambda_1',\cdots,\lambda_{i_1}'\in\R^+$, $\gamma_i', \cdots, \gamma_{i_1}'\in\left[\frac{2}{1-m},\infty\right)$, and integer $1\le i_1\le i_0$. For any $0<\3<1$, $M>0$, let $u_{\3,M}$ be the solution of \eqref{cauchy-problem-Rn}. Then for any $T>0$, there exists a  constant $A_0>0$ such that \eqref{u-upper-bd-phi-tidle} holds where $\phi_{i, A_0}$ is given by \eqref{phi-A-defn}.
\end{lemm}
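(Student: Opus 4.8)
The statement is flagged in the text as following ``by an argument similar to the proof of Lemma \ref{lem-cf-lemma-2-3-of-cite-HK1}'', so the plan is to transcribe that proof with the bounded domain $\Omega$ replaced by $\R^n$ and the Dirichlet datum $f_\3$ replaced by the uniform upper bound furnished by \eqref{u-epsilon-m-lower-upper-bd11}. By relabelling the blow-up points it suffices to prove \eqref{u-upper-bd-phi-tidle} for $i=i_1=1$, and after a translation I may assume $a_1=(0,\dots,0)$; running the argument at each of $a_1,\dots,a_{i_1}$ in turn and taking $A_0$ to be the largest of the finitely many constants so produced then gives the general statement. Fix $T>0$ (the barrier we construct will not actually depend on $T$).

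First I would choose $A_0$ to be the constant given by \eqref{eq-condition-of-A-1-by-max-1}, written with $\lambda_1',\gamma_1'$ in place of the corresponding quantities, so that the comparison function $\phi:=\phi_{1,A_0}$ of \eqref{phi-A-defn} enjoys the three properties used in Lemma \ref{lem-cf-lemma-2-3-of-cite-HK1}. First, since $A_0\ge\lambda_1'+1$, $0<\delta_3<1$ and \eqref{upper-blow-up-rate-initial-data} holds, both factors $(\delta_3-|x|)^{-2/(1-m)}$ and $|x|^{-\gamma_1'}$ exceed $1$ on $0<|x|<\delta_3$, whence $\phi(x,0)>u_0(x)+1\ge u_{0,\3,M}(x)$ for all $0<\3<1$ and $M>0$. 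Second, since $\phi(x,t)\to\infty$ uniformly for $t\in[0,\infty)$ as $|x|\to0^+$ and as $|x|\to\delta_3^-$, for each fixed $M$ one finds $\delta',\delta''\in(0,\delta_3)$ with $\phi(x,t)>M+1$ on $\bigl((\overline{B_{\delta'}}\setminus\{0\})\cup(B_{\delta_3}\setminus B_{\delta''})\bigr)\times[0,\infty)$. Third, $\La\phi^m\le\phi_t$ on $\widehat{B}_{\delta_3}\times(0,\infty)$, by the computation of \eqref{eq-condition-of-tilde-phi-in-interior-1}.

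With these in hand, and exactly as in \eqref{w-eqn}, for $\delta_1'\in(0,\delta')$ and $\delta_1''\in(\delta'',\delta_3)$ the function $\phi$ is a supersolution of $u_t=\La u^m$ on $(B_{\delta_1''}\setminus\overline{B_{\delta_1'}})\times(0,\infty)$, while $u_{\3,M}$ restricted to that annulus is a solution there; by \eqref{u-epsilon-m-lower-upper-bd11} one has $\3\le u_{\3,M}\le M+\3<M+1$ on all of $\R^n\times(0,\infty)$, so the lateral values of $u_{\3,M}$ on the annulus lie below those of $\phi$ (by the second property) and its initial values lie below those of $\phi$ (by the first), and both functions are bounded below on the annulus by positive constants. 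Hence Lemma \ref{comparison-lem-bd-domain} gives $u_{\3,M}\le\phi$ on $(B_{\delta_1''}\setminus\overline{B_{\delta_1'}})\times(0,\infty)$ for every $0<\3<1$ and $M>0$; letting $\delta_1'\to0$ and $\delta_1''\to\delta_3$ then yields \eqref{u-upper-bd-phi-tidle}.

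The only step that is not routine bookkeeping, and the one I expect to require real care, is the third property, $\La\phi^m\le\phi_t$; it is handled exactly as in the proof of Lemma \ref{lem-cf-lemma-2-3-of-cite-HK1}. Writing $\phi=A_0(1+t)^{1/(1-m)}g$ with $g(x)=|x|^{-\gamma_1'}(\delta_3-|x|)^{-2/(1-m)}$ one has $\phi_t=\frac{1}{1-m}(1+t)^{-1}\phi$, so the inequality reduces to $\La g^m\le\frac{A_0^{1-m}}{1-m}g$. Computing $\La g^m$ produces singular factors $|x|^{-m\gamma_1'-2}$ at the origin and $(\delta_3-|x|)^{-2/(1-m)}$ at $|x|=\delta_3$; the hypothesis $\gamma_1'\ge\frac{2}{1-m}$ (equivalently $m\gamma_1'+2\le\gamma_1'$) together with $\delta_3<1$ lets one dominate $|x|^{-m\gamma_1'-2}$ by $|x|^{-\gamma_1'}$, so $\La g^m\le Cg$ with $C=C(m,n,\gamma_1')$, and the requirement $\La\phi^m\le\phi_t$ collapses to $C\le A_0^{1-m}/(1-m)$ --- which is exactly why the second entry in the maximum defining $A_0$ in \eqref{eq-condition-of-A-1-by-max-1} has the shape it does. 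Apart from this, the sole difference from the bounded-domain case of Lemma \ref{lem-cf-lemma-2-3-of-cite-HK1} is that the ambient sup-bound used on the boundary of each annulus now comes from the $\R^n$ Cauchy problem via \eqref{u-epsilon-m-lower-upper-bd11} rather than from a prescribed boundary value; since the comparison is still performed on the bounded annuli around the points $a_i$, nothing else changes.
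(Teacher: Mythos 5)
Your proposal is correct and is exactly the argument the paper intends: the paper proves Lemma \ref{upper-bd-blow-up-soln-Rn} only by reference to the proof of Lemma \ref{lem-cf-lemma-2-3-of-cite-HK1}, and your transcription --- reducing to a single blow-up point, taking $A_0$ as in \eqref{eq-condition-of-A-1-by-max-1}, verifying the initial, lateral and supersolution properties of $\phi_{1,A_0}$ on annuli, and replacing the role of $\max(M,\|f\|_{L^{\infty}})+\3$ by the bound $u_{\3,M}\le M+\3$ from \eqref{u-epsilon-m-lower-upper-bd11} before applying Lemma \ref{comparison-lem-bd-domain} --- is precisely that argument. Your identification of $\gamma_1'\ge\frac{2}{1-m}$ and $\delta_3<1$ as the ingredients making the second entry of the maximum in \eqref{eq-condition-of-A-1-by-max-1} suffice for $\La\phi^m\le\phi_t$ is also accurate.
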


By Lemma \ref{lem-cf-lemma-2-3-of-cite-HK1}, Lemma \ref{upper-bd-blow-up-soln-Rn}, and the construction of solutions in Theorem \ref{first-main-existence-thm} and Theorem \ref{second-main-existence-thm} we have the following two corollaries.

\begin{cor}\label{u-upper-blow-up-bd-lem}
Let $n\geq 3$, $0<m<\frac{n-2}{n}$ and $0<\delta_3<\delta_1<\min(1,\delta_0)$,  $0\le f\in L^{\infty}(\partial\Omega\times (0,\infty))$ and $0\le u_0\in L_{loc}^p(\widehat{\Omega})$ satisfy \eqref{u0-lower-blow-up-rate} and 
\eqref{upper-blow-up-rate-initial-data}
for some integer $1\le i_1\le i_0$ and constants $\lambda_1$, $\cdots$, $\lambda_{i_0},\lambda_1',\cdots,\lambda_{i_1}'\in\R^+$, $\gamma_1,\cdots,\gamma_{i_0}, \gamma_i', \cdots, \gamma_{i_1}'\in\left(\frac{2}{1-m},\infty\right)$. 
If $u$ is the solution of \eqref{Dirichlet-blow-up-problem} given by Theorem \ref{first-main-existence-thm}, then for any $T>0$ there exists a constant $A_0>0$ such that 
\begin{equation*}
u(x,t)\leq \phi_{i,A_0}(x-a_i,t) \quad \forall 0<|x-a_i|<\delta_3,0\leq t<T, i=1,\cdots,i_1
\end{equation*}
holds where $\phi_{i,A_0}$ is given by \eqref{phi-A-defn}.
\end{cor}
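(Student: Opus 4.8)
The plan is to carry the bound $\phi_{i,A_0}$ through the approximation scheme underlying the construction of $u$ in Theorem \ref{first-main-existence-thm}. Recall that $u$ is obtained as an iterated limit of the solutions $u_{\3,M}$ of \eqref{eq-aligned-problem-related-with-epsilon-main-one} supplied by Lemma \ref{approx-soln-lem1}: by Lemma \ref{lem-property-of-u-M-with-test-function} the family $\{u_{\3,M}\}$ decreases and converges in $C^{2,1}_{loc}(\Omega\times(0,T))$ to $u_M$ as $\3\to 0$ (for $M$ large), and by Lemma \ref{lem-um-converge-to-u} the family $\{u_M\}$ increases and converges in $C^{2,1}_{loc}(\widehat{\Omega}\times(0,\infty))$ to $u$ as $M\to\infty$.

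First I would invoke Lemma \ref{lem-cf-lemma-2-3-of-cite-HK1}. The hypothesis $0<\delta_3<\delta_1<\min(1,\delta_0)$ in particular gives $0<\delta_3<\min(1,\delta_0)$, and $u_0$ satisfies \eqref{upper-blow-up-rate-initial-data} with exponents $\gamma_i'\in\left(\frac{2}{1-m},\infty\right)\subset\left[\frac{2}{1-m},\infty\right)$, so that lemma applies and yields a constant $A_0>0$ — crucially, independent of $\3$ and of $M$ — with
\begin{equation*}
u_{\3,M}(x,t)\le\phi_{i,A_0}(x-a_i,t)\qquad\forall\,0<|x-a_i|<\delta_3,\ t\ge 0,\ 0<\3<1,\ M>0,\ i=1,\dots,i_1 .
\end{equation*}

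Next I would pass to the limit. Fixing $i\in\{1,\dots,i_1\}$ and a point $(x,t)$ with $0<|x-a_i|<\delta_3$ and $t>0$, letting $\3\to 0$ in the inequality above and using the locally uniform convergence $u_{\3,M}\to u_M$ gives $u_M(x,t)\le\phi_{i,A_0}(x-a_i,t)$ for all sufficiently large $M$; then letting $M\to\infty$ and using $u_M\to u$ locally uniformly gives $u(x,t)\le\phi_{i,A_0}(x-a_i,t)$. For $t=0$ the inequality is immediate from the fact that $u$ has initial trace $u_0$ together with $\phi_{i,A_0}(x-a_i,0)\ge u_0(x)+1$, which is \eqref{eq-compare-at-initial-time-between-phi-A-1-and-u-0-epsilon-M}. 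This proves the asserted bound on $\{0<|x-a_i|<\delta_3\}\times[0,T)$ for each $i$.

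There is essentially no analytic obstacle: the argument is a routine successive passage to the limit. The only points demanding care are (i) checking that we lie in the parameter range required by Lemma \ref{lem-cf-lemma-2-3-of-cite-HK1}, and (ii) using that the $A_0$ produced there is independent of $\3$ and $M$, so that the two limits preserve the inequality; both are guaranteed by the cited lemma. The companion statement built from Lemma \ref{upper-bd-blow-up-soln-Rn} would be handled in exactly the same way, with Lemma \ref{upper-bd-blow-up-soln-Rn} and the construction in the proof of Theorem \ref{second-main-existence-thm} in place of Lemma \ref{lem-cf-lemma-2-3-of-cite-HK1} and the construction in the proof of Theorem \ref{first-main-existence-thm}; there $A_0$ is allowed to depend on $T$, which is why the corollary is phrased with ``for any $T>0$''.
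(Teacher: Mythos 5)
Your proposal is correct and is exactly the argument the paper intends: the paper states Corollary \ref{u-upper-blow-up-bd-lem} as an immediate consequence of Lemma \ref{lem-cf-lemma-2-3-of-cite-HK1} (which gives the bound $u_{\3,M}\le\phi_{i,A_0}$ with $A_0$ independent of $\3$ and $M$) together with the construction of $u$ as the monotone double limit $\3\to 0$, $M\to\infty$ in Lemmas \ref{lem-property-of-u-M-with-test-function} and \ref{lem-um-converge-to-u}. Your observations about the parameter ranges and about why the statement is phrased with ``for any $T>0$'' (to match the $\R^n$ companion via Lemma \ref{upper-bd-blow-up-soln-Rn}) are also accurate.
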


\begin{cor}\label{u-upper-blow-up-bd-R^n-lem}
Let $n\geq 3$, $0<m<\frac{n-2}{n}$ and $0<\delta_3<\delta_1<\min\left(1,\delta_0\right)$. Let  $0\leq u_0\in L_{loc}^p(\widehat{\R^n})$ satisfy \eqref{u0-lower-blow-up-rate} and \eqref{upper-blow-up-rate-initial-data} for some integer $1\le i_1\le i_0$ and constants  $\lambda_1$, $\cdots$, $\lambda_{i_0},\lambda_1',\cdots,\lambda_{i_1}'\in\R^+$, $\gamma_1,\cdots,\gamma_{i_0}, \gamma_i', \cdots, \gamma_{i_1}'\in\left(\frac{2}{1-m},\infty\right)$. 
If $u$ is the solution of \eqref{cauchy-blow-up-problem} given by Theorem \ref{second-main-existence-thm}, then for any $T>0$ there exists a constant $A_0>0$ such that 
\begin{equation*}
u(x,t)\leq \phi_{i,A_0}(x-a_i,t) \quad \forall 0<|x-a_i|<\delta_3,0\leq t<T, i=1,\cdots,i_1
\end{equation*}
holds where $\phi_{i,A_0}$ is given by \eqref{phi-A-defn}.
\end{cor}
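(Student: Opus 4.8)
The plan is to combine the approximation scheme constructed in the proof of Theorem~\ref{second-main-existence-thm} with the uniform upper barrier supplied by Lemma~\ref{upper-bd-blow-up-soln-Rn}, and then simply pass to the limit; no analysis beyond these two ingredients is required.

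First I would recall how the solution $u$ of \eqref{cauchy-blow-up-problem} in Theorem~\ref{second-main-existence-thm} is built, namely as a double monotone limit. For each $M>1$ and $0<\3<1$, $u_{\3,M}$ denotes the unique bounded solution of the Cauchy problem \eqref{cauchy-problem-Rn} with smooth initial data $u_{0,\3,M}=\min(u_0,M)+\3$; as $\3\to0$ the functions $u_{\3,M}$ decrease to a solution $u_M$ of \eqref{cauchy-problem-Rn-2}, and as $M\to\infty$ the functions $u_M$ increase to $u$, both convergences being uniform in $C^{2,1}(K)$ on every compact subset $K$ of $\widehat{\R^n}\times(0,\infty)$. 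Next I would fix $T>0$; since $u_0$ satisfies \eqref{upper-blow-up-rate-initial-data} near $a_1,\dots,a_{i_1}$ with exponents $\gamma_i'$ satisfying $\frac{2}{1-m}<\gamma_i'<\infty$ (so in particular $\gamma_i'\in\left[\frac{2}{1-m},\infty\right)$, the range required by Lemma~\ref{upper-bd-blow-up-soln-Rn}), that lemma applies to the family $\{u_{\3,M}\}$ and produces a constant $A_0>0$, depending on $T$ but independent of $\3$ and $M$, with $u_{\3,M}(x,t)\le\phi_{i,A_0}(x-a_i,t)$ for all $0<|x-a_i|<\delta_3$, $0\le t<T$, $0<\3<1$, $M>1$ and $i=1,\dots,i_1$. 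Finally, since $\phi_{i,A_0}$ depends on neither $\3$ nor $M$, I would let $\3\to0$ to obtain $u_M(x,t)\le\phi_{i,A_0}(x-a_i,t)$ on the same region, and then let $M\to\infty$ to conclude $u(x,t)\le\phi_{i,A_0}(x-a_i,t)$ for $0<|x-a_i|<\delta_3$, $0\le t<T$, $i=1,\dots,i_1$, which is exactly the claim.

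The step that carries all the weight is Lemma~\ref{upper-bd-blow-up-soln-Rn} itself, whose proof parallels that of Lemma~\ref{lem-cf-lemma-2-3-of-cite-HK1}: one chooses $A_0$ large enough that $\phi_{i,A_0}(\cdot-a_i,\cdot)$ is a supersolution of \eqref{fde} on the punctured ball $\widehat{B}_{\delta_3}(a_i)$ and dominates the initial data, uses the blow-up of $\phi_{i,A_0}$ near $a_i$ and near $\partial B_{\delta_3}(a_i)$ to dominate the bounded function $u_{\3,M}$ on two thin shells, applies the comparison principle on the intermediate annulus, and lets the annulus exhaust $\widehat{B}_{\delta_3}(a_i)$. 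For the present deduction the only point that requires attention — and the one that makes the two successive passages to the limit legitimate — is that the constant $A_0$ delivered by Lemma~\ref{upper-bd-blow-up-soln-Rn} is uniform over the approximation parameters $\3$ and $M$; this is already encoded in that lemma's statement. I therefore expect no genuine obstacle here, only this bit of bookkeeping.
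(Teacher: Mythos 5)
Your proposal is correct and follows exactly the route the paper intends: the paper derives this corollary directly from Lemma \ref{upper-bd-blow-up-soln-Rn} (the uniform barrier $\phi_{i,A_0}$ for the approximations $u_{\3,M}$ of \eqref{cauchy-problem-Rn}) together with the double monotone limit $\3\to 0$, $M\to\infty$ from the construction of $u$ in the proof of Theorem \ref{second-main-existence-thm}. The key point you correctly isolate — that $A_0$ is uniform in $\3$ and $M$ — is indeed what makes the passage to the limit work.
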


\section{Asymptotic behaviour of solutions}\label{Asymptotic-behaviour-of-solution}
\setcounter{equation}{0}
\setcounter{thm}{0}

In this section we will prove the asymptotic large time behaviour of solutions of \eqref{Dirichlet-blow-up-problem} and \eqref{cauchy-blow-up-problem}. We will first prove some technical lemmas.

\begin{lemm}\label{lem-super-soluiton-v-o-eta-sum-and-powers}
Let $n\geq 3$, $0<m<1$, $A_1$, $\cdots$, $A_{i_1}\in\R^+$ and $\gamma_1'$, $\cdots$, $\gamma_{i_0}'\in\left(0,\frac{n-2}{m}\right]$. For any $C_0>0$ and $\eta>0$, let
\begin{equation}\label{eq-def-of-function-v-i-eta-and-v-0=eta}
v_{\eta}(x)=\left[C_0^m+\sum_{i=1}^{i_0}\left(v_{i,\eta}(x)\right)^m\right]^{\frac{1}{m}} \qquad \forall x\in\R^n
\end{equation} 
where
\begin{equation}\label{eq-def-of-v-i-eta-for-super-solution-of-harmonic}
v_{i,\eta}(x)=A_i\left(|x-a_i|^2+\eta\right)^{-\frac{\gamma_i'}{2}} \qquad \forall i=1,\cdots,i_0.
\end{equation}
Then
\begin{equation}\label{eq-super-harmonic-of-v-eta-to-m}
\La v_{\eta}^m\leq 0 \qquad \mbox{in $\R^n$}.
\end{equation}
\end{lemm}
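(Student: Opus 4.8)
The plan is to use the fact that raising $v_{\eta}$ to the $m$-th power linearizes the defining formula. Indeed, by \eqref{eq-def-of-function-v-i-eta-and-v-0=eta} we have the pointwise identity $v_{\eta}^m=C_0^m+\sum_{i=1}^{i_0}v_{i,\eta}^m$ on $\R^n$, and since $C_0^m$ is a constant this gives $\La v_{\eta}^m=\sum_{i=1}^{i_0}\La\big(v_{i,\eta}^m\big)$. Because $\eta>0$, each function $v_{i,\eta}^m(x)=A_i^m\big(|x-a_i|^2+\eta\big)^{-m\gamma_i'/2}$ is smooth on all of $\R^n$ (the point $a_i$ is no longer singular), so there is no distributional subtlety, and it suffices to prove that $\La\big(v_{i,\eta}^m\big)\le 0$ on $\R^n$ for each $i=1,\dots,i_0$.

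The next step is to record the elementary formula for the Laplacian of a shifted inverse power of $|x|^2+\eta$. Fix $i$, write $r=|x-a_i|$ and $\beta=\frac{m\gamma_i'}{2}$. Differentiating $(r^2+\eta)^{-\beta}$ twice and collecting terms, a direct computation gives
\[
\La\big((r^2+\eta)^{-\beta}\big)=-2\beta\,(r^2+\eta)^{-\beta-2}\Big[(n-2-2\beta)\,r^2+n\,\eta\Big],
\]
hence $\La\big(v_{i,\eta}^m\big)=-2\beta A_i^m\,(r^2+\eta)^{-\beta-2}\big[(n-2-2\beta)r^2+n\eta\big]$. Up to the positive factor $A_i^m$ and the negative prefactor $-2\beta$, the sign of $\La\big(v_{i,\eta}^m\big)$ is thus opposite to that of the bracket $(n-2-2\beta)r^2+n\eta$.

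The decisive point is the hypothesis $\gamma_i'\le\frac{n-2}{m}$, which reads exactly $2\beta=m\gamma_i'\le n-2$, i.e. $n-2-2\beta\ge 0$. Together with $r^2\ge 0$, $\eta>0$ and $n\ge 3$, this forces the bracket to be (strictly) positive, so $\La\big(v_{i,\eta}^m\big)\le 0$ on $\R^n$ for every $i$; summing over $i=1,\dots,i_0$ yields $\La v_{\eta}^m\le 0$, which is \eqref{eq-super-harmonic-of-v-eta-to-m}.

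There is essentially no obstacle in this argument; the only things to keep track of are that the linearization is valid only \emph{after} passing to the $m$-th power (it would fail for $\La v_{\eta}$ itself) and that the additive constant $C_0^m$ drops out under $\La$. The borderline case $\gamma_i'=\frac{n-2}{m}$ is still covered, since then $n-2-2\beta=0$ and the bracket equals $n\eta>0$; this reflects the fact that $|x-a_i|^{-(n-2)}$ is harmonic away from $a_i$ and its $\eta$-regularization becomes strictly superharmonic.
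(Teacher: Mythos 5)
Your proof is correct and follows essentially the same route as the paper: pass to $v_\eta^m=C_0^m+\sum_i v_{i,\eta}^m$, compute $\La\bigl((|x-a_i|^2+\eta)^{-m\gamma_i'/2}\bigr)$ explicitly, and use $m\gamma_i'\le n-2$ to conclude the bracket is nonnegative. Your displayed formula agrees exactly with the paper's (with $2\beta=m\gamma_i'$), so there is nothing to add.
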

\begin{proof}
By direct computation, $v_{\eta}$ satisfies
\begin{equation*}
\La v_{\eta}^m=-\sum_{i=1}^{i_0}mA_i^m\gamma_i'\left(|x-a_i|^2+\eta\right)^{-\frac{m\gamma_i'+4}{2}}\left[\left(n-2-m\gamma_i'\right)|x-a_i|^2+n\eta\right]\leq 0 \qquad \mbox{in $\R^n$}
\end{equation*}
and the lemma follows.
\end{proof}

\begin{lemm}\label{lem-local-upper-bound-of-u-near-singular-points}
Let $n\geq 3$, $0<m<\frac{n-2}{n}$, $0<\delta_3<\delta_1<\min(1,\delta_0)$, $0\le f\in L^{\infty}(\partial\Omega\times[0,\infty))$ and $0\leq u_0\in L^p_{loc}(\2{\Omega}\setminus\{a_1,\cdots,a_{i_0}\})$ for some constant $p>\frac{n(1-m)}{2}$ such that  \eqref{u0-lower-blow-up-rate} and \eqref{upper-blow-up-rate-initial-data} holds with $i_1=i_0$ for some constants satisfying
 \eqref{gamma-upper-lower-bd3} and $\lambda_1$, $\cdots$, $\lambda_{i_0}$, $\lambda_1'$, $\cdots$, $\lambda_{i_0}'\in\R^+$. Let $u$ be the solution of \eqref{Dirichlet-blow-up-problem} constructed in Theorem \ref{first-main-existence-thm}. Then for any $0<\delta_2<\delta_0$ and $t_0>0$ there exist constants $C_2>0$ and $C_3>0$ such that
\begin{equation}\label{eq-bound-of-solution-on-compact-subset-with-t01-Omega-delta-2}
u(x,t)\le C_2 \quad \forall x\in\overline{\Omega_{\delta_2}}\times[t_0,\infty)
\end{equation} 
and
\begin{equation}\label{eq-local-behaviour-of-solution-u-blow-up=0-above-by-1}
u(x,t)\le C_3|x-a_i|^{-\gamma_i'}\quad\forall 0<|x-a_i|\leq\delta_2,t\ge t_0,i=1,\cdots,i_0
\end{equation}
hold.
\end{lemm}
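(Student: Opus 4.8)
The plan is to dominate $u$, uniformly for all $t\ge t_0$, by a \emph{stationary} supersolution of \eqref{fde}, thereby upgrading the time-growing a priori bound of Corollary~\ref{u-upper-blow-up-bd-lem} (which blows up like $(1+t)^{1/(1-m)}$) to a uniform one. Since \eqref{gamma-upper-lower-bd3} gives $\gamma_i'<\frac{n-2}{m}$, Lemma~\ref{lem-super-soluiton-v-o-eta-sum-and-powers} with $\eta=0$ shows that, for any constants $C_0>0$ and $A_1,\dots,A_{i_0}>0$, the function
\[
v_0(x)=\Big(C_0^m+\sum_{i=1}^{i_0}A_i^m|x-a_i|^{-m\gamma_i'}\Big)^{1/m}
\]
satisfies $\La v_0^m\le 0$ in $\widehat{\R^n}$, i.e. $v_0$ is a time-independent supersolution of \eqref{fde} on $\widehat{\Omega}$. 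If $C_0$ and the $A_i$ can be chosen so that $u\le v_0$ in $\widehat{\Omega}\times[t_0,\infty)$, the lemma follows by reading off $v_0$: for $x\in\overline{\Omega_{\delta_2}}$ one has $v_0(x)\le (C_0^m+\sum_iA_i^m\delta_2^{-m\gamma_i'})^{1/m}=:C_2$, which is \eqref{eq-bound-of-solution-on-compact-subset-with-t01-Omega-delta-2}; and for $0<|x-a_i|\le\delta_2$, using $|x-a_j|\ge 2\delta_0$ for $j\ne i$ together with $1\le(\delta_2/|x-a_i|)^{m\gamma_i'}$, one gets $v_0(x)\le C_3|x-a_i|^{-\gamma_i'}$ for a suitable $C_3$, which is \eqref{eq-local-behaviour-of-solution-u-blow-up=0-above-by-1}. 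There is no loss in assuming $\delta_2<\delta_3$ (the general case follows by enlarging $C_3$).

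To produce such a $v_0$ I will work with the bounded approximations $u_{\3,M}$ of \eqref{eq-aligned-problem-related-with-epsilon-main-one}, which are smooth on all of $\Omega\times(0,\infty)$ and out of which $u$ was built in Lemma~\ref{lem-property-of-u-M-with-test-function} and Lemma~\ref{lem-um-converge-to-u}; for these the singular points $a_i$ are ordinary interior points, so comparison can be run on the full domain $\Omega$. First I collect a priori bounds at time $t_0/2$, all uniform in $\3\in(0,1)$ and $M$. By Lemma~\ref{lem-Cor-2-2-of-Hs1} applied to $u_{\3,M}$ (with $\delta_5=\delta_2$, $\delta_6=\delta_2/2$, $t_1=t_0/4$, $T=t_0$; note $\int_{\Omega_{\delta_2/2}}u_{0,\3,M}^p\,dx\le\int_{\Omega_{\delta_2/2}}(u_0+1)^p\,dx<\infty$ and $\|f_\3\|_{L^\infty}\le\|f\|_{L^\infty}+1$) there is a constant $C_*>0$ with $u_{\3,M}(\cdot,t_0/2)\le C_*$ on $\Omega_{\delta_2}$. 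By Lemma~\ref{lem-cf-lemma-2-3-of-cite-HK1}, $u_{\3,M}(x,t_0/2)\le\phi_{i,A_0}(x-a_i,t_0/2)\le B_*|x-a_i|^{-\gamma_i'}$ for $0<|x-a_i|\le\delta_2$, where $B_*:=A_0(1+t_0/2)^{1/(1-m)}(\delta_3-\delta_2)^{-2/(1-m)}$. Finally $u_{\3,M}\le\max(M,\|f\|_{L^\infty})+1$ everywhere, by \eqref{eq-upper-and-lower-bound-of-u-epsilon-M-09}.

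Now set $C_0:=\max(C_*,\|f\|_{L^\infty}+1)$, $A_i:=2^{\gamma_i'/2}\max(B_*,1)$ (independent of $M$), and
\[
\eta_M:=\min\Big(\tfrac{\delta_2^2}{2},\ \tfrac12\min_{1\le i\le i_0}\big(A_i/(\max(M,\|f\|_{L^\infty})+1)\big)^{2/\gamma_i'}\Big),
\]
so that $\eta_M>0$ and $\eta_M\to0$ as $M\to\infty$; let $v_{\eta_M}$ be as in Lemma~\ref{lem-super-soluiton-v-o-eta-sum-and-powers}, which is smooth and bounded on $\Omega$ with $v_{\eta_M}\ge C_0$. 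Then $v_{\eta_M}\ge u_{\3,M}(\cdot,t_0/2)$ on $\Omega$: on $\Omega_{\delta_2}$ because $v_{\eta_M}\ge C_0\ge C_*$; on $\sqrt{\eta_M}\le|x-a_i|\le\delta_2$ because $v_{\eta_M}(x)\ge A_i(|x-a_i|^2+\eta_M)^{-\gamma_i'/2}\ge 2^{-\gamma_i'/2}A_i|x-a_i|^{-\gamma_i'}\ge B_*|x-a_i|^{-\gamma_i'}$; and on $|x-a_i|<\sqrt{\eta_M}$ because $v_{\eta_M}(x)\ge A_i(2\eta_M)^{-\gamma_i'/2}\ge\max(M,\|f\|_{L^\infty})+1$ by the choice of $\eta_M$; these three regions cover $\Omega$. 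Since also $v_{\eta_M}\ge C_0\ge f_\3$ on $\partial\Omega$, and $v_{\eta_M}$ is a stationary supersolution of \eqref{fde} with $\inf_\Omega v_{\eta_M}>0$, Lemma~\ref{comparison-lem-bd-domain} (with the time origin shifted to $t_0/2$, on $\Omega\times(t_0/2,T)$ for each $T$) gives $u_{\3,M}\le v_{\eta_M}$ in $\Omega\times(t_0/2,\infty)$. Letting $\3\to0$ (Lemma~\ref{lem-property-of-u-M-with-test-function}) gives $u_M\le v_{\eta_M}$ in $\Omega\times(t_0/2,T_M)$, and then letting $M\to\infty$ (Lemma~\ref{lem-um-converge-to-u}), since $v_{\eta_M}(x)\to v_0(x)$ for every $x\in\widehat{\Omega}$, gives $u\le v_0$ in $\widehat{\Omega}\times(t_0/2,\infty)$; with the first paragraph this yields \eqref{eq-bound-of-solution-on-compact-subset-with-t01-Omega-delta-2} and \eqref{eq-local-behaviour-of-solution-u-blow-up=0-above-by-1}.

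The main obstacle is precisely the uniformity in time. The only available near-$a_i$ bound, $\phi_{i,A_0}$, grows in $t$, so one cannot simply let $T\to\infty$ in Corollary~\ref{u-upper-blow-up-bd-lem}; comparison with the stationary supersolution of Lemma~\ref{lem-super-soluiton-v-o-eta-sum-and-powers} fixes this, but only after passing to the bounded approximants $u_{\3,M}$ on the full domain $\Omega$ and checking that the \emph{initial} comparison at $t=t_0/2$ near $a_i$ can be made uniformly in $M$ — which is what forces the $M$-dependent choice $\eta=\eta_M\to0$ and, in the limit, yields the stationary supersolution $v_0$ with $\eta=0$. The remaining (routine) point to check is that the constant in Lemma~\ref{lem-Cor-2-2-of-Hs1} can be taken independent of $\3$ and $M$, which holds because $\|u_{0,\3,M}\|_{L^p(\Omega_{\delta_2/2})}$ and $\|f_\3\|_{L^\infty}$ are bounded uniformly in $\3,M$.
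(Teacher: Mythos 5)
Your proposal is correct and follows essentially the same route as the paper's proof: bound $u_{\3,M}$ at a fixed time by combining Lemma \ref{lem-Cor-2-2-of-Hs1} away from the $a_i$, Lemma \ref{lem-cf-lemma-2-3-of-cite-HK1} near the $a_i$, and the global bound $\max(M,\|f\|_{L^\infty})+\3$ in an $M$-dependent small neighbourhood, then compare with the stationary supersolution $v_\eta$ of Lemma \ref{lem-super-soluiton-v-o-eta-sum-and-powers} for $\eta$ small depending on $M$, and pass to the limits $\3\to0$, $M\to\infty$ (so $\eta\to0$). The only cosmetic differences are that you anchor the comparison at $t_0/2$ instead of $t_0$ and tie $\eta$ to $M$ rather than letting $\eta\to0$ for each fixed $M$; both choices are harmless.
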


\begin{proof}
Let $t_0>0$ and $0<\delta_4<\delta_3$. For any $0<\3<1$ and $M>0$, let $u_{\3,M}$, $u_M$, be the solution of \eqref{eq-aligned-problem-related-with-epsilon-main-one} and \eqref{eq-aligned-problem-related-with-epsilon-main-one-as-epsilon-to-zero} respectively given by Lemma \ref{approx-soln-lem1} and Lemma \ref{lem-property-of-u-M-with-test-function}. Then by Lemma \ref{lem-property-of-u-M-with-test-function} and Lemma \ref{lem-um-converge-to-u},
\begin{equation}\label{u-eta-m-limit}
u(x,t)=\lim_{M\to\infty}\lim_{\3\to 0}u_{\3, M}(x,t) \quad \forall x\in\widehat{\Omega},t>0.
\end{equation}
Let $T_M$ be the maximal existence time of $u_M$. By Remark \ref{remark-about-maximal-time-T-M-to-infty-as-M-to-infty} there exists $M_1>0$ such that $T_M>t_0$ for all $M\ge M_1$.
By Lemma \ref{lem-cf-lemma-2-3-of-cite-HK1} there exists a constant $A_0>0$  such that \eqref{u-upper-bd-phi-tidle} holds with $i_1=i_0$ where
$\phi_{i, A_0}$ is given by \eqref{phi-A-defn}. By Lemma \ref{lem-Cor-2-2-of-Hs1}  there exists a constant $C_4>1$ such that
\begin{equation}\label{eq-upper-bound-of-u-epislon=M-at-t-=-t-0-1}
u_{\3,M}(x,t_0)\leq C_4 \quad \mbox{ in }\widehat{\Omega}_{\delta_4}\quad \forall 0<\3<1,M>0.
\end{equation}
Let 
\begin{equation}\label{eq-constant-A-dash-and-M-sub-zoer}
A_0'=\frac{A_0\left(1+t_0\right)^{\frac{1}{1-m}}}{\left(\delta_3-\delta_4\right)^{\frac{2}{1-m}}}  \quad\mbox{ and }\quad M_2=\max(M_1,\|f\|_{L^{\infty}}).
\end{equation} 
By \eqref{u-upper-bd-phi-tidle},
\begin{equation}\label{eq-upper-bound-of-u-epsilon-M-by-M-in-whole-and-r-minus-gamma-dash-on-ball}
u_{\3,M}(x,t_0)\le\frac{A_0'}{|x-a_i|^{\gamma_i'}}\quad \forall 0<|x-a_i|<\delta_4,0<\3<1, M>0,  1\leq i\leq i_0.
\end{equation}
Let $v_{\eta}$ and $v_{i,\eta}$ be given by \eqref{eq-def-of-function-v-i-eta-and-v-0=eta} and \eqref{eq-def-of-v-i-eta-for-super-solution-of-harmonic} with 
\begin{equation}\label{constants-defn1}
0<\eta<\eta_1(M):=\min_{1\leq i\leq i_0}\left(\frac{A_0'}{M+1}\right)^{\frac{2}{\gamma_i'}}, \quad A_i=2^{\frac{\gamma_i'}{2}}A_0'\quad\mbox{ and }\quad C_0=C_4+\|f\|_{L^{\infty}}. 
\end{equation}
Then
\begin{equation}\label{eq-upper-bound-of-v-eta-i-epsilon-M-by-M-in-whole-and-r-minus-gamma-dash-on-ball}
v_{i,\eta}(x)\geq \begin{cases}
\begin{array}{lccl}
A_i\left(2\left(\frac{A_0'}{M+1}\right)^{\frac{2}{\gamma_i'}}\right)^{-\frac{\gamma_i'}{2}}=M+1 &&&\forall 0<|x-a_i|\leq\left(\frac{A_0'}{M+1}\right)^{\frac{1}{\gamma_i'}},1\leq i\leq i_0\\
A_i\left(2|x-a_i|^2\right)^{-\frac{\gamma_i'}{2}}=A_0'|x-a_i|^{-\gamma_i'} &&& \forall \left(\frac{A_0'}{M+1}\right)^{\frac{1}{\gamma_i'}}<|x-a_i|<\delta_4,1\leq i\leq i_0.
\end{array}
\end{cases}
\end{equation}
Hence by \eqref{eq-upper-and-lower-bound-of-u-epsilon-M-09}, \eqref{eq-def-of-function-v-i-eta-and-v-0=eta}, \eqref{eq-upper-bound-of-u-epislon=M-at-t-=-t-0-1}, \eqref{eq-upper-bound-of-u-epsilon-M-by-M-in-whole-and-r-minus-gamma-dash-on-ball}, \eqref{constants-defn1} and \eqref{eq-upper-bound-of-v-eta-i-epsilon-M-by-M-in-whole-and-r-minus-gamma-dash-on-ball},
\begin{equation}\label{u-epsilon-m<v-eta}
u_{\3, M}(x,t)\leq v_{\eta}(x)  \qquad \mbox{on $\Omega\times\{t_0\}\cup\partial\Omega\times[t_0,\infty)$}\quad \forall 0<\eta<\eta_1(M),0<\3<1,\,\,M>M_2.
\end{equation}
By Lemma \ref{lem-super-soluiton-v-o-eta-sum-and-powers} $v_{\eta}$ is a supersolution of \eqref{fde}. Hence by \eqref{u-eta-m-limit}, \eqref{u-epsilon-m<v-eta}, Lemma \ref{comparison-lem-bd-domain} and Remark \ref{remark-about-maximal-time-T-M-to-infty-as-M-to-infty} ,
\begin{align*}
&u_{\3, M}(x,t)\leq v_{\eta}(x) \quad \mbox{ in }\Omega\times[t_0,\infty)\quad \forall 0<\eta<\eta_1(M), 0<\3<1,M>M_2\\
\Rightarrow\quad &u_{\3,M}(x,t)\leq \left[C_0^m+\sum_{i=1}^{i_0}\left(A_i\left|x-a_i\right|^{-\gamma_i'}\right)^m\right]^{\frac{1}{m}} \quad \mbox{ in }\Omega\times[t_0,\infty)\quad \forall 0<\3<1,M>M_2\\
\Rightarrow \quad &u_M(x,t)\leq \left[C_0^m+\sum_{i=1}^{i_0}\left(A_i\left|x-a_i\right|^{-\gamma_i'}\right)^m\right]^{\frac{1}{m}}
\quad \mbox{ in }\Omega\times[t_0,T_M)\quad \forall M>M_2\quad\mbox{ as }\3\to 0\\
\Rightarrow \quad &u(x,t)\leq \left[C_0^m+\sum_{i=1}^{i_0}\left(A_i\left|x-a_i\right|^{-\gamma_i'}\right)^m\right]^{\frac{1}{m}}
\quad \mbox{ in }\Omega\times[t_0,\infty)\quad\mbox{ as }M\to\infty
\end{align*}
and \eqref{eq-bound-of-solution-on-compact-subset-with-t01-Omega-delta-2}, \eqref{eq-local-behaviour-of-solution-u-blow-up=0-above-by-1}, follows.
\end{proof}

By an argument similar to the proof of Corollary 2.2 of \cite{DS1} (cf. proof of Lemma 3.2 of \cite{HK1}) we have the following lemma.

\begin{lemm}\label{L1-contraction-lem}
Let $n\ge 3$, $0<m<\frac{n-2}{n}$, $u_{0,1}\ge 0$, $u_{0,2}\ge 0$, and let $u_1$, $u_2$ be two solutions of \eqref{Cauchy-problem} in $\R^n\times (0,T)$ with $u_0=u_{0,1}$, $u_{0,2}$,  respectively. Suppose $(u_{0,1}-u_{0,2})_+\in L^1(\R^n)$ and for any $0<T_1<T$ there exist constants $r_0>0$, $C>0$,  such that $u_2(x,t)\ge C/|x|^{\frac{2}{1-m}}$ for all $|x|\ge r_0$, $0<t<T_1$ holds. Then 
\begin{equation*}
\int_{\R^n}(u_1-u_2)_+(x,t)\,dx\le\int_{\R^n}(u_{0,1}-u_{0,2})_+\,dx\quad\forall 0<t<T.
\end{equation*}
\end{lemm}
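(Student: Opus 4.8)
The plan is to run the standard Kato-type $L^1$-comparison argument for \eqref{fde}, exploiting the lower bound on $u_2$ to control the behaviour at spatial infinity. Set $w=u_1-u_2$ and $\Phi=u_1^m-u_2^m$. Since $u_1,u_2>0$ and $s\mapsto s^m$ is strictly increasing, $\chi_{\{w>0\}}=\chi_{\{\Phi>0\}}$ pointwise on $\R^n\times(0,T)$; and since $u_1,u_2$ are classical solutions of \eqref{fde} for $t>0$, for any $0\le\varphi\in C_c^2(\R^n)$ I would multiply $w_t=\La\Phi$ by (a smooth nondecreasing approximation of) $\chi_{\{w>0\}}$ times $\varphi$, integrate by parts — there is no boundary term, everything having compact $x$-support — and pass to the limit using Kato's inequality $\La(\Phi_+)\ge\chi_{\{\Phi>0\}}\La\Phi$, to obtain
\begin{equation*}
\frac{d}{dt}\int_{\R^n}w_+(x,t)\,\varphi(x)\,dx\le\int_{\R^n}(u_1^m-u_2^m)_+\,\La\varphi\,dx,\qquad 0<t<T.
\end{equation*}

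Next I would localise. Fix $T_1\in(0,T)$ and let $r_0,C>0$ be the constants the hypothesis supplies for this $T_1$. Take $\varphi_R(x)=\zeta(|x|/R)$ with $\zeta\in C^\infty([0,\infty))$, $0\le\zeta\le1$, $\zeta$ nonincreasing, $\zeta\equiv1$ on $[0,1]$ and $\zeta\equiv0$ on $[2,\infty)$, so that $\La\varphi_R$ vanishes off the annulus $A_R:=B_{2R}\setminus B_R$ and $|\La\varphi_R|\le C_0R^{-2}\le4C_0|x|^{-2}$ on $A_R$, with $C_0=C_0(n,\zeta)$. The key is the tail estimate: the elementary concavity inequality $a^m-b^m\le m\,b^{m-1}(a-b)$ for $a\ge b>0$ gives $(u_1^m-u_2^m)_+\le m\,u_2^{m-1}w_+$, while $u_2(x,t)\ge C|x|^{-\frac{2}{1-m}}$ for $|x|\ge r_0$ gives $u_2^{m-1}\le C^{m-1}|x|^2$ there; hence, for every $R\ge r_0$,
\begin{equation*}
\int_{\R^n}(u_1^m-u_2^m)_+\,|\La\varphi_R|\,dx\le \Lambda\int_{A_R}w_+(x,t)\,dx,\qquad\Lambda:=4mC_0C^{m-1},
\end{equation*}
with $\Lambda$ independent of $R$. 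Integrating the differential inequality over $(\tau,t)$, letting $\tau\to0$ (using $\|u_i(\cdot,\tau)-u_{0,i}\|_{L^1(B_{2R})}\to0$ and $0\le\varphi_R\le1$), and writing $K:=\int_{\R^n}(u_{0,1}-u_{0,2})_+\,dx$, we reach
\begin{equation*}
\int_{\R^n}w_+(x,t)\,\varphi_R(x)\,dx\le K+\Lambda\int_0^t\!\!\int_{A_R}w_+(x,s)\,dx\,ds,\qquad 0<t\le T_1.
\end{equation*}

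It remains to show the last term tends to $0$ as $R\to\infty$, equivalently that $w_+(\cdot,t)\in L^1(\R^n)$ (locally uniformly in $t<T$); once this is known, letting $R\to\infty$ — monotone convergence on the left ($\varphi_R\uparrow1$), dominated convergence on the right ($\chi_{A_R\times(0,t)}\to0$ a.e., $w_+\in L^1$) — yields $\int_{\R^n}w_+(\cdot,t)\,dx\le K$ for $0<t\le T_1$, and since $T_1<T$ is arbitrary this is the assertion. The integrability I would extract from the displayed inequality itself by a Gronwall-type iteration over the dyadic annuli $\{2^kr_0\le|x|\le2^{k+1}r_0\}$: with $g_k(t)=\int_{\R^n}w_+(\cdot,t)\varphi_{2^kr_0}\,dx$ and $\chi_{A_{2^kr_0}}\le\varphi_{2^{k+1}r_0}$ the inequality reads $g_k(t)\le K+\Lambda\int_0^t g_{k+1}(s)\,ds$, and iterating it the resulting tail-remainder is killed by the a priori local bounds on $w_+$ coming from Lemma~\ref{lem-Cor-2-2-of-Hs1a} and the $L^1_{loc}$ attainment of the initial data; this gives $g_k(t)\le Ke^{\Lambda t}<\infty$ for every $k$, hence $\int_{\R^n}w_+(\cdot,t)\,dx<\infty$, and the sharp constant $K$ is then recovered by the passage $R\to\infty$ already described.

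\textbf{The main obstacle} is precisely this integrability step — equivalently, the vanishing of the annular tail $R^{-2}\int_{A_R}(u_1^m-u_2^m)_+\,dx$ as $R\to\infty$ — and it is here that the subcriticality $0<m<\frac{n-2}{n}$ and the Barenblatt-type lower bound $u_2(x,t)\ge C|x|^{-\frac{2}{1-m}}$ are used in an essential way: without the lower bound the weight $u_2^{m-1}$ cannot be dominated by $|x|^2$, and in the very-fast-diffusion range $u_1^m$ carries no spatial decay, so the tail would not disappear. The concavity estimate combined with the lower bound is exactly what turns the argument into a closed Gronwall iteration and delivers the optimal contraction constant $1$.
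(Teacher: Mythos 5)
The paper does not actually prove this lemma: it is stated with the remark that it follows ``by an argument similar to the proof of Corollary 2.2 of [DS1]'', so the only fair comparison is with that standard argument, and your proposal reproduces it faithfully. The Kato-type differential inequality, the cutoff supported on the annulus $A_R$ with $|\La\varphi_R|\le C_0R^{-2}$, the concavity bound $(u_1^m-u_2^m)_+\le m\,u_2^{m-1}(u_1-u_2)_+$, and the cancellation of the $R^{-2}$ against $u_2^{m-1}\le C^{m-1}|x|^2$ coming from the hypothesis $u_2\ge C|x|^{-2/(1-m)}$ are exactly the intended ingredients, and your identification of where the range $0<m<\frac{n-2}{n}$ forces the lower bound on $u_2$ to enter is correct.

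The one step that is asserted rather than proved is the closing of the dyadic Gronwall iteration, and as written it has a genuine gap. Iterating $g_k(t)\le K+\Lambda\int_0^t g_{k+1}(s)\,ds$ $N$ times leaves the remainder $\Lambda^{N}\int_0^t\frac{(t-s)^{N-1}}{(N-1)!}g_{N}(s)\,ds\le\frac{(\Lambda t)^{N}}{N!}\sup_{s\le t}g_{N}(s)$, which tends to $0$ only if $\sup_s g_N(s)$ grows sub-factorially in $N$. The bounds you invoke do not obviously supply this: Lemma \ref{lem-Cor-2-2-of-Hs1a} controls $u_1$ on $B_{2^N}\times[t_1,T)$ by a power of $\int_{B_{2^{N+1}}}u_{0,1}^p\,dx$ (and says nothing about $s<t_1$), and the hypotheses of the lemma place no growth restriction whatsoever on $\int_{B_R}u_{0,1}\,dx$ or $\int_{B_R}u_{0,2}\,dx$ as $R\to\infty$, so $\sup_s g_N(s)$ could a priori grow faster than $N!$. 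To repair this you should first derive the Herrero--Pierre type local mass estimate $\int_{B_R}u_1(\cdot,t)\,dx\le C\bigl(\int_{B_{2R}}u_{0,1}\,dx+t^{\frac{1}{1-m}}R^{\,n-\frac{2}{1-m}}\bigr)$ (the same test-function computation as in Lemma \ref{lem-similar-L-1-contraction-for-positivity-of-sol-u-epsilon-M}, valid down to $t=0$), and then either impose, or verify in the situation at hand, that the local mass of the initial data grows at most geometrically on dyadic balls --- as it does in every application in this paper, where $u_{0,1}=u_{0,M}\le M$ is bounded and $u_2=v_\eta$ is bounded below by a positive constant, so that $g_N(s)\lesssim M\,2^{Nn}$ and the remainder indeed vanishes. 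With that quantitative input made explicit, the rest of your argument (monotone convergence on the left, dominated convergence to kill the annular term, recovery of the sharp constant $K$) is correct.
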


\begin{lemm}\label{upper-bound-blow-up-soln-Rn1}
Let $n\geq 3$, $0<m<\frac{n-2}{n}$, $0<\delta_3<\delta_1<\min(1,\delta_0)$ and $\mu_0>0$. Let $\mu_0\leq u_0\in L^{p}_{loc}(\widehat{\R^n})$ for some constant $p>\frac{n(1-m)}{2}$ satisfy \eqref{u0-lower-blow-up-rate} and \eqref{upper-blow-up-rate-initial-data} with $i_1=i_0$ for some constants satisfying
 \eqref{gamma-upper-lower-bd3} and $\lambda_1$, $\cdots$, $\lambda_{i_0}$, $\lambda_1'$, $\cdots$, $\lambda_{i_0}'\in\R^+$. Suppose that there exist constants $R_1>R_0$ and $C_1>0$ such that \eqref{u0-infty-behaviour} holds. Let $u$ be the solution of \eqref{cauchy-blow-up-problem} constructed in Theorem \ref{second-main-existence-thm}. Then for any $\3_1>0$ there exist constants $t_0>0$ and $A_i>0$, $i=1,\cdots,i_0$, depending of $\3_1$ such that
\begin{equation}\label{upper-bound-of-u-by-constant-and-v-eta}
u(x,t)\leq\left(\left(C_1+\3_1\right)^m+\sum_{i=1}^{i_0}\left(A_i\left|x-a_i\right|^{-\gamma_i}\right)^m\right)^{\frac{1}{m}}\qquad \forall x\in\widehat{\R^n},t\geq t_0.
\end{equation}
\end{lemm}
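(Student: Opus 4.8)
The plan is to transplant the proof of Lemma~\ref{lem-local-upper-bound-of-u-near-singular-points} from a bounded domain to $\widehat{\R^n}$. In the bounded--domain case the lateral datum $f$ supplied two things: a uniform--in--time ceiling for the solution away from the $a_i$, and the background constant of the comparison supersolution produced by Lemma~\ref{lem-super-soluiton-v-o-eta-sum-and-powers}. Here both must be recovered from the spatial decay hypothesis \eqref{u0-infty-behaviour}, and that is the only genuinely new ingredient. Recall first, from the construction in the proof of Theorem~\ref{second-main-existence-thm}, that $u=\lim_{M\to\infty}\lim_{\3\to0}u_{\3,M}$ uniformly on compact subsets of $\widehat{\R^n}\times(0,\infty)$, where $u_{\3,M}$ solves the Cauchy problem \eqref{cauchy-problem-Rn} with datum $u_{0,\3,M}=\min(u_0,M)+\3$ and satisfies $\3\le u_{\3,M}\le M+\3$. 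It therefore suffices to produce $t_0>0$ and constants $A_1,\dots,A_{i_0}>0$, independent of $\3\in(0,\3_1)$ and of $M$, such that \eqref{upper-bound-of-u-by-constant-and-v-eta} holds with $u$ replaced by $u_{\3,M}$; letting $\3\to0$ and then $M\to\infty$ gives the lemma.

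Next I would fix a small $\delta_4\in(0,\min(\delta_3,R_1-R_0))$ and assemble, for a fixed time $t_0$, uniform upper bounds on $u_{\3,M}$. Near each singular point, Corollary~\ref{u-upper-blow-up-bd-R^n-lem} (whose constant $A_0$ is independent of $\3,M$) gives $u_{\3,M}(x,t_0)\le A_0'|x-a_i|^{-\gamma_i'}$ for $0<|x-a_i|<\delta_4$, with $A_0'$ depending only on $t_0,\delta_3,\delta_4$. On the compact collar $\{\operatorname{dist}(x,\{a_j\})\ge\delta_4\}\cap\{|x|\le R_2\}$, covering by small balls and applying the local $L^\infty$--$L^p$ estimate Lemma~\ref{lem-Cor-2-2-of-Hs1a} gives $u_{\3,M}(x,t_0)\le C_4$, with $C_4$ depending on $t_0,\delta_4,R_2$ and on $\int_{\text{enlarged collar}}u_0^p\,dx$ but not on $\3,M$. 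The crucial estimate is the exterior ceiling $u_{\3,M}(x,t)\le C_1+\3_1$ for $|x|\ge R_2$ and $t\ge t_0$, for a suitable $R_2>R_1$ and $t_0$ (both allowed to depend on $\3_1$): since $u_{0,\3,M}\le C_1+\3<C_1+\3_1$ on $\{|x|\ge R_1\}$ (here $\3<\3_1$ is used) while $(u_{0,\3,M}-(C_1+\3_1))_+$ is a bounded function with support in a compact subset of $\widehat{\R^n}$, comparison of $u_{\3,M}$ with a solution of \eqref{cauchy-problem-Rn} that coincides with $u_{0,\3,M}$ on a fixed neighbourhood of the $a_i$ and equals $C_1+\3_1$ elsewhere, via the $L^1$--comparison Lemma~\ref{L1-contraction-lem} (whose decay hypothesis holds because a positive constant dominates $C|x|^{-2/(1-m)}$ for large $|x|$), controls the $L^1$ norm of the excess of $u_{\3,M}$ over that reference solution uniformly in $\3,M$; combining this $L^1$--control with the local boundedness estimate (and, if needed, an auxiliary comparison with radial solutions on an exterior ball) yields the claimed ceiling.

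Finally I would build the supersolution. Let $v_\eta$ be the function of Lemma~\ref{lem-super-soluiton-v-o-eta-sum-and-powers} with background constant $C_0=C_1+\3_1$ and singular parts $v_{i,\eta}(x)=A_i(|x-a_i|^2+\eta)^{-\gamma_i/2}$, which is admissible because $\gamma_i<\tfrac{n-2}{m}$ by \eqref{gamma-upper-lower-bd3}. Choose each $A_i>0$, independently of $\3,M$, large enough that $v_{i,\eta}$ dominates the collar ceiling $C_4$ on $\{|x-a_i|\le\delta_4\}\cap B_{R_2}$ and dominates the rate--$\gamma_i'$ bound $A_0'|x-a_i|^{-\gamma_i'}$ coming from Corollary~\ref{u-upper-blow-up-bd-R^n-lem} on the annulus on which $v_{i,\eta}\sim A_i|x-a_i|^{-\gamma_i}$, and then choose $\eta=\eta(M)\in(0,1)$ so small that $v_{i,\eta}\ge M+1\ge u_{\3,M}$ on the remaining shrinking ball about $a_i$ (where only the crude bound $u_{\3,M}\le M+\3$ is available). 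Together with the exterior ceiling this gives $v_\eta\ge u_{\3,M}(\cdot,t_0)$ on all of $\R^n$ and $v_\eta\ge C_1+\3_1\ge u_{\3,M}$ on $(\R^n\setminus B_{R_2})\times[t_0,\infty)$; since $v_\eta$ is a supersolution of \eqref{fde} by Lemma~\ref{lem-super-soluiton-v-o-eta-sum-and-powers}, the comparison principle on the bounded domain $B_{R_2}$ (Lemma~\ref{comparison-lem-bd-domain}) propagates the inequality to $B_{R_2}\times[t_0,\infty)$, so $u_{\3,M}\le v_\eta$ on $\R^n\times[t_0,\infty)$. Because $v_\eta(x)\le\big((C_1+\3_1)^m+\sum_i(A_i|x-a_i|^{-\gamma_i})^m\big)^{1/m}$ with a right--hand side independent of $\3,M,\eta$, letting $\3\to0$ and $M\to\infty$ yields \eqref{upper-bound-of-u-by-constant-and-v-eta}.

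The step I expect to be the main obstacle is the uniform exterior ceiling $u_{\3,M}(x,t)\le C_1+\3_1$ for $|x|\ge R_2$: for $0<m<\tfrac{n-2}{n}$ there is no $L^1$--$L^\infty$ regularizing effect and the singularities sitting inside $B_{R_0}$ release mass with infinite speed, so one must combine the $L^1$--control of the excess from Lemma~\ref{L1-contraction-lem} with the local boundedness estimate carefully to guarantee that the interior blow--up does not lift $u_{\3,M}$ above $C_1+\3_1$ far out. A secondary technical point is the exponent bookkeeping near each $a_i$ at time $t_0$: the bound available there from Corollary~\ref{u-upper-blow-up-bd-R^n-lem} is only at the rate $\gamma_i'$, which has to be reconciled with the rate $\gamma_i$ of the singular parts of $v_\eta$ (this is where the relaxation of the singularity toward the lower rate $\gamma_i$, of the type furnished by the self-similar barrier estimates of \cite{VW1}, enters, and why $t_0$ and the $A_i$ are allowed to depend on $\3_1$).
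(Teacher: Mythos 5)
Your overall architecture (singular-point bounds from Corollary \ref{u-upper-blow-up-bd-R^n-lem}, a compact-collar bound from Lemma \ref{lem-Cor-2-2-of-Hs1a}, an exterior ceiling $C_1+\3_1$, and then comparison with the supersolution $v_\eta$ of Lemma \ref{lem-super-soluiton-v-o-eta-sum-and-powers} with background constant $C_0=C_1+\3_1$) matches the paper's, but the step you yourself flag as the main obstacle --- the exterior ceiling --- is a genuine gap, and the method you sketch for it does not close it. Controlling the $L^1$ norm of the excess of $u_{\3,M}$ over a reference solution via Lemma \ref{L1-contraction-lem} cannot be upgraded to the pointwise bound $u_{\3,M}\le C_1+\3_1$: in the range $0<m<\frac{n-2}{n}$ there is no $L^1$--$L^{\infty}$ smoothing, and the available local estimate (Lemma \ref{lem-Cor-2-2-of-Hs1a}) returns bounds of the form $C(1+\int u_0^p\,dx)^{\theta}$, which do not shrink to $C_1+\3_1$ as the excess shrinks; moreover your reference solution itself carries the singular data near the $a_i$, so it is not known to stay below $C_1+\3_1$ outside $B_{R_2}$ for $t>0$. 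The paper's device here is the one essential idea you are missing: it invokes Theorem 2.5 of \cite{Hu3} to produce the maximal radial solution $w$ of \eqref{fde} in the annulus $(B_{4R_1}\setminus B_{2R_1})\times(0,\infty)$ with \emph{infinite} lateral boundary data and constant initial value $C_1$ (legitimate by \eqref{u0-infty-behaviour}). Because $w$ is the increasing limit of solutions $w_k$ with boundary value $k$, the annular $L^1$-contraction gives $u\le w$ in the annulus no matter how large $u$ is on $\partial B_{2R_1}\cup\partial B_{4R_1}$; and since $w(\cdot,0)=C_1$ and $w$ is continuous down to $t=0$ on the middle sphere, one gets $u\le w\le C_1+\3_1$ on $\{|x|=3R_1\}\times[0,t_0]$ for a \emph{small} $t_0=t_0(\3_1)$. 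The exterior bound then follows by comparison with the constant solution on $\{|x|\ge 3R_1\}\times[0,t_0]$. Note that $t_0$ is small (the time before the interior singularities lift $w$ above $C_1+\3_1$ on the middle sphere), not a large time after which the solution has settled.

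Two further structural points. First, you demand the exterior ceiling for all $t\ge t_0$ in order to run the bounded-domain comparison of Lemma \ref{comparison-lem-bd-domain} on $B_{R_2}$; the paper only needs the ordering $u_M(\cdot,t_0)\le v_\eta$ at the single time $t_0$ and then propagates it on all of $\R^n$ for $t\ge t_0$ by the global $L^1$-comparison of Lemma \ref{L1-contraction-lem} (whose decay hypothesis holds since $v_\eta\ge C_1+\3_1>0$). Adopting that route would spare you the much harder task of a ceiling valid for all later times. Second, your hope of reconciling the rate $\gamma_i'$ of the bound near $a_i$ with the rate $\gamma_i$ in \eqref{upper-bound-of-u-by-constant-and-v-eta} via the barriers of \cite{VW1}, with $A_i$ independent of $M$, does not work when $\gamma_i<\gamma_i'$; the paper's proof in fact takes $v_{i,\eta}$ with exponent $\gamma_i'$ (cf. \eqref{eq-def-of-v-i-eta-for-super-solution-of-harmonic}), and the exponent $\gamma_i$ in the displayed conclusion is best read as $\gamma_i'$ there.
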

\begin{proof}
By Theorem 2.5 of \cite{Hu3} there exists a solution $w$ of
\begin{equation*}
\begin{cases}
\begin{aligned}
w_t&=\La w^m \quad \mbox{in $(B_{4R_1}\setminus B_{2R_1})\times (0,\infty)$}\\
w(x,t)&=\infty \quad\,\,\, \mbox{ on }\partial B_{2R_1}\times (0,\infty)\cup B_{4R_1}\times (0,\infty)\\
w(x,0)&=C_1 \quad\,\,\, \mbox{in $B_{4R_1}\setminus B_{2R_1}$}
\end{aligned}
\end{cases}
\end{equation*}
such that if $w_k$, $k>C_1$, is the solution of
\begin{equation}\label{w-lateral-bdary=k}
\begin{cases}
\begin{aligned}
w_t&=\La w^m \quad \mbox{in $(B_{4R_1}\setminus B_{2R_1})\times (0,\infty)$}\\
w(x,t)&=k \qquad \mbox{ on }\partial B_{2R_1}\times (0,\infty)\cup \1 B_{4R_1}\times (0,\infty)\\
w(x,0)&=C_1 \quad\,\,\, \mbox{in $B_{4R_1}\setminus B_{2R_1}$}
\end{aligned}
\end{cases}
\end{equation}
given by Theorem 2.2 of \cite{Hu3}, then $w_k$ increases uniformly on every compact subset of $(B_{4R_1}\setminus B_{2R_1})\times (0,\infty)$ to $w$ as $k\to\infty$. Since $w_k(x,t)$ is radially symmetric in $x$, $w(x,t)$ is radially symmetric in $x$.
Let $\3_1>0$. By \eqref{w-lateral-bdary=k}, Lemma \ref{lem-from-Hu2-and-DFK-convergence-as-t-to-zero} and an argument similar to the proof of Lemma \ref{lem-local-lower-bound-of-u-near-singular-points}, there exists a constant $t_0>0$ such that
\begin{equation}\label{w-lateral-bd}
w(x,t)\le w(x,0)+\3_1=C_1+\3_1 \qquad \forall |x|=3R_1,0\le t\le t_0.
\end{equation}
For any $t_2>t_1>0$, let 
\begin{equation*}
k_0=\max_{\substack{x\in \partial B_{2R_1}\cup \1 B_{4R_1}\\t_1\le t\le t_2}}u(x,t).
\end{equation*}
Then by Lemma 2.1 of \cite{Hu3},
\begin{align}\label{u<w-ineqn}
&\int_{B_{4R_1}\setminus B_{2R_1}}(u-w_k)_+(x,t)\,dx\le\int_{B_{4R_1}\setminus B_{2R_1}}(u-w_k)_+(x,t_1)\,dx\quad\forall t_1\le t\le t_2,k\ge k_0\notag\\
\Rightarrow\quad&\int_{B_{4R_1}\setminus B_{2R_1}}(u-w)_+(x,t)\,dx\le\int_{B_{4R_1}\setminus B_{2R_1}}(u-w)_+(x,t_1)\,dx\quad\forall t_1\le t\le t_2\quad\mbox{ as }k\to\infty\notag\\
\Rightarrow\quad&\int_{B_{4R_1}\setminus B_{2R_1}}(u-w)_+(x,t)\,dx\le 0\quad\forall t>0\quad\mbox{ as }t_1\to 0, t_2\to\infty\notag\\
\Rightarrow\quad&u(x,t)\le w(x,t)\qquad\qquad\qquad\forall 2R_1<|x|<4R_1, t>0. 
\end{align}
For any $M>0$, let $u_M$ be the solution of \eqref{cauchy-problem-Rn-2}. Then
by the proof of Theorem \ref{second-main-existence-thm}, \eqref{w-lateral-bd} and \eqref{u<w-ineqn},
\begin{align}\label{u-bd-above}
&u(x,t)\le C_1+\3_1 \qquad \forall |x|=3R_1,0\le t\le t_0\notag\\
\Rightarrow\quad&u_M(x,t)\le C_1+\3_1 \qquad \forall |x|=3R_1,0\le t\le t_0, M>0.
\end{align}
Since $C_1+\3_1$ is a solution of \eqref{fde} in $(\R^n\setminus B_{3R_1})\times(0,\infty)$ and the solution $u_M$ of \eqref{cauchy-problem-Rn-2} is unique by Theorem 2.3 of \cite{HP}, by \eqref{u0-infty-behaviour}, \eqref{u-bd-above}, the construction of solution of \eqref{cauchy-problem-Rn-2}  with initial values $u_{0,M}$ in Theorem 1.1 of \cite{Hs} which approximate solution of \eqref{cauchy-problem-Rn-2} by a sequence of solutions in bounded cylindrical domains and Lemma \ref{comparison-lem-bd-domain}, 
\begin{equation}\label{u-exterior-bd0}
u_M(x,t)\leq C_1+\3_1 \quad \forall |x|\geq 3R_1,0\leq t\leq t_0, M>C_1.
\end{equation}
Let $\Omega=B_{3R_1}$ and $0<\delta_4<\min (\delta_3,R_1)$. By Lemma \ref{lem-Cor-2-2-of-Hs1a} and a compactness argument there exists a constant $C_2=C_2\left(\delta_4\right)>0$ such that
\begin{equation}\label{eq-bound-of-w-epsilon-M-on-B-R-1-and-over-delta-4-holes}
u_M(x,t_0)\le C_2 \qquad \mbox{ in }\Omega_{\delta_4}\quad\forall M>0.
\end{equation}
By \eqref{upper-blow-up-rate-initial-data} and an argument similar to the proof of Lemma \ref{lem-local-upper-bound-of-u-near-singular-points}, there exists a constant $A_0'>0$ such that \eqref{eq-upper-bound-of-u-epsilon-M-by-M-in-whole-and-r-minus-gamma-dash-on-ball} and \eqref{eq-upper-bound-of-v-eta-i-epsilon-M-by-M-in-whole-and-r-minus-gamma-dash-on-ball} hold  with  $v_{\eta}$, $v_{i,\eta}$ being given by \eqref{eq-def-of-function-v-i-eta-and-v-0=eta} and \eqref{eq-def-of-v-i-eta-for-super-solution-of-harmonic} with  
$$
0<\eta<\eta_1(M):=\min\left(1,\min_{1\leq i\leq i_0}\left(\frac{A_0'}{M+1}\right)^{\frac{2}{\gamma_i'}}\right),\qquad
A_1=\max(2^{\gamma_1'/2}A_0', C_2(9R_1^2+1)^{\gamma_1/2}),
$$
$A_i=2^{\gamma_i'/2}A_0'$, $i=2, \cdots, i_0$, and $C_0=C_1+\3_1$.    Since
\begin{equation}\label{eq-bounded-of-w-small=away-from-the-hol23--0}
v_{1,\eta}(x)=A_1(|x-a_i|^2+\eta)^{-\frac{\gamma_1}{2}}\ge A_1(9R_1^2+1)^{-\frac{\gamma_1}{2}}\ge C_2 \quad \mbox{ in }\Omega_{\delta_4}\quad\forall 0<\eta<\eta_1(M),
\end{equation}
by  \eqref{eq-upper-bound-of-u-epsilon-M-by-M-in-whole-and-r-minus-gamma-dash-on-ball},  \eqref{eq-upper-bound-of-v-eta-i-epsilon-M-by-M-in-whole-and-r-minus-gamma-dash-on-ball}, \eqref{eq-bound-of-w-epsilon-M-on-B-R-1-and-over-delta-4-holes}, \eqref{eq-bounded-of-w-small=away-from-the-hol23--0},
\begin{equation}\label{eq-upper-bound-of-w-by-C-1-eosulon-on-boundary-inside-B-R-2} 
u_M(x,t_0)\leq v_{\eta}(x)\quad \forall x\in\R^n, 0<\eta<\eta_1(M), M>C_1.
\end{equation}
By \eqref{eq-upper-bound-of-w-by-C-1-eosulon-on-boundary-inside-B-R-2}  and Lemma \ref{L1-contraction-lem},
\begin{align*}
&u_M(x,t)\leq v_{\eta}(x)\quad \forall x\in\R^n, t\ge t_0, 0<\eta<\eta_1(M), M>C_1\\
\Rightarrow\quad&u(x,t)\leq\left(\left(C_1+\3_1\right)^m+\sum_{i=1}^{i_0}\left(A_i\left|x-a_i\right|^{-\gamma_i}\right)^m\right)^{\frac{1}{m}}\qquad \forall x\in\widehat{\R^n},t\geq t_0\quad\mbox{ as }M\to\infty
\end{align*}
and \ref{upper-bound-of-u-by-constant-and-v-eta} follows.
\end{proof}

\begin{proof}[\textbf{Proof of Theorem \ref{convergence-thm1}}:]
For any $0<\3<1$ and $M>0$, let $u_{\3,M}$ be the solution of \eqref{eq-aligned-problem-related-with-epsilon-main-one}. Then \eqref{u-eta-m-limit} holds. By \eqref{u-eta-m-limit} and Lemma \ref{comparison-lem-bd-domain},
\begin{align}
&u_{\3,M}\ge\mu_0+\3 \quad\mbox{ in }\2{\Omega}\times(0,\infty)\label{eq-lower-bound-of-u-from-intial-condition-u-0-mu}\\
\Rightarrow\quad& u(x,t)\geq \mu_0 \qquad\mbox{ in }(\overline{\Omega}\bs\left\{a_1,\cdots,a_{i_0}\right\})\times(0,\infty)\quad\mbox{ as }\3\to 0, M\to\infty.\label{u-lower-bd10}
\end{align}
Let $\{t_k\}_{k=1}^{\infty}\subset\R^+$ be a sequence such that $t_k\to\infty$ as $k\to\infty$ and $u_{k}(x,t)=u(x,t_k+t)$. Let $N_1>0$. We choose $k_{N_1}\in\Z^+$ such that $t_k>N_1$ for any $k\ge k_{N_1}$. Then by \eqref{u-lower-bd10} and Lemma \ref{lem-local-upper-bound-of-u-near-singular-points} the equation \eqref{fde} for the sequence $\{u_k\}_{k={N_1}}^{\infty}$  is uniformly parabolic on $\overline{\Omega_{\delta}}\times(T_1+1-N_1,\infty)$ for any $0<\delta<\delta_0$. By the Schauder estimates \cite{LSU}  the sequence $\{u_k\}_{k={N_1}}^{\infty}$ is equi-H\"older continuous in $C^{2,1}(K)$ for any compact subset $K$ of $\left\{\overline{\Omega}\bs\left\{a_1,\cdots,a_{i_0}\right\}\right\}\times(T_1+1-N_1,\infty)$. Hence by Ascoli theorem and a diagonalization argument the sequence $\{u_k\}_{k=1}^{\infty}$ has a subsequence which we may assume without loss of generality to be the sequence itself that converges uniformly in $C^{2,1}(K)$ on every compact subset $K$ of $(\overline{\Omega}\bs\left\{a_1,\cdots,a_{i_0}\right\})\times\left(-\infty,\infty\right)$ to a solution $u_{\infty}$ of \eqref{fde} in $(\overline{\Omega}\bs\left\{a_1,\cdots,a_{i_0}\right\})\times(-\infty,\infty)$ as $k\to\infty$ which satisfies
\begin{equation}\label{eq-boundary-condtion-of-u-infty-as-t-to-infty}
u_{\infty}=\mu_0 \quad \mbox{  on }\partial\Omega\times\R.
\end{equation}
We now divide the proof into two cases.

\noindent\textbf{Case 1}: There exists a constant $t_0>0$ such that 
$f\equiv\mu_0$ on $\partial\Omega\times(t_0,\infty)$.

\noindent Then 
\begin{equation}\label{eq-condition-of-boundary-value-after-time-T-1}
u_{\3,M}=\mu_0+\3 \quad \mbox{ on }\partial\Omega\times(t_0,\infty).
\end{equation}
Let $\gamma_0=\max_{1\leq i\leq i_0}\gamma_i'$. Then by \eqref{gamma-upper-lower-bd1},
\begin{equation*}
\frac{2}{1-m}<\gamma_0<n.
\end{equation*}
Let $v_0(x)=u_{\infty}(x,0)$ and $1<p_1<n/\gamma_0$. By Lemma \ref{lem-local-upper-bound-of-u-near-singular-points} there exist constants $C_2>0$ and $C_3>0$ such that
\eqref{eq-bound-of-solution-on-compact-subset-with-t01-Omega-delta-2}
and \eqref{eq-local-behaviour-of-solution-u-blow-up=0-above-by-1} holds with $\delta_2=\delta_1$. Then by \eqref{eq-bound-of-solution-on-compact-subset-with-t01-Omega-delta-2}
and \eqref{eq-local-behaviour-of-solution-u-blow-up=0-above-by-1},
\begin{equation*}
\int_{\widehat{\Omega}}u^{p_1}\left(x,t_0\right)\,dx\leq C_3^p\omega_n\sum_{i=1}^{i_0}\int_{0}^{\delta_1}r^{n-p_1\gamma_0-1}\,dr+\|u(\cdot,t_0)\|^{p_1}_{L^{\infty}(\Omega_{\delta_1})}|\Omega|\leq C_1(1+\delta_1^{n-p_1\gamma_0})<\infty
\end{equation*}
for some constant $C_1>0$ where $\omega_n$ is the surface area of the unit sphere $S^{n-1}$ in $\R^n$. Hence $u(\cdot,t_0)\in L^{p_1}(\widehat{\Omega})$.
By \eqref{eq-lower-bound-of-u-from-intial-condition-u-0-mu} and \eqref{eq-condition-of-boundary-value-after-time-T-1}, 
\begin{equation}\label{eq-boundary-condition-negative-of-our-normal-direction-on-boundary}
\frac{\partial u_{\3,M}}{\partial\nu}\leq 0\qquad \mbox{ on }\partial\Omega\times(t_0,\infty)
\end{equation}
where $\frac{\partial}{\partial\nu}$ is the derivative with respect to the unit outward normal on $\partial\Omega\times(t_0,\infty)$. Then by \eqref{eq-boundary-condition-negative-of-our-normal-direction-on-boundary},
\begin{align*}
\frac{1}{p_1}\left(\int_{\Omega_{\delta}}u_{\3,M}^{p_1}(x,t)\,dx-\int_{\Omega_{\delta}}u_{\3,M}^{p_1}(x,t_1)\,dx\right)
=&\frac{1}{p_1}\int_{t_0}^t\frac{d}{d\tau}\left[\int_{\Omega_{\delta}}u_{\3,M}^{p_1}(x,\tau)\,dx\right]d\tau\\
=&\int_{t_0}^t\int_{\Omega_{\delta}}u_{\3,M}^{p_1-1}(u_{\3,M})_{\tau}\,dx\,d\tau=\int_{t_0}^t\int_{\Omega_{\delta}}u_{\3,M}^{p_1-1}\La u_{\3,M}^m\,dx\,d\tau\\
\le&-\int_{t_0}^t\int_{\Omega_{\delta}}\nabla u_{\3,M}^m\cdot\nabla u_{\3,M}^{p_1-1}\,dx\,d\tau\\ 
=&-m(p_1-1)\int_{t_0}^{t}\int_{\Omega_{\delta}}u_{\3,M}^{m+p_1-3}|\nabla u_{\3,M}|^2\,dx\,d\tau\quad\forall t>t_0,0<\delta\leq \delta_1. 
\end{align*}
Hence
\begin{equation}\label{eq-integration-by-parts-for-u-epsilon-M-to-p-before-letting-epsilon-m-to}
\frac{1}{p_1}\int_{\Omega_{\delta}}u_{\3,M}^{p_1}(x,t)\,dx+m(p_1-1)\int_{t_0}^t\int_{\Omega_{\delta}}u_{\3,M}^{m+p_1-3}|\nabla u_{\3,M}|^2\,dx\,d\tau\leq \frac{1}{p_1}\int_{\Omega_{\delta}}u_{\3,M}^{p_1}(x,t_1)\,dx\quad\forall t>t_0, 0<\delta\leq \delta_1.  
\end{equation}
Letting $\3\to 0$, $M\to\infty$ in \eqref{eq-integration-by-parts-for-u-epsilon-M-to-p-before-letting-epsilon-m-to}, by \eqref{u-eta-m-limit},
\begin{align}\label{eq-bound-of-u-to-sudden-power-grad-u-squre-on-Omega-deta}
&\frac{1}{p_1}\int_{\Omega_{\delta}}u^{p_1}(x,t)\,dx+m(p_1-1)\int_{t_0}^t\int_{\Omega_{\delta}}u^{m+p_1-3}|\nabla u|^2\,dx\,d\tau\leq \frac{1}{p_1}\int_{\Omega_{\delta}}u^{p_1}(x,t_0)\,dx\quad\forall t>t_0, 0<\delta\le \delta_1\notag\\
\Rightarrow \quad &\int_{t_0}^{\infty}\int_{\Omega_{\delta}}\frac{1}{u^{3-m-p_1}}|\nabla u|^2\,dx\,dt<\frac{1}{p_1(p_1-1)m}\int_{\widehat{\Omega}}u^{p_1}(x,t_0)\,dx<\infty\quad\mbox{ as }t\to\infty.
\end{align}
Let $0<\delta<\delta_1$. By Lemma \ref{lem-local-upper-bound-of-u-near-singular-points} there exists a constant $C_{\delta}>0$ such that
\begin{equation}\label{eq-upper-bound-of-u-by-C-delta-on-Omega-delta-t-1-over-2-to-infty}
u(x,t)\leq C_{\delta} \qquad \mbox{on $\Omega_{\delta}\times(t_0,\infty)$}.
\end{equation}
By \eqref{u-lower-bd10}, \eqref{eq-bound-of-u-to-sudden-power-grad-u-squre-on-Omega-deta} and \eqref{eq-upper-bound-of-u-by-C-delta-on-Omega-delta-t-1-over-2-to-infty},
\begin{equation}\label{eq-upper-bound-of-integral-of-u-by-C-delta-on-Omega-delta-t-1-over-2-to-infty}
\int_{t_0}^{\infty}\int_{\Omega_{\delta}}|\nabla u|^2\,dx\,dt\le C_{\delta}' 
\end{equation}
for some constant $C_{\delta}'>0$ depending on $\delta>0$. Observe that
\begin{equation}\label{eq-equality-of-integral-u-and-integral-u-k-with-different-range}
\int_{0}^{\infty}\int_{\Omega_{\delta}}|\nabla u_k(x,t)|^2\,dx\,dt=\int_{t_k}^{\infty}\int_{\Omega_{\delta}}|\nabla u(x,s)|^2\,dx\,ds\quad\forall k\in\Z^+, 0<\delta<\delta_1.
\end{equation}
Letting $k\to\infty$ in \eqref{eq-equality-of-integral-u-and-integral-u-k-with-different-range}, by \eqref{eq-upper-bound-of-integral-of-u-by-C-delta-on-Omega-delta-t-1-over-2-to-infty} and Fatou's lemma,
\begin{align}
&\int_{0}^{\infty}\int_{\Omega_{\delta}}|\nabla u_{\infty}|^2\,dx\,dt=0\quad \forall 0<\delta<\delta_1\notag\\
\Rightarrow \quad & \nabla u_{\infty}=0 \qquad \mbox{ in $\widehat{\Omega}\times[0,\infty)$}\qquad \mbox{as $\delta\to 0$}.\label{eq-gradient-of-u-sub-infty-equal-to-zero}
\end{align}
Thus by \eqref{eq-boundary-condtion-of-u-infty-as-t-to-infty} and \eqref{eq-gradient-of-u-sub-infty-equal-to-zero},
\begin{align*}
&u_{\infty}=\mu_0 \quad \mbox{ on }(\overline{\Omega}\bs\{a_1,\cdots,a_{i_0}\})\times[0,\infty)\\
\Rightarrow \qquad & v_{0}=\mu_0 \quad \mbox{ on }\overline{\Omega}\bs\{a_1,\cdots,a_{i_0}\}.
\end{align*}
Since the sequence $\{t_k\}_{k=1}^{\infty}$ is arbitrary, $u$ satisfies \eqref{u-infty-to-mu0} for any compact subset $K$ of $\overline{\Omega}\bs\left\{a_1,\cdots,a_{i_0}\right\}$.

\noindent\textbf{Case 2}: $f$ satisfies \eqref{eq-first-condition-on-f-constant-mu-sub-zero} and \eqref{eq-second-condition-on-f-convergence-to-mu-sub-one}.\\
By \eqref{eq-second-condition-on-f-convergence-to-mu-sub-one} for any $i\geq 2$ there exists a constant $T_i>0$ such that
\begin{equation}\label{eq-condition-of-f-trapped-by-two-constants-one-plus-and-the-ohter-minus}
\mu_0\leq f\leq \mu_0\left(1+\frac{1}{i}\right) \qquad \mbox{on $\partial\Omega\times(T_i,\infty)$}.
\end{equation}
For any $i\ge 2$, let $\underline{f}_i$, $\overline{f}_i$, be given by
\begin{equation}\label{lower-upper-approx-lateral-values}
\left\{\begin{aligned}
&\underline{f}_i(x,t)=\mu_0\qquad\qquad\qquad\qquad\qquad \mbox{ on }\partial\Omega\times(0,\infty)\\
&\overline{f}_i(x,t)=\max\left(f(x,t),\mu_0\left(1+\frac{1}{i}\right)\right) \quad \mbox{ on }\partial\Omega\times(0,\infty)
\end{aligned}\right.
\end{equation}
and $\underline{u}_{0,i}$, $\overline{u}_{0,i}$ be given by
\begin{equation}\label{lower-upper-approx-initial-value}
\underline{u}_{0,i}(x,0)=u_0\quad\mbox{ and }\quad \overline{u}_{0,i}(x,0)=u_0+\frac{\mu_0}{i}.
\end{equation}
Let $\underline{u}_i$, $\overline{u}_i$, be solutions of \eqref{Dirichlet-blow-up-problem} with $f=\underline{f}_i$, $\overline{f}_i$, and $u_0=\underline{u}_{0,i}, \overline{u}_{0,i}$, respectively given by Theorem \ref{first-main-existence-thm}.
Since 
\begin{equation*}
\underline{f}_i(x,t)=\mu_0 \quad \mbox{ and }\quad
\overline{f}_i(x,t)=\mu_0\left(1+\frac{1}{i}\right)\quad \mbox{ on }\partial\Omega\times(T_i,\infty),
\end{equation*}
by  case 1, 
\begin{equation}\label{eq-uniform-convergence-of-lower-one-by-Case-1}
\left\{\begin{aligned}
&\underline{u}_i\to \mu_0 \qquad\qquad\quad \mbox{in $C^2(K)$ as $t\to\infty$}\\
&\overline{u}_i\to \mu_0\left(1+\frac{1}{i}\right) \quad \mbox{ in $C^2(K)$ as $t\to\infty$}
\end{aligned}\right.
\end{equation}
for any compact subset $K$ of $\overline{\Omega}\bs\left\{a_1,\cdots,a_n\right\}$.
By \eqref{lower-upper-approx-lateral-values}, \eqref{lower-upper-approx-initial-value} and Theorem \ref{bd-soln-comparison-thm1},
\begin{align}
&\underline{u}_i\leq u\leq \overline{u}_i \quad \mbox{ in }\Omega\times(0,\infty)\quad\forall i\geq 2\notag\\
\Rightarrow \quad &\underline{u}_i\left(x,t+t_k\right)\leq u_{k}(x,t)\leq \overline{u}_i\left(x,t+t_k\right) \qquad \forall (x,t)\in\widehat{\Omega}\times(-t_k,\infty),\,\,i\geq 2.\label{eq-comparison-bteween-underline-u-k-and-u-and-overline-u-k}
\end{align}
Thus letting $k\to\infty$ in \eqref{eq-comparison-bteween-underline-u-k-and-u-and-overline-u-k}, by \eqref{eq-uniform-convergence-of-lower-one-by-Case-1}, 
\begin{equation*}
\begin{aligned}
&\mu_0\leq u_{\infty}(x,t)\leq \mu_0\left(1+\frac{1}{i}\right) \qquad \forall (x,t)\in\widehat{\Omega}\times\R,i\geq 2\\
\Rightarrow \quad&u_{\infty}=\mu_0 \quad\mbox{ in } \widehat{\Omega}\times\R\quad\mbox{as $i\to\infty$} 
\end{aligned}
\end{equation*}
and \eqref{u-infty-to-mu0} holds for any compact subset $K$ of $\overline{\Omega}\bs\left\{a_1,\cdots,a_n\right\}$.
\end{proof}

\begin{remar}
If one only assume that $f\in L^{\infty}(\partial\Omega\times\left(0,\infty\right))$ 
and
\begin{equation*}
f(x,t)\to\mu_0 \quad \mbox{uniformly on }\partial\Omega\mbox{ as }t\to\infty
\end{equation*}
instead of $f\in L^{\infty}(\partial\Omega\times\left(0,\infty\right))\cap C^{\infty}(\partial\Omega\times\left(T_1,\infty\right))$ for some constant $T_1>0$ and condition \eqref{eq-second-condition-on-f-convergence-to-mu-sub-one} in Theorem \ref{convergence-thm1}, then by an argument similar to the proof of Theorem \ref{convergence-thm1} one can prove that the solution $u$ of \eqref{Dirichlet-blow-up-problem} given by Theorem \ref{first-main-existence-thm} satisfy
\eqref{u-infty-to-mu0} for any compact set $K\subset\widehat{\Omega}$. Moreover
\begin{equation*}
u(x,t)\to \mu_0 \quad \mbox{ in }L_{loc}^{\infty}(\overline{\Omega}\bs\left\{a_1,\cdots,a_{i_0}\right\}) \quad \mbox{ as }t\to\infty.
\end{equation*}
\end{remar}

\begin{proof}[\textbf{Proof of Theorem \ref{convergence-thm2}}:]
Let $\left\{t_k\right\}_{k=1}^{\infty}\subset\R^+$ be a sequence such that $t_k\to\infty$ as $k\to\infty$ and let $u_k(x,t)=u(x,t+t_k)$. By the same argument as the proof of Theorem \ref{convergence-thm1} the sequence $\{u_k\}_{k=1}^{\infty}$ has a subsequence which we may assume without loss of generality to be the sequence itself that converges uniformly in $C^{2,1}(K)$ for every compact subset $K$ of $(\overline{\Omega}\bs\left\{a_1,\cdots,a_{i_0}\right\})\times\left(-\infty,\infty\right)$ to a solution $u_{\infty}$ of \eqref{fde} in $\widehat{\Omega}\times(-\infty,\infty)$ as $k\to\infty$ which satisfies 
\begin{equation}\label{eq-boundary-condition-of-u-infty-by-f-mu-1}
u_{\infty}=\mu_1 \qquad \mbox{on $\partial\Omega\times\left(-\infty,\infty\right)$.}
\end{equation} 
Without loss of generality we will assume that $T_1=\frac{1}{2}$.
We divide the proof into two cases.

\noindent\textbf{Case 1}: $\gamma_i=\gamma_i'$ for all $1\leq i\leq i_0$.\\
Let $T>1$. Then 
\begin{align}\label{eq-for-thm-a-aligned-lower-and-uppoer-bound-of-integral-Laplace-of-u-epsilon-M}
\int_1^T\int_{\Omega_{\delta}}(u^m)_tu_t\,dx\,dt&=\int_{1}^{T}\int_{\Omega_{\delta}}(u^m)_t\La u^m\,dx\,dt\notag\\
& =-\frac{1}{2}\int_1^T\frac{d}{dt}\left[\int_{\Omega_{\delta}}|\nabla u^m|^2\,dx\right]dt +\int_1^T\int_{\partial\Omega_{\delta}}(u^m)_{t}\frac{\partial u^m}{\partial\nu}\,d\sigma\, dt\notag\\
& \leq \frac{m^2}{2}\int_{\Omega_{\delta}}u^{2(m-1)}|\nabla u|^2(x,1)\,dx+m^2\sum_{i=1}^{i_0}\int_1^T\int_{\partial B_{\delta}(a_i)}u^{2(m-1)}|u_t||\nabla u|\,d\sigma\,dt\notag\\
&\qquad +m^2\int_1^T\int_{\partial\Omega}f^{2(m-1)}|f_t||\nabla u|\,d\sigma\, dt\quad\forall 0<\delta<\delta_1
\end{align}
where $\frac{\partial}{\partial\nu}$ is the derivative with respect to the unit outward normal on $\partial\Omega_{\delta}$.

Let $\delta_2=\frac{\delta_1}{2}$ and $i\in\{1,\cdots,i_0\}$.  We claim that  there exists a constant $C_1(T)>0$ such that
\begin{equation}\label{v-i-tidle-x-derivative-bd}
|\nabla u(x,t)|\le\frac{C_1(T)}{|x-a_i|^{\gamma_i+\frac{\gamma_i(1-m)}{2}}}\quad \forall (x,t)\in \widehat{B}_{\delta_2}(a_i)\times[1,T),i=1,\cdots,i_0 
\end{equation}
and
\begin{equation}\label{v-i-tidle-t-derivative-bd}
|u_t(x,t)|\le\frac{C_1(T)}{|x-a_i|^{\gamma_i}}\quad \forall (x,t)\in \widehat{B}_{\delta_2}(a_i)\times[1,T),i=1,\cdots,i_0
\end{equation}
hold. To prove the claim we let $x_0\in \widehat{B}_{\delta_2}\left(a_i\right)$,  $R_i=|x_0-a_i|$,
\begin{equation*}\label{x-y-relation}
y_{0,i}=R_i^{-\frac{\gamma_i\left(1-m\right)}{2}}(x_0-a_i) \qquad \mbox{and} \qquad y=R_i^{-\frac{\gamma_i\left(1-m\right)}{2}}\left(x-a_i\right).
\end{equation*}
Then $\left|y_{0,i}\right|=R_i^{1-\frac{\gamma_i(1-m)}{2}}$, 
\begin{equation*}
y\in B\left(y_{0,i},\frac{1}{2} R_i^{1-\frac{\gamma_i(1-m)}{2}}\right) \quad \iff \quad x\in B\left(x_0,\frac{1}{2}R_i\right)
\end{equation*} 
and
\begin{equation}\label{eq-range-of-y-and-x-when-y-in-R-1-to-1-gamma-times-1-m-over-2}
 y\in B\left(y_{0,i},\frac{1}{2}R_i^{1-\frac{\gamma_i\left(1-m\right)}{2}}\right)\quad \Rightarrow \quad \frac{1}{2}R_i^{1-\frac{\gamma_i\left(1-m\right)}{2}}\leq \left|y\right|\leq \frac{3}{2}R_i^{1-\frac{\gamma_i\left(1-m\right)}{2}} \quad \Rightarrow \quad \frac{1}{2}R_i\leq \left|x-a_i\right|\leq \frac{3}{2}R_i.
\end{equation} 
Let
\begin{equation*}
\tilde{v}_{i}(y,t)=R_i^{\gamma_i}u\left(a_i+R_i^{\frac{\gamma_i\left(1-m\right)}{2}}y,t\right) \qquad \forall (y,t)\in B\left(y_{0,i},\frac{1}{2}R_i^{1-\frac{\gamma_i\left(1-m\right)}{2}}\right)\times\left(0,T\right) ,i=1,\cdots,i_0.
\end{equation*}
Then $\tilde{v}_i$ satisfies
\begin{equation*}
\tilde{v}_t=\La_y\tilde{v}^m=m\tilde{v}^{m-1}\La_y\tilde{v}+m(m-1)\tilde{v}^{m-2}\left|\nabla_y\tilde{v}\right|^2 \quad \mbox{in $B\left(y_{0,i},\frac{1}{2} R_i^{1-\frac{\gamma_i\left(1-m\right)}{2}}\right)\times\left(0,T\right)$}\quad\forall i=1,\cdots,i_0.
\end{equation*}
By Lemma \ref{lem-local-lower-bound-of-u-near-singular-points}, Lemma \ref{lem-local-upper-bound-of-u-near-singular-points} and \eqref{eq-range-of-y-and-x-when-y-in-R-1-to-1-gamma-times-1-m-over-2}, there exist constants $C_5>C_2(T)>0$ such that
\begin{equation}\label{v-tidle-i-upper-bd0}
C_2(T)\leq \tilde{v}_i(y,t) \leq C_5 \qquad \forall y\in B\left(y_{0,i},\frac{1}{2}R_i^{1-\frac{\gamma_i\left(1-m\right)}{2}}\right),\frac{1}{2}\leq t<T, i=1,\cdots,i_0.
\end{equation}
Then the equation for $\tilde{v}_i$ is uniformly parabolic equation in $B\left(y_{0,i},\frac{1}{2}R_i^{1-\frac{\gamma_i\left(1-m\right)}{2}}\right)\times\left[\frac{1}{2},T\right)$ for any $i=1,\cdots,i_0$. Since  $R_i<1$ and $\gamma_i>\frac{2}{1-m}$, $R_i^{1-\frac{\gamma_i\left(1-m\right)}{2}}>1$. Hence by \eqref{v-tidle-i-upper-bd0} and the parabolic Schauder estimate \cite{LSU}  there exists a constant $C_4(T)>0$ independent of $R_i$ such that 
\begin{equation*}
\sup_{|y-y_{0,i}|\leq \frac{1}{4}}|\nabla_y \tilde{v}_i|(y,t_0)+\sup_{|y-y_{0,i}|\leq \frac{1}{4}}|\tilde{v}_{i,t}|(y,t_0)\leq C_4(T)\sup_{\begin{subarray}{c} |y-y_{0,i}|\leq\frac{1}{2}\\t_0-\frac{1}{4}\leq t\leq t_0\end{subarray}}|\tilde{v}_i|(y,t)=C_5C_4(T)=C_1(T)\,\forall 1\leq t_0<T, i=1,\cdots,i_0. 
\end{equation*}
Thus
\begin{align*}
&|\nabla_y \tilde{v}_i(y_{0,i},t_0)|\leq C_1(T) \qquad\qquad \forall 1\leq t_0<T,|y_{0,i}|<R_i^{-\frac{\gamma_i(1-m)}{2}}\delta_2,\,\,i=1,\cdots,i_0\notag\\
\Rightarrow\quad&|\nabla u(x,t)|\leq \frac{C_1\left(T\right)}{|x-a_i|^{\gamma_i+\frac{\gamma_i(1-m)}{2}}} \qquad \forall (x,t)\in \widehat{B}_{\delta_2}(a_i)\times[1,T),i=1,\cdots,i_0
\end{align*}
and 
\begin{align*}
&|\tilde{v}_{i,t}(y_{0,i},t_0)|\leq C_1(T) \qquad \forall 1\leq t_0<T\,\,|y_{0,i}|<R_i^{-\frac{\gamma_i(1-m)}{2}}\delta_2,i=1,\cdots,i_0\notag\\
\Rightarrow\quad &\left|u_t(x,t)\right|\le\frac{C_1(T)}{|x-a_i|^{\gamma_i}} \qquad \forall (x,t)\in \widehat{B}_{\delta_2}\left(a_i\right)\times[1,T), i=1,\cdots,i_0
\end{align*}
and the claim follows. Let $t_0=1/2$.
By Lemma \ref{lem-local-upper-bound-of-u-near-singular-points} there exist constants $C_2>0$, $C_3>0$, such that \eqref{eq-bound-of-solution-on-compact-subset-with-t01-Omega-delta-2} and \eqref{eq-local-behaviour-of-solution-u-blow-up=0-above-by-1} hold. Since $u$ satisfies \eqref{u-lower-bd10}, by \eqref{eq-bound-of-solution-on-compact-subset-with-t01-Omega-delta-2} and \eqref{eq-local-behaviour-of-solution-u-blow-up=0-above-by-1} the equation \eqref{fde} for the solution $u$ is uniformly parabolic on $\overline{\Omega_{\delta}}\times\left(\frac{1}{2},\infty\right)$ for any $0<\delta<\delta_2$. Hence by the parabolic Schauder estimates \cite{LSU} there exists a constant $C_6>0$ such that
\begin{equation}\label{eq-nabla-u-bounded-by-constant-over-overline-Omega-delta-2-to=1=toinfty-by-Schauder}
\left|\nabla u\right|\leq C_6 \qquad \mbox{in $\overline{\Omega_{\delta_2}}\times[1,\infty)$}.
\end{equation}
Since $0<m<\frac{n-2}{n+2}$ and $\gamma_i\in \left(\frac{2}{1-m},\frac{n}{m+1}\right)$ for all $i=1,\dots,i_0$,
\begin{equation*}
\gamma_i<\frac{n}{m+1}<\frac{2(n-1)}{3m+1}\quad\forall i=1,\dots,i_0.
\end{equation*}
Hence by Lemma \ref{lem-local-lower-bound-of-u-near-singular-points}, \eqref{eq-bound-of-solution-on-compact-subset-with-t01-Omega-delta-2}, \eqref{eq-local-behaviour-of-solution-u-blow-up=0-above-by-1}, \eqref{v-i-tidle-x-derivative-bd}, \eqref{v-i-tidle-t-derivative-bd}  and \eqref{eq-nabla-u-bounded-by-constant-over-overline-Omega-delta-2-to=1=toinfty-by-Schauder},
\begin{align}\label{eq-control-of-the-second-term-on-tghe-right-hand-side-of-to-zero-as-delta-to-zero}
\int_1^T\int_{\partial B_{\delta}\left(a_i\right)}u^{2\left(m-1\right)}|u_t||\nabla u|\,d\sigma\, dt
&\leq CT\delta^{n-1-\frac{\gamma_i}{2}\left(3m+1\right)} \quad \forall 0<\delta<\delta_2,i=1,\cdots,i_0\notag\\
&\to 0 \quad\,\,\, \mbox{ as }\delta\to 0\quad\forall i=1,\cdots,i_0,
\end{align}
\begin{align}\label{eq-control-of-the-first-term-on-tghe-right-hand-side-of-L-1-norm-nabla-u-to-m-squre}
\int_{\Omega_{\delta}}u^{2(m-1)}|\nabla u|^2(x,1)\,dx
=&\int_{\Omega_{\delta_2}}u^{2(m-1)}|\nabla u|^2(x,1)\,dx+\sum_{i=1}^{i_0}\int_{B_{\delta_2}(a_i)}u^{2(m-1)}|\nabla u|^2(x,1)\,dx\notag\\
\le&C\left(1+\int_0^{\delta_2}r^{n-1-\gamma_i\left(m+1\right)}\,dr\right)=C'\left(1+\delta_2^{n-\gamma_i(m+1)}\right) \quad \forall 0<\delta<\delta_2,i=1,\cdots,i_0\notag\\
\Rightarrow\quad\int_{\widehat{\Omega}}u^{2(m-1)}|\nabla u|^2(x,1)\,dx\le& C'\left(1+\sum_{i=1}^{i_0}\delta_2^{n-\gamma_i(m+1)}\right)\qquad \forall i=1,\cdots,i_0\quad\mbox{ as }\delta\to 0
\end{align}
and
\begin{equation}\label{eq-control-of-the-second-term-on-tghe-right-hand-side-of-L-1-norm-nabla-f-to-m-squre-in-hat-Omega}
\int_1^{\infty}\int_{\partial\Omega}|f|^{2(m-1)}|f_t||\nabla u|\,d\sigma\, dt\leq C\|f_t\|_{L^1(\partial\Omega\times(1,\infty))}.
\end{equation}
for some constants $C>0$, $C'>0$.
Letting first $\delta\to 0$   and then $T\to\infty$ in \eqref{eq-for-thm-a-aligned-lower-and-uppoer-bound-of-integral-Laplace-of-u-epsilon-M}, by \eqref{eq-control-of-the-second-term-on-tghe-right-hand-side-of-to-zero-as-delta-to-zero}, \eqref{eq-control-of-the-first-term-on-tghe-right-hand-side-of-L-1-norm-nabla-u-to-m-squre} and \eqref{eq-control-of-the-second-term-on-tghe-right-hand-side-of-L-1-norm-nabla-f-to-m-squre-in-hat-Omega} we have
\begin{equation}\label{eq-for-thm-a-aligned-lower-and-uppoer-bound-of-integral-Laplace-of-u}
\int_1^{\infty}\int_{\widehat{\Omega}}u^{m-1}(x,t)u^2_{t}(x,t)\,dxdt\leq C\left(1+\sum_{i=1}^{i_0}\delta_2^{n-\gamma_i(m+1)}+\|f_t\|_{L^1(\partial\Omega\times(1,\infty))}\right)<\infty.
\end{equation}
We now observe that
\begin{equation}\label{eq-equality-of-integral-u-and-integral-u-k-with-different-range-2}
\int_0^{\infty}\int_{\widehat{\Omega}}u_k^{m-1}\left|\La u_{k}^m\right|^2\,dxdt
=\int_0^{\infty}\int_{\widehat{\Omega}}u_k^{m-1}u_{k,t}^2\,dxdt
=\int_{t_k}^{\infty}\int_{\widehat{\Omega}}u^{m-1}(x,s)u_t^2(x,s)\,dxds.
\end{equation}
Letting $k\to\infty$ in \eqref{eq-equality-of-integral-u-and-integral-u-k-with-different-range-2}, by \eqref{eq-bound-of-solution-on-compact-subset-with-t01-Omega-delta-2}, \eqref{eq-local-behaviour-of-solution-u-blow-up=0-above-by-1}, \eqref{eq-for-thm-a-aligned-lower-and-uppoer-bound-of-integral-Laplace-of-u} and Fatou's lemma, for any  $0<\delta<\delta_2$ there exists a constant $C_{\delta}>0$ such that
\begin{align}
C_{\delta}\int_0^{\infty}\int_{\Omega_{\delta}}\left|\La u_{\infty}^m\right|^{2}\,dx\,dt&\le\int_{0}^{\infty}\int_{\Omega_{\delta}}u_{\infty}^{m-1}\left|\La u_{\infty}^m\right|^{2}\,dx\,dt\leq\int_{0}^{\infty}\int_{\widehat{\Omega}}u_{\infty}^{m-1}\left|\La u_{\infty}^m\right|^{2}\,dxdt=0 \notag\\
\Rightarrow \qquad \qquad \qquad \qquad   \La u^m_{\infty}(x,t)&=0 \qquad \mbox{ in $\Omega_{\delta}\times[0,\infty)$}\notag\\
\Rightarrow \qquad \qquad \qquad \qquad   \La u^m_{\infty}(x,t)&=0 \qquad \mbox{ in $\widehat{\Omega}\times[0,\infty)$}\qquad \mbox{as $\delta\to 0$}\label{eq-La-of-u-sub-infty-equal-to-zero}.
\end{align}
Since $0<m<\frac{n-2}{n+2}$, by \eqref{gamma-upper-lower-bd2},
\begin{equation*}
\gamma_i'<\frac{n}{m+1}<\frac{n-2}{m}\quad\forall i=1,\cdots,i_0.
\end{equation*}
Hence putting $t=t+t_k$ in \eqref{eq-local-behaviour-of-solution-u-blow-up=0-above-by-1} and letting $k\to\infty$,
\begin{equation}\label{u^m-near-ai-decay}
u_{\infty}^{m}(x,t)\leq C_3^m|x-a_i|^{-m\gamma_i'}=o\left(\left|x-a_i\right|^{2-n}\right) \qquad \forall x\in \widehat{B_{\delta_2}}(a_i),\,\,t\in\R, i=1,\cdots,i_0.
\end{equation}
Note that (\cite{F}) when $n\geq 3$, a harmonic function $v$ in $D\bs\left\{x_0\right\}$, $x_0\in D\subset\R^n$, has a removable singularity at $x_0$ if and only if  there exists $\delta>0$ such that $|u(x)|\leq o\left(|x-x_0|^{2-n}\right)$ for any $x\in\widehat{B_{\delta}}(x_0)$. Hence by \eqref{u^m-near-ai-decay} 
$a_i$ is a removable singularity of $u_{\infty}^m$ for all $i=1,\cdots,i_0$. Thus by \eqref{eq-La-of-u-sub-infty-equal-to-zero} the function $u^m_{\infty}(\cdot,t)$ can be extended to a harmonic function in $\Omega$ for any $t\in [0,\infty)$. Hence  by \eqref{eq-boundary-condition-of-u-infty-by-f-mu-1} and the maximum principle for harmonic function,
\begin{equation*}\label{eq-aligned-La-u-infty-equiv-0-a-e-in-hat-Ojmega}
\begin{aligned}
&u_{\infty}(x,t)=\mu_1 \qquad \mbox{in $(\overline{\Omega}\bs\{a_1,\cdots,a_{i_0}\})\times[0,\infty)$}\\
\Rightarrow \qquad & \lim_{k\to\infty}u(x,t_k)=\mu_1 \qquad \mbox{in $\overline{\Omega}\bs\{a_1,\cdots,a_{i_0}\}$}.
\end{aligned}
\end{equation*}
Since the sequence $\{t_k\}_{k+1}^{\infty}$ is arbitrary, \eqref{eq-for-thm-a-global-behaviour-of-solution-u-blow-up=0-above-by} holds for any compact subset $K$ of $\overline{\Omega}\bs\{a_1,\cdots,a_{i_0}\}$.

\noindent\textbf{Case 2}: $\gamma_i<\gamma_i'$ for all $i=1,\cdots, i_0$.\\
Let $\delta_4=\min \left(\delta_3,\min_{1\le i\le i_0}(\lambda_i/\mu_0)^{\frac{1}{\gamma_i}}\right)$,
\begin{equation*}
\overline{u}_0(x)=\begin{cases}
\begin{array}{lcl}
u_0(x)&& x\in\Omega_{\delta_4}\\
\lambda_{i}'|x-a_i|^{-\gamma_i'}&&x\in B_{\delta_4}(a_i)\quad\forall i=1,\cdots,i_0
\end{array}
\end{cases}
\end{equation*}
and
\begin{equation*}
\underline{u}_0(x)=\begin{cases}
\begin{array}{lcl}
u_0(x)&& x\in\Omega_{\delta_4}\\
\lambda_{i}|x-a_i|^{-\gamma_i}&&x\in B_{\delta_4}(a_i)\quad\forall i=1,\cdots,i_0.
\end{array}
\end{cases}
\end{equation*}
Then 
\begin{equation}\label{eq-comparison-between-initial-datas-u-0-under-u-0-and-over-u-0}
\mu_0\le\underline{u}_0\le u_0\le\overline{u}_0\in L^p_{loc}(\widehat{\Omega}\bs\{a_1,\cdots,a_{i_0}\}).
\end{equation}
Let $\overline{u}$, $\underline{u}$, be the solutions of \eqref{Dirichlet-blow-up-problem} with $u_0=\overline{u}_0, \underline{u}_0$, respectively given by Theorem \ref{first-main-existence-thm}. Then by \eqref{eq-comparison-between-initial-datas-u-0-under-u-0-and-over-u-0} and Theorem \ref{bd-soln-comparison-thm1},
\begin{equation}\label{eq-ordering-of-u-and-overline-u-and-underline-u}
\underline{u}\leq u\leq \overline{u} \quad \mbox{ in }\widehat{\Omega}\times\left(0,\infty\right).
\end{equation}
By case 1,
\begin{equation}\label{eq-limit-of-overline-u-and-underline-u-as-t-to-infty}
\overline{u}(x,t)\to \mu_1 \qquad \mbox{and} \qquad \underline{u}(x,t)\to\mu_1 \qquad \mbox{in $C^2(K)$} \quad \mbox{ as }t\to\infty
\end{equation}
for any compact subset $K$ of $\2{\Omega}\setminus\{a_1,\cdots,a_{i_0}\}$.
Thus by \eqref{eq-ordering-of-u-and-overline-u-and-underline-u} and \eqref{eq-limit-of-overline-u-and-underline-u-as-t-to-infty}, \eqref{eq-for-thm-a-global-behaviour-of-solution-u-blow-up=0-above-by} holds for any compact subset $K$ of $\overline{\Omega}\bs\{a_1,\cdots,a_{i_0}\}$ and the theorem follows.
\end{proof}

\begin{remar}
If one only assume that $f\in L^{\infty}(\partial\Omega\times\left(0,\infty\right))$ and 
\begin{equation*}
f(x.t)\to\mu_1\mbox{ uniformly in } L^{\infty}(\partial\Omega)\mbox{  as }t\to\infty
\end{equation*} 
instead of $f\in L^{\infty}(\partial\Omega\times(0,\infty))\cap C^3(\partial\Omega\times(T_1,\infty))$, $f_t\in L^1(\partial\Omega\times(T_1,\infty))$, for some constant $T_1>0$ and condition \eqref{eq-for-thm-a-second-condition-on-f-convergence-to-mu-sub-one} in Theorem \ref{convergence-thm2}, then by an argument similar to the proof of Theorem \ref{convergence-thm2} one can prove that the solution $u$ of \eqref{Dirichlet-blow-up-problem} given by Theorem \ref{first-main-existence-thm} satisfy
\eqref{eq-for-thm-a-global-behaviour-of-solution-u-blow-up=0-above-by}  for any compact set $K\subset\widehat{\Omega}$. Moreover\begin{equation*}
u(x,t)\to \mu_1 \quad \mbox{ in }L_{loc}^{\infty}(\overline{\Omega}\bs\left\{a_1,\cdots,a_{i_0}\right\}) \quad \mbox{ as }t\to\infty
\end{equation*} 
holds.
\end{remar}

\begin{proof}[\textbf{Proof of Theorem \ref{convergence-thm3}}:]
Let $\delta_2=\delta_1/2$, $\{t_k\}_{k=1}^{\infty}\subset\R^+$ be a sequence such that $t_k\to\infty$ as $k\to\infty$, and $u_k(x,t)=u(x,t+t_k)$. Let $t_0=\frac{1}{2}$. By Lemma \ref{lem-local-upper-bound-of-u-near-singular-points} there exist constants $C_2>0$, $C_3>0$ such that \eqref{eq-bound-of-solution-on-compact-subset-with-t01-Omega-delta-2} and \eqref{eq-local-behaviour-of-solution-u-blow-up=0-above-by-1} hold. Then by \eqref{eq-bound-of-solution-on-compact-subset-with-t01-Omega-delta-2} and \eqref{eq-local-behaviour-of-solution-u-blow-up=0-above-by-1},
\begin{equation}\label{eq-bound-of-solution-on-compact-subset-with-t01-Omega-delta-2-for-u-sub-k}
u_{k}\leq C_2 \quad \mbox{ in }\overline{\Omega_{\delta_2}}\times \left(\frac{1}{2}-t_k,\infty\right)
\end{equation} 
and
\begin{equation}\label{eq-local-behaviour-of-solution-u-blow-up=0-above-by-for-u-sub-k}
u_k(x,t)\leq C_3\left|x-a_i\right|^{-\gamma_i'}  \qquad \forall 0<|x-a_i|\le\delta_2,t\ge\frac{1}{2}-t_k,i=1,\cdots,i_0.
\end{equation}
By \eqref{eq-first-condition-on-f-constant-mu-sub-zero} and an argument similar to the proof of Theorem \ref{convergence-thm1}, \eqref{u-lower-bd10} holds. By \eqref{u-lower-bd10}, \eqref{eq-bound-of-solution-on-compact-subset-with-t01-Omega-delta-2-for-u-sub-k}, \eqref{eq-local-behaviour-of-solution-u-blow-up=0-above-by-for-u-sub-k} and an argument similar to the proof of Theorem \ref{convergence-thm1} the sequence $\{u_k\}_{k=1}^{\infty}$ has a subsequence which we may assume without loss of generality to be the sequence itself that converges uniformly in $C^{2,1}(K)$ on every compact subset $K$ of $(\overline{\Omega}\bs\{a_1,\cdots,a_{i_0}\})\times(-\infty,\infty)$ to a solution $u_{\infty}$ of \eqref{fde} in $\widehat{\Omega}\times(-\infty,\infty)$ as $k\to\infty$ which satisfies
\begin{equation}\label{u-infty-lower-bd}
u_{\infty}\ge\mu_0\quad\mbox{ in }\widehat{\Omega}\times \R
\end{equation}
and
\begin{equation}\label{eq-boundary-condtion-of-u-infty-as-t-to-infty-for-extend}
u_{\infty}=g \qquad \mbox{ on }\partial\Omega\times\R.
\end{equation}
Letting $k\to\infty$ in \eqref{eq-bound-of-solution-on-compact-subset-with-t01-Omega-delta-2-for-u-sub-k} and \eqref{eq-local-behaviour-of-solution-u-blow-up=0-above-by-for-u-sub-k}, 
\begin{equation}\label{eq-bound-of-solution-on-compact-subset-with-t01-Omega-delta-2-for-u-sub-infty}
u_{\infty}\leq C_2 \qquad \mbox{in $\overline{\Omega_{\delta_2}}\times\left(-\infty,\infty\right)$}
\end{equation} 
and
\begin{equation}\label{eq-local-behaviour-of-solution-u-blow-up=0-above-by-for-u-sub-infty}
u_{\infty}(x,t)\leq C_3\left|x-a_i\right|^{-\gamma_i'}  \qquad \forall 0<\left|x-a_i\right|\leq\delta_2,t\in\R,i=1,\cdots,i_0.
\end{equation}
We now divide the proof into two cases.

\noindent\textbf{Case 1}: There exists a constant $T_0\ge 0$ such that \eqref{eq-condition-monotone-decreasingness-of-f2-34} holds.
By \eqref{eq-condition-monotone-decreasingness-of-f2-34} and Theorem \ref{first-main-existence-thm},
\begin{equation}\label{eq-Aronson-Benelan-for-u-sub-k-on-hat-Omeag-and-t-bigger-than-T-0-T-k}
u_{k,t}\leq \frac{u_k}{(1-m)\left(t+t_k-T_0\right)} \quad \mbox{ in }\widehat{\Omega}\times(T_0-t_k,\infty).
\end{equation}
Letting $k\to\infty$ in \eqref{eq-Aronson-Benelan-for-u-sub-k-on-hat-Omeag-and-t-bigger-than-T-0-T-k}, by \eqref{eq-bound-of-solution-on-compact-subset-with-t01-Omega-delta-2-for-u-sub-k} and \eqref{eq-local-behaviour-of-solution-u-blow-up=0-above-by-for-u-sub-k},
\begin{equation}\label{eq-negativity-of-time-derivaitive-of-u-infty-0}
u_{\infty,t}\leq 0 \quad \mbox{ in }\widehat{\Omega}\times\R.
\end{equation}
Thus by \eqref{u-infty-lower-bd}, \eqref{eq-bound-of-solution-on-compact-subset-with-t01-Omega-delta-2-for-u-sub-infty} and \eqref{eq-local-behaviour-of-solution-u-blow-up=0-above-by-for-u-sub-infty}, the equation \eqref{fde} for $u_{\infty}$ is uniformly parabolic in $\overline{\Omega_{\delta}}\times\left(-\infty,\infty\right)$ for any $0<\delta<\delta_2$. By the Schauder estimates \cite{LSU}  the family $\{u_{\infty}(\cdot,t)\}_{t\in\R}$ is equi-H\"older continuous in $C^2(K)$ for any compact subset $K$ of $\overline{\Omega}\bs\left\{a_1,\cdots,a_{i_0}\right\}$. Hence by \eqref{u-infty-lower-bd}, \eqref{eq-boundary-condtion-of-u-infty-as-t-to-infty-for-extend}, \eqref{eq-bound-of-solution-on-compact-subset-with-t01-Omega-delta-2-for-u-sub-infty}, \eqref{eq-local-behaviour-of-solution-u-blow-up=0-above-by-for-u-sub-infty} and \eqref{eq-negativity-of-time-derivaitive-of-u-infty-0} $u_{\infty}\left(\cdot,t\right)$ decreases (increases respectively) and converges uniformly in $C^{2}(K)$ on every compact subset $K$ of $\overline{\Omega}\bs\{a_1,\cdots,a_{i_0}\}$ to some function $w_1\in C^2(\overline{\Omega}\bs\{a_1,\cdots,a_{i_0}\})$ ($w_2\in C^2(\overline{\Omega}\bs\{a_1,\cdots,a_{i_0}\})$, respectively) as $t\to\infty$ $\left(t\to-\infty\,\, \mbox{respectively}\right)$ which satisfies
\begin{equation}\label{w-1-2-bd}
\left\{\begin{aligned}
&w_j\ge\mu_0\qquad\qquad\qquad\quad\mbox{in }\widehat{\Omega}\quad\,\,\,\forall j=1,2\\
&w_j=g \qquad\qquad\qquad\quad\mbox{ on }\partial\Omega\quad \forall j=1,2\\
&w_j\leq C_2 \qquad\qquad\qquad\,\,\mbox{ on }\overline{\Omega_{\delta_2}}\quad\forall j=1,2\\
&w_j(x)\leq C_3|x-a_i|^{-\gamma_i'} \quad\,\,\, \forall 0<|x-a_i|\leq\delta_2,i=1,\cdots,i_0,j=1,2.
\end{aligned}\right.
\end{equation}
By \eqref{eq-negativity-of-time-derivaitive-of-u-infty-0} and \eqref{w-1-2-bd},
\begin{equation*}
\int_{-\infty}^{\infty}\int_{\Omega_{\delta}}\left|\La u_{\infty}^m\right|\,dx\,dt=\int_{-\infty}^{\infty}\int_{\Omega_{\delta}}|u_{\infty,t}|\,dx\,dt=-\int_{-\infty}^{\infty}\int_{\Omega_{\delta}}u_{\infty,t}\,dx\,dt\leq \int_{\Omega_{\delta}}w_2\,dx\leq C_{\delta}\left|\Omega\right|<\infty \qquad \forall 0<\delta<\delta_2
\end{equation*}
where $C_{\delta}=\max (C_2,C_3\max_{1\le i\le i_0}\delta^{-\gamma_i'})$.
Hence there exist sequences $s_i\to\infty$ and $s'_i\to-\infty$ as $i\to\infty$ such that
\begin{align}
&\int_{\Omega_{\delta}}\left|\La u_{\infty}^m\left(x,s_i\right)\right|dx\to 0 \qquad \mbox{and} \qquad \int_{\Omega_{\delta}}\left|\La u_{\infty}^m\left(x,s'_i\right)\right|dx\to 0 \qquad \mbox{as $i\to\infty$}\notag\\
&\Rightarrow \qquad \int_{\Omega_{\delta}}\left|\La w_j^m\right|dx=0 \qquad \forall 0<\delta<\delta_0,\,\, j=1,2\notag\\
&\Rightarrow \qquad \La w_j^{m}=0 \qquad \mbox{ in }\widehat{\Omega}\quad\forall j=1,2.\label{eq-harmonicity-of-function-w-1-2-to-m-in-hat-Omega}
\end{align}
Since by \eqref{gamma-upper-lower-bd3}  and \eqref{w-1-2-bd},
\begin{equation*}
0\le w_j^{m}(x)\leq C_3^m|x-a_i|^{-m\gamma_i'}=o\left(\left|x-a_i\right|^{2-n}\right) \qquad \forall x\in \widehat{B}_{\delta_2}(a_i), i=1,\cdots,i_0,\,\, j=1,2.
\end{equation*}
Hence (\cite{F}) $a_i$ is a removable singularity of $w_j^m$ for all $i=1,\cdots,i_0$, $j=1,2$. Thus 
$w_j$  can be extended to a function on $\2{\Omega}$ for $j=1,2$, such that
\begin{equation}\label{eq-harmonicity-of-function-w-1-2-to-m-in-Omega}
\La w_j^m=0 \quad \mbox{ in }\Omega\quad\forall j=1,2.
\end{equation}
By \eqref{w-1-2-bd}, \eqref{eq-harmonicity-of-function-w-1-2-to-m-in-Omega}, and the maximum principle for harmonic functions,
\begin{equation}\label{eq-equivalent-of-w-on=whole-domain-by-mu-sub-zero}
w_j^m=\phi \quad \mbox{ in }\Omega\quad\forall j=1,2.
\end{equation}
By \eqref{eq-negativity-of-time-derivaitive-of-u-infty-0} and \eqref{eq-equivalent-of-w-on=whole-domain-by-mu-sub-zero},
\begin{align}
&\phi^{\frac{1}{m}}(x)=w_1(x)\leq u_{\infty}\left(x,t\right)\leq w_2(x)=\phi^{\frac{1}{m}}(x) \qquad \forall x\in \overline{\Omega}\bs\left\{a_1,\cdots,a_{i_0}\right\},t\in\R\notag\\
&\Rightarrow \qquad u_{\infty}(x,t)=\phi^{\frac{1}{m}}(x) \qquad \forall x\in\overline{\Omega}\bs\left\{a_1,\cdots,a_{i_0}\right\},t\in\R\label{u-infty-harmonic-fcn}\\
&\Rightarrow \qquad \lim_{k\to\infty}u\left(x,t_k\right)=\phi^{\frac{1}{m}}(x)\qquad \forall x\in\overline{\Omega}\bs\left\{a_1,\cdots,a_{i_0}\right\}\label{uk-limit=harmonic-fcn}.
\end{align}
Since the sequence $\left\{t_k\right\}$ is arbitrary, \eqref{u-phi-1/m-limit1} holds for any compact subset $K$ of $\overline{\Omega}\bs\left\{a_1,\cdots,a_{i_0}\right\}$.

\noindent\textbf{Case 2}: $f$ satisfies \eqref{eq-first-condition-on-f-constant-mu-sub-zero} and \eqref{eq-for-thm-a-second-condition-on-f-convergence-to-mu-sub-one-general}.

By \eqref{eq-for-thm-a-second-condition-on-f-convergence-to-mu-sub-one-general} for any $i\geq 2$ there exists a constant $T_i>0$ such that
\begin{equation*}
g-\frac{\mu_0}{i}\leq f \leq g+\frac{\mu_0}{i} \qquad \mbox{on $\partial\Omega\times\left(T_i,\infty\right)$}.
\end{equation*}
For any $i\ge 2$, let $\underline{f}_i$, $\underline{g}_i$, $\overline{f}_i$ and $\overline{g}_i$ be given by 
\begin{equation}\label{fi-gi-defn}
\begin{cases}
\begin{aligned}
\underline{f}_i(x,t)&=\min\left(f(x,t), g(x)-\frac{\mu_0}{i}\right) ,\quad \overline{f}_i(x,t)=\max\left(f(x,t),\,g(x)+\frac{\mu_0}{i}\right) \quad \mbox{ on $\partial\Omega\times(0,\infty)$}\\
\underline{g}_i(x)&=g(x)-\frac{\mu_0}{i},\quad \overline{g}_i(x)=g(x)+\frac{\mu_0}{i}\quad \mbox{ on $\partial\Omega$},
\end{aligned}
\end{cases}
\end{equation}
and $\underline{u}_{0,i}$, $\overline{u}_{0,i}$ be given by \eqref{lower-upper-approx-initial-value}.
Let $\underline{u}_i$, $\overline{u}_i$ be solutions of \eqref{Dirichlet-blow-up-problem} with $f=\underline{f}_i$, $\overline{f}_i$ and $u_0=\underline{u}_i, \overline{u}_i$, respectively  given by Theorem \ref{first-main-existence-thm}.
Let $\underline{\phi}_i$, $\overline{\phi}_i$,  be the solutions of \eqref{harmonic-eqn} with $=\underline{g}_i, \overline{g}_i$, respectively. 
Since 
\begin{equation*}
{f}_i(x,t)=g(x)-\frac{\mu_0}{i}\quad\mbox{ and }\quad\overline{f}_i(x,t)=g(x)+\frac{\mu}{i}\quad\mbox{ on } \partial\Omega\times(T_i,\infty),
\end{equation*} 
by case 1,
\begin{equation}\label{eq-uniform-convergence-of-lower-one-by-Case-1-01}
\left\{\begin{aligned}
&\underline{u}_i\to \underline{\phi}_i^{\frac{1}{m}}\quad \mbox{ in $C^2(K)$ as $t\to\infty$}\\
&\overline{u}_i\to \overline{\phi}_i^{\frac{1}{m}}\quad \mbox{ in $C^2(K)$ as $t\to\infty$}
\end{aligned}\right.
\end{equation}
for any  compact subset $K$ of $\overline{\Omega}\bs\left\{a_1,\cdots,a_n\right\}$.
Since by \eqref{fi-gi-defn},
\begin{equation*}
\left\{\begin{aligned}
&\underline{f}_i\le f\le\overline{f}_i\quad\mbox{ on }\partial\Omega\times(0,\infty)\quad\forall i\ge 2\\
&\underline{g}_i\le g\le\overline{g}_i\quad\,\mbox{ on }\partial\Omega\qquad\qquad\,\,\,\,\forall i\ge 2,
\end{aligned}\right.
\end{equation*}
by Theorem \ref{bd-soln-comparison-thm1} and \eqref{eq-uniform-convergence-of-lower-one-by-Case-1-01},
\begin{align}\label{u-i-u-ineqn}
&\underline{u}_i(x,t+t_k)\leq u_k(x,t)\leq \overline{u}_i(x,t+t_k) \quad\forall x\in \overline{\Omega}\bs\left\{a_1,\cdots,a_{i_0}\right\},t\in\R, k\in\Z^+,i\ge i_2\notag\\
\Rightarrow \quad&\underline{\phi}^{\frac{1}{m}}_i(x)\leq u_{\infty}(x,t)\leq \overline{\phi}^{\frac{1}{m}}_i(x) \qquad\qquad\,\,\,\forall x\in \overline{\Omega}\bs\left\{a_1,\cdots,a_{i_0}\right\}, t\in\R, i\ge i_2\quad\mbox{ as }k\to\infty.
\end{align}
Since both $\underline{g}_i$ and $\overline{g}_i$ converges uniformly on $\1\Omega$ to $g$ as $i\to\infty$, both $\underline{\phi}_i$ and $\overline{\phi}_i$ converges uniformly on $\2{\Omega}$ to $\phi$ as $i\to\infty$. Hence letting $i\to\infty$ in \eqref{u-i-u-ineqn}, we get \eqref{u-infty-harmonic-fcn} and \eqref{uk-limit=harmonic-fcn}.
Since the sequence $\{t_k\}_{k=1}^{\infty}$ is arbitrary, \eqref{u-phi-1/m-limit1} holds for any compact subset $K$ of $\overline{\Omega}\bs\left\{a_1,\cdots,a_{i_0}\right\}$  and the theorem follows.
\end{proof}

\begin{remar}
If one only assume that $f\in L^{\infty}(\partial\Omega\times(0,\infty))$, $g\in C(\1\Omega)$ and 
\begin{equation*}
f(x.t)\to g(x) \quad\mbox{ uniformly in }L^{\infty}(\partial\Omega)\mbox{ as }t\to\infty
\end{equation*}
instead of $f\in L^{\infty}(\partial\Omega\times\left(0,\infty\right))\cap C^3(\partial\Omega\times\left(T_1,\infty\right))$ for some constant $T_1>0$, $g\in C^3(\1\Omega)$ and  condition \eqref{eq-for-thm-a-second-condition-on-f-convergence-to-mu-sub-one-general} in Theorem \ref{convergence-thm3}, then by an argument similar to the proof of Theorem \ref{convergence-thm3} one can prove that the solution $u$ of \eqref{Dirichlet-blow-up-problem} given by Theorem \ref{first-main-existence-thm} satisfy \eqref{u-phi-1/m-limit1} for any compact set $K\subset\widehat{\Omega}$. Moreover \begin{equation*}
u(x,t)\to \phi^{\frac{1}{m}} \quad \mbox{ in }L_{loc}^{\infty}(\overline{\Omega}\bs\left\{a_1,\cdots,a_{i_0}\right\}) \quad \mbox{ as }t\to\infty
\end{equation*} 
holds.
\end{remar}

\begin{proof}[\textbf{Proof of Theorem \ref{convergence-thm4}}:]
For any $0<\3<1$ and $M>0$, let $u_{\3,M}$ be the solution of \eqref{cauchy-problem-Rn} which satisfies \eqref{u-epsilon-m-lower-upper-bd11}. Then by the proof of Theorem \ref{second-main-existence-thm},
\begin{equation*}
u(x,t)=\lim_{M\to\infty}\lim_{\3\to 0} u_{\3, M}(x,t) \qquad \forall x\in\widehat{\R^n},\,\,t>0.
\end{equation*}
Moreover by an argument similar to the proof of Theorem \ref{convergence-thm1},
\begin{equation}\label{eq-lower-bound-of-u-through-widehat-R-n-times-0-infty-3-2-8}
u\geq\mu_0 \qquad \mbox{in $\widehat{\R^n}\times(0,\infty)$}.
\end{equation}
By \eqref{upper-blow-up-rate-initial-data} and \eqref{gamma-upper-lower-bd1}, $u_0\in L^1(\widehat{B}_{R_1})$. Hence by \eqref{eq-initial-condition-over-ball-L-1-bound-of-u-0-and-mu-0}, $u_0-\mu_0\in L^1(\widehat{\R^n})$. We extend $u_0$ to a function on $\R^n$ by setting $u_0(a_i)=0$ for all $i=1,2,\cdots,i_0$. Then $u_0-\mu_0\in L^1(\R^n)$.
By Corollary 2.2 of \cite{DS1},
\begin{align}\label{eq-L-1-contraction-of-u-minus-mu-sub-zeor-as-m-to-infty-and-epsilon-to-zoer}
&\int_{\R^n}|u_{\3,M}(x,t)-\mu_0|\,dx\leq\int_{\R^n}|u_{0,M}-\mu_0|\,dx\quad \forall t>0, M>\max(\mu_0,C_1), 0<\3<1\notag\\
\Rightarrow\quad&\int_{\widehat{\R^n}}|u(x,t)-\mu_0|\,dx\le\int_{\R^n}|u_0-\mu_0|\,dx<\infty\quad\forall t>0\quad \mbox{ as }\3\to0, M\to\infty.
\end{align}
Let $\left\{t_k\right\}_{k=1}^{\infty}\subset\R^+$ be a sequence such that $t_k\to\infty$ as $k\to\infty$ and  $u_k(x,t)=u(x,t+t_k)$.  Then by Lemma \ref{upper-bound-blow-up-soln-Rn1} for any $0<\3_1<1$ there exist constants $t_0>0$ and $A_i>0$, $i=1,\cdots, i_0$,  such that \eqref{upper-bound-of-u-by-constant-and-v-eta} holds. 
By \eqref{upper-bound-of-u-by-constant-and-v-eta} and \eqref{eq-lower-bound-of-u-through-widehat-R-n-times-0-infty-3-2-8},
\begin{equation}\label{eq-upper-and-lower-bound-of-u-k-by-mu-0-and-terms-from-Theorem-3-3}
\mu_0\le u_k(x,t)\leq \left(\left(C_1+\3_1\right)^m+\sum_{i=1}^{i_0}\left(A_i\left|x-a_i\right|^{-\gamma_i}\right)^m\right)^{\frac{1}{m}}\qquad \forall x\in\widehat{\R^n},\,\,t\geq t_0-t_k.
\end{equation}
For any $N_0>0$, there exists $k_{N_0}\in\Z^+$ such that $-N_0>t_0-t_k$  for all $k\geq k_{N_0}$. Then by \eqref{eq-upper-and-lower-bound-of-u-k-by-mu-0-and-terms-from-Theorem-3-3} the equation \eqref{fde} for the sequence $\left\{u_k\right\}_{k=k_{_{N_0}}}^{\infty}$ is uniformly parabolic on $\R^n_{\delta}\times(-N_0,\infty)$ for any $0<\delta<\delta_0$. Hence by the Schauder estimates \cite{LSU} the sequence $\{u_k\}_{k=k_{_{N_0}}}^{\infty}$ is equi-H\"older continuous in $C^{2,1}(K)$ for any compact subset $K$ of $\widehat{\R^n}\times\left(-N_0,\infty\right)$. Thus by the Ascoli theorem and a diagonalization argument the sequence $\{u_k\}_{k=1}^{\infty}$ has a subsequence which we may assume without loss of generality to be the sequence itself that converges uniformly in $C^{2,1}(K)$ on every compact subset $K$ of $\widehat{\R^n}\times\left(-\infty,\infty\right)$ to a solution $u_{\infty}$ of \eqref{fde} in $\widehat{\R^n}\times(-\infty,\infty)$ as $k\to\infty$. Letting $k\to\infty$ in \eqref{eq-upper-and-lower-bound-of-u-k-by-mu-0-and-terms-from-Theorem-3-3},
\begin{equation}\label{eq-upper-bound-of-u-infty-on-widehat-R-n-by-Lemma-3-3}
\mu_0\leq u_{\infty}(x,t)\leq \left(\left(C_1+\3_1\right)^m+\sum_{i=1}^{i_0}\left(A_i\left|x-a_i\right|^{-\gamma_i}\right)^m\right)^{\frac{1}{m}}\qquad \forall x\in\widehat{\R^n},\,\,t\in\R.
\end{equation} 
Putting $u=u_k$ and $t=t+t_k$ in \eqref{eq-Aronson-Bernilan-on-R-n} and letting $k\to\infty$, by \eqref{eq-upper-and-lower-bound-of-u-k-by-mu-0-and-terms-from-Theorem-3-3},
\begin{equation}\label{eq-negativity-of-time-derivaitive-of-u-infty-0-on-R-to-n}
u_{\infty,t}\le 0 \quad \mbox{ in }\widehat{\R^n}\times\left(-\infty,\infty\right).
\end{equation}
Putting $t=t+t_k$ and letting $k\to\infty$ in \eqref{eq-L-1-contraction-of-u-minus-mu-sub-zeor-as-m-to-infty-and-epsilon-to-zoer},
\begin{equation}\label{eq-L-1-contraction-of-u-infty-minus-mu-sub-zeor-as-k-to-infty}
\int_{\widehat{\R^n}}|u_{\infty}(x,t)-\mu_0|\,dx\leq\int_{\R^n}|u_0-\mu_0|\,dx<\infty\quad\forall t\in\R.
\end{equation}
By \eqref{eq-upper-bound-of-u-infty-on-widehat-R-n-by-Lemma-3-3} the equation \eqref{fde} for  $u_{\infty}$ is uniformly parabolic on $\R^n_{\delta}\times\left(-\infty,\infty\right)$ for any $0<\delta<\delta_2$. By the Schauder estimates \cite{LSU} the family $\{u_{\infty}(\cdot,t)\}_{t\in\R}$ is equi-H\"older continuous in $C^2(K)$ for any compact subset $K$ of $\widehat{\R^n}$. Hence by \eqref{eq-negativity-of-time-derivaitive-of-u-infty-0-on-R-to-n} $u_{\infty}(\cdot,t)$ decreases (increases respectively) and converges uniformly in $C^{2}\left(K\right)$ on every compact subset $K$ of $\widehat{\R^n}$ to some functions $w_1\in C^2\left(\widehat{\R^n}\right)$ ($w_2\in C^2\left(\widehat{\R^n}\right)$, respectively) as $t\to\infty$ $\left(t\to-\infty\,\, \mbox{respectively}\right)$ which satisfies
\begin{equation}\label{eq-bound-of-w-limit-of-u-sub-infty-by-C0delta-53-on-R-to-n}
\mu_0\leq w_j(x)\leq \left(\left(C_1+\3_1\right)^m+\sum_{i=1}^{i_0}\left(A_i\left|x-a_i\right|^{-\gamma_i}\right)^m\right)^{\frac{1}{m}} \qquad \forall x\in\widehat{\R^n},\,\, j=1,2.
\end{equation}
Letting $t\to\pm\infty$ in \eqref{eq-L-1-contraction-of-u-infty-minus-mu-sub-zeor-as-k-to-infty},
\begin{equation*}
\int_{R_0}^{\infty}\int_{\1 B_r}|w_j(y)-\mu_0|\,d\sigma (y)dr\le\int_{\widehat{\R^n}}|w_j(x)-\mu_0|\,dx\leq\int_{\R^n}|u_0-\mu_0|\,dx<\infty\quad\forall j=1,2.
\end{equation*}
Then for any $j=1,2$, there exists a sequence $\{R_{j,i}\}_{i=1}^{\infty}$, $R_{j,i}\to\infty$ as $i\to\infty$ and $R_{j,i}>R_0$ for all $i\in\Z^+$, such that
\begin{equation*}
\int_{\1 B_{R_{j,i}}}|w_j(y)-\mu_0|\,d\sigma (y)\to 0\quad\mbox{ as }i\to\infty.
\end{equation*}
Then for any $0<\3<1$ there exists $n_{\3}\in\Z^+$ such that 
\begin{equation}\label{eq-inequality-for-behaviour-of-u-infyt-at-infty-equail--mu-sub-zero-before-r-to=infty}
\int_{\1 B_{R_{j,i}}}|w_j(y)-\mu_0|\,d\sigma (y)<\3 \quad \forall i\ge n_{\3}, j=1,2.
\end{equation}
By \eqref{gamma-upper-lower-bd1}, \eqref{eq-negativity-of-time-derivaitive-of-u-infty-0-on-R-to-n}, \eqref{eq-bound-of-w-limit-of-u-sub-infty-by-C0delta-53-on-R-to-n} and an argument similar to the proof of Theorem \ref{convergence-thm3},
\begin{equation*}
\La w_j^m=0 \qquad \mbox{in $\widehat{\R^n}$}\qquad  \forall j=1,\,2
\end{equation*}
and $w_j^m$ has removable singularities at $a_1$, $\cdots$, $a_{i_0}$. Hence $w_j$ can be extended to a function on $\R^n$ for $j=1,2$, such that
\begin{equation}\label{eq-harmonicity-of-function-w-1-2-to-m-in-Omega-on-R-to-n}
\La w_j^m=0 \qquad \mbox{in $\R^n$} \qquad \forall j=1,\,2.
\end{equation}
By \eqref{eq-harmonicity-of-function-w-1-2-to-m-in-Omega-on-R-to-n} and  the Green representation formula for harmonic functions for any $r>R_1$,
\begin{equation}\label{eq-express-of-limit-function-w-j-by-Green-functions}
w_j^m(x)=\int_{y\in\partial B_{r}}w_{j}^m(y)\frac{\partial G_r(x,y)}{\partial n}\,d\sigma (y) \quad \forall x\in B_{r},j=1,2
\end{equation}
where $G_r$ is the Green function for $B_r$ and $\frac{\partial}{\partial n}$ is the derivative with respect to the unit outward normal on $\partial B_r$ at the point $y$. Let $R_2>1$. Since 
\begin{equation*}
\left|\nabla_yG_r\left(x,y\right)\right|\leq C \qquad \forall |x|< R_2,|y|=r>2R_2
\end{equation*} 
for some constant $C>0$, by \eqref{eq-bound-of-w-limit-of-u-sub-infty-by-C0delta-53-on-R-to-n}, \eqref{eq-inequality-for-behaviour-of-u-infyt-at-infty-equail--mu-sub-zero-before-r-to=infty}, \eqref{eq-express-of-limit-function-w-j-by-Green-functions} and the mean value theorem,
\begin{align}\label{eq-equality-of-limit-function-w-j-m-with-mu-0-m-by-Green-functions}
|w_j^m(x)-\mu_{0}^m|&=\left|\int_{\partial B_{R_{j,i}}}(w_j^m(y)-\mu_{0}^m)\frac{\partial G_{r_i}(x,y)}{\partial n}\,d\sigma (y)\right| \qquad \forall x\in B_{R_2},i\geq n_{\3}\mbox{ and $R_{j,i}>2R_2$},j=1,2\notag\\
&\leq C\int_{\partial B_{R_{j,i}}}|w_{j}^m(y)-\mu_{0}^m|\,d\sigma (y)\qquad \qquad\qquad\forall x\in B_{R_2},i\geq n_{\3}\mbox{ and $R_{j,i}>2R_2$},j=1,2\notag\\
&\leq Cm\mu_0^{m-1}\int_{\partial B_{R_{j,i}}}|w_j(y)-\mu_{0}|\,d\sigma (y)\qquad \quad\forall x\in B_{R_2},i\geq n_{\3}\mbox{ and $R_{j,i}>2R_2$},j=1,2\notag\\
&\leq Cm\mu_0^{m-1}\3 \qquad \qquad\qquad\qquad\qquad\qquad\forall x\in B_{R_2},0<\3<1.
\end{align}
Since $\3>0$ and $R_2>1$ are arbitrary, letting $\3\to 0$ first and then $R_2\to\infty$ in \eqref{eq-equality-of-limit-function-w-j-m-with-mu-0-m-by-Green-functions},
\begin{equation}\label{eq-equivalent-of-w-on=whole-domain-by-mu-sub-zero-on-R-to-n}
w_j=\mu_0 \quad \mbox{ in }\R^n\quad\forall j=1,2.
\end{equation}
By \eqref{eq-negativity-of-time-derivaitive-of-u-infty-0-on-R-to-n} and \eqref{eq-equivalent-of-w-on=whole-domain-by-mu-sub-zero-on-R-to-n},
\begin{align}
&\mu_0=w_1(x)\leq u_{\infty}\left(x,t\right)\leq w_2(x)=\mu_0 \qquad \forall x\in \widehat{\R^n},t\in\R\label{w1-u-infty-w2-ineqn}\\
\Rightarrow \quad&u_{\infty}(x,t)=\mu_0\qquad\qquad\qquad\qquad\qquad\quad\,\forall x\in\widehat{\R^n},t\in\R\notag\\
\Rightarrow \quad&\lim_{k\to\infty}u\left(x,t_k\right)=\mu_0\qquad\qquad\qquad\qquad\quad\,\,\forall x\in\widehat{\R^n}.\label{u-infty-limit-mu0}
\end{align}
Since the sequence $\left\{t_k\right\}$ is arbitrary, \eqref{u-infty-to-mu0} holds for any compact subset $K$ of $\widehat{\R^n}$. and the theorem follows.
\end{proof}

\begin{proof}[\textbf{Proof of Theorem \ref{convergence-thm5}}:]
Since the proof of the theorem is similar to the proof of Theorem \ref{convergence-thm4} we will only sketch the argument here.
Let $0<\3_1<1$. Then by \eqref{eq-initial-condition-of-u-0-to-mu-0-at-infty} there exists a constant $R_1>R_0$ such that
\begin{equation}\label{eq-upper-bound-of-u-0-by-condition-at-infty-equal-mu-0}
u_0(x)\leq \mu_0+\frac{\3_1}{2} \qquad \forall |x|\geq R_1.
\end{equation}
By \eqref{eq-upper-bound-of-u-0-by-condition-at-infty-equal-mu-0} and Lemma \ref{upper-bound-blow-up-soln-Rn1} there exist constants $t_0>0$ and $A_i>0$, $i=1,\cdots,i_0$,  such that \eqref{upper-bound-of-u-by-constant-and-v-eta} holds with $C_1=\mu_0$. Let $\{t_k\}_{k=1}^{\infty}\subset\R^+$ be a sequence such that $t_k\to\infty$ as $k\to\infty$ and  $u_{k}(x,t)=u(x,t+t_k)$. Then by an argument similar to the proof of Theorem \ref{convergence-thm4}, $\{u_k\}_{k=1}^{\infty}$ has a subsequence which we may assume without loss of generality to be the sequence itself that converges  in $C^{2,1}(K)$ for any compact subset $K$ of $\widehat{\R^n}\times\left(-\infty,\infty\right)$ to a solution $u_{\infty}$ of \eqref{fde} in $\widehat{\R^n}\times(-\infty,\infty)$ which satisfies \eqref{eq-negativity-of-time-derivaitive-of-u-infty-0-on-R-to-n}  as $k\to\infty$. 

Moreover by \eqref{eq-negativity-of-time-derivaitive-of-u-infty-0-on-R-to-n} $u_{\infty}(\cdot,t)$ decreases (increases, respectively) and converges uniformly in $C^2(K)$ on every compact subset $K$ of $\widehat{\R^n}$ to some functions 
\begin{equation*}
w_1\in C^2(\widehat{\R^n})\quad (w_2\in C^2(\widehat{\R^n}),\mbox{ respectively})\mbox{ as }t\to\infty \quad(t\to-\infty\mbox{ respectively})
\end{equation*}
which satisfies \eqref{eq-bound-of-w-limit-of-u-sub-infty-by-C0delta-53-on-R-to-n}. 
By \eqref{gamma-upper-lower-bd3} and an argument similar to the proof of Theorem  \ref{convergence-thm3}, $w_j$ can be extended to a function on $\R^n$ for $j=1,2$, which satisfies \eqref{eq-harmonicity-of-function-w-1-2-to-m-in-Omega-on-R-to-n}.
By \eqref{eq-bound-of-w-limit-of-u-sub-infty-by-C0delta-53-on-R-to-n},
\begin{align}\label{eq-bound-of-w-j-at-infty-between-mu-0-and-mu-0-epsilon}
&\mu_0\leq \liminf_{|x|\to\infty}w_j(x)\leq\limsup_{|x|\to\infty}w_j(x)\leq\mu_0+\3_1 \qquad \forall 0<\3_1<1,j=1,2\notag\\
\Rightarrow\quad&\lim_{|x|\to\infty}w_{j}(x)=\mu_0 \quad \forall j=1,2\quad\mbox{ as }\3_1\to 0.
\end{align}
By \eqref{eq-harmonicity-of-function-w-1-2-to-m-in-Omega-on-R-to-n}, \eqref{eq-bound-of-w-j-at-infty-between-mu-0-and-mu-0-epsilon} and the maximum principle for harmonic function in a bounded domain,
\begin{equation}\label{eq-equivalent-of-w-on=whole-domain-by-mu-0-on-R-to-n-by-Liouville-thm}
w_j(x)=\mu_0 \qquad \forall x\in\R^n\,\,j=1,2.
\end{equation}
By \eqref{eq-equivalent-of-w-on=whole-domain-by-mu-0-on-R-to-n-by-Liouville-thm}, we get
\eqref{w1-u-infty-w2-ineqn}. Hence \eqref{u-infty-limit-mu0} holds.
Since the sequence $\left\{t_k\right\}$ is arbitrary, \eqref{u-infty-to-mu0} holds for any compact subset $K$ of $\widehat{\R^n}$ and the theorem follows.
\end{proof}

\begin{proof}[\textbf{Proof of Theorem \ref{convergence-thm6}}:]
Let 
\begin{equation*}
v_0(x)=\begin{cases}
\begin{array}{lcl}
\lambda_1|x-a_1|^{-\gamma_1}&&\forall x\in \widehat{B}_{\delta_1}(a_1)\\
\mu_0&&\forall x\in\2{\Omega}\bs B_{\delta_1}(a_1).
\end{array}
\end{cases}
\end{equation*}
By Theorem \ref{first-main-existence-thm} there exists a solution $v$ of
\begin{equation*}
\begin{cases}
\begin{aligned}
v_t&=\La v^m\quad \mbox{ in $\left(\Omega\bs\left\{a_1\right\}\right)\times(0,\infty)$}\\
v(x,t)&=\mu_0 \quad\,\,\,\, \mbox{ on $\partial\Omega$}\\
v(x,0)&=v_0(x) \quad \mbox{ in $\Omega\bs\left\{a_1\right\}$}.
\end{aligned}
\end{cases}
\end{equation*}
Since $u_0\geq v_0$ in $\Omega$,
by \eqref{eq-first-condition-on-f-constant-mu-sub-zero} and Theorem \ref{bd-soln-comparison-thm1},
\begin{equation}\label{eq-compare-solution-u-and-v-sol-of-minimized-initial-function-min-u-0-and-mu-0}
u\ge v \quad\mbox{ in }\widehat{\Omega}\times(0,\infty).
\end{equation}
By Theorem 4.13 of \cite{VW1},
\begin{equation}\label{eq-known-fact-by-VW-Theorem-4-13-locally-uniformly-blow-up-whole-domain}
v\to\infty \qquad \mbox{locally uniformly in $\Omega$ }\quad \mbox{as $t\to\infty$}.
\end{equation}
Thus by \eqref{eq-compare-solution-u-and-v-sol-of-minimized-initial-function-min-u-0-and-mu-0} and \eqref{eq-known-fact-by-VW-Theorem-4-13-locally-uniformly-blow-up-whole-domain}, \eqref{eq-blow-up-of-soluiton-u-when-gamma-greater-than-n-2-over-m-2c} holds and the theorem follows.
\end{proof}

\begin{proof}[\textbf{Proof of Theorem \ref{convergence-thm7}}:]
By an argument similar to the proof of Theorem \ref{convergence-thm1},
\begin{equation}\label{eq-lower-bound-of-u-through-widehat-R-n-times-0-infty-3-2-10}
u\geq\mu_0 \qquad \mbox{in $\widehat{\R^n}\times(0,\infty)$}.
\end{equation}
Let 
\begin{equation*}
v_0(x)=\begin{cases}
\begin{array}{lcl}
\lambda_1|x-a_1|^{-\gamma_1}&&\forall x\in \widehat{B}_{\delta_1}(a_1)\\
\mu_0&&\forall x\in\R^n\bs B_{\delta_1}(a_1).
\end{array}
\end{cases}
\end{equation*}
For any $M>0$, let $v_{0,M}(x)=\min (v_0(x),M)$. Let $R>R_0+2\delta_0$ and $M>\mu_0$. We will now prove the existence of a unique solution $v_M$ of 
\begin{equation}\label{v-bd-domain-problem}
\begin{cases}
\begin{aligned}
v_t&=\La v^m\qquad \mbox{ in }B_R(a_1)\times(0,\infty)\\
v(x,t)&=\mu_0 \qquad\quad \mbox{on $\partial B_R(a_1)\times(0,\infty)$}\\
v(x,0)&=v_{0,M}(x) \quad \mbox{in }B_R(a_1)
\end{aligned}
\end{cases}
\end{equation}
which satisfies 
\begin{equation}\label{lower-upper-bd10}
\mu_0\le v_M(x,t)\le M\quad\forall x\in \2{B_R(a_1)}, t>0.
\end{equation}
Since the proof is similar to the proof of Theorem 3.5 of \cite{Hu2}, we will only sketch the argument here.
We choose a monotone decreasing function $\psi\in C^{\infty}(\R)$, $\psi>0$ on $\R$, such that $\psi(s)=ms^{-\frac{1-m}{m}}$ for $(\mu_0/2)^m\le s\le (2M)^m$, $\psi(s)=m(\mu_0/4)^{-\frac{1-m}{m}}$ for $s\le (\mu_0/4)^m$, $\psi(s)=m(4M)^{-\frac{1-m}{m}}$ for $s\ge (4M)^m$. By standard parabolic theory for non-degenerate parabolic equation \cite{LSU} there exists a unique solution $w_M\in C^{2,1}(\2{B_R(a_1)}\times(0,\infty))$ to the problem,
\begin{equation*}
\begin{cases}
\begin{aligned}
w_t&=\psi (w)\Delta w\qquad \mbox{ in }B_R(a_1)\times(0,\infty)\\
w(x,t)&=\mu_0^m \qquad\qquad \mbox{on $\partial B_R(a_1)\times(0,\infty)$}\\
w(x,0)&=v_{0,M}(x)^m \quad\,\, \mbox{ in }B_R(a_1).
\end{aligned}
\end{cases}
\end{equation*}
Moreover by the maximum principle, $\mu_0^m\le w_M\le M^m$ in $\2{B_R(a_1)}\times(0,\infty)$. Hence $\psi (u_M)=mw_M^{-\frac{1-m}{m}}$. Thus $v_M=w_M^{1/m}$ is a solution of \eqref{v-bd-domain-problem} which satisfies \eqref{lower-upper-bd10}. By Lemma \ref{comparison-lem-bd-domain} the solution $v_M$ is unique.

By Lemma \ref{lem-um-converge-to-u}, as $M\to\infty$, $v_M$ converges in $C^{2,1}(K)$ for any compact subset $K$ of $(\2{B_R}\setminus\{a_1\})\times(0,\infty)$ to a solution $v$ of
\begin{equation*}
\begin{cases}
\begin{aligned}
v_t&=\La v^m\quad\mbox{ in }(B_R\setminus\{a_1\})\times(0,\infty)\\
v(x,t)&=\mu_0 \qquad\mbox{on $\partial B_R\times(0,\infty)$}\\
v(x,0)&=v_0(x) \quad \mbox{in }B_R\setminus\{a_1\}.
\end{aligned}
\end{cases}
\end{equation*}
Let $u_{0,M}$ be given by \eqref{eq-def-of-u-0-epsilon-M} and $u_M$ be the solution of \eqref{cauchy-problem-Rn-2} which satisfies \eqref{u-m1-u-m2-compare} and
\begin{equation}\label{um-lower-bd11}
u_M\ge\mu_0\quad\mbox{ in }\R^n\times (0,\infty)\quad\forall M>\mu_0.
\end{equation} 
By the proof of Theorem \ref{second-main-existence-thm} $u_M$ increases and converges uniformly in $C^{2,1}(K)$ on every compact subset $K$ of $\widehat{\R^n}\times(0,\infty)$ to the solution $u$ of \eqref{cauchy-blow-up-problem} as $M\to\infty$.
Since
\begin{equation*}
u_{0,M}\geq v_{0,M}\quad\mbox{ in }B_R(a_1)\quad\forall M>\mu_0,
\end{equation*} 
by \eqref{um-lower-bd11} and Lemma \ref{L1-contraction-lem},
\begin{align}\label{u>v-ineqn2}
&u_M\geq v_M\quad \mbox{ in }B_R(a_1)\times(0,\infty)\quad\forall M>\mu_0\notag\\
\Rightarrow\quad&u\geq v \quad\quad\,\, \mbox{ in }\widehat{B}_R(a_1)\times(0,\infty)\quad\mbox{ as }M\to\infty.
\end{align}
By Theorem 4.13 of \cite{VW1}, 
\begin{equation}\label{v-limit}
v(x,t)\to\infty \quad \mbox{ on } K\quad \mbox{ as }t\to\infty
\end{equation}
for any compact subset $K$ of $\widehat{B}_R(a_1)$.
Hence by \eqref{u>v-ineqn2} and \eqref{v-limit}, \eqref{eq-blow-up-of-soluiton-u-when-gamma-greater-than-n-2-over-m-2c} holds for any compact subset $K$ of $\widehat{B}_R(a_1)$.
Since $R>R_0+2\delta_0$ is arbitrary, \eqref{eq-blow-up-of-soluiton-u-when-gamma-greater-than-n-2-over-m-2c} holds for any compact subset $K$ of $\widehat{\R^n}$ and the theorem follows.
\end{proof}

\noindent {\bf Acknowledgement:} Sunghoon Kim was supported by the National Research Foundation of Korea (NRF) grant funded by the Korea government (MSIP, no. 2015R1C1A1A02036548). Sunghoon Kim was also supported by the Research Fund, 2017 of The Catholic University of Korea.

\end{document}